\def\ps@pprintTitle{
 \let\@oddhead\@empty
 \let\@evenhead\@empty
 \def\@oddfoot{\centerline{\thepage}}
 \let\@evenfoot\@oddfoot}
\newtheorem{theorem}{Theorem}[section]
\newtheorem{corollary}[theorem]{Corollary}
\newtheorem{lemma}[theorem]{Lemma}
\newtheorem{proposition}[theorem]{Proposition}
\newtheorem{definition}[theorem]{Definition}
\newtheorem{hypothesis}[theorem]{Hypothesis}
\newtheorem{remark}[theorem]{Remark}
\newtheorem{conjecture}[theorem]{Conjecture}
\newcommand{\hooklongrightarrow}{\lhook\joinrel\longrightarrow}
\newcommand{\twoheadlongrightarrow}{\relbar\joinrel\twoheadrightarrow}
\newcommand{\ra}{\rightarrow}
\newcommand{\lra}{\longrightarrow}
\newcommand{\ul}{\underline}
\newcommand{\hV}{\textbf V}
\newcommand{\hR}{\textbf R}
\newcommand{\cY}{\mathcal Y}
\newcommand{\bA}{\mathbb A}
\newcommand{\bC}{\mathbb C}
\newcommand{\F}{\mathbb F}
\newcommand{\Q}{\mathbb Q}
\newcommand{\bR}{\mathbb R}
\newcommand{\bT}{\mathbb T}
\newcommand{\Z}{\mathbb Z}
\newcommand{\co}{\mathcal O}
\newcommand{\cA}{\mathcal A}
\newcommand{\cC}{\mathcal C}
\newcommand{\cS}{\mathcal S}
\newcommand{\cI}{\mathcal I}
\newcommand{\cF}{\mathcal F}
\newcommand{\cE}{\mathcal E}
\newcommand{\cG}{\mathcal G}
\newcommand{\cJ}{\mathcal J}
\newcommand{\fn}{\mathfrak n}
\newcommand{\fm}{\mathfrak{m}}
\newcommand{\ub}{\mathfrak b}
\newcommand{\fp}{\mathfrak p}
\newcommand{\fF}{\mathfrak F}
\newcommand{\fx}{\mathfrak x}
\newcommand{\fC}{\mathfrak C}
\newcommand{\fa}{\mathfrak a}
\newcommand{\fc}{\mathfrak c}
\newcommand{\tm}{\text{\texttt{m}}}
\newcommand{\fB}{\mathfrak B}
\DeclareMathAlphabet{\mathpzc}{OT1}{pzc}{m}{it}
\DeclareMathOperator{\sm}{\mathrm sm}
\DeclareMathOperator{\la}{\mathrm la}
\DeclareMathOperator{\gl}{\mathfrak gl}
\DeclareMathOperator{\tr}{\mathrm tr}
\DeclareMathOperator{\GL}{\mathrm GL}
\DeclareMathOperator{\gr}{\mathrm gr}
\DeclareMathOperator{\Fil}{\mathrm Fil}
\DeclareMathOperator{\Res}{\mathrm Res}
\DeclareMathOperator{\Gal}{\mathrm Gal}
\DeclareMathOperator{\Hom}{\mathrm Hom}
\DeclareMathOperator{\End}{\mathrm End}
\DeclareMathOperator{\cris}{\mathrm cris}
\DeclareMathOperator{\rig}{\mathrm rig}
\DeclareMathOperator{\an}{\mathrm an}
\DeclareMathOperator{\Spec}{\mathrm Spec}
\DeclareMathOperator{\Frob}{\mathrm Frob}
\DeclareMathOperator{\Ind}{\mathrm Ind}
\DeclareMathOperator{\unr}{\mathrm unr}
\DeclareMathOperator{\Ker}{\mathrm Ker}
\DeclareMathOperator{\pr}{\mathrm pr}
\DeclareMathOperator{\ord}{\mathrm ord}
\DeclareMathOperator{\Ext}{\mathrm Ext}
\DeclareMathOperator{\Spf}{\mathrm Spf}
\DeclareMathOperator{\Ima}{\mathrm Im}
\DeclareMathOperator{\lalg}{\mathrm lalg}
\DeclareMathOperator{\id}{\mathrm id}
\DeclareMathOperator{\op}{\mathrm op}
\DeclareMathOperator{\dett}{\mathrm det}
\DeclareMathOperator{\alg}{\mathrm alg}
\DeclareMathOperator{\cyc}{\mathrm cyc}
\DeclareMathOperator{\soc}{\mathrm soc}
\DeclareMathOperator{\Supp}{\mathrm Supp}
\DeclareMathOperator{\Ad}{\mathrm Ad}
\DeclareMathOperator{\ad}{\mathrm ad}
\DeclareMathOperator{\sss}{\mathrm ss}
\DeclareMathOperator{\red}{\mathrm red}
\DeclareMathOperator{\nc}{\mathrm nc}
\DeclareMathOperator{\Art}{\mathrm Art}
\DeclareMathOperator{\ab}{\mathrm ab}
\DeclareMathOperator{\WD}{\mathrm WD}
\DeclareMathOperator{\rk}{\mathrm rk}
\DeclareMathOperator{\val}{\mathrm val}
\DeclareMathOperator{\univ}{\mathrm univ}
\DeclareMathOperator{\Def}{\mathrm Def}
\DeclareMathOperator{\cts}{\mathrm cts}
\DeclareMathOperator{\Ord}{\mathrm Ord}
\DeclareMathOperator{\lfin}{\mathrm lfin}
\DeclareMathOperator{\Mod}{\mathrm Mod}
\DeclareMathOperator{\aug}{\mathrm aug}
\DeclareMathOperator{\pro}{\mathrm pro}
\DeclareMathOperator{\Sp}{\mathrm Sp}
\DeclareMathOperator{\cosoc}{\mathrm cosoc}
\DeclareMathOperator{\pss}{\mathrm ps}
\DeclareMathOperator{\loc}{\mathrm loc}
\DeclareMathOperator{\tw}{\mathrm tw}
\DeclareMathOperator{\tf}{\mathrm tf}
\DeclareMathOperator{\Iw}{\mathrm Iw}
\DeclareMathOperator{\Nm}{\mathrm Nm}
\begin{document}
\title{$\GL_2(\Q_p)$-ordinary families and automorphy lifting}
\author[YD]{Yiwen Ding}\ead[YD]{yiwen.ding@bicmr.pku.edu.cn}

\address[YD]{B.I.C.M.R., Peking University, Beijing}

\begin{abstract}
We prove automorphy lifting results for certain essentially conjugate self-dual $p$-adic Galois representations $\rho$ over CM imaginary fields $F$, which satisfy in particular that $p$ splits in $F$, and that the restriction of $\rho$ on any decomposition group above $p$ is reducible with all the Jordan-H\"older factors of dimension at most $2$. We also show some results on Breuil's locally analytic socle conjecture in certain non-trianguline case. The main results are obtained by establishing an $R=\bT$-type result over the $\GL_2(\Q_p)$-ordinary families considered in \cite{BD1}.
\end{abstract}
\maketitle
\tableofcontents
\numberwithin{equation}{section}
\section{Introduction}
\noindent Let $p>2$ be a prime number, $F/F^+$ be a CM imaginary field such that $p$ splits in $F$ and that $F/F^+$ is unramified. For each place $v|p$ of $F^+$, we fix a place $\widetilde{v}$ of $F$ with $\widetilde{v}|v$. Let $E$ be a sufficiently large finite extension of $\Q_p$. In this note, we prove automorphy lifting results for certain essentially conjugate self-dual $p$-adic Galois representations $\rho$ of $\Gal_F$ over $E$.  For simplicity, we summarize our main result for the case where $\dim_E \rho=3$  in the the following theorem (see Theorem \ref{main} for the statement for general $n$) .
\begin{theorem}\label{mainIntro}
Let $\rho: \Gal_F \ra \GL_3(E)$ be a continuous representation satisfying the following conditions:
  \begin{enumerate}
    \item $\rho^c\cong \rho^{\vee} \varepsilon^{1-n}$, where $\rho^c(g)=\rho(cgc^{-1})$, $1\neq c\in \Gal(F/F^+)$, and $\varepsilon$ denotes the cyclotomic character.
    \item $\rho$ is unramified for all but finitely many primes, and unramified for all the places that do not split over $F^+$. We denote by $S$ the union of the complement set and the places dividing $p$.
    \item $\overline{\rho}$ is absolutely irreducible, $\overline{\rho}(\Gal_{F(\zeta_p)})\subseteq \GL_3(k_E)$ is adequate and $\overline{F}^{\Ker \ad \overline{\rho}}$ does not contain $F(\zeta_p)$.
    \item For all $v|p$, $\rho_{\widetilde{v}}$ is reducible, i.e. is of the form
     \begin{equation}\label{fil00}
       \rho_{\widetilde{v}} \cong \begin{pmatrix}
         \rho_{\widetilde{v},1} & * \\
         0 & \rho_{\widetilde{v},2}
       \end{pmatrix}.
     \end{equation}
    \item For all $v|p$, $\rho_{\widetilde{v}}$ is de Rham of distinct Hodge-Tate weights. Suppose moreover one of the following two conditions holds
     \begin{enumerate} \item for all $v|p$, and $i=1,2$, $\rho_{\widetilde{v},i}$ is absolutely irreducible and the Hodge-Tate weights of $\rho_{\widetilde{v},1}$ are strictly bigger than those of $\rho_{\widetilde{v},2}$\footnote{Where we use the convention that the Hodge-Tate weight of the cyclotomic character is $1$.};
     \item for all $v|p$, $\rho_{\widetilde{v}}$ is crystalline and generic in the sense of \cite{BHS3}\footnote{I.e. the eigenvalues $(\phi_1, \phi_2,\phi_3)$ of the crystalline Frobeinus satisfy $\phi_i\phi_j^{-1}\notin\{1,p\}$ for $i\neq j$.}.
     \end{enumerate}
    \item Let $\overline{\rho}_{\widetilde{v},i}$ be the mod $p$ reduction of $\rho_{\widetilde{v},i}$ (induced by $\overline{\rho}$), $\omega$ the modulo $p$ cyclotomic character. Suppose for all $v|p$\footnote{We need some more technical assumptions when $p=3$, that we ignore in the introduction.}
    \begin{enumerate} \item $\Hom_{\Gal_{\Q_p}}(\overline{\rho}_{\widetilde{v},i},\overline{\rho}_{\widetilde{v},j})=0$ for $i\neq j$;
    \item $\Hom_{\Gal_{\Q_p}}(\overline{\rho}_{\widetilde{v},1}, \overline{\rho}_{\widetilde{v},1}\otimes_{k_E} \omega)=0$;
    \item $\Hom_{\Gal_{\Q_p}}(\overline{\rho}_{\widetilde{v}}, \overline{\rho}_{\widetilde{v},2}\otimes_{k_E} \omega)=0$.
    \end{enumerate}
    \item There exist a definite unitary group $G/F^+$ attached to $F/F^+$ such that $G$ is quasi-split at all finite places of $F^+$, and an automorphic representation $\pi$ of $G$ with the associated Galois representation $\rho_{\pi}: \Gal_F \ra \GL_3(E)$ satisfying
    \begin{enumerate}
    \item $\overline{\rho}_{\pi}\cong \overline{\rho}$;
      \item $\pi_v$ is unramified for all $v\notin S$;
      \item $\pi$ is $\fB$-ordinary (cf. Definition \ref{fBord}, see also Lemma \ref{Bord}). \footnote{\label{fn2}Where $\fB$ is the block associated to $\{\overline{\rho}_{\widetilde{v},i}\}$ in the category of locally finite length smooth $L_P(\Q_p)$-representations (we refer to (\ref{block0}) and \S~\ref{sec: Pas} for details), and where $L_P$ is the Levi subgroup (containing the  subgroup of diagonal matrices) of $P:=\prod_{v|p} P_{\widetilde{v}}(\Q_p)$ with $P_{\widetilde{v}}\subseteq \GL_3$ the parabolic subgroup corresponding to the filtration (\ref{fil00}). In particular, $L_P$ is equal, up to the order of the factors, to $\prod_{v|p} (\GL_2(\Q_p)\times \Q_p^{\times} )$.}
    \end{enumerate}
  \end{enumerate}
  Then $\rho$ is automorphic, i.e. there exists an automorphic representation $\pi'$ of $G$ such that $\rho\cong \rho_{\pi'}$.
\end{theorem}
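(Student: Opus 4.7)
The plan is to prove $\rho$ is automorphic by establishing an $R = \bT$ theorem adapted to the $\fB$-ordinary (equivalently $P$-ordinary) setting, where $P = \prod_{v|p} P_{\widetilde{v}}$ is the parabolic preserving the filtration (\ref{fil00}). The overall strategy is a Taylor--Wiles--Kisin patching, but with the usual Hida-type ordinary factor replaced by a $\GL_2(\Q_p)$-ordinary family in the sense of \cite{BD1}, so that the local-at-$p$ factor of the patched module lives in the Paškūnas block $\fB$ for $L_P(\Q_p)$ rather than merely being a character.

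First I would set up a global framed deformation ring $R_\infty$ for $\overline{\rho}$ with: essentially conjugate self-dual form; unramified condition outside $S\cup Q_N$ for auxiliary Taylor--Wiles sets $Q_N$; and at each $v|p$ the local condition that the deformation factor through an ordinary deformation with respect to $P_{\widetilde{v}}$ (i.e.\ preserves a line/plane refining (\ref{fil00})). Condition (6) guarantees the existence of a unique such parabolic refinement of $\overline{\rho}_{\widetilde{v}}$, so this local condition is rigid enough to produce a smooth local deformation problem of the expected dimension. On the automorphic side, I would take $M$ to be the $\overline{\rho}$-localized, $\fB$-ordinary part of the space of continuous $E$-valued functions on $G(F^+)\backslash G(\bA_{F^+}^\infty)$, cut out by applying the $\fB$-component of Paškūnas' block decomposition for $L_P(\Q_p)$-representations fibrewise over the tame level.

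Second I would patch: the Taylor--Wiles primes give, after passing to the limit, a faithful $M_\infty$ over $R_\infty[[x_1,\ldots,x_g]]$ (suitably normalized), where $M_\infty$ carries an action of the completed Hecke algebra and a compatible action of $L_P(\Q_p)$ via its $\fB$-component. Hypothesis (7) provides a nonzero classical point of $M_\infty$, hence $M_\infty$ is nonzero. A dimension / support argument (using that $R_\infty^\loc$ has the right number of components and that $M_\infty$ is Cohen--Macaulay of the expected depth coming from the $\fB$-ordinary Hida theory in \cite{BD1}) should give $M_\infty$ faithful over $R_\infty$. Then, denoting by $R_\rho$ the global deformation ring for $\rho$ with its $P$-ordinary condition, the classical point attached to $\rho$ lies on $\Spec R_\infty$ and, by faithfulness of $M_\infty$, in the support of the patched Hecke module, which yields an automorphic $\pi'$ with $\rho_{\pi'}\cong \rho$.

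The main obstacle is the local-at-$p$ analysis, specifically the classicality step that lets one move from the generic automorphic $\pi$ (given by (7)) to a point on the $\fB$-ordinary family that matches the de Rham data (5) of $\rho$. This requires: (i) showing that the de Rham locus in $R_\infty^\loc$ corresponding to the Hodge--Tate weights and refinement of $\rho_{\widetilde{v}}$ meets the support of $M_\infty$ only at classical points, which is precisely where hypothesis (5a) (distinct Hodge--Tate weights on the pieces $\rho_{\widetilde{v},i}$) or (5b) (crystalline-generic à la \cite{BHS3}) is used to rule out companion or non-classical trianguline deformations; and (ii) controlling the geometry of the Paškūnas block $\fB$ for $\GL_2(\Q_p)$ when one of the factors is reducible, so that the Galois-to-automorphic dictionary over the $\GL_2(\Q_p)$-ordinary family is exact — this is also where the assumptions in (6), especially the twist-by-$\omega$ vanishing, intervene to avoid accidental extra maps between blocks. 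Once these local pieces are in place, the standard Thorne-style argument packages everything into $R^{\red} = \bT^{\red}$ and concludes automorphy of $\rho$.
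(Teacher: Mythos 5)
Your overall architecture --- patching a $\fB$-ordinary (Pa{\v{s}}k{\=u}nas-block-decomposed) Hecke module over a global deformation ring with $P_{\widetilde{v}}$-ordinary conditions at $p$, using condition (6) for smoothness and uniqueness of the parabolic refinement, condition (7) for non-vanishing, and condition (5) for the final classicality step --- is the same as the paper's. The classicality discussion is also essentially right: under 5(a) the paper uses the adjunction property of $\Ord_P$ together with the existence of locally algebraic vectors in $\widehat{\pi}(\rho_{\widetilde{v},i})$ for de Rham $\rho_{\widetilde{v},i}$, and under 5(b) it first places the point on an eigenvariety via ``trianguline implies finite slope'' and then invokes Breuil--Hellmann--Schraen.

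The genuine gap is in your second step, where you claim that a dimension/support argument ``using that $R_\infty^{\loc}$ has the right number of components'' gives faithfulness of $M_\infty$ over $R_\infty$. The dimension count (flatness over $S_\infty$ plus $\dim_{R_\infty} M_\infty \geq \dim R_\infty$ with $R_\infty$ equidimensional) only shows that $\mathrm{Supp}\, M_\infty$ is a \emph{union of irreducible components} of $\Spec R_\infty$; it does not show it is all of $\Spec R_\infty$, and the component carrying the point attached to $\rho$ need not be the one carrying the point attached to $\pi$, since $\rho$ and $\rho_{\pi}$ may have different inertial behaviour at places of $S\setminus S_p$. The paper closes this by a solvable base change reducing to a level structure $U_\Omega\times \Iw(\widetilde{v}_1)\times(\text{hyperspecial})$ with $\overline{\rho}$ trivial at the places of $\Omega$, followed by Taylor's Ihara avoidance: one patches two families of modules $\fm_1^{\infty}(\fB)_{\chi}$ for $\chi=1$ and for a generic character $\chi$ of the pro-$\ell$ Iwahori quotients, uses the isomorphism $\fm_1^{\infty}(\fB)_{1}/\varpi_E\cong \fm_1^{\infty}(\fB)_{\chi}/\varpi_E$ together with the irreducibility of $R^{\bar\square}_{\overline{\rho}_{\widetilde{v}},\chi_{\widetilde{v}}}$ and the component structure of $R^{\bar\square}_{\overline{\rho}_{\widetilde{v}},1}$ mod $\varpi_E$ to propagate the support across all components. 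Without this (or a minimality hypothesis you have not imposed), your argument does not reach the point $x_\infty$ corresponding to $\rho$, and the proof does not close.
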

\noindent We make a few remarks on the assumptions.
The assumptions in 1, 2, 3, the first part of 5, and 7(a), 7(b) are standard for automorphy lifting theorems (e.g. see \cite{CHT}, \cite{Tay08}, \cite{Thor}, \cite{Ger19}, \cite{BLGGT},...). The assumption 4 is crucial for this paper, which gives a necessary condition such that $\rho$ appears in the \emph{$\GL_2(\Q_p)$-ordinary family} that we work with (see the discussions below). The assumption that $p$ splits in $F$ is also crucial because we use some results in $p$-adic Langlands program, those that are only known for $\GL_2(\Q_p)$. The assumption 5(a) is a non-critical assumption, which is used for a classicality criterion; when $\rho_{\widetilde{v}}$ is crystalline and generic for all $v|p$ (as in the assumption 5(b)), we apply the classicality result of Breuil-Hellmann-Schraen \cite{BHS2}\cite{BHS3} to remove such  non-critical assumption. The assumption 6 is rather technical, and we make this assumption so that the Galois deformations are easier to study. Finally the assumption 7(c) is to ensure that certain automorphy lifting of $\overline{\rho}$ can appear in our  $\GL_2(\Q_p)$-ordinary family. One can find analogues of these assumptions (except for 5(b) and the generic assumption 6) in \cite[Thm. 5.11]{Ger19} in ordinary case. Note that, since we crucially use $p$-adic Langlands correspondence for $\GL_2(\Q_p)$, any base-change of $F$ that we can use in this paper has to be split at $p$.
\\

\noindent We sketch the proof of the theorem.
The main object that we work with is (a generalization/variation of) the $\GL_2(\Q_p)$-ordinary families considered in \cite{BD1}. We fix a compact open subgroup $U^p$ of $G(\bA_{F^+}^{\infty,p})$, and let $\widehat{S}(U^p,\co_E):=\{f: G(F^+)\backslash G(\bA^{\infty}_{F^+})/U^p \ra \co_E\ |\ \text{$f$ is continuous}\}$. This $\co_E$-module is equipped with a natural action of $\widetilde{\bT}(U^p)\times G(F^+\otimes_{\Q} \Q_p)$ where $\widetilde{\bT}(U^p)$ is a (semi-local) complete commutative $\co_E$-algebra generated by certain Hecke operators outside $p$ acting on $\widehat{S}(U^p,\co_E)$. To $\overline{\rho}$, we can associate a maximal ideal $\fm_{\overline{\rho}}\subset \widetilde{\bT}(U^p)$, and we denote by $\widehat{S}(U^p,\co_E)_{\overline{\rho}}$ (resp. $\widetilde{\bT}(U^p)_{\overline{\rho}}$) the localisation of $\widehat{S}(U^p,\co_E)$ (resp. $\widetilde{\bT}(U^p)$) at $\fm_{\overline{\rho}}$. We have a natural surjection $R_{\overline{\rho}, \cS}\twoheadrightarrow \widetilde{\bT}(U^p)_{\overline{\rho}}$, where $R_{\overline{\rho}, \cS}$ denotes the universal deformation ring of a certain deformation problem $\cS$ of $\overline{\rho}$.
 \\

\noindent Applying Emerton's $P$-ordinary part functor (see footnote \ref{fn2} for $P$), we obtain an admissible  Banach representation $\Ord_P(\widehat{S}(U^p,\co_E)_{\overline{\rho}})$ of $L_P$. We can further decompose $\Ord_P(\widehat{S}(U^p,\co_E)_{\overline{\rho}})$  using the theory of blocks of  \cite{Pas13}, in particular,  we can associate to the block $\fB$ (as in the theorem) a $\widetilde{\bT}(U^p)_{\overline{\rho}}\times L_P$-equivariant direct summand $\Ord_{P}(\widehat{S}(U^p,\co_E)_{\overline{\rho}})_{\fB}$ of $\Ord_P(\widehat{S}(U^p,\co_E)_{\overline{\rho}})$.  The $\widetilde{\bT}(U^p)_{\overline{\rho}}$-action on $\Ord_P(\widehat{S}(U^p,\co_E)_{\overline{\rho}})_{\fB}$  factors through a certain quotient $\widetilde{\bT}(U^p)_{\overline{\rho},\fB}^{P-\ord}$.
\\

\noindent On the Galois side, we let  $R_{\fB}:=\widehat{\otimes}_{v|p} R_{\fB,\widetilde{v}}:= \widehat{\otimes}_{v|p} \big(R_{\tr \overline{\rho}_{\widetilde{v},1}}^{\pss}\widehat{\otimes} R_{\tr \overline{\rho}_{\widetilde{v},2}}^{\pss}\big)$, where $R_{\tr \overline{\rho}_{\widetilde{v},i}}^{\pss}$ denotes the universal deformation ring of the pseudo-character $\tr \overline{\rho}_{\widetilde{v},i}$. Let $R_{\overline{\rho}_{\widetilde{v}}, \cF_{\widetilde{v}}}^{P_{\widetilde{v}}-\ord, \square}$ be the framed universal $P_{\widetilde{v}}$-ordinary deformation ring of $\overline{\rho}_{\widetilde{v}}$ with respect to the $P_{\widetilde{v}}$-filtration $\cF_{\widetilde{v}}$ on $\overline{\rho}_{\widetilde{v}}$ induced by  (\ref{fil00}) (see \S~\ref{Porddef} for details). There is a natural morphism $R_{\fB,\widetilde{v}} \ra R_{\overline{\rho}_{\widetilde{v}},\cF_{\widetilde{v}}}^{P_{\widetilde{v}}-\ord, \square}$.
Adding the local conditions $\{R_{\overline{\rho}_{\widetilde{v}}, \cF_{\widetilde{v}}}^{P_{\widetilde{v}}-\ord, \square}\}_{v|p}$ to the deformation problem $\cS$, we obtain a quotient $R_{\overline{\rho},\cS,\fB}^{P-\ord}$ of $R_{\overline{\rho}, \cS}$. There is a natural morphism $R_{\fB} \ra R_{\overline{\rho}, \cS,\fB}^{P-\ord}$. One can prove that the composition $R_{\overline{\rho}, \cS}\twoheadrightarrow \widetilde{\bT}(U^p)_{\overline{\rho}} \twoheadrightarrow \widetilde{\bT}(U^p)_{\overline{\rho},\fB}^{P-\ord}$ factors through
\begin{equation*}
  R_{\overline{\rho}, \cS,\fB}^{P-\ord} \lra \widetilde{\bT}(U^p)_{\overline{\rho},\fB}^{P-\ord}.
\end{equation*}
This is the ``$R \ra \bT$" map of the paper.
\\

\noindent We then prove a local-global compatibility result on $\Ord_{P}(\widehat{S}(U^p,\co_E)_{\overline{\rho}})_{\fB}$. We use a similar  formulation as in \cite{Pan2}. The first key point is that the  $L_P$-action on $\Ord_{P}(\widehat{S}(U^p,E)_{\overline{\rho}})_{\fB}$ can be parameterized by $R_{\fB}$ using $p$-adic local Langlands correspondence. More precisely, by the theory of Pa{\v{s}}k{\=u}nas, we can associate to the block $\fB$ an $L_P$-representation $\widetilde{P}_{\fB}$ (which is projective in a certain category, see \S~\ref{sec: Pas}), and we have a natural injection (induced by the $p$-adic Langlands correspondence) $R_{\fB} \hookrightarrow \End_{L_P}(\widetilde{P}_{\fB})$. Put
\begin{equation*}\tm(U^p,\fB):=\Hom_{L_P}\big(\widetilde{P}_{\fB}, \Ord_P(\widehat{S}(U^p,\co_E)_{\overline{\rho}})_{\fB}^d\big)\end{equation*}
where $\Ord_P(\widehat{S}(U^p,\co_E)_{\overline{\rho}})_{\fB}^d$ denotes the Schikhof dual of  $\Ord_P(\widehat{S}(U^p,\co_E)_{\overline{\rho}})_{\fB}$ (which lies in the same category as $\widetilde{P}_{\fB}$). The natural action of $R_{\fB}$ on $\widetilde{P}_{\fB}$ induces an $R_{\fB}$-action on $\tm(U^p,\fB)$. One can moreover show that $\tm(U^p,\fB)$ is a finitely generated $R_{\fB}$-module. We remark that this $R_{\fB}$-action is obtained in a purely \emph{local} way, and characterizes the $L_P$-action on  $\Ord_P(\widehat{S}(U^p,\co_E)_{\overline{\rho}})_{\fB}$. On the other hand, $\tm(U^p, \fB)$ inherits from $\Ord_P(\widehat{S}(U^p,\co_E)_{\overline{\rho}})_{\fB}^d$ an action of $\widetilde{\bT}(U^p)_{\overline{\rho},\fB}^{P-\ord}$, hence is equipped with another $R_{\fB}$-action via
\begin{equation*}
 R_{\fB}  \lra  R_{\overline{\rho}, \cS,\fB}^{P-\ord} \lra \widetilde{\bT}(U^p)_{\overline{\rho},\fB}^{P-\ord}.
\end{equation*}
Note that this $R_{\fB}$-action is obtained in a \emph{global} way (since it comes from the global Galois deformation ring).  Then we show that these two $R_{\fB}$-actions on $\tm(U^p,\fB)$ coincide (up to a certain twist, that we ignore in the introduction). In summary, we find ourselves in a similar situation as Hida's ordinary families (see \S~\ref{GL2ord}, \S~\ref{sec: lg} for details): we have a finite morphism $R_{\fB} \ra \widetilde{\bT}(U^p)_{\overline{\rho},\fB}^{P-\ord}$ and a finitely generated $\widetilde{\bT}(U^p)_{\overline{\rho},\fB}^{P-\ord}$-module $\tm(U^p,\fB)$ with nice properties as $R_{\fB}$-module.
%
\\

\noindent We then apply the Taylor-Wiles patching argument (\cite{CEGGPS} \cite{SchoLT}  \cite{Pan2}) to our $\GL_2(\Q_p)$-ordinary families and obtain the following data:
\begin{equation*}
  S_{\infty} \ra R_{\infty} \curvearrowright \tm_1^{\infty}(\fB),
\end{equation*}
where $S_{\infty}$ is a formal power series over $\co_E$, $R_{\infty}$ is a patched global deformation ring, and $\tm_1^{\infty}(\fB)$ is a finitely generated $R_{\infty}$-module, which is flat over $S_{\infty}$. We remark that
\begin{equation*}\dim R_{\infty}=\dim S_{\infty} +\dim (B_p\cap L_P)
 \end{equation*}where $B_p:=\prod_{v|p} B(\Q_p)$. We also have a closed ideal $\fa_1\subset S_{\infty}$ such that $R_{\infty}/\fa_1 \twoheadrightarrow R_{\cS,\overline{\rho}, \fB}^{P-\ord}$, and we have  an $R_{\infty}$-equivariant isomorphism $\tm_1^{\infty}(\fB)/\fa_1\cong \tm(U^p,\fB)$ (where $R_{\infty}$ acts on $\tm(U^p,\fB)$ via the precedent projection).
Using Taylor's Ihara avoidance and arguments on supports of modules, one can show that  $\rho$ appears in the $\GL_2(\Q_p)$-ordinary family, in other words, $\rho$ can be attached to $P$-ordinary $p$-adic automorphic representations. Finally we use the assumption 5 to show that $\rho$ can be attached to classical automorphic representations. Assuming 5(a), the result follows from the  existence of locally algebraic vectors in $\GL_2(\Q_p)$-representations in de Rham case and an adjunction property of the functor $\Ord_P(-)$. For 5(b), we first use $p$-adic local Langlands correspondence for $\GL_2(\Q_p)$ (the result ``trianguline implying finite slope") to show that $\rho$ appears in the eigenvariety, and then deduce the theorem from the classicality result of Breuil-Hellmann-Schraen \cite{BHS2} \cite{BHS3}.
\\

\noindent Finally, under similar assumptions as in Theorem \ref{mainIntro} except the assumption 5, and assuming $\rho$ is automorphic \emph{\`a priori} (which implies $\rho_{\widetilde{v}}$ is de Rham of distinct Hodge-Tate weights for $v|p$), when the Hodge-Tate weights of $\rho_{\widetilde{v},1}$ are not bigger that those of $\rho_{\widetilde{v},2}$ (which is contrary to the assumption 5(a), and is  often referred to as the \emph{critical} case), then the automorphy lifting method allows us to find a \emph{non-classical} point in the $\GL_2(\Q_p)$-ordinary family $\Spf \widetilde{\bT}(U^p)_{\overline{\rho},\fB}^{P-\ord}$ associated to $\rho$. We then deduce from the existence of the non-classical point some results towards Breuil's locally analytic socle conjecture (cf. Theorem \ref{main2}, Remark \ref{soc}). Let us mention that when $\rho_{\widetilde{v}}$ is non-trianguline (i.e. $\rho_{\widetilde{v},1}$ or $\rho_{\widetilde{v},2}$ is not trianguline), these results  provide probably a first known example (to the author's knowledge) on the conjecture in the non-trianguline case.

\subsection*{Acknowledgement}
\noindent This work grows out from discussions with Lue Pan, and I want to thank him for the helpful discussions and for answering my questions. I also want to thank Yongquan Hu for answering my questions during the preparation of this note. The work was supported by  Grant No. 7101502007 and No. 8102600246 from Peking University.
\subsection{Some notations}
\noindent Throughout the paper, $E$ will be a finite extension of $\Q_p$, with $\co_E$ its ring of integers, $\varpi_E$ a uniformizer of $\co_E$, and $k_E:=\co_E/\varpi_E$. Let $\varepsilon: \Gal_{\Q_p} \ra E^{\times}$ denote the cyclotomic character, $\omega: \Gal_{\Q_p} \ra k_E^{\times}$ the modulo $p$ cyclotomic character. We use the convention that the Hodge-Tate weight of $\varepsilon$ is $1$. We normalize local class field theory by sending a uniformizer to a (lift of the) geometric
Frobenius. In this way, we view characters of $\Gal_{\Q_p}$ as characters of $\Q_p^{\times}$ without further
mention.
\\

\noindent For a torsion $\co_E$-module $N$, let $N^{\vee}:=\Hom_{\co_E}(N, E/\co_E)$ be the Pontryagain dual of $N$.  For $M$ a $p$-adically complete torsion free $\co_E$-module (so $M\cong \varprojlim_n M/\varpi_E^n$),  we let
\begin{equation*}
  M^d:=\Hom_{\co_E}(M, \co_E)
\end{equation*}
equipped with the point-wise convergence topology, be the Schikhof dual of $M$. We have
\begin{equation}\label{duala}
  \Hom_{\co_E}(M,\co_E)\cong \varprojlim_n \Hom_{\co_E/\varpi_E^n}(M/\varpi_E^n, \co_E/\varpi_E^n)\cong \varprojlim_n (M/\varpi_E^n)^{\vee},
\end{equation}
where the map $(M/\varpi_E^n)^{\vee}\twoheadrightarrow (M/\varpi_E^{n-1})^{\vee}$ is induced by the injection $M/\varpi_E^{n-1} \xrightarrow{\varpi_E} M/\varpi_E^n$. We also have (e.g. see the proof of \cite[Thm. 1.2]{ST})
\begin{equation}\label{duali}
  M\cong \Hom_{\co_E}^{\cts}(M^d, \co_E)
\end{equation}
where the right hand side is equipped with the compact-open topology.

\section{$P$-ordinary Galois deformations}\label{Porddef}
\noindent In this section, for $P$ a parabolic subgroup of $\GL_n$, we study $P$-ordinary Galois deformations, and show some (standard) properties of $P$-ordinary Galois deformation rings.
\\

\noindent Let $L$ be a finite extension over $\Q_p$. We enlarge $E$ such that $E$ contains all the embeddings of $L$ in $\overline{\Q_p}$.  Let $B$ be the Borel subgroup of $\GL_n$ of upper triangular matrices, and let $P$ be a parabolic subgroup containing $B$ with a Levi subgroup  $L_P$ given by
(where $\sum_{i=1}^k n_i=n$):
\begin{equation}\label{equ: levi}
 \begin{pmatrix}
  \GL_{n_1} & 0 & \cdots & 0 \\
  0 & \GL_{n_2} & \cdots & 0 \\
  \vdots & \vdots & \ddots & 0 \\
  0 & 0 &\cdots & \GL_{n_k}
 \end{pmatrix}.
\end{equation}
Denote by $s_i:=\sum_{j=0}^{i-1} n_j$ where we set $n_0=0$ (hence $s_1=0$).
\\

\noindent Let $(\overline{\rho}, V_{k_E})$ be an $n$-dimensional $P$-ordinary representation of $\Gal_L$ over $k_E$ in the sense of  \cite[Def. 5.1]{BD1}, i.e.  there exists an increasing $\Gal_L$-equivariant filtration
\begin{equation*}
 \cF:\  0=\Fil^0 V_{k_E} \subsetneq \Fil^1 V_{k_E}\subsetneq \cdots \subsetneq \Fil^{k} V_{k_E}=V_{k_E}
\end{equation*}
such that $\dim_{k_E} \gr^i \cF:=\Fil^i V_{k_E}/\Fil^{i-1} V_{k_E}=n_i$. Let  $(\overline{\rho}_i, \gr^i \cF)$ be the $\Gal_L$-representation given by the graded piece. We choose a basis $\{e_1, \cdots, e_n\}$ of $V_{k_E}$ such that $\{e_1,\cdots, e_{s_i}\}$ is a basis of $\Fil^{i-1} V_{k_E}$ for all $i$. Under this basis, $\overline{\rho}$ corresponds to a continuous morphism $\overline{\rho}: \Gal_L \ra P(k_E)$.
\\

\noindent  Let $\Art(\co_E)$ be the category of local artinian $\co_E$-algebras with residue field $k_E$ and $\Def^{\square}_{\overline{\rho}}$ the functor of framed deformations of $\overline{\rho}$, i.e. the functor from $\Art(\co_E)$ to sets which sends $A\in \Art(\co_E)$ to the set $\{\rho_A: \Gal_L \ra \GL_n(A)\ |\ \rho_A \equiv \rho \pmod{\fm_A}\}$. Let $\Def^{P-\ord, \square}_{\overline{\rho}, \cF}$ be the subfunctor of $\Def^{\square}_{\overline{\rho}}$ which sends $A\in \Art(\co_E)$ to the set of $\rho_A\in \Def^{\square}_{\overline{\rho}}(A)$ satisfying that the underlying $A$-module $V_A$ of $\rho_A$ admits an increasing filtration $\cF_A=\Fil^\bullet V_A$ by $\Gal_L$ invariant free $A$ submodules which are direct summands as $A$-modules such that $\Fil^i V_A \cong \Fil^i V_{k_E} \pmod{\fm_A}$. We assume the following hypothesis.
\begin{hypothesis}\label{hypo: gene1}
  Suppose $\Hom_{\Gal_L}(\overline{\rho}_i, \overline{\rho}_j)=0$ for all $i\neq j$.
\end{hypothesis}
\noindent By the same argument as in the proof of \cite[Lem. 5.3]{BD1}, we have:
\begin{lemma}\label{pord0}
  Assume Hypothesis \ref{hypo: gene1} and let $\rho_A \in \Def^{\square}_{\overline{\rho}}$.

\noindent (1) $\rho_A\in \Def^{P-\ord,\square}_{\overline{\rho},\cF}$ if and only if there exists $M\in \GL_n(A)$ such that $M\rho_A M^{-1}$ has image in $P(A)$, and $M\rho_A M^{-1}\equiv \overline{\rho} \pmod{\fm_A}$.

\noindent   (2) Suppose there exist $M_1$, $M_2\in \GL_n(A)$ such that $M_i\rho_A M_i^{-1}$ has image in $P(A)$, and $M_i\rho_A M_i^{-1} \equiv \overline{\rho} \pmod{\fm_A}$, then $M_1M_2^{-1}\in P(A)$.
\end{lemma}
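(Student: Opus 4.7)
My plan is to adapt the Borel-case argument of \cite[Lem. 5.3]{BD1} by first isolating the following key observation: \emph{any $\Gal_L$-equivariant endomorphism of $\overline{\rho}$ preserves the filtration $\cF$ and hence lies in $\fp(k_E)$}. Indeed, given $\varphi\in\End_{\Gal_L}(V_{k_E})$, Hypothesis~\ref{hypo: gene1} forces the composition $\Fil^1 V_{k_E}\xrightarrow{\varphi} V_{k_E}\twoheadrightarrow \gr^j\cF$ to vanish for every $j>1$, so $\varphi(\Fil^1 V_{k_E})\subseteq \Fil^1 V_{k_E}$; an easy induction on $i$ gives $\varphi(\Fil^i V_{k_E})\subseteq \Fil^i V_{k_E}$ for all $i$. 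In particular, the centralizer of $\overline{\rho}$ in $\GL_n(k_E)$ is contained in $P(k_E)$.

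For (1), the ``if'' direction then follows immediately: given such an $M$, its reduction $\overline{M}$ centralizes $\overline{\rho}$ and so lies in $P(k_E)$, so the $\Gal_L$-stable filtration $\Fil^i V_A := M^{-1}(Ae_1\oplus\cdots\oplus Ae_{s_{i+1}})$ is a filtration of $V_A$ by free direct summands reducing mod $\fm_A$ to $\cF$. For the ``only if'' direction, given a filtration $\cF_A$ lifting $\cF$, I successively lift bases of $\gr^i\cF$ to bases of $\Fil^i V_A/\Fil^{i-1} V_A$ to obtain a basis $\{f_1,\ldots,f_n\}$ of $V_A$ adapted to $\cF_A$ with $f_i\equiv e_i\pmod{\fm_A}$; the change-of-basis matrix $M\in\GL_n(A)$ then satisfies $M\equiv 1\pmod{\fm_A}$ and conjugates $\rho_A$ into $P(A)$.

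For (2), I would proceed by induction on the length of $A$. Set $\sigma_i:=M_i\rho_A M_i^{-1}\in P(A)$ and $N:=M_1M_2^{-1}$, so that $N\sigma_2 N^{-1}=\sigma_1$; the base case $A=k_E$ follows from the centralizer statement above. For the inductive step, choose an ideal $I\subset A$ with $\fm_A I=0$, and suppose that $N\bmod I$ lies in $P(A/I)$. By smoothness of $P$, lift to some $N'\in P(A)$ and write $N=N'(1+X)$ with $X\in M_n(I)$. Replacing $\sigma_1$ by $N'^{-1}\sigma_1 N'\in P(A)$ reduces us to $N=1+X$. Using $X^2=0$ and $(\sigma_i-\overline{\rho})X=0=X(\sigma_i-\overline{\rho})$ (all consequences of $\fm_A\cdot I=0$), the relation $(1+X)\sigma_2(1-X)=\sigma_1$ simplifies to
\begin{equation*}
  \sigma_1(g)-\sigma_2(g) \;=\; X\,\overline{\rho}(g) \,-\, \overline{\rho}(g)\,X, \qquad g\in\Gal_L.
\end{equation*}

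To conclude, let $\fg:=M_n$ be endowed with the $\Gal_L$-action via $\Ad\overline{\rho}$; then $\fp\subset\fg$ is a $\Gal_L$-subrepresentation since $\Ad P(\fp)\subseteq\fp$. Both $\sigma_i(g)$ lie in $P(A)$, so the left-hand side of the displayed identity lies in $\fp(A)$; combined with $\fp\cdot\overline{\rho}(g)^{-1}\subset\fp$, this shows that the image $\overline{X}$ of $X$ in $(\fg/\fp)\otimes_{k_E} I$ is fixed by the $\Gal_L$-action. Now $\fg/\fp$ admits a $\Gal_L$-stable filtration whose graded pieces are the spaces $\Hom_{k_E}(\overline{\rho}_i,\overline{\rho}_j)$ for pairs $i>j$; each has trivial $\Gal_L$-invariants by Hypothesis~\ref{hypo: gene1}, so left-exactness of invariants yields $(\fg/\fp)^{\Gal_L}=0$. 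Hence $\overline{X}=0$, i.e.\ $X\in\fp(I)$, so $1+X\in P(A)$ and $N\in P(A)$, completing the induction. The main obstacle I anticipate is the bookkeeping in identifying the graded pieces of $\fg/\fp$ (and in verifying that $\fp$ really is stable under $\Ad\overline{\rho}$) so as to confirm that Hypothesis~\ref{hypo: gene1} is exactly what is needed to kill all their $\Gal_L$-invariants; the rest is formal manipulation.
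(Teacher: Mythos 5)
Your argument is correct and is essentially the one the paper invokes by citing \cite[Lem.~5.3]{BD1}: Hypothesis~\ref{hypo: gene1} forces the centralizer of $\overline{\rho}$ into $P(k_E)$ (giving (1) and the base case of (2)), and the inductive step of (2) over a small extension reduces, via the identity $\sigma_1-\sigma_2=X\overline{\rho}-\overline{\rho}X$, to the vanishing of $H^0(\Gal_L,\Ad\overline{\rho}/\Ad_{\cF}\overline{\rho})$, which follows by d\'evissage from the same hypothesis. The only cosmetic point is that the composite $\Fil^1 V_{k_E}\xrightarrow{\varphi}V_{k_E}\twoheadrightarrow\gr^j\cF$ is not literally defined for $j<k$; one should either run a descending induction starting from the genuine quotient $V_{k_E}\twoheadrightarrow\gr^k\cF$, or take $j$ maximal with $\varphi(\Fil^1 V_{k_E})\not\subseteq\Fil^{j-1}V_{k_E}$.
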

\noindent In particular, if $\rho_A\in \Def^{P-\ord, \square}_{\overline{\rho}, \cF}(A)$, then the associated increasing filtration $\Fil^{\bullet} V_A$ is unique. Recall that $\Def^{\square}_{\overline{\rho}}$ is pro-representable by a complete local noetherian $\co_E$-algebra $R_{\overline{\rho}}^{\square}$ of residue field $k_E$.
\begin{proposition}\label{propPord}
  Assume Hypothesis \ref{hypo: gene1}, the functor $\Def^{P-\ord,\square}_{\overline{\rho},\cF}$ is pro-representable by a complete local noetherian $\co_E$-algebra $R_{\overline{\rho},\cF}^{P-\ord, \square}$, which is a quotient of $R^{\square}_{\overline{\rho}}$.
\end{proposition}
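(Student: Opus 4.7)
The plan is to realize $\Def^{P-\ord,\square}_{\overline{\rho},\cF}$ as a subfunctor of the pro-representable functor $\Def^{\square}_{\overline{\rho}}$ which is closed under quotients and under artinian fiber products, and then to conclude that it is cut out by a quotient $R^{P-\ord, \square}_{\overline{\rho},\cF}$ of $R^{\square}_{\overline{\rho}}$. Since $R^{\square}_{\overline{\rho}}$ is complete local noetherian, any quotient inherits these properties automatically. This is a standard deformation-theoretic argument in the spirit of Mazur.

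First I would check closure under quotients: if $\rho_A \in \Def^{P-\ord,\square}_{\overline{\rho},\cF}(A)$ has lifting filtration $\Fil^{\bullet} V_A$ and $A \twoheadrightarrow A'$ is a surjection in $\Art(\co_E)$, then $\Fil^i V_A \otimes_A A' \subset V_A \otimes_A A'$ remains a direct summand (tensoring a split short exact sequence), is $\Gal_L$-stable by functoriality, and reduces to $\Fil^i V_{k_E}$ modulo $\fm_{A'}$. So $\rho_{A'}$ is again $P$-ordinary with the induced filtration.

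Next I would prove closure under fiber products: suppose $A = A_1 \times_{A_0} A_2$ with $A_j \twoheadrightarrow A_0$ surjections in $\Art(\co_E)$, and $\rho_A \in \Def^{\square}_{\overline{\rho}}(A)$ has the property that both its images $\rho_{A_j}$ lie in $\Def^{P-\ord,\square}_{\overline{\rho},\cF}(A_j)$ with respective (unique) filtrations $\Fil^{\bullet} V_{A_j}$. By Lemma \ref{pord0}(2), the two induced filtrations on $V_{A_0}$ coincide, so they glue via $V_A = V_{A_1} \times_{V_{A_0}} V_{A_2}$ to a $\Gal_L$-stable filtration of $V_A$ by direct summands lifting $\cF$. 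Hence $\rho_A$ is $P$-ordinary. This is the step where Hypothesis \ref{hypo: gene1} is used crucially, through Lemma \ref{pord0}(2).

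Finally, I would set
\begin{equation*}
J := \bigcap \left\{ I \subset R^{\square}_{\overline{\rho}} \,:\, R^{\square}_{\overline{\rho}}/I \text{ is artinian and } \rho^{\univ} \bmod I \text{ is } P\text{-ordinary} \right\}
\end{equation*}
and define $R^{P-\ord,\square}_{\overline{\rho},\cF} := R^{\square}_{\overline{\rho}}/J$. To verify this pro-represents the functor, take any artinian quotient $A = R^{\square}_{\overline{\rho}}/K$ with $J \subset K$: the image of $J$ in $A$ vanishes, so by the descending chain condition in the artinian ring $A$ there exist finitely many ideals $I_1,\ldots,I_N$ from the defining family with $I_1 \cap \cdots \cap I_N \subset K$. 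Iterated closure under fiber products (noting that $I_i + I_j \subset \fm_{R^{\square}_{\overline{\rho}}}$ so each intermediate fiber product lies in $\Art(\co_E)$) places $I_1 \cap \cdots \cap I_N$ in the family, and then closure under quotients shows that $\rho^{\univ} \bmod K$ is $P$-ordinary. The reverse implication is immediate from the definition of $J$. The main obstacle is the fiber-product closure, which rests on the uniqueness of the lifting filtration supplied by Lemma \ref{pord0}(2); once that is in hand, the rest is formal.
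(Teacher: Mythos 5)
Your central step --- closure of $\Def^{P-\ord,\square}_{\overline{\rho},\cF}$ under fiber products $A\times_C B$, obtained by matching the two lifting filtrations over $C$ via their uniqueness (Lemma \ref{pord0}(2)) and then gluing --- is exactly the paper's argument; the paper carries out the gluing at the level of bases, using Lemma \ref{pord0}(1) to produce conjugating matrices $M_A$, $M_B$, correcting $M_B$ by an element of $P(C)$ lifted to $P(B)$, and lifting the resulting pair to $\GL_n(A\times_C B)$. That matrix manipulation is precisely the detail you elide when you assert that the fiber product of the $\Fil^i$'s is again a filtration by free direct summands, so treat it as a needed (if routine) supplement to your gluing step. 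The two write-ups part ways only at the formal conclusion: the paper invokes Schlessinger's criterion, whose hypotheses involve nothing beyond fiber products along small surjections, so the gluing step finishes the proof; you instead build the representing ring explicitly as $R^{\square}_{\overline{\rho}}/J$ with $J$ an intersection of ideals, and your verification of $\Def^{P-\ord,\square}_{\overline{\rho},\cF}(A)\cong\Hom(R^{\square}_{\overline{\rho}}/J,A)$ only treats surjective homomorphisms $R^{\square}_{\overline{\rho}}\twoheadrightarrow A$. For a non-surjective $\phi$ with $\rho_\phi$ $P$-ordinary you must still show $J\subseteq \Ker\phi$, which amounts to descending the $P$-ordinary condition along the injection $R^{\square}_{\overline{\rho}}/\Ker\phi\hooklongrightarrow A$; this closure under subobjects is a separate axiom in the usual formalism of deformation problems (cf. \cite[Def.~2.2.2]{CHT}) and is not addressed in your last paragraph. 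It does hold here --- for instance it follows a posteriori once pro-representability by a quotient of $R^{\square}_{\overline{\rho}}$ is established via Schlessinger --- but as a self-contained argument your final step is incomplete without it.
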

\begin{proof}By Schlessinger's criterion, the fact that $\Def^{P-\ord,\square}_{\overline{\rho},\cF}$ is a subfunctor of $\Def^{\square}_{\overline{\rho}}$ which is pro-representable, it suffices to show that given morphisms $f_1: A\ra C$, $f_2: B\ra C$ in $\Art(\co_E)$ with $f_2$ surjective and small, the induced maps
  \begin{equation*}
    \Def^{P-\ord,\square}_{\overline{\rho},\cF}(A\times_C B) \lra \Def^{P-\ord,\square}_{\overline{\rho},\cF}(A)\times_{\Def^{P-\ord,\square}_{\overline{\rho},\cF}(C)} \Def^{P-\ord,\square}_{\overline{\rho},\cF}(B)
  \end{equation*}
  is surjective. Let $(\rho_A, \rho_B)\in  \Def^{P-\ord,\square}_{\overline{\rho},\cF}(A)\times_{\Def^{P-\ord,\square}_{\overline{\rho},\cF}(C)} \Def^{P-\ord,\square}_{\overline{\rho},\cF}(B)$ and let $\tilde{\rho}$ be a lifting of $(\rho_A,\rho_B)$ in  $\Def^{\square}_{\overline{\rho}}(A\times_C B)$. By Lemma \ref{pord0} (1), there exists $M_A\in \GL_n(A)$ (resp. $M_B$)  such that $M_A\rho_AM_A^{-1}$ (resp. $M_B\rho_B M_B^{-1}$) has image in $P(A)$ (resp. in $P(B)$) and that $M_A\rho_A M_1^{-1}\equiv \overline{\rho}\pmod{\fm_A}$ (resp. $M_B \rho_B M_B^{-1}\equiv \overline{\rho} \pmod{\fm_B}$). Denote by $\overline{M_A}$ and $\overline{M_B}$ the image of $M_A$, $M_B$ in $\GL_n(C)$ respectively. By Lemma \ref{pord0} (2), there exists $N_C\in P(C)$ such that $\overline{M_B}=N_C\overline{M_A}$. Let $N_B\in P(B)$ be a lifting of $N_C$. Then we see $N_BM_B \rho_B M_B^{-1} N_B^{-1}$ has image in $P(B)$. Let $\tilde{M}$ be a lifting of $(M_A, N_B M_B)\in \GL_n(A\times_C B)$. It is easy to check that $\tilde{M}\tilde{\rho}\tilde{M}^{-1}$ has image in $P(A\times_C B)\cong P(A)\times_{P(C)} P(B)$. The proposition follows.
\end{proof}
\noindent Let $\Hom_{\cF}(V_{k_E}, V_{k_E})$ be the $k_E$-vector subspace of $\Hom_{k_E}(V_{k_E}, V_{k_E})$ consisting of morphisms of filtered $k_E$-vector spaces, i.e. $f: V_{k_E} \ra V_{k_E}$ lies in $\Hom_{\cF}(V_{k_E}, V_{k_E})$ if and only if $f|_{\Fil^i V_{k_E}} \subseteq \Fil^i V_{k_E}$ for all $i$. We have the following easy lemma.
\begin{lemma}\label{Pord: dim1}
  $\dim_{k_E} \Hom_{\cF}(V_{k_E}, V_{k_E})=\sum_{i=1}^k (n_i(n-s_i))$.
\end{lemma}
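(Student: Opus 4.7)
The plan is to use the chosen adapted basis $\{e_1,\ldots,e_n\}$ of $V_{k_E}$ to identify $\Hom_{\cF}(V_{k_E},V_{k_E})$ with a concrete space of matrices and then count dimensions block by block. Recall that $\{e_1,\ldots,e_{s_i}\}$ is a $k_E$-basis of $\Fil^{i-1} V_{k_E}$, so $\Fil^i V_{k_E}$ has basis $\{e_1,\ldots,e_{s_{i+1}}\}$ for $i=1,\ldots,k$, where $s_{k+1}=n$.

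First I would observe that a $k_E$-linear endomorphism $f$ of $V_{k_E}$ lies in $\Hom_{\cF}(V_{k_E},V_{k_E})$ if and only if, written in the basis $\{e_1,\ldots,e_n\}$, the matrix of $f$ is block upper triangular with respect to the partition $n=n_1+n_2+\cdots+n_k$. Indeed, the condition $f(\Fil^i V_{k_E})\subseteq \Fil^i V_{k_E}$ for every $i$ translates into the requirement that $f(e_\ell)\in \mathrm{span}(e_1,\ldots,e_{s_{i+1}})$ whenever $\ell\le s_{i+1}$. This is precisely the condition that the matrix of $f$ lies in the block parabolic subalgebra $\mathfrak p \subset \mathfrak{gl}_n(k_E)$ of shape dual to (\ref{equ: levi}).

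I would then compute the dimension of this space directly. For each $i\in\{1,\ldots,k\}$, the $i$-th block row consists of the rows indexed by $\{s_i+1,\ldots,s_i+n_i\}$; the block upper triangularity condition allows arbitrary entries in columns indexed by $\{s_i+1,\ldots,n\}$ and forces zeros in columns $\{1,\ldots,s_i\}$. This gives $n_i \cdot (n-s_i)$ free parameters in the $i$-th block row. Summing over $i$ yields
\begin{equation*}
\dim_{k_E} \Hom_{\cF}(V_{k_E},V_{k_E}) = \sum_{i=1}^{k} n_i(n-s_i),
\end{equation*}
which is the desired formula. There is no real obstacle: this is a pure linear algebra computation and no hypothesis beyond the existence of a filtration-adapted basis is needed.
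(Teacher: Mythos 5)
Your argument is correct and matches the paper's proof, which identifies $\Hom_{\cF}(V_{k_E},V_{k_E})$ with the Lie algebra $\fp(k_E)$ of block upper triangular matrices via the adapted basis and reads off the dimension. You have simply written out the block-row count $\sum_{i=1}^k n_i(n-s_i)$ that the paper leaves implicit.
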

\begin{proof}
Using the basis $\{e_1,\cdots, e_n\}$ of $V_{k_E}$, we identify the $k_E$-vector space $\Hom_{k_E}(V_{k_E}, V_{k_E})$ (resp. $\Hom_{\cF}(V_{k_E}, V_{k_E})$) with $M_n(k_E)$ (resp. $\fp(k_E)$) (where $\fp$ denotes the Lie algebra of $P$). The lemma follows.
\end{proof}
\noindent The $k_E$-vector space $\Hom_{k_E}(V_{k_E}, V_{k_E})$ is equipped with a natural $\Gal_L$-action given by
\begin{equation}\label{equ: adj}
(gf)(v)=gf(g^{-1}v),\end{equation} and we denote by $\Ad \overline{\rho}$ the corresponding representation. Since the filtration $\cF$ on $V_{k_E}$ is $\Gal_L$-equivariant, one easily check $\Hom_{\cF}(V_{k_E}, V_{k_E})$ is $\Gal_L$-invariant. We denote by $\Ad_{\cF}\overline{\rho}$ the corresponding $\Gal_L$-representation (which is  a subrepresentation of $\Ad \overline{\rho}$).
\begin{proposition}
Assume Hypothesis \ref{hypo: gene1},  we have a natural isomorphism of $k_E$ vector-spaces
  \begin{equation*}
 \Def^{P-\ord,\square}_{\overline{\rho},\cF}(k_E[\epsilon]/\epsilon^2) \cong B^1(\Gal_L, \Ad_{\cF} \overline{\rho}) + Z^1(\Gal_L, \Ad \overline{\rho}),
  \end{equation*}
  where we use the standard notation with $B^1$ for the $1$-coboundary and $Z^1$ for the $1$-cocycle.
\end{proposition}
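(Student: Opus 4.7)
The plan is to use the standard bijection
\begin{equation*}
  \Def^{\square}_{\overline{\rho}}(k_E[\epsilon]/\epsilon^2)\xrightarrow{\sim}Z^1(\Gal_L,\Ad\overline{\rho}),\qquad \rho_A(g)=(1+\epsilon c(g))\overline{\rho}(g)\longleftrightarrow c,
\end{equation*}
and to rewrite the $P$-ordinary condition of Lemma \ref{pord0}(1) as a linear condition on $c$. Concretely, I would identify the $P$-ordinary tangent space with $Z^1(\Gal_L,\Ad_\cF\overline{\rho})+B^1(\Gal_L,\Ad\overline{\rho})$, which I take as the intended reading of the right-hand side of the statement.

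Given $\rho_A$, by Lemma \ref{pord0}(1) it is $P$-ordinary iff there exists $M\in\GL_n(k_E[\epsilon]/\epsilon^2)$ with $M\rho_A M^{-1}\in P(k_E[\epsilon]/\epsilon^2)$ and $M\rho_A M^{-1}\equiv\overline{\rho}\pmod{\epsilon}$. Writing $M=M_0(1+\epsilon N)$, the mod-$\epsilon$ condition forces $M_0\overline{\rho}M_0^{-1}=\overline{\rho}$; applying Lemma \ref{pord0}(2) to $\rho_A=\overline{\rho}$ with $M_1=M_0$, $M_2=1$ then yields $M_0\in P(k_E)$. Since conjugation by $P(k_E)$ preserves $P(k_E[\epsilon]/\epsilon^2)$, I may replace $M$ by $M_0^{-1}M$, so WLOG $M=1+\epsilon N$ for some $N\in M_n(k_E)=\Ad\overline{\rho}$.

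A direct first-order computation then gives
\begin{equation*}
  M\rho_A(g)M^{-1}=\bigl(1+\epsilon[c(g)-\partial N(g)]\bigr)\overline{\rho}(g),\qquad \partial N(g):=\Ad\overline{\rho}(g)(N)-N.
\end{equation*}
Hence $M\rho_A M^{-1}\in P(k_E[\epsilon]/\epsilon^2)$ iff $(c-\partial N)(g)\in\fp(k_E)=\Ad_\cF\overline{\rho}$ for every $g\in\Gal_L$. Since $c$ and $\partial N$ are both cocycles for $\Ad\overline{\rho}$, their difference is automatically a cocycle, so this amounts to $c-\partial N\in Z^1(\Gal_L,\Ad_\cF\overline{\rho})$; equivalently, $c$ is a cocycle with values in $\Ad_\cF\overline{\rho}$ up to a coboundary in $\Ad\overline{\rho}$, yielding the asserted identification.

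The one substantive point is the reduction to $M\equiv 1\pmod\epsilon$, for which I rely on Lemma \ref{pord0}(2) (whose proof is where Hypothesis \ref{hypo: gene1} enters). Once that is in place, the rest is routine $\epsilon^2=0$ matrix bookkeeping, and the naturality of the resulting isomorphism is clear from the construction.
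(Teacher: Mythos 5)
Your proof is correct and follows essentially the same route as the paper's: use Lemma \ref{pord0} to reduce the conjugating matrix to the form $U(1+A\epsilon)$ with $U\in P(k_E)$ (your $M_0(1+\epsilon N)$), and then carry out the first-order computation identifying the condition as $c-\partial N$ being valued in $\fp(k_E)=\Ad_{\cF}\overline{\rho}$. Your reading of the right-hand side as $Z^1(\Gal_L,\Ad_{\cF}\overline{\rho})+B^1(\Gal_L,\Ad\overline{\rho})$ is indeed the intended one --- as literally written the sum would collapse to $Z^1(\Gal_L,\Ad\overline{\rho})$, and the corrected reading is also what the dimension count in Corollary \ref{Porddim0} requires.
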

\begin{proof}
  Let $\tilde{\rho}\in  \Def^{\square}_{\overline{\rho}}(k_E[\epsilon]/\epsilon^2)$, and $c\in B^1(\Gal_L, \Ad \overline{\rho})$ be the associated $1$-th coboundary, i.e.  $\tilde{\rho}(g)=\overline{\rho}(g)(1+c(g)\epsilon)$ for $g\in \Gal_L$. It is sufficient to show $\tilde{\rho}\in \Def^{P-\ord,\square}_{\overline{\rho},\cF}(k_E[\epsilon]/\epsilon^2)$ if and only if $c \in B^1(\Gal_L, \Ad_{\cF} \overline{\rho}) + Z^1(\Gal, \Ad \overline{\rho})$.

\noindent Suppose $\tilde{\rho}\in \Def^{P-\ord,\square}_{\overline{\rho},\cF}(k_E[\epsilon]/\epsilon^2)$. By Lemma \ref{pord0}, there exists $M \in \GL_n(k_E[\epsilon]/\epsilon^2)$ such that $M\tilde{\rho}M^{-1}$ has image in $P(k_E[\epsilon]/\epsilon^2)$, $M\tilde{\rho}M^{-1}\equiv \overline{\rho} \pmod{\epsilon}$ and $M$ modulo $\epsilon$ lies in $P(k_E)$. There exist then $U\in P(k_E)$ and $A\in M_n(k_E)$ such that $M=U(1+A\epsilon)$. For any $g\in \Gal_L$, using
$(1+A\epsilon) \tilde{\rho}(g) (1-A\epsilon) \in P(k_E[\epsilon]/\epsilon^2)$,
  we deduce
  \begin{equation}\label{equ: cg}c(g)+\overline{\rho}(g)^{-1} A \overline{\rho}(g) - A \in \fp(k_E),\end{equation}
  hence  $c(g)\in B^1(\Gal_L, \Ad_{\cF} \overline{\rho}) + Z^1(\Gal, \Ad \overline{\rho})$.

\noindent Conversely, if $c(g)\in B^1(\Gal_L, \Ad_{\cF} \overline{\rho}) + Z^1(\Gal, \Ad \overline{\rho})$, there exists $A\in M_n(k_E)$ such that (\ref{equ: cg}) holds, and it is easy to check $(1+A\epsilon)\tilde{\rho}(1-A\epsilon)$ has image in $P(k_E[\epsilon]/\epsilon^2)$ and is equal to $\overline{\rho}$ modulo $\epsilon$. This concludes the proof.
\end{proof}

\begin{lemma}\label{Pordsm}
Assume Hypothesis \ref{hypo: gene1},   if $H^2(\Gal_L, \Ad_{\cF} \overline{\rho})=0$, then $R_{\overline{\rho}, \cF}^{P-\ord, \square}$ is formally smooth over $\co_E$.
\end{lemma}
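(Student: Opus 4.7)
The strategy is the standard infinitesimal lifting criterion. To show that $R_{\overline{\rho},\cF}^{P-\ord,\square}$ is formally smooth over $\co_E$, it suffices by Proposition \ref{propPord} to check that for every \emph{small} surjection $B\twoheadrightarrow C$ in $\Art(\co_E)$ (kernel $I$ satisfying $\fm_B\cdot I=0$, which may be reduced to the principal case $I\cong k_E$ by filtration), every $\rho_C\in \Def^{P-\ord,\square}_{\overline{\rho},\cF}(C)$ admits a lift to $\Def^{P-\ord,\square}_{\overline{\rho},\cF}(B)$. Since every surjection in $\Art(\co_E)$ factors as a composition of small surjections, this is enough.

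Given such $\rho_C$, I first apply Lemma \ref{pord0}(1) to obtain $M_C\in \GL_n(C)$ with $\rho_C':=M_C\rho_C M_C^{-1}$ taking values in $P(C)$ and satisfying $\rho_C'\equiv \overline{\rho}\pmod{\fm_C}$. The problem then reduces to lifting the $P$-valued homomorphism $\rho_C'$ to a continuous $\rho_B':\Gal_L\ra P(B)$: once such $\rho_B'$ is produced, I pick any lift $M_B\in \GL_n(B)$ of $M_C$ (available from the surjection $\GL_n(B)\twoheadrightarrow \GL_n(C)$) and set $\rho_B:=M_B^{-1}\rho_B' M_B$. A direct check using $I\subset \fm_B$ shows $\rho_B\bmod I=\rho_C$ and $\rho_B\bmod \fm_B=\overline{\rho}$, so $\rho_B\in \Def^{\square}_{\overline{\rho}}(B)$; and since $M_B\rho_BM_B^{-1}=\rho_B'$ takes values in $P(B)$, Lemma \ref{pord0}(1) places $\rho_B$ in $\Def^{P-\ord,\square}_{\overline{\rho},\cF}(B)$.

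The lifting of $\rho_C'$ to $P(B)$ is a standard obstruction computation for group extensions. Because $P$ is a smooth affine $\co_E$-group scheme, $P(B)\twoheadrightarrow P(C)$ is surjective with kernel $\{1+x : x\in \fp(k_E)\otimes_{k_E}I\}\cong \fp(k_E)$, and $\Gal_L$ acts on this kernel by conjugation through $\rho_C'$, which coincides (since $\fm_C$ acts trivially) with the adjoint action via $\overline{\rho}$. Via the identification of Lemma \ref{Pord: dim1}, this $\Gal_L$-module is exactly $\Ad_{\cF}\overline{\rho}$. Choosing any continuous set-theoretic lift $\widetilde{\rho}:\Gal_L\ra P(B)$ of $\rho_C'$ (which exists since $\Gal_L$ is profinite and the kernel is a $k_E$-vector space), the failure of $\widetilde{\rho}$ to be a homomorphism is a continuous $2$-cocycle whose class in $H^2(\Gal_L,\Ad_{\cF}\overline{\rho})$ is the obstruction to modifying $\widetilde{\rho}$ by a $1$-cochain valued in $\fp(k_E)$ to produce the desired $\rho_B'$. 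The assumption $H^2(\Gal_L,\Ad_{\cF}\overline{\rho})=0$ kills this class.

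The only real subtlety is pinning down that the obstruction lives in $H^2(\Gal_L,\Ad_{\cF}\overline{\rho})$ rather than in the larger (and typically nonvanishing) $H^2(\Gal_L,\Ad \overline{\rho})$. This is precisely because the lift is required to land in the parabolic $P$, so the infinitesimal kernel of $P(B)\twoheadrightarrow P(C)$ is the Lie algebra $\fp$ (not $\gl_n$), which under Lemma \ref{Pord: dim1} matches $\Ad_{\cF}\overline{\rho}$ as a $\Gal_L$-module. Once this identification is set up, the rest is routine deformation theory.
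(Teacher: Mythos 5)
Your proof is correct and follows essentially the same route as the paper's: conjugate the given deformation into $P$, take a set-theoretic lift valued in $P(B)$, and observe that the obstruction cocycle lands in $H^2(\Gal_L,\Ad_{\cF}\overline{\rho})$ precisely because the kernel of $P(B)\twoheadrightarrow P(C)$ is $\fp(k_E)\otimes I$. The only difference is that you spell out the step of lifting the conjugating matrix $M_C$ to $M_B$ and conjugating back, which the paper leaves implicit.
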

\begin{proof}
Let $A\twoheadrightarrow A/I$ be a small extension (i.e. $I=(\epsilon)$ with $\epsilon \fm_A=0$). It is sufficient to show that the natural map $\Def^{P-\ord,\square}_{\overline{\rho},\cF}(A) \ra\Def^{P-\ord,\square}_{\overline{\rho},\cF}(A/I)$ is surjective. Let $\rho_{A/I} \in \Def^{P-\ord,\square}_{\overline{\rho},\cF}(A/I)$, replacing $\rho_{A/I}$ be a certain conjugate of $\rho_{A/I}$, we assume $\rho_{A/I}$ has image in $P(A/I)$, and it is sufficient to show there exists $\rho_{A}: \Gal_L \ra P(A)$ such that $\rho_A\equiv \rho_{A/I} \pmod{\epsilon}$. Let $\rho_A: \Gal_L \ra P(A)$ be a set theoretic lift of $\rho_{A/I}$. By standard arguments in Galois deformation theory, the obstruction for $\rho_A$ being a group homomorphism corresponds to an element $c\in H^2(\Gal_L, \Ad_{\cF} \overline{\rho})$ given by $\rho_A(g_1, g_2)\rho_A(g_2)^{-1} \rho_A(g_1)^{-1}=1+c(g_1,g_2)\epsilon$ for $g_1$, $g_2\in \Gal_L$. Since $H^2(\Gal_L, \Ad_{\cF} \overline{\rho})=0$, the existence of a homomorphism $\rho_A$ follows, from which we deduce the lemma.
\end{proof}
\noindent For $1\leq i<j \leq k$, we denote by $\overline{\rho}_i^j:=\Fil^{j} \overline{\rho} /\Fil^{i-1} \overline{\rho}$.
\begin{lemma}\label{lem: H2van}
  Suppose that for any $i$, $\Hom_{\Gal_L}(\overline{\rho}_1^{i}, \overline{\rho}_i \otimes_{k_E} \omega)=0$, then $H^2(\Gal_L, \Ad_{\cF} \overline{\rho})=0$.
\end{lemma}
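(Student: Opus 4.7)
The plan is to reduce the $H^2$-vanishing, via Tate local duality, to the vanishing of a $\Hom$-group, and then to devissage along a $\Gal_L$-stable filtration of $\Ad_{\cF}\overline{\rho}$ whose graded pieces are arranged to match the hypothesis term-by-term. Concretely, Tate local duality for the finite $k_E[\Gal_L]$-module $\Ad_{\cF}\overline{\rho}$ gives
\[
H^2(\Gal_L, \Ad_{\cF}\overline{\rho}) \cong \Hom_{\Gal_L}\bigl(\Ad_{\cF}\overline{\rho}, \omega\bigr)^{\vee},
\]
so it suffices to show $\Hom_{\Gal_L}(\Ad_{\cF}\overline{\rho}, \omega)=0$.

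For this I would introduce the descending $\Gal_L$-stable filtration $\{G_i\}_{0\le i \le k}$ of $\Ad_{\cF}\overline{\rho}$ defined by
\[
G_i := \bigl\{ f \in \Ad_{\cF}\overline{\rho} : f|_{\Fil^i V_{k_E}} = 0 \bigr\},
\]
with $G_0 = \Ad_{\cF}\overline{\rho}$ and $G_k = 0$. Each $G_i$ is $\Gal_L$-stable because $\Fil^i V_{k_E}$ is. For $f \in G_{i-1}$, the fact that $f$ vanishes on $\Fil^{i-1} V_{k_E}$ together with $f(\Fil^i V_{k_E}) \subseteq \Fil^i V_{k_E}$ (from $f\in\Ad_{\cF}\overline{\rho}$) means that $f|_{\Fil^i V_{k_E}}$ factors through $\overline{\rho}_i$ and lands in $\overline{\rho}_1^i = \Fil^i V_{k_E}$. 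The resulting $\Gal_L$-equivariant map $G_{i-1} \to \Hom_{k_E}(\overline{\rho}_i, \overline{\rho}_1^i)$ has kernel $G_i$, and I would check that it is surjective by extending any $k_E$-linear map $\overline{\rho}_i \to \overline{\rho}_1^i$ to an element of $\Ad_{\cF}\overline{\rho}$ that vanishes on a chosen $k_E$-linear complement of $\Fil^i V_{k_E}$ in $V_{k_E}$. This produces the key identification
\[
G_{i-1}/G_i \;\cong\; \Hom_{k_E}\bigl(\overline{\rho}_i, \overline{\rho}_1^i\bigr).
\]

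The conclusion then follows from the tensor-hom adjunction
\[
\Hom_{\Gal_L}\bigl(\Hom_{k_E}(\overline{\rho}_i, \overline{\rho}_1^i), \omega\bigr) \;\cong\; \Hom_{\Gal_L}\bigl(\overline{\rho}_1^i, \overline{\rho}_i \otimes_{k_E} \omega\bigr),
\]
which vanishes by assumption, combined with a descending induction on $i$ using left-exactness of $\Hom_{\Gal_L}(-,\omega)$ applied to $0 \to G_i \to G_{i-1} \to G_{i-1}/G_i \to 0$. I do not anticipate any real obstacle; the one point requiring care is the choice of filtration $G_\bullet$, since more familiar filtrations (e.g.\ by the nilpotent radical of $\fp$, i.e.\ by how much $f$ strictly decreases $\cF$) produce graded pieces of the shape $\Hom_{k_E}(\overline{\rho}_b,\overline{\rho}_a)$ with $a\le b$, whose required $\Hom(-,\omega)$-vanishing is strictly stronger than what the hypothesis supplies. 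The filtration $G_\bullet$ is designed so that, after adjunction, its graded pieces reproduce exactly the hypothesized vanishing.
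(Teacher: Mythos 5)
Your proof is correct and is essentially the paper's argument: the paper peels off the subobject $\Hom_{k_E}(\overline{\rho}_k,\overline{\rho})$ (your $G_{k-1}$) with quotient $\Hom_{\cF}(\Fil^{k-1}\overline{\rho},\Fil^{k-1}\overline{\rho})$ and inducts, which when unwound is exactly your filtration $G_\bullet$ with graded pieces $\Hom_{k_E}(\overline{\rho}_i,\overline{\rho}_1^i)$, and it likewise identifies $H^2$ of each piece with $\Hom_{\Gal_L}(\overline{\rho}_1^i,\overline{\rho}_i\otimes_{k_E}\omega)^{\vee}$ via local Tate duality. Your closing remark correctly pinpoints why the hypothesis is stated with $\overline{\rho}_1^i$ rather than with individual graded pieces.
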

\begin{proof}
  We have a natural $\Gal_L$-equivariant exact sequence
  \begin{equation*}
    0 \lra \Hom_{k_E}(\overline{\rho}_k, \overline{\rho}) \lra \Hom_{\cF}(\overline{\rho}, \overline{\rho}) \lra \Hom_{\cF}(\Fil^{k-1} \overline{\rho}, \Fil^{k-1} \overline{\rho}) \lra 0,
  \end{equation*}
  where $\Fil^{k-1} \overline{\rho}$ is equipped with the induced filtration. By assumption, $\Ext^2_{\Gal_L}(\overline{\rho}_k, \overline{\rho})=0$. The lemma follows then by an easy d\'evissage/induction argument.
 \end{proof}
\begin{corollary}\label{Porddim0}
Assume Hypothesis \ref{hypo: gene1}, and keep the assumption in Lemma \ref{lem: H2van}. Then $R_{\overline{\rho}, \cF}^{P-\ord, \square}$ is formally smooth of relative dimension $n^2+[L:\Q_p]\sum_{i=1}^k (n_i(n-s_i))$ over $\co_E$.
\end{corollary}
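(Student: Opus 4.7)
The formal smoothness part is immediate: Lemma \ref{lem: H2van} gives $H^2(\Gal_L, \Ad_\cF \overline{\rho}) = 0$ under the stated hypotheses, and then Lemma \ref{Pordsm} yields that $R^{P-\ord, \square}_{\overline{\rho}, \cF}$ is formally smooth over $\co_E$. The relative dimension therefore equals $\dim_{k_E} T$, where $T := \Def^{P-\ord, \square}_{\overline{\rho}, \cF}(k_E[\epsilon]/\epsilon^2)$, and the remaining task is to compute this tangent-space dimension.

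The plan is as follows. By the preceding proposition, $T$ identifies with the preimage of $B^1(\Gal_L, Q)$ in $Z^1(\Gal_L, \Ad \overline{\rho})$ under the natural map $\phi : Z^1(\Gal_L, \Ad \overline{\rho}) \to Z^1(\Gal_L, Q)$, where $Q := \Ad \overline{\rho}/\Ad_\cF \overline{\rho}$. The kernel of $\phi$ is $Z^1(\Gal_L, \Ad_\cF \overline{\rho})$, and $B^1(\Gal_L, Q)$ lies in the image of $\phi$ (any lift of a $0$-cochain from $Q$ to $\Ad \overline{\rho}$ produces a coboundary in $\Ad \overline{\rho}$ whose $\phi$-image is the given one in $Q$). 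Hence
\begin{equation*}
\dim_{k_E} T = \dim_{k_E} Z^1(\Gal_L, \Ad_\cF \overline{\rho}) + \dim_{k_E} B^1(\Gal_L, Q).
\end{equation*}

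The main (and essentially only) subtle step is the vanishing $H^0(\Gal_L, Q) = 0$, after which $\dim_{k_E} B^1(\Gal_L, Q) = \dim_{k_E} Q = n^2 - \dim_{k_E} \Ad_\cF \overline{\rho}$. For this I would use that $\cF$ induces a $\Gal_L$-equivariant filtration on $Q$ whose graded pieces are the Galois modules $\Hom_{k_E}(\overline{\rho}_j, \overline{\rho}_i)$ for $i > j$; by Hypothesis \ref{hypo: gene1}, each such piece has trivial $\Gal_L$-invariants, so a standard dévissage gives $H^0(\Gal_L, Q) = 0$.

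Finally, the vanishing $H^2(\Gal_L, \Ad_\cF \overline{\rho}) = 0$ together with the local Euler-Poincaré formula gives $h^1(\Ad_\cF \overline{\rho}) - h^0(\Ad_\cF \overline{\rho}) = [L:\Q_p] \dim_{k_E} \Ad_\cF \overline{\rho}$, so $\dim_{k_E} Z^1(\Gal_L, \Ad_\cF \overline{\rho}) = \dim_{k_E} \Ad_\cF \overline{\rho} + (h^1 - h^0) = (1 + [L:\Q_p])\dim_{k_E} \Ad_\cF \overline{\rho}$. Summing and invoking Lemma \ref{Pord: dim1} to identify $\dim_{k_E} \Ad_\cF \overline{\rho} = \sum_{i=1}^k n_i(n - s_i)$ yields $\dim_{k_E} T = n^2 + [L:\Q_p] \sum_{i=1}^k n_i(n - s_i)$, as required.
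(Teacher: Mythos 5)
Your argument is correct and takes essentially the same route as the paper: both reduce the dimension count to the d\'evissage statement that $H^0(\Gal_L, Q)=0$ for $Q:=\Ad\overline{\rho}/\Ad_{\cF}\overline{\rho}$ (supplied by Hypothesis \ref{hypo: gene1}) together with the local Euler characteristic formula and the vanishing of $H^2(\Gal_L,\Ad_{\cF}\overline{\rho})$, the only difference being that you organize the linear algebra via rank-nullity for $Z^1(\Gal_L,\Ad\overline{\rho})\to Z^1(\Gal_L,Q)$ where the paper uses the formula for the dimension of a sum of subspaces. Your identification of the tangent space as $\phi^{-1}(B^1(\Gal_L,Q))=Z^1(\Gal_L,\Ad_{\cF}\overline{\rho})+B^1(\Gal_L,\Ad\overline{\rho})$ is moreover the correct reading of the preceding proposition, whose displayed formula has its $B^1$ and $Z^1$ transposed (as literally written it would equal all of $Z^1(\Gal_L,\Ad\overline{\rho})$), but whose proof establishes exactly the cocycle condition you use.
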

\begin{proof}
It is not difficult to see that  $\Hom_{k_E}(\overline{\rho}, \overline{\rho})/\Hom_{\cF}(\overline{\rho}, \overline{\rho})$ is isomorphic (as a $\Gal_L$-representation) to a successive extension of $\Hom_{k_E}(\overline{\rho}_i, \overline{\rho}_j)$ with $i\neq j$. By Hypothesis \ref{hypo: gene1} and d\'evissage, we deduce that $H^0(\Gal_L, \Ad_{\cF} \overline{\rho}) \xrightarrow{\sim} H^0(\Gal_L, \Ad \overline{\rho})$ and
\begin{equation*}
  H^1(\Gal_L, \Ad_{\cF} \overline{\rho}) \hooklongrightarrow H^1(\Gal_L, \Ad \overline{\rho}).
\end{equation*}
Consequently, we deduce $Z^1(\Gal_L, \Ad \overline{\rho}) \cap B^1(\Gal_L, \Ad_{\cF}\overline{\rho})=Z^1(\Gal_L, \Ad_{\cF} \overline{\rho})$. Hence
\begin{multline*}
  \dim_{k_E}(B^1(\Gal_L, \Ad_{\cF} \overline{\rho}) + Z^1(\Gal_L, \Ad \overline{\rho}))\\=\dim_{k_E} H^1(\Gal_L, \Ad_{\cF} \overline{\rho}) + \dim_{k_E} Z^1(\Gal_L, \Ad \overline{\rho})
\\  =\dim_{k_E} H^1(\Gal_L,\Ad_{\cF} \overline{\rho})+n^2-\dim_{k_E} H^0(\Gal_L, \Ad \overline{\rho})
\\  =n^2+[L:\Q_p]\sum_{i=1}^k (n_i(n-s_i)),
\end{multline*}
where the last equation follows from the Euler characteristic formula, Lemma \ref{Pord: dim1} and Lemma \ref{lem: H2van}. Together with Lemma \ref{Pordsm}, the corollary follows.
\end{proof}
\section{$P$-ordinary automorphic representations}\label{ordAut}
\noindent In this section, we recall (and generalize) some results of \cite[\S~6]{BD1} on $P$-ordinary automorphic representations.
\subsection{Global setup}\label{prelprel}
\noindent
We fix field embeddings $\iota_{\infty}: \overline{\Q} \hookrightarrow \bC$, $\iota_p: \overline{\Q} \hookrightarrow \overline{\Q_p}$. We also fix $F^+$ a totally real number field, $F$ a quadratic totally imaginary extension of $F^+$ such that any place of $F^+$ above $p$ is split in $F$, and $G/F^+$ a unitary group attached to the quadratic extension $F/F^+$ as in \cite[\S~6.2.2]{Bch} such that $G\times_{F^+} F\cong \GL_n$ ($n\geq 2$) and $G(F^+\otimes_{\Q} \bR)$ is compact. For a finite place $v$ of $F^+$ which is totally split in $F$, we fix a place $\tilde{v}$ of $F$ dividing $v$, and we have an isomorphism $ i_{G, \widetilde{v}}: G(F^+_v) \xrightarrow{\sim} \GL_n(F_{\tilde{v}})$. We let $S_p$ denote the set of places of $F^+$ dividing $p$. For  an open compact subgroup $U^p=\prod_{v\nmid p} U_v$ of $G(\bA_{F^+}^{p,\infty})$, we put
\begin{equation*}
\widehat{S}(U^p,E):=\Big\{f: G(F^+) \setminus G(\bA_{F^+}^{\infty})/U^p\lra E,\ f \text{ is continuous}\Big\}.
\end{equation*}
Since $G(F^+\otimes_{\Q} \bR)$ is compact, $G(F^+)\setminus G(\bA_{F^+}^{\infty})/U^p$ is a profinite set, and we see that $\widehat{S}(U^p,E)$ is a Banach space over $E$ with the norm defined by the (complete) $\co_E$-lattice:
\begin{equation*}
\widehat{S}(U^p,\co_E):=\Big\{f: G(F^+)\setminus G(\bA_{F^+}^{\infty})/U^p \lra \co_E,\ f \text{ is continuous}\Big\}.\end{equation*}
Moreover, $\widehat{S}(U^p,E)$ is equipped with a continuous action of $G(F^+\otimes_{\Q}\Q_p)$ given by $(g'f)(g)=f(gg')$ for $f\in \widehat{S}(U^p,E)$, $g'\in G(F^+\otimes_{\Q} \Q_p)$, $g\in G(\bA_{F^+}^{\infty})$. The lattice $\widehat{S}(U^p,\co_E)$ is obviously stable by this action, so the Banach representation $\widehat{S}(U^p,E)$ of $G(F^+\otimes_{\Q} \Q_p)$ is unitary.
\\

\noindent Let $S$ be a finite set of finite places of $F^+$ consisting of those $v$ such that $v|p$, or $v$ ramifies in $F$, or $v$ is unramified and $U_v$ is not maximal hyperspecial. Let $\bT(U^p):=\co_E[T_{\tilde{v}}^{(j)}]$ be the commutative polynomial $\co_E$-algebra generated by the formal variables $T_{\tilde{v}}^{(j)}$ where $j\in \{1,\cdots,n\}$ and $v\notin S$ splits in $F$. The $\co_E$-algebra $\bT(U^p)$ acts on $\widehat{S}(U^p,E)$ and $\widehat{S}(U^p, \co_E)$ by making $T_{\tilde{v}}^{(j)}$ act by the double coset operator:
\begin{equation}\label{equ: ord-hecke1}
 T_{\tilde{v}}^{(j)}:=\Big[U_{v} g_v i_{G,\tilde{v}}^{-1}\begin{pmatrix}
  \text{\textbf{1}}_{n-j} & 0 \\ 0 & \text{$\varpi_{\tilde{v}}$ \textbf{1}}_{j}
 \end{pmatrix} g_v^{-1}U_{v}\Big]
\end{equation}
where $\varpi_{\tilde{v}}$ is a uniformizer of $F_{\tilde{v}}$, and where $g_v\in G(F_v^+)$ is such that $i_{G,\tilde{v}}(g_v^{-1} U_v g_v)=\GL_n(\co_{F_{\tilde{v}}})$. This action commutes with that of $G(F^+ \otimes_{\Q} \Q_p)$.\\

\noindent
Recall that the automorphic representations of $G(\bA_{F^+})$ are the irreducible constituents of the $\bC$-vector space of functions $f: G(F^+)\backslash G(\bA_{F^+})\lra \bC$ which are:
\begin{itemize}
 \item $\cC^{\infty}$ when restricted to $G(F^+\otimes_{\Q} \bR)$
 \item locally constant when restricted to $G(\bA_{F^+}^{\infty})$
 \item $G(F^+\otimes_{\Q} \bR)$-finite,
\end{itemize}
where $G(\bA_{F^+})$ acts on this space via right translation. An automorphic representation $\pi$ is isomorphic to $\pi_{\infty}\otimes_{\bC} \pi^{\infty}$ where $\pi_{\infty}=W_{\infty}$ is an irreducible algebraic representation of $(\Res_{F^+/\Q}G)(\bR)=G(F^+\otimes_{\Q} \bR)$ over $\bC$ and $\pi^{\infty}\cong \Hom_{G(F^+\otimes_{\Q}\bR)}(W_{\infty}, \pi)\cong \otimes'_{v}\pi_v$ is an irreducible smooth representation of $G(\bA_{F^+}^{\infty})$. The algebraic representation $W_{\infty}\vert_{(\Res_{F^+/\Q}G)(\Q)}$ is defined over $\overline{\Q}$ via $\iota_{\infty}$ and we denote by $W_p$ its base change to $\overline{\Q_p}$ via $\iota_p$, which is thus an irreducible algebraic representation of $(\Res_{F^+/\Q}G)(\Q_p)=G(F^+\otimes_{\Q} \Q_p)$ over $\overline{\Q_p}$. Via the decomposition $G(F^+\otimes_{\Q} \Q_p)\xrightarrow{\sim} \prod_{v\in S_p} G(F^+_v)$, one has $W_p\cong \otimes_{v\in S_p} W_v$ where $W_v$ is an irreducible algebraic representation of $G(F^+_v)$ over $\overline{\Q_p}$. One can also prove $\pi^{\infty}$ is defined over a number field via $\iota_{\infty}$ (e.g. see \cite[\S~6.2.3]{Bch}). Denote by $\pi^{\infty,p}:=\otimes'_{v\nmid p} \pi_v$, so that we have $\pi^\infty \cong \pi^{\infty,p}\otimes_{\overline{\Q}} \pi_p$ (seen over $\overline{\Q}$ via $\iota_\infty$), and by $m(\pi)\in \Z_{\geq 1}$ the multiplicity of $\pi$ in the above space of functions $f: G(F^+)\backslash G(\bA_{F^+})\ra \bC$. Denote by $\widehat{S}(U^p,E)^{\lalg}$ the subspace of $\widehat{S}(U^p,E)$ of locally algebraic vectors for the $(\Res_{F^+/\Q}G)(\Q_p)=G(F^+\otimes_{\Q} \Q_p)$-action, which is stable by $\bT(U^p)$. We have an isomorphism which is equivariant under the action of $G(F^+\otimes_{\Q} \Q_p)\times \bT(U^p)$ (see e.g. \cite[Prop. 5.1]{Br13II} and the references in \cite[\S~5]{Br13II}):
\begin{equation}\label{equ: lalgaut}
 \widehat{S}(U^p,E)^{\lalg} \otimes_E \overline{\Q_p} \cong \bigoplus_{\pi}\Big((\pi^{\infty,p})^{U^p} \otimes_{\overline{\Q}}(\pi_p \otimes_{\overline{\Q}} W_p)\Big)^{\oplus m(\pi)}
\end{equation}
where $\pi\cong \pi_{\infty}\otimes_{\overline{\Q}} \pi^{\infty}$ runs through the automorphic representations of $G(\bA_{F^+})$ and $W_p$ is associated to $\pi_{\infty}=W_\infty$ as above, and where $T_{\tilde{v}}^{(j)}\in \bT(U^p)$ acts on $(\pi^{\infty,p})^{U^p}$ by the double coset operator (\ref{equ: ord-hecke1}).\\

\noindent
Following \cite[\S~3.3]{CHT}, we say that $U^p$ is {\it sufficiently small} if there is a place $v\nmid p$ such that $1$ is the only element of finite order in $U_v$. We have (e.g. see \cite[Lem. 6.1]{BD1})
\begin{lemma}\label{projH}
Assume $U^p$ sufficiently small, then for any compact open subgroup $U_p$ of $G(F^+\otimes_{\Q} \Q_p)$ there is an integer $r\geq 1$ such that $\widehat{S}(U^p,\co_E)|_{U_{p}}$ is isomorphic to $\cC(U_p,\co_E)^{\oplus r}$.
\end{lemma}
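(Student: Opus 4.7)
The key observation is that $\widehat{S}(U^p,\co_E) = \cC(X,\co_E)$ where $X := G(F^+)\backslash G(\bA_{F^+}^\infty)/U^p$ is a profinite set carrying a continuous right action of $G(F^+\otimes_\Q\Q_p)$, hence of $U_p$. The plan is to establish a $U_p$-equivariant homeomorphism $X \cong \bigsqcup_{i=1}^r U_p$ for some finite $r$, after which taking continuous $\co_E$-valued functions immediately yields the claim.

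First I would note that the double coset set $X/U_p = G(F^+)\backslash G(\bA_{F^+}^\infty)/(U^p U_p)$ is finite (this is standard for open compact subgroups of $G(\bA_{F^+}^\infty)$, using compactness of $G(F^+\otimes_\Q\bR)$). Let $r$ denote its cardinality, and let $g_1,\dots,g_r \in G(\bA_{F^+}^\infty)$ be a set of orbit representatives. Next I would compute the stabilizer of each $[g_i]\in X$: an element $u \in U_p$ satisfies $[g_i u]=[g_i]$ iff there exist $\gamma \in G(F^+)$ and $u' \in U^p$ with $g_i u = \gamma g_i u'$. Using that $U_p$ and $U^p$ commute (they lie in distinct factors of the decomposition $G(\bA_{F^+}^\infty) = G(F^+\otimes_\Q\Q_p) \times G(\bA_{F^+}^{\infty,p})$), this is equivalent to $\gamma \in \Gamma_i := G(F^+) \cap g_i(U_p\times U^p)g_i^{-1}$, with $u$ read off as the $p$-component of $g_i^{-1}\gamma g_i$. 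The group $\Gamma_i$ is discrete in $G(F^+)\hookrightarrow G(F^+\otimes_\Q\bR) \times G(\bA_{F^+}^\infty)$ and contained in the compact set $G(F^+\otimes_\Q\bR) \times g_i(U_p\times U^p)g_i^{-1}$, hence is finite.

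The crucial step, and the only real input beyond formalities, is the use of the sufficiently small hypothesis: fix $v\nmid p$ with $U_v$ containing no nontrivial element of finite order. Any $\gamma \in \Gamma_i$ has finite order (as $\Gamma_i$ is finite), so its image $\gamma_v$ lies in $g_{i,v} U_v g_{i,v}^{-1}$, a torsion-free group, forcing $\gamma_v=1$. Since $G(F^+) \hookrightarrow G(F_v^+)$ is injective, $\gamma=1$, so $\Gamma_i$ is trivial for every $i$; equivalently, the stabilizer of each $[g_i]$ in $U_p$ is trivial, i.e.\ $U_p$ acts freely on $X$.

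To conclude, for each $i$ the continuous $U_p$-equivariant map $U_p \to X$, $u\mapsto [g_i u]$, is injective with image the $U_p$-orbit of $[g_i]$; being a continuous bijection from the compact space $U_p$ to a Hausdorff subspace of $X$, it is a homeomorphism onto its image. Assembling the $r$ orbits gives the desired $U_p$-equivariant homeomorphism $X \cong \bigsqcup_{i=1}^r U_p$, whence $\widehat{S}(U^p,\co_E)|_{U_p} = \cC(X,\co_E) \cong \cC(U_p,\co_E)^{\oplus r}$ as $\co_E[U_p]$-modules. The integer $r$ a priori depends on the chosen $U_p$, which matches the statement of the lemma.
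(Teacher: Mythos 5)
Your proof is correct and follows the same route as the paper's source for this lemma (the paper defers to \cite[Lem.~6.1]{BD1}, whose argument is exactly this): finiteness of $G(F^+)\backslash G(\bA_{F^+}^{\infty})/U^pU_p$, triviality of the arithmetic stabilizers $\Gamma_i$ via the torsion-free place $v$ from the sufficiently-small hypothesis, and the resulting free $U_p$-orbit decomposition of the profinite set $X$. The topological details you supply (finitely many compact orbits in a compact Hausdorff space are clopen, and a continuous bijection from the compact group $U_p$ onto an orbit is a homeomorphism) are exactly what is needed to pass to $\cC(X,\co_E)\cong\cC(U_p,\co_E)^{\oplus r}$.
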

\noindent In the following, we assume $U^p$ sufficiently small.
\subsection{Galois deformations}\label{sec: galdef}
\noindent Let $\cG_n$ be the group scheme over $\Z$ which is the semi-direct product of $\{1,\jmath\}$ acting on $\GL_1 \times \GL_n$ via
\begin{equation*}
  \jmath (\mu, g)\jmath^{-1}=(\mu,(g^t)^{-1}\mu).
\end{equation*}
Denote by $\nu: \cG_n \ra \GL_1$ the morphism given by $(\mu,g) \jmath\mapsto -\mu$.
Let $\overline{\rho}: \Gal_{F^+} \ra \cG_n(k_E)$ be a continuous representation such that
\begin{itemize}\item $\overline{\rho}(\Gal_F)\subseteq \GL_1(k_E)\times \GL_n(k_E)$,
 \item $\overline{\rho}$ is unramified outside $S$,
 \item $\overline{\rho}(c)\in \cG_n(k_E)\setminus \GL_n(k_E)$ (where $c$ denote the complex conjugation),
 \item the composition $\Gal_{F^+} \xrightarrow{\overline{\rho}} \cG_n(k_E) \xrightarrow{\nu} k_E^{\times}$ is equal to $\omega^{1-n} \delta_{F/F^+}^n$, where $\delta_{F/F^+}$ is the unique non-trivial character of $\Gal(F/F^+)$.
\end{itemize}
Denote by $\overline{\rho}_F$ the composition $\Gal_F \xrightarrow{\overline{\rho}} \GL_1 \times \GL_n \twoheadrightarrow \GL_n(k_E)$ (where the second the map denotes the natural projection), then we have $\overline{\rho}_F^c \cong \overline{\rho}_F^{\vee}\otimes_{k_E} \chi_{\cyc}^{1-n}$. For a place $v$ of $F^+$ such that $v=\widetilde{v}\widetilde{v}^c$ in $F$, denote by $\overline{\rho}_{\widetilde{v}}:=\overline{\rho}_F |_{F_{\widetilde{v}}}$.
\\

\noindent Denote by $\cS$ the following deformation problem (cf. \cite[\S~2.3]{CHT} \cite[\S~3]{Thor})
\begin{equation*}
  (F/F^+, S, \widetilde{S}, \co_E, \overline{\rho}, \varepsilon^{1-n}\delta_{F/F^+}, \{R_{\overline{\rho}_{\widetilde{v}}}^{\bar{\square}}\}_{v\in S}),
\end{equation*}
where $R_{\overline{\rho}_{\widetilde{v}}}^{\bar{\square}}$ denotes the reduced quotient of the universal framed deformation ring of $\overline{\rho}_{\widetilde{v}}$, and $\widetilde{S}=\{\widetilde{v}\ |\ v\in S\}$. Suppose $\overline{\rho}$ is absolutely irreducible. By \cite[Prop. 2.2.9]{CHT}, the deformation problem $\cS$ is pro-represented by a complete local noetherian $\co_E$-algebra $R_{\overline{\rho}, \cS}$. Denote by $R_{\overline{\rho}, \cS}^{\square}$ the \emph{$S$-framed} deformation ring of $\cS$-deformations (cf. \cite[Def. 3.1]{Thor}). By definition, $R_{\overline{\rho}, \cS}^{\square}$ is formally smooth over $R_{\overline{\rho}, \cS}$ of relative dimension $n^2|S|$. Let $R^{\loc}:=\widehat{\otimes}_{v\in S} R_{\overline{v}}^{\bar{\square}}$, where the tensor product is taken over $\co_E$. We have by definition a natural morphism
\begin{equation*}
  R^{\loc} \lra R_{\overline{\rho}, \cS}^{\square}.
\end{equation*}
\subsection{Hecke operators}\label{sec: ord-hecke}

\noindent
We recall the definition of some useful pro-$p$-Hecke algebras and of their localisations.\\

\noindent
For $s\in \Z_{>0}$ and a compact open subgroup $U_p$ of $G(F^+\otimes_{\Q} \Q_p)\cong \prod_{v\in S_p} \GL_n(F_{\widetilde{v}})$, we let $\bT(U^pU_p, \co_E/\varpi_E^s)$ (resp. $\bT(U^pU_p, \co_E)$) be the $\co_E/\varpi_E^s$-subalgebra (resp. $\co_E$-subalgebra) of the endomorphism ring of $S(U^{p}U_{p}, \co_E/\varpi_E^s)$ (resp. $S(U^{p}U_{p}, \co_E)$) generated by the operators in $\bT(U^p)$. Then $\bT(U^{p}U_{p}, \co_E/\varpi_E^s)$ is a finite $\co_E/\varpi_E^s$ algebra \big(resp. $\bT(U^{p}U_{p}, \co_E)$ is an $\co_E$-algebra which is finite free as $\co_E$-module\big). We have
\begin{equation}\label{equ: ord-redT}
\bT(U^{p}U_{p}, \co_E)\xlongrightarrow{\sim} \varprojlim_s \bT(U^{p}U_{p}, \co_E/\varpi_E^s),
\end{equation}
\begin{equation}\small\label{tildeproj}
\widetilde{\bT}(U^{p}):=\varprojlim_s \varprojlim_{U_{p}} \bT(U^{p}U_{p}, \co_E/\varpi_E^s)
\cong \varprojlim_{U_{p}}\varprojlim_s \bT(U^{p}U_{p}, \co_E/\varpi_E^s) \cong \varprojlim_{U_{p}} \bT(U^{p}U_{p}, \co_E).
\end{equation}
We have as in \cite[Lem. 6.3]{BD1}:
\begin{lemma}\label{lem: ord-hecke}
The $\co_E$-algebra $\widetilde{\bT}(U^{p})$ is reduced and acts faithfully on $\widehat{S}(U^{p}, E)$.
\end{lemma}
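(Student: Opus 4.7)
The plan is to prove the two assertions in parallel, reducing each to a statement at finite level through the inverse limit description (\ref{tildeproj}). First I would record the basic compatibility $\widehat{S}(U^p,\co_E)^{U_p} = S(U^p U_p, \co_E)$: since $U_p \subseteq G(F^+ \otimes_\Q \Q_p)$ acts on the profinite set $G(F^+)\backslash G(\bA_{F^+}^{\infty})/U^p$ with orbit space the \emph{finite} set $G(F^+)\backslash G(\bA_{F^+}^{\infty})/(U^p U_p)$, any continuous $U_p$-invariant $\co_E$-valued function descends to a function on this finite quotient. This gives a full $\co_E$-lattice in $\widehat{S}(U^p,E)^{U_p} = S(U^p U_p, E)$.

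For the faithfulness, given $T = (T_{U_p})_{U_p} \in \widetilde{\bT}(U^p)$ acting trivially on $\widehat{S}(U^p,E)$, its component $T_{U_p} \in \bT(U^p U_p, \co_E)$ in (\ref{tildeproj}) annihilates $S(U^p U_p, E)$ and therefore also its lattice $S(U^p U_p, \co_E)$. Since $\bT(U^p U_p, \co_E)$ is defined as a subalgebra of $\End_{\co_E}(S(U^p U_p, \co_E))$, this forces $T_{U_p} = 0$; letting $U_p$ shrink to $1$ gives $T = 0$.

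For the reducedness, since an inverse limit of reduced rings is reduced (a nilpotent element maps to a nilpotent in each factor, which must then vanish), it suffices to show each $\bT(U^p U_p, \co_E)$ is reduced. Being a subalgebra of the finite free $\co_E$-module $\End_{\co_E}(S(U^p U_p, \co_E))$, it is itself a finite free $\co_E$-module, so injects into its $E$-base change $\bT(U^p U_p, E)$, and I only need the latter to be a finite product of fields. I would use (\ref{equ: lalgaut}) together with the elementary inclusion $\widehat{S}(U^p,E)^{U_p} \subseteq \widehat{S}(U^p,E)^{\lalg}$ (any $U_p$-fixed continuous vector is smooth, hence locally algebraic with trivial algebraic part) to decompose $S(U^p U_p, E) \otimes_E \overline{\Q_p}$ as a direct sum indexed by automorphic representations $\pi$ of $G(\bA_{F^+})$, on each summand of which the Hecke operators $T_{\widetilde{v}}^{(j)}$ act by the Satake parameters of $\pi_{\widetilde{v}}$. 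Strong multiplicity one for $\GL_n$, applied after base change of $\pi$ to an automorphic representation of $\GL_n/F$, ensures that distinct $\pi$ yield distinct systems of Hecke eigenvalues, so $\bT(U^p U_p, \overline{\Q_p})$ is a finite \'etale $\overline{\Q_p}$-algebra and reducedness of $\bT(U^p U_p, E)$ follows by faithfully flat descent along $E \to \overline{\Q_p}$.

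The only non-formal input is the final invocation of strong multiplicity one for $\GL_n$ via base change (to separate the Hecke eigensystems); everything else is routine bookkeeping with the inverse limit, lattices, and the definitions of the Hecke algebras. I expect that eigensystem-separation step to be the main substantive point, since it is what upgrades the purely commutative structure of $\bT(U^p U_p, E)$ to semisimplicity.
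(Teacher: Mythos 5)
Your proof is correct and follows essentially the same route as the paper, whose proof is the one cited from \cite[Lem.~6.3]{BD1}: faithfulness by passing to $U_p$-invariants $S(U^pU_p,\co_E)=\widehat{S}(U^p,\co_E)^{U_p}$ and using the inverse-limit description (\ref{tildeproj}), and reducedness at finite level from the semisimplicity of the Hecke action on classical forms via (\ref{equ: lalgaut}). One small remark: the appeal to strong multiplicity one is superfluous --- each $T_{\tilde v}^{(j)}$ already acts by a scalar on every summand of (\ref{equ: lalgaut}), so these commuting operators are simultaneously diagonalizable on the finite-dimensional space $S(U^pU_p,E)\otimes_E\overline{\Q_p}$, and a commutative algebra of simultaneously diagonalizable operators is reduced regardless of whether distinct $\pi$ give distinct eigensystems.
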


\noindent
To $\overline{\rho}$ (as in \S~\ref{sec: galdef}), we associate a maximal ideal   $\fm_{\overline{\rho}}$ of residue field $k_E$ of $\bT(U^p)$
 such that for $v\notin S$ splitting in $F$,   the characteristic polynomial of ${\overline{\rho}_{\widetilde{v}}}(\Frob_{\tilde{v}})$, where $\Frob_{\tilde{v}}$ is a {\it geometric} Frobenius at $\tilde v$, is given by:
\begin{equation}\label{equ: ord-ideal}
X^n+\cdots + (-1)^j (\Nm\tilde{v})^{\frac{j(j-1)}{2}} \theta_{{\rho}}(T_{\tilde{v}}^{(j)}) X^{n-j}+\cdots +(-1)^n(\Nm\tilde{v})^{\frac{n(n-1)}{2}} \theta_{{\rho}}(T_{\tilde{v}}^{(n)})
\end{equation}
where $\Nm\tilde{v}$ is the cardinality of the residue field at $\tilde v$ and $\theta_{\overline{\rho}}: \bT(U^p)/\fm_{\overline{\rho}}\buildrel\sim\over\lra k_E$. For a $\bT(U^p)$-module $M$, denote by $M_{\overline{\rho}}$ the localisation of $M$ at $\fm(\overline{\rho})$.
Recall a maximal ideal $\fm(\overline{\rho})$ of $\bT(U^p)$ is called \emph{$U^p$-automorphic} if there exist $s$, $U_{p}$ as above such that the localisation $S(U^{p}U_{p}, \co_E/\varpi_E^s)_{\overline{\rho}}$ is nonzero. Suppose $\fm(\overline{\rho})$ is $U^p$-automorphic, then $\fm(\overline{\rho})$ corresponds to a maximal ideal, still denoted by $\fm(\overline{\rho})$, of $\widetilde{\bT}(U^p)$. The localisation $\widetilde{\bT}(U^p)_{\overline{\rho}}$ is a direct factor of $\widetilde{\bT}(U^p)$ (e.g. by \cite[Lem. 6.5]{BD1}),  and there is a natural isomorphism
\begin{equation}\label{projtrho}
\widetilde{\bT}(U^{p})_{\overline{\rho}}\cong \varprojlim_s \varprojlim_{U_{p}} \bT(U^{p}U_{p}, \co_E/\varpi_E^s)_{\overline{\rho}}
\cong \varprojlim_{U_p} \bT(U^{p}U_{p}, \co_E)_{\overline{\rho}}.
\end{equation}
Put $\widehat{S}(U^{p}, \co_E)_{\overline{\rho}}:= \varprojlim_s \varinjlim_{U_p} S(U^{p}U_{p}, \co_E/\varpi_E^s)_{\overline{\rho}}$.
We have as in \cite[\S~6.2]{BD1}:
\begin{lemma}Suppose $\overline{\rho}$ is $U^p$-automorphic.

\noindent (1) $\widehat{S}(U^{p}, \co_E)_{\overline{\rho}}$ is a $\widetilde{\bT}(U^p) \times G(F\otimes_{\Q} \Q_p)$-equivariant direct summand of $\widehat{S}(U^p, \co_E)$.

\noindent (2) The action of $\widetilde{\bT}(U^p)$ on $\widehat{S}(U^{p}, \co_E)_{\overline{\rho}}$ factors through $\widetilde{\bT}(U^p)_{\overline{\rho}}$.

\noindent (3) The $\co_E$-algebra $\widetilde{\bT}(U^p)_{\overline{\rho}}$ is reduced and acts faithfully on $\widehat{S}(U^{p}, \co_E)_{\overline{\rho}}$.
\end{lemma}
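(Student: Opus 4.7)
The plan is to reduce everything to finite level, where $\bT(U^pU_p,\co_E/\varpi_E^s)$ is a finite, hence semilocal Artinian, $\co_E/\varpi_E^s$-algebra; such a ring decomposes as a finite product of its localizations at its (finitely many) maximal ideals, with corresponding pairwise orthogonal idempotents $\{e_{\fm}\}$, which gives a decomposition
\begin{equation*}
S(U^pU_p,\co_E/\varpi_E^s)=\bigoplus_{\fm}S(U^pU_p,\co_E/\varpi_E^s)_{\fm}.
\end{equation*}
I would then transport these idempotents back through the inverse/direct limits defining $\widehat{S}(U^p,\co_E)$ and $\widetilde{\bT}(U^p)$; the $U^p$-automorphy hypothesis ensures that the summand indexed by (the image of) $\fm_{\overline{\rho}}$ is genuinely present for some sufficiently small $U_p$ and large enough $s$.

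For (1), the idempotent $e_{\overline{\rho}}^{(U_p,s)}\in\bT(U^pU_p,\co_E/\varpi_E^s)$ projecting onto the $\fm_{\overline{\rho}}$-summand is compatible with the surjective transition maps as $U_p$ shrinks and $s$ grows, by the uniqueness of lifts of idempotents modulo nilpotents in an Artinian ring. These idempotents therefore assemble via (\ref{tildeproj}) into a canonical idempotent $e_{\overline{\rho}}\in\widetilde{\bT}(U^p)$, whose associated direct factor $e_{\overline{\rho}}\widetilde{\bT}(U^p)$ is precisely $\widetilde{\bT}(U^p)_{\overline{\rho}}$ via (\ref{projtrho}). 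Applying $e_{\overline{\rho}}$ to $\widehat{S}(U^p,\co_E)\cong\varprojlim_s\varinjlim_{U_p}S(U^pU_p,\co_E/\varpi_E^s)$ produces the desired direct summand. Since each $T_{\widetilde{v}}^{(j)}$ commutes with right translation by $G(F^+\otimes_\Q\Q_p)$, so does the limit $e_{\overline{\rho}}$, giving the $\widetilde{\bT}(U^p)\times G(F^+\otimes_\Q\Q_p)$-equivariance. Part (2) is then immediate: the complementary idempotent $1-e_{\overline{\rho}}$ annihilates $\widehat{S}(U^p,\co_E)_{\overline{\rho}}$, so the $\widetilde{\bT}(U^p)$-action factors through $\widetilde{\bT}(U^p)\twoheadrightarrow\widetilde{\bT}(U^p)_{\overline{\rho}}$.

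For (3), reducedness of $\widetilde{\bT}(U^p)_{\overline{\rho}}$ follows at once from Lemma \ref{lem: ord-hecke}, as a direct factor of a reduced ring is reduced. For faithfulness, Lemma \ref{lem: ord-hecke} gives that $\widetilde{\bT}(U^p)$ acts faithfully on $\widehat{S}(U^p,E)$; the orthogonal decomposition $\widehat{S}(U^p,E)=\widehat{S}(U^p,E)_{\overline{\rho}}\oplus(1-e_{\overline{\rho}})\widehat{S}(U^p,E)$ as a module over $\widetilde{\bT}(U^p)_{\overline{\rho}}\times(1-e_{\overline{\rho}})\widetilde{\bT}(U^p)$ forces $\widetilde{\bT}(U^p)_{\overline{\rho}}$ to act faithfully on $\widehat{S}(U^p,E)_{\overline{\rho}}$; since $\widehat{S}(U^p,\co_E)_{\overline{\rho}}$ is a $\widetilde{\bT}(U^p)_{\overline{\rho}}$-stable $\co_E$-lattice therein, faithfulness descends to the integral version. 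The only genuinely delicate step is tracking the idempotent $e_{\overline{\rho}}$ coherently across the projective/inductive system, which is handled by the standard uniqueness of idempotent lifts modulo nilpotents; otherwise the lemma is a routine consequence of finiteness of the level-$(U_p,s)$ Hecke algebras together with the already-established reducedness and faithfulness of $\widetilde{\bT}(U^p)$ itself.
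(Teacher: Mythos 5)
Your argument is correct and is essentially the argument the paper intends: the paper gives no proof here but defers to \cite[\S~6.2]{BD1}, and in particular invokes \cite[Lem.~6.5]{BD1} for exactly the statement that $\widetilde{\bT}(U^p)_{\overline{\rho}}$ is cut out by an idempotent assembled from the finite-level Artinian Hecke algebras, after which (1)--(3) follow as you describe from Lemma \ref{lem: ord-hecke}. The only cosmetic point is that compatibility of the finite-level idempotents under the surjective transition maps is most cleanly seen from the behaviour of the primitive idempotent decomposition of an Artinian ring under a quotient map (grouping maximal ideals by their pullback of $\fm_{\overline{\rho}}\subset\bT(U^p)$), rather than from lifting idempotents modulo nilpotents, since the kernels of the transition maps need not be nilpotent.
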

\noindent We assume the following hypothesis:
\begin{hypothesis}\label{hypoglobal}
We have $p>2$, $F/F^+$ is unramified and $G$ is quasi-split at all finite places of $F^+$.
\end{hypothesis}
\noindent Under the hypothesis, by \cite[Prop. 6.7]{Thor}, there exists a natural surjection of complete $\co_E$-algebras
\begin{equation*}
  R_{\overline{\rho}, \cS} \twoheadlongrightarrow \widetilde{\bT}(U^p)_{\overline{\rho}}.
\end{equation*}
In particular, $\widetilde{\bT}(U^p)_{\overline{\rho}}$ is a noetherian (local complete) $\co_E$-algebra.
\subsection{$P$-ordinary part}\label{sec: Pord-3}

\noindent We fix a parabolic subgroup $$P\cong \prod_{v\in S_p} \Res^{F_{\widetilde{v}}}_{\Q_p} P_{\widetilde{v}}$$ of $\prod_{v\in S_p} \Res^{F_{\widetilde{v}}}_{\Q_p} \GL_n$, such that $P_{\widetilde{v}}$ is a parabolic subgroup of $\GL_n$ containing the Borel subgroup $B$ of upper triangular matrices. Let $L_{\widetilde{v}}\cong \GL_{n_{\widetilde{v},1}} \times \cdots \times \GL_{n_{\widetilde{v},k_{\widetilde{v}}}}$ be the Levi subgroup of $P_{\widetilde{v}}$ containing the diagonal subgroup $T$, $\overline{P}_{\widetilde{v}}$ be the parabolic subgroup of $\GL_n$ opposite to $P_{\widetilde{v}}$, $N_{P_{\widetilde{v}}}$ (resp. $N_{\overline{P}_{\widetilde{v}}}$) be the unipotent radical of $P_{\widetilde{v}}$ (resp. $\overline{P}_{\widetilde{v}}$), and $Z_{L_{\widetilde{v}}}$ be the center of $L_{\widetilde{v}}$. For $i=1,\cdots, k_{\widetilde{v}}$, let $s_{\widetilde{v}, i}:=\sum_{j=0}^{i-1} n_{\widetilde{v},j}$, where $n_{\widetilde{v},0}:=0$. Put
\begin{equation*}L_P:=\prod_{v\in S_p} \Res^{F_{\widetilde{v}}}_{\Q_p} L_{\widetilde{v}}, \ \overline{P}:=\prod_{\sigma \in S_p} \Res^{F_{\widetilde{v}}}_{\Q_p} \overline{P}_{\widetilde{v}},\end{equation*}
 \begin{equation*}N_P:=\prod_{v\in S_p} \Res^{F_{\widetilde{v}}}_{\Q_p} N_{P_{\widetilde{v}}}, \ N_{\overline{P}}:=\prod_{v\in S_p} \Res^{F_{\widetilde{v}}}_{\Q_p} N_{\overline{P}_{\widetilde{v}}}, \ Z_{L_P}:=\prod_{v\in S_p} \Res^{F_{\widetilde{v}}}_{\Q_p} Z_{L_{\widetilde{v}}}.\end{equation*} Thus $L_P$ is the Levi subgroup of $P$ containing $\prod_{v\in S_p} \Res^{F_{\widetilde{v}}}_{\Q_p} T$, $\overline{P}$ is the parabolic subgroup opposite to $P$,  $Z_{L_P}$ is the center of $L_P$, and $N_P$ (resp. $N_{\overline{P}}$) is the unipotent radical of $P$ (resp. of $\overline{P}$).
\\

\noindent For $v\in S_p$, $i\in \Z_{\geq 0}$, let
\begin{equation*}K_{i,\widetilde{v}}:=\{g\in \GL_n(\co_{F_{\widetilde{v}}})\ |\ g\equiv 1 \pmod{\varpi_{\widetilde{v}}^i}\},\end{equation*}
 \begin{equation*}N_{i,\widetilde{v}}:=N_{P_{\widetilde{v}}}(F_{\widetilde{v}}) \cap K_{i,\widetilde{v}}, \ L_{i,\widetilde{v}}:=L_{\widetilde{v}}(F_{\widetilde{v}}) \cap K_{i,\widetilde{v}}, \ \overline{N}_{i,\widetilde{v}}:=N_{\overline{P}_{\widetilde{v}}}(F_{\widetilde{v}}) \cap K_{i,\widetilde{v}}.\end{equation*}
 For $i\geq j\geq 0$, put $K_{i,j,\widetilde{v}}:=\overline{N}_{i,\widetilde{v}} L_{j,\widetilde{v}} N_{0,\widetilde{v}}$. Let
 \begin{equation*}Z_{L_{\widetilde{v}}}^+:=\{(a_1, \cdots, a_{k_{\widetilde{v}}})\in Z_{L_{\widetilde{v}}}(F_{\widetilde{v}})\ |\ \val_p(a_1)\geq \cdots \geq \val_p(a_{k_{\widetilde{v}}})\}.\end{equation*}
 Finally, we put $K_i:=\prod_{v\in S_p} K_{i,\widetilde{v}}$, $N_i:=\prod_{v\in S_p} N_{i,\widetilde{v}}$, $L_i:=\prod_{v\in S_p} L_{i,\widetilde{v}}$, $\overline{N}_i:=\prod_{v\in S_p} \overline{N}_{i,\widetilde{v}}$, $K_{i,j}:=\prod_{v\in S_p} K_{i,j,\widetilde{v}}\cong \overline{N}_i L_j N_0$, and $Z_{L_P}^+:=\prod_{v\in S_p} Z_{L_{\widetilde{v}}}^+$. The proof of the following lemma is  straightforward (and we omit).
 \begin{lemma}
   (1) For $i\in \Z_{\geq 0}$, $K_i$ is a normal subgroup of $K_0$, and $\overline{N}_i \times L_i \times N_i\xrightarrow{\sim} K_i$.

 \noindent (2) For  $i \geq j \geq 0$, we have $\overline{N}_i\times L_j \times N_0 \xrightarrow{\sim} K_{i,j}$.

\noindent    (3) For  $z\in Z_{L_P}^+$, $i\in \Z_{\geq 0}$, we have $\overline{N}_i\subseteq z\overline{N}_i z^{-1}$.
 \end{lemma}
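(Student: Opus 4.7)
The statement factors as a product over $v \in S_p$, so I would fix $v \in S_p$, drop it from the notation and verify (1)--(3) for $\GL_n(F_{\widetilde{v}})$ equipped with the parabolic $P_{\widetilde{v}}$ and Levi $L_{\widetilde{v}}$; the case of a general $v$ then follows by taking products. The whole lemma is a routine consequence of the standard Iwahori factorization of principal congruence subgroups and of the openness of the big cell $\overline{N}_P \cdot L_P \cdot N_P \subseteq \GL_n$.

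For part (1), normality of $K_i$ in $K_0 = \GL_n(\co_{F_{\widetilde{v}}})$ is immediate: for $i\geq 1$ it is the kernel of the reduction map $\GL_n(\co_{F_{\widetilde{v}}}) \to \GL_n(\co_{F_{\widetilde{v}}}/\varpi_{\widetilde{v}}^i)$, and for $i=0$ there is nothing to show. The product decomposition $\overline{N}_i \times L_i \times N_i \xrightarrow{\sim} K_i$ (for $i\geq 1$, which is the content of the statement) I would prove by induction on the number of blocks of $L_{\widetilde{v}}$. In the two-block case, writing $g \in K_i$ as $g = \begin{pmatrix} A & B \\ C & D \end{pmatrix}$ with $A \equiv I \pmod{\varpi_{\widetilde{v}}^i}$, so that $A \in \GL_{n_1}(\co_{F_{\widetilde{v}}})$, the block LU decomposition
\begin{equation*}
g = \begin{pmatrix} I & 0 \\ CA^{-1} & I \end{pmatrix}\begin{pmatrix} A & 0 \\ 0 & D - CA^{-1}B \end{pmatrix}\begin{pmatrix} I & A^{-1}B \\ 0 & I \end{pmatrix}
\end{equation*}
exhibits $g$ in the required form, each factor landing in $\overline{N}_i$, $L_i$, $N_i$ respectively because $B, C \equiv 0 \pmod{\varpi_{\widetilde{v}}^i}$; iterating on the Schur complement $D - CA^{-1}B$ handles the general case. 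Uniqueness follows from the fact that the multiplication map $\overline{N}_P \times L_P \times N_P \to \GL_n$ is an open immersion onto the big cell, so any factorization in $\GL_n(F_{\widetilde{v}})$ is unique as soon as it exists.

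For part (2), surjectivity of $\overline{N}_i \times L_j \times N_0 \to K_{i,j}$ is the definition of $K_{i,j}$, and injectivity again follows from the uniqueness of the big-cell factorization in $\GL_n(F_{\widetilde{v}})$: two products $n l u = n' l' u'$ in $K_{i,j}$ force $n=n'$, $l=l'$, $u=u'$ at the level of $\GL_n(F_{\widetilde{v}})$, after which membership in the individual integral subgroups is automatic. For part (3), a direct matrix computation suffices: writing $z = \diag(a_1 I_{n_1}, \dots, a_{k_{\widetilde{v}}} I_{n_{k_{\widetilde{v}}}})$ and $n \in \overline{N}_i$ in block form with $(k,\ell)$-block $n_{k\ell} \in \varpi_{\widetilde{v}}^i M_{n_k, n_\ell}(\co_{F_{\widetilde{v}}})$ for $k > \ell$, the $(k,\ell)$-block of $z^{-1} n z$ equals $a_k^{-1} a_\ell\, n_{k\ell}$. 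Since $z \in Z_{L_P}^+$ gives $\val_p(a_\ell) \geq \val_p(a_k)$ whenever $\ell < k$, we have $\val_p(a_k^{-1} a_\ell) \geq 0$, so $z^{-1} n z$ still lies in $\overline{N}_i$; equivalently $\overline{N}_i \subseteq z \overline{N}_i z^{-1}$.

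There is no real obstacle: the only care required is to keep track at each step that every factor lands in the correct integral subgroup, which is automatic from the congruence $g \equiv I \pmod{\varpi_{\widetilde{v}}^i}$ in (1)--(2) and from the positivity condition $\val_p(a_1) \geq \dots \geq \val_p(a_{k_{\widetilde{v}}})$ in (3). This is presumably why the author dispenses with the proof.
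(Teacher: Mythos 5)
Your proof is correct and is exactly the routine argument the paper has in mind when it says the proof is ``straightforward (and we omit)'': block LU decomposition with the congruence $g\equiv 1\pmod{\varpi_{\widetilde v}^i}$ guaranteeing invertibility of the leading block and integrality of all factors, uniqueness from injectivity of $N_{\overline P}\times L_P\times N_P\to \GL_n$, and a direct block computation for the conjugation statement. You are also right to read the isomorphism in (1) as having content only for $i\geq 1$ (for $i=0$ the product $\overline N_0 L_0 N_0$ is a proper subset of $K_0$), so there is nothing to add.
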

\noindent Applying Emerton's ordinary part functor (\cite[\S~3]{EOrd1}), we obtain a unitary admissible Banach representation of $L_P(\Q_p)$:
\begin{equation*}
  \Ord_P(\widehat{S}(U^p, E)_{\overline{\rho}})\cong \Ord_P(\widehat{S}(U^p, \co_E)_{\overline{\rho}}) \otimes_{\co_E} E.
\end{equation*}

\noindent Let $M$ be a finitely generated $\co_E$-module, equipped with an $\co_E$-linear action of $Z_{L_P}^+$. Consider the   $\co_E$-subalgebra $B$ of $\End_{\co_E}(M)$ generated by the image of $\iota: Z_{L_P}^+ \ra \End_{\co_E}(M)$. It is clear that $B$ is  a finite $\co_E$-algebra. Recall that a maximal ideal $\fn$ of $B$ is called \emph{ordinary} if $\fn \cap \iota(Z_{L_P}^+)=\emptyset$. Denote by $M_{\ord}:=\prod_{\fn\ \text{ordinary}} M_{\fn}$, which is a direct summand of $M$. The induced action of $Z_{L_P}^+$ on $M_{\ord}$ is invertible and hence extends naturally to an action of $Z_{L_P}$.
We have then
\begin{multline}\label{isoOrd}
  \Ord_P(\widehat{S}(U^p, \co_E)_{\overline{\rho}})\cong \varprojlim_k \Ord_P(S(U^p, \co_E/\varpi_E^k)_{\overline{\rho}}) \cong \varprojlim_k \varinjlim_{i} \Ord_P(S(U^p, \co_E/\varpi_E^k)_{\overline{\rho}})^{L_i}
  \\ \cong \varprojlim_k \varinjlim_{i} (S(U^p, \co_E/\varpi_E^k)^{K_{i,i}})_{\ord}
  \cong \varprojlim_k \varinjlim_i (S(U^pK_{i,i}, \co_E/\varpi_E^k)_{\overline{\rho}})_{\ord},
\end{multline}
where the first isomorphism is by definition, the second  from the fact that  $ \Ord_P(S(U^p, \co_E/\varpi_E^k)_{\overline{\rho}})$ is a smooth representation of $L_P(L)$ over $\co_E/\varpi_E^k$, the third isomorphism from step (c) in the proof of \cite[Thm. 4.4]{BD1}, and the last isomorphism from  $S(U^p, \co_E/\varpi_E^k)^{K_{i,i}}_{\overline{\rho}}\xrightarrow{\sim} S(U^pK_{i,i}, \co_E/\varpi_E^k)_{\overline{\rho}}$.
\\

\noindent
We define as in \cite[(4.15)]{BD1}:
\begin{equation*}
\Ord_P(S(U^p, \co_E)_{\overline{\rho}})\cong \varinjlim_{i} \big(S(U^p, \co_E)_{\overline{\rho}}^{K_{i,i}}\big)_{\ord},
\end{equation*}
which is equipped with a natural  smooth action of $L_P(\Q_p)$. As in the proof of \cite[Lem. 6.8 (1)]{BD1}, we have for $k\geq 1$:
\begin{equation}\label{Pord: dense1}
  \Ord_P(S(U^p, \co_E)_{\overline{\rho}})/\varpi_E^k \xlongrightarrow{\sim} \Ord_P(S(U^p, \co_E/\varpi_E^k)_{\overline{\rho}}).
\end{equation}
In particular, $\Ord_P(S(U^p, \co_E)_{\overline{\rho}}$ is dense in $ \Ord_P(\widehat{S}(U^p, \co_E)_{\overline{\rho}})$. Finally, by \cite[Cor. 4.6]{BD1}, we have
\begin{lemma}\label{LPzp}
  The  representation  $\Ord_P(\widehat S(U^p, \co_E)_{\overline{\rho}})|_{L_P(\Z_p)}$  is  isomorphic  to  a  direct summand of $\cC(L_P(\Z_p), \co_E)^{\oplus r}$ for some $r\geq 0$.
\end{lemma}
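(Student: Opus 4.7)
The strategy is to reduce everything to Lemma \ref{projH} via the explicit description (\ref{isoOrd}) of the ordinary part, and then use that the $\overline{\rho}$-localisation is a direct summand.

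First, applying Lemma \ref{projH} with $U_p = K_0$ yields $\widehat{S}(U^p,\co_E)|_{K_0} \cong \cC(K_0,\co_E)^{\oplus r}$ for some $r \geq 0$. The Hecke operators generating $\bT(U^p)$ act at places away from $p$ and hence commute with the $K_0$-action, so the localisation $\widehat{S}(U^p,\co_E)_{\overline{\rho}}$ is a $K_0$-equivariant direct summand of $\widehat{S}(U^p,\co_E)$. Consequently $\widehat{S}(U^p,\co_E)_{\overline{\rho}}|_{K_0}$ is a direct summand of $\cC(K_0,\co_E)^{\oplus r}$, and the same persists after reducing modulo $\varpi_E^k$ and taking $K_{i,i}$-invariants, giving that $S(U^pK_{i,i},\co_E/\varpi_E^k)_{\overline{\rho}}$ is a direct summand of $\cC(K_0/K_{i,i},\co_E/\varpi_E^k)^{\oplus r}$.

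Next, I would use the Iwahori factorisations $K_0 = \overline{N}_0 L_0 N_0$ and $K_{i,i} = \overline{N}_i L_i N_0$ to identify $K_0/K_{i,i}$ with $(\overline{N}_0/\overline{N}_i) \times (L_0/L_i)$ as a right $L_0$-set, with $L_0$ acting only on the second factor. Thus $\cC(K_0/K_{i,i},\co_E/\varpi_E^k)|_{L_0}$ is $\cC(\overline{N}_0/\overline{N}_i,\cC(L_0/L_i,\co_E/\varpi_E^k))$. Applying the ordinary projector $(-)_{\ord}$ with respect to $Z_{L_P}^+$: since elements of $Z_{L_P}^+$ strictly contract $\overline{N}_0$, the ordinary part should cut out the ``constant in $\overline{N}_0/\overline{N}_i$" direct summand, yielding an $L_0$-equivariant direct summand decomposition $(\cC(K_0/K_{i,i},\co_E/\varpi_E^k)|_{L_0})_{\ord} \cong \cC(L_0/L_i,\co_E/\varpi_E^k)$. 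Plugging this into (\ref{isoOrd}) and taking $\varinjlim_i$ followed by $\varprojlim_k$, one deduces that $\Ord_P(\widehat{S}(U^p,\co_E)_{\overline{\rho}})|_{L_0}$ is a direct summand of $\cC(L_0,\co_E)^{\oplus r}$, which is the claim.

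The main obstacle is the third step: verifying rigorously that the ordinary projector on $\cC(\overline{N}_0/\overline{N}_i,-)$ genuinely splits off the ``constant" summand, rather than merely embedding into it. This is the substantive content of \cite[Cor.~4.6]{BD1}, and is established via Emerton's realisation of $(-)_{\ord}$ as the idempotent attached to a suitable $U_p$-type operator coming from $z \in Z_{L_P}^+$, whose compatibility with the Iwahori decomposition forces the ordinary idempotent to project exactly onto the $L_0$-direction.
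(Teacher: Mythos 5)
The paper offers no argument for this lemma beyond the citation of \cite[Cor.~4.6]{BD1}, and your proposal takes essentially the same route: after unwinding (\ref{isoOrd}) and Lemma \ref{projH}, the whole content is the identification of the ordinary summand of $\cC(K_0/K_{i,i},\co_E/\varpi_E^k)^{\oplus r}$ with $\cC(L_0/L_i,\co_E/\varpi_E^k)^{\oplus r}$ (note the Hecke operators attached to $z\in Z_{L_P}^+$ do not transfer through the merely $K_0$-equivariant isomorphism of Lemma \ref{projH}, so this step cannot be purely formal), which you correctly isolate and defer to exactly that corollary. Your surrounding reductions — the $K_0$-equivariance of the localisation at $\fm_{\overline{\rho}}$, and the description of $K_0/K_{i,i}$ as $(\overline{N}_0/\overline{N}_i)\times(L_0/L_i)$ with $L_0$ acting by right translation on the second factor only — are sound.
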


\subsection{$P$-ordinary Hecke algebra}
\noindent Assume $\Ord_P(\widehat S(U^p, \co_E)_{\overline{\rho}})\neq 0$. Note that by (\ref{Pord: dense1}), this implies $\Ord_P(S(U^p, \co_E)_{\overline{\rho}})\neq 0$. By the local-global compatibility for classical local Langlands correspondence (see \cite[Thm. 6.5 (v)]{Thor}, \cite{Car12}) and using \cite[Prop. 5.10]{BD1}, we can deduce that $\overline{\rho}_{\widetilde{v}}$ is $P_{\widetilde{v}}$-ordinary, for $v\in S_p$.
\\

\noindent
For any $i\geq 0$, $*\in \{\co_E, \co_E/\varpi_E^s\}$, $(S(U^p, *)_{\overline{\rho}}^{K_{i,i}})_{\ord}=(S(U^p K_{i,i}, *)_{\overline{\rho}})_{\ord}$ is stable by $\bT(U^p)$ (since the action of $\bT(U^p)$ on $S(U^{p}, *)^{K_{i,i}}_{\overline{\rho}}$ commutes with that of $L_P^+$), and we denote by $\bT(U^p K_{i,i}, *)_{\overline{\rho}}^{P-\ord}$ the $\co_E$-subalgebra of the endomorphism ring of $(S(U^p, *)_{\overline{\rho}}^{K_{i,i}})_{\ord}$ generated by the operators in $\bT(U^p)$. From the natural $\bT(U^p)$-equivariant injection
\begin{equation*}
\big(S(U^p,*)_{\overline{\rho}}^{K_{i,i}}\big)_{\ord} \hooklongrightarrow S(U^p, *)_{\overline{\rho}}^{K_{i,i}}\cong S(U^{p}K_{i,i}, *)_{\overline{\rho}},
\end{equation*}
we have a natural surjection of local $\co_E$-algebras (finite  over $\co_E$): $\bT(U^{p}K_{i,i}, *)_{\overline{\rho}} \twoheadrightarrow \bT(U^{p}K_{i,i}, *)_{\overline{\rho}}^{P-\ord}.$
We have $\bT(U^pK_{i,i}, \co_E)_{\overline{\rho}}^{P-\ord}\cong \varprojlim_s \bT(U^pK_{i,i}, \co_E/\varpi_E^s)_{\overline{\rho}}^{P-\ord}$.
We set:
$$\widetilde{\bT}(U^p)^{P-\ord}_{\overline{\rho}}:=\varprojlim_i \bT(U^p K_{i,i}, \co_E)_{\overline{\rho}}^{P-\ord}$$
which is thus easily checked to be a quotient of $\widetilde{\bT}(U^p)_{\overline{\rho}}$ and is also a complete local $\co_E$-algebra of residue field $k_E$. We have as in \cite[Lem. 6.7, 6.8 (1)]{BD1}:
\begin{lemma}\label{lem: Pord-reduce}
The $\co_E$-algebra $\widetilde{\bT}(U^{p})_{\overline{\rho}}^{P-\ord}$ is reduced and the natural action of $\widetilde{\bT}(U^p)_{\overline{\rho}}^{P-\ord}$ on $\Ord_P(S(U^p, E)_{\overline{\rho}})$ is faithful, and extends to a faithful action on $\Ord_P(\widehat{S}(U^p, \co_E)_{\overline{\rho}})$.
\end{lemma}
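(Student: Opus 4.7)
The plan is to follow the template of \cite[Lem.~6.7, Lem.~6.8(1)]{BD1}, adapted to the parabolic $P$. I would organize the proof in three steps: reducedness at each finite level; passage to the inverse limit; and faithfulness by a tautological plus density argument.

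First, I claim that for each $i\geq 0$ the finite $\co_E$-algebra $\bT(U^pK_{i,i},\co_E)_{\overline{\rho}}^{P-\ord}$ is reduced. By construction it is the image of $\bT(U^p)$ in $\End_{\co_E}\!\big((S(U^pK_{i,i},\co_E)_{\overline{\rho}})_{\ord}\big)$, and this target is $\co_E$-torsion-free (it is cut out of the $\co_E$-finite-free module $S(U^pK_{i,i},\co_E)_{\overline{\rho}}$ by the idempotent associated with the ordinary maximal ideals in the auxiliary $\co_E$-subalgebra $B\subseteq\End_{\co_E}(S(\cdots))$ generated by $\iota(Z_{L_P}^+)$, and this idempotent commutes with $\bT(U^p)$). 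Hence $\bT(U^pK_{i,i},\co_E)_{\overline{\rho}}^{P-\ord}$ is $\co_E$-torsion-free, so it suffices to check reducedness after tensoring with $E$. By (\ref{equ: lalgaut}) and classical local-global compatibility, $(S(U^pK_{i,i},E)_{\overline{\rho}})_{\ord}$ decomposes as a direct sum of Hecke eigenlines for pairwise distinct eigensystems, so $\bT(U^pK_{i,i},\co_E)_{\overline{\rho}}^{P-\ord}\otimes_{\co_E}E$ is a finite product of fields, hence reduced. Reducedness then descends to the inverse limit $\widetilde{\bT}(U^p)_{\overline{\rho}}^{P-\ord}=\varprojlim_i\bT(U^pK_{i,i},\co_E)_{\overline{\rho}}^{P-\ord}$: any nilpotent element maps to a nilpotent in each factor, which must vanish.

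Next, faithfulness on $\Ord_P(S(U^p,\co_E)_{\overline{\rho}})=\varinjlim_i(S(U^pK_{i,i},\co_E)_{\overline{\rho}})_{\ord}$ is essentially tautological: each $\bT(U^pK_{i,i},\co_E)_{\overline{\rho}}^{P-\ord}$ was \emph{defined} as a subring of $\End_{\co_E}((S(U^pK_{i,i},\co_E)_{\overline{\rho}})_{\ord})$, and an element of the inverse limit annihilating the colimit annihilates each finite level, hence is zero. Since $\widetilde{\bT}(U^p)_{\overline{\rho}}^{P-\ord}$ is $\co_E$-torsion-free (as an inverse limit of $\co_E$-torsion-free rings), faithfulness on the $\co_E$-lattice implies faithfulness on its $E$-tensor, i.e.\ on $\Ord_P(S(U^p,E)_{\overline{\rho}})$.

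Finally, to extend faithfulness to the Banach representation $\Ord_P(\widehat{S}(U^p,\co_E)_{\overline{\rho}})$, I use the density statement following (\ref{Pord: dense1}), which asserts that $\Ord_P(S(U^p,\co_E)_{\overline{\rho}})$ is dense in $\Ord_P(\widehat{S}(U^p,\co_E)_{\overline{\rho}})$. If $x\in\widetilde{\bT}(U^p)_{\overline{\rho}}^{P-\ord}$ annihilates the completion then it annihilates the dense smooth subspace, and by the preceding step $x=0$. The only real bookkeeping obstacle is checking that the transition maps in the inverse systems of Hecke algebras are compatible with the ordinary idempotents so that the limit $\widetilde{\bT}(U^p)_{\overline{\rho}}^{P-\ord}$ is well-defined and its action on the completed space is what one expects; this compatibility is encoded in the chain of isomorphisms (\ref{isoOrd}) and poses no serious difficulty.
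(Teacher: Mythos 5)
Your proof is correct and follows essentially the same route as the paper, which simply defers to \cite[Lem.~6.7, 6.8(1)]{BD1}: reducedness at each finite level via $\co_E$-torsion-freeness plus semisimplicity of the classical Hecke action coming from (\ref{equ: lalgaut}), passage to the inverse limit, tautological faithfulness on the smooth ordinary part, and extension to the completion via (\ref{isoOrd}) and the density statement after (\ref{Pord: dense1}). The only nitpick is that the reduction from $\Ord_P(S(U^p,\co_E)_{\overline{\rho}})$ to $\Ord_P(S(U^p,E)_{\overline{\rho}})$ rests on the $\co_E$-torsion-freeness of the \emph{module} (so that it injects into its $E$-tensor), not of the Hecke algebra, but this is immediate since the module sits inside $S(U^p,\co_E)_{\overline{\rho}}$.
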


\noindent Let $\fm$ be a maximal ideal of $\widetilde{\bT}(U^p)^{P-\ord}_{\overline{\rho}}[1/p]$ such that $\Ord_P(\widehat{S}(U^p,E)_{\overline{\rho}})[\fm]\neq 0$. Using the natural composition
\begin{equation*}
  R_{\overline{\rho},\cS} \twoheadlongrightarrow \widetilde{\bT}(U^p)_{\overline{\rho}} \twoheadlongrightarrow \widetilde{\bT}(U^p)_{\overline{\rho}}^{P-\ord},
\end{equation*}
we associate to $\fm$ a continuous representation $\rho_{\fm}: \Gal_{F^+} \ra \cG_n(E)$. Let $\rho_{\fm,F}: \Gal_F\xrightarrow{\rho_{\fm}} \GL_n(E)\times \GL_1(E)\ra \GL_n(E)$, and $\rho_{\fm,\widetilde{v}}:=\rho_{\fm,F}|_{\Gal_{F_{\widetilde{v}}}}$.
\begin{conjecture}\label{conjPord}
$\rho_{\fm,\widetilde{v}}$ is $P_{\widetilde{v}}$-ordinary for all $v|p$.
\end{conjecture}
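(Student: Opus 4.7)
The plan is to reduce the conjecture to a Galois deformation statement, then exploit the Zariski density of classical (locally algebraic) points together with classical local–global compatibility.

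First I would verify the conjecture at classical points. By (\ref{Pord: dense1}) the smooth subrepresentation $\Ord_P(S(U^p,\co_E)_{\overline{\rho}})$ is $\varpi_E$-adically dense in $\Ord_P(\widehat{S}(U^p,\co_E)_{\overline{\rho}})$, and its $L_P(\Q_p)$-locally algebraic vectors decompose, via (\ref{equ: lalgaut}) and the exactness/adjunction properties of Emerton's $\Ord_P$, into contributions of automorphic representations $\pi$ of $G(\bA_{F^+})$ with $\Ord_P(\pi_p)\neq 0$. For such $\pi$ the non-vanishing of $\Ord_P(\pi_p)$ forces $\pi_p$ to be a subquotient of a parabolic induction from $P$, and by classical local–global compatibility (\cite[Thm. 6.5(v)]{Thor}, \cite{Car12}) combined with \cite[Prop. 5.10]{BD1} the Galois representation $\rho_{\pi,\widetilde{v}}$ carries a $\Gal_{F_{\widetilde{v}}}$-stable $P_{\widetilde{v}}$-filtration whose graded pieces have the expected Hodge–Tate weights. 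Hence the conjecture holds at every maximal ideal $\fm'$ of $\widetilde{\bT}(U^p)^{P-\ord}_{\overline{\rho}}[1/p]$ attached to a classical automorphic form.

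Second I would spread $P$-ordinariness to arbitrary $\fm$ by deformation theory. For each $v|p$ and each candidate $P_{\widetilde{v}}$-filtration $\cF_{\widetilde{v}}$ on $\overline{\rho}_{\widetilde{v}}$, Proposition \ref{propPord} furnishes a quotient $R_{\overline{\rho}_{\widetilde{v}},\cF_{\widetilde{v}}}^{P_{\widetilde{v}}-\ord,\square}$ of $R^{\square}_{\overline{\rho}_{\widetilde{v}}}$ which pro-represents $P_{\widetilde{v}}$-ordinary framed deformations. Adding these local conditions at $p$ to the global deformation problem $\cS$ gives a quotient $R_{\overline{\rho},\cS}^{P-\ord}$ of $R_{\overline{\rho},\cS}$ whose every $\overline{E}$-point is $P$-ordinary by construction. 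It then suffices to show that the surjection $R_{\overline{\rho},\cS}\twoheadrightarrow\widetilde{\bT}(U^p)^{P-\ord}_{\overline{\rho}}$ factors through $R_{\overline{\rho},\cS}^{P-\ord}$. Because $\widetilde{\bT}(U^p)^{P-\ord}_{\overline{\rho}}$ is reduced (Lemma \ref{lem: Pord-reduce}), such a factorization is forced as soon as the two resulting maps agree on a Zariski-dense subset of $\Spec \widetilde{\bT}(U^p)^{P-\ord}_{\overline{\rho}}[1/p]$; the first step supplies exactly this information on the classical locus.

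The main obstacle is therefore twofold: (i) establishing Zariski density of the classical points, and (ii) extending the $P_{\widetilde{v}}$-filtration across the non-classical locus in a way compatible with $\cF_{\widetilde{v}}$. For (i) one can invoke the eigenvariety-style machinery underlying the $\GL_2(\Q_p)$-ordinary family (cf.\ \cite{BD1}), using Lemma \ref{LPzp} to view $\Ord_P(\widehat{S}(U^p,\co_E)_{\overline{\rho}})$ as living inside a Hida-type flat family over $\co_E[\![Z_{L_P}(\Z_p)]\!]$ in which classical weights are dense. For (ii), the technical point is that the residual filtration $\cF$ may not lift uniquely on the nose, but Hypothesis \ref{hypo: gene1} together with Lemma \ref{pord0}(2) ensures uniqueness of the filtration once it exists, so one can glue the $\cF_{\widetilde{v}}$'s obtained from classical points into a single global choice — this is essentially the content of the $R=\bT$-type result announced in the introduction and will be the most delicate part of the argument.
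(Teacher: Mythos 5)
Your overall strategy — verify $P_{\widetilde{v}}$-ordinarity at classical points via classical local--global compatibility and \cite[Prop. 5.10]{BD1}, then propagate it to all of $\Spec \widetilde{\bT}(U^p)^{P-\ord}_{\overline{\rho}}[1/p]$ by cutting out a quotient of $R_{\overline{\rho},\cS}$ with a $P$-ordinary local condition at $p$ and using reducedness plus Zariski density of the classical (benign) locus — is exactly the route the paper takes, and your identification of Lemma \ref{LPzp} and the density of locally algebraic vectors as the source of the density statement is correct.

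The genuine gap is in your final ``gluing'' step. You claim that Hypothesis \ref{hypo: gene1} together with Lemma \ref{pord0}(2) lets you glue the filtrations $\cF_{\widetilde{v}}$ arising from classical points into a single global choice. This conflates two different uniqueness statements: Lemma \ref{pord0}(2) says that a \emph{fixed} residual filtration $\cF$ lifts uniquely to any deformation $\rho_A\in \Def^{P-\ord,\square}_{\overline{\rho},\cF}(A)$; it says nothing about uniqueness of the residual $P_{\widetilde{v}}$-filtration on $\overline{\rho}_{\widetilde{v}}$ itself. Different classical points can perfectly well induce different residual filtrations — ruling this out is precisely the content of the $\fB$-genericity condition (Definition \ref{Bgene}, Lemma \ref{bgene1}), which is imposed only in the later sections and is \emph{not} available here. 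If you fix one filtration $\cF_{\widetilde{v}}$ per place, the classical points whose induced residual filtration differs from your choice do not factor through your $R^{P-\ord}_{\overline{\rho},\cS}$, and the density argument collapses. The correct fix, and what the paper does, is to take as local condition at $\widetilde{v}$ the \emph{union} of the conditions over all (finitely many) residual $P_{\widetilde{v}}$-filtrations, i.e. the quotient $R^{\square}_{\overline{\rho}}/I_{\widetilde{v}}$ with $I_{\widetilde{v}}=\cap_{\cF_{\widetilde{v}}} I_{\cF_{\widetilde{v}}}$; this is still a local deformation problem by \cite[Lem. 3.2]{BLGHT} (a point you would also need to check, since the global formalism of \cite{Thor} requires it), every classical point factors through it, and a point of the resulting global quotient is $P_{\widetilde{v}}$-ordinary with respect to \emph{some} filtration — which is all Conjecture \ref{conjPord} asks for. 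Note also that this is why the statement is only proved under the hypothesis that \emph{every} residual $P_{\widetilde{v}}$-filtration satisfies Hypothesis \ref{hypo: gene1}, not just one of them.
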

\begin{proposition}
  Suppose that for all $v|p$, any $P_{\widetilde{v}}$-filtration on $\overline{\rho}_{\widetilde{v}}$ satisfies Hypothesis \ref{hypo: gene1}, then Conjecture \ref{conjPord} holds.
\end{proposition}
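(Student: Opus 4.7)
The plan is to combine (a) density of classical (locally algebraic) points in $\Spec \widetilde{\bT}(U^p)^{P-\ord}_{\overline{\rho}}[1/p]$, (b) classical local-global compatibility showing that such classical points are $P_{\widetilde v}$-ordinary, and (c) closedness of the $P$-ordinary condition on the Galois deformation space, which is exactly what Hypothesis \ref{hypo: gene1} buys us through Proposition \ref{propPord}.

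First I would extract the Hida-style structure on the family. By Lemma \ref{LPzp}, the Schikhof dual $\Ord_P(\widehat{S}(U^p,\co_E)_{\overline{\rho}})^d$ is a finitely generated projective module over the Iwasawa algebra $\co_E[\![L_P(\Z_p)]\!]$. Consequently $\widetilde{\bT}(U^p)^{P-\ord}_{\overline{\rho}}$ is a finite module over (the image of) the central Iwasawa subalgebra $\co_E[\![Z_{L_P}(\Z_p)]\!]$, so its rigid generic fibre is finite over weight space. By density of locally algebraic vectors in $\cC(L_P(\Z_p),E)$ (Schneider--Teitelbaum) and the duality (\ref{duali}), the locally algebraic vectors are dense in $\Ord_P(\widehat{S}(U^p,E)_{\overline{\rho}})$. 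Using the decomposition (\ref{equ: lalgaut}) and Emerton's adjunction between $\Ord_P$ and parabolic induction, the locally $\lambda$-algebraic subspace for a dominant algebraic weight $\lambda$ of $L_P$ corresponds to classical automorphic representations $\pi$ whose $\pi_p$ is a quotient of $\Ind_{\overline P(\Q_p)}^{G(F^+\otimes_\Q\Q_p)}(\sigma)$ for a smooth $\sigma$ of $L_P(\Q_p)$. Varying $\lambda$ over all dominant algebraic weights yields a Zariski-dense subset of classical maximal ideals in the generic fibre of $\widetilde{\bT}(U^p)^{P-\ord}_{\overline{\rho}}$.

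Next I would invoke classical local-global compatibility at $\widetilde v$: for any such classical $\fm$, \cite[Thm. 6.5(v)]{Thor} and \cite{Car12}, together with the compatibility of local Langlands with parabolic induction, imply that $\rho_{\fm,\widetilde v}$ is a successive extension of Galois representations attached (via local Langlands) to the factors of $\sigma$, and hence is $P_{\widetilde v}$-ordinary with graded pieces reducing modulo $p$ to the graded pieces of some $P_{\widetilde v}$-filtration $\cF_{\widetilde v}$ on $\overline{\rho}_{\widetilde v}$. Under Hypothesis \ref{hypo: gene1}, each such $\cF_{\widetilde v}$ is unique among filtrations with its sequence of Jordan--H\"older factors, and there are only finitely many such filtrations.

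Finally, for each of the finitely many $P_{\widetilde v}$-filtrations $\cF_{\widetilde v}$ on $\overline{\rho}_{\widetilde v}$, Proposition \ref{propPord} produces a quotient $R^{P_{\widetilde v}-\ord,\square}_{\overline{\rho}_{\widetilde v},\cF_{\widetilde v}}$ of $R^{\square}_{\overline{\rho}_{\widetilde v}}$ cutting out exactly the $P_{\widetilde v}$-ordinary locus with that filtration. Pulling back via the composition $R_{\overline{\rho},\cS}\twoheadrightarrow\widetilde{\bT}(U^p)_{\overline{\rho}}\twoheadrightarrow\widetilde{\bT}(U^p)^{P-\ord}_{\overline{\rho}}$ and taking the finite union over all pairs $(\cF_{\widetilde v})_{v\mid p}$, one obtains a Zariski closed subset of $\Spec\widetilde{\bT}(U^p)^{P-\ord}_{\overline{\rho}}[1/p]$ consisting of those points whose associated Galois representation is $P_{\widetilde v}$-ordinary at every $v\mid p$. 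By Step 2 this closed subset contains a Zariski-dense set, hence equals the whole space, proving the conjecture. The main obstacle in this outline is the second step, namely making precise the passage from density of locally algebraic vectors in the Banach representation $\Ord_P(\widehat{S}(U^p,E)_{\overline{\rho}})$ to Zariski density of classical maximal ideals in $\widetilde{\bT}(U^p)^{P-\ord}_{\overline{\rho}}[1/p]$; one must match Hecke eigensystems on both sides and ensure that enough classical weights $\lambda$ contribute non-trivially, which ultimately relies on the finiteness of the Hecke algebra over $\co_E[\![Z_{L_P}(\Z_p)]\!]$ established in Step 1.
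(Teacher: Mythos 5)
Your proposal follows essentially the same route as the paper: Zariski-density of classical points in $\Spec \widetilde{\bT}(U^p)^{P-\ord}_{\overline{\rho}}[1/p]$, classical local-global compatibility to see that those points are $P_{\widetilde{v}}$-ordinary, and the observation that Hypothesis \ref{hypo: gene1} (via Proposition \ref{propPord}) makes the union over the finitely many $P_{\widetilde{v}}$-filtrations of the ordinary loci a closed local deformation condition, so that density propagates $P$-ordinarity to every point of the support. The paper packages this by intersecting the kernels $I_{\cF_{\widetilde{v}}}$ into a single local deformation problem and invoking the arguments of \cite[Thm. 6.12]{BD1} both for the density/annihilation step and for the final passage from membership in the closed subscheme back to $P_{\widetilde{v}}$-ordinarity of $\rho_{\fm,\widetilde{v}}$ --- the latter being the one step your outline asserts rather than proves.
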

\begin{proof}
  For $v|p$, and a $P_{\widetilde{v}}$-filtration $\cF_{\widetilde{v}}$ on $\overline{\rho}_{\widetilde{v}}$, by Hypothesis \ref{hypo: gene1}, (the proof of) Proposition \ref{propPord}, the reduced quotient $R_{\overline{\rho}_{\widetilde{v}}, \cF_{\widetilde{v}}}^{P-\ord, \bar{\square}}$ of $R_{\overline{\rho}_{\widetilde{v}}, \cF_{\widetilde{v}}}^{P_{\widetilde{v}}-\ord, \square}$ is a local deformation problem (cf. \cite[Def. 3.2]{Thor}). Denote by $I_{\cF_{\widetilde{v}}}$ the kernel of $R_{\overline{\rho}}^{\square} \twoheadrightarrow R_{\overline{\rho}_{\widetilde{v}}, \cF_{\widetilde{v}}}^{P_{\widetilde{v}}-\ord, \bar{\square}}$, and put $I_{\widetilde{v}}:=\cap_{\cF_{\widetilde{v}}} I_{\cF_{\widetilde{v}}}$ where $\cF_{\widetilde{v}}$ runs through all the (finitely many) $P_{\widetilde{v}}$-filtrations on $\overline{\rho}_{\widetilde{v}}$. Let $R_{\overline{\rho}_{\widetilde{v}}}^{P_{\widetilde{v}}-\ord, \bar{\square}}:=R_{\overline{\rho}}^{\square}/I_{\widetilde{v}}$, which, by \cite[Lem. 3.2]{BLGHT}, is a local deformation problem at the place $\widetilde{v}$. Let $R_{\overline{\rho}, \cS}^{P-\ord}$ be the universal deformation ring of the deformation problem
  \begin{equation*}
  (F/F^+, S, \widetilde{S}, \co_E, \overline{\rho}, \varepsilon^{1-n}\delta_{F/F^+}, \{R_{\overline{\rho}_{\widetilde{v}}}^{\bar{\square}}\}_{v\in S\setminus S_p}, \{R_{\overline{\rho}_{\widetilde{v}}}^{P_{\widetilde{v}}-\ord, \bar{\square}}\}_{v\in S_p}).
\end{equation*}
  Denote by $I$ the kernel of the natural surjection $R_{\overline{\rho}, \cS} \twoheadrightarrow R_{\overline{\rho}, \cS}^{P-\ord}$. By the same argument as in the proof of \cite[Thm. 6.12 (1)]{BD1} (which relies on \cite[Prop. 5.10]{BD1}, noting also that since $\overline{\rho}$ is absolutely irreducible, for any continuous $\Gal_F$-representation $\rho$ with modulo $\varpi_E$ reduction isomorphic to $\overline{\rho}$,  any $P_{\widetilde{v}}$-filtration on $\rho_{\widetilde{v}}:=\rho|_{\Gal_{F_{\widetilde{v}}}}$ naturally induces a $P_{\widetilde{v}}$-filtration on $\overline{\rho}_{\widetilde{v}}$), we have $I\big(\Ord_P(\widehat{S}(U^p,E)_{\overline{\rho}})\big)=0$. The proposition then follows by the same argument as for \cite[Thm. 6.12 (2)]{BD1}.
\end{proof}


\section{$\GL_2(\Q_p)$-ordinary families}
\noindent Keep the notation and assumptions in \S~\ref{ordAut}, in particular, we assume $\Ord_P(\widehat{S}(U^{p}, \co_E)_{\overline{\rho}})\neq 0$ and hence $\overline{\rho}_{\widetilde{v}}$ is $P_{\widetilde{v}}$-ordinary for all $v|p$. Assume moreover for all $v\in S_p$, $F_v^+\cong \Q_p$ and $n_{\widetilde{v},i}\leq 2$ for $i=1, \cdots, k_{\widetilde{v}}$. In this section, using $p$-adic Langlands correspondence for $\GL_2(\Q_p)$, we construct $\GL_2(\Q_p)$-ordinary families from $\Ord_P(\widehat{S}(U^p,E)_{\overline{\rho}})$. We also show a local-global compatibility result of these families, formulated in a similar way as in \cite{Pan2} (in particular, by using the theory of Pa{\v{s}}k{\=u}nas \cite{Pas13}). Under more restrictive assumptions, similar results were essentially obtained in \cite{BD1}, but were stated in a different formulation, due to Emerton \cite{Em4} (using deformations).
\subsection{Benign points}
\noindent As in \cite[\S~7.1.1]{BD1}, we put
\begin{multline}\label{LpZp+}
\Ord_P(\widehat{S}(U^p,E)_{\overline{\rho}})_+^{L_P(\Z_p)-\alg}:=\bigoplus_{\sigma} \Hom_{L_P(\Z_p)}(\sigma, \Ord_P(\widehat{S}(U^p,E)_{\overline{\rho}}))\otimes_E \sigma\\
\cong \bigoplus_{\sigma} (\Ord_P(\widehat{S}(U^p,E)_{\overline{\rho}})\otimes_E \sigma^\vee)^{L_P(\Z_p)} \otimes_E \sigma
\end{multline}
where $\sigma$ runs through algebraic representations of $L_P(\Q_p)\cong \prod_{v\in S_p} L_{P_{\widetilde{v}}}(\Q_p)$ of highest weight $(\lambda_{\widetilde{v},1}, \cdots, \lambda_{\widetilde{v},n})_{v\in S_p}$ satisfying $\lambda_{\widetilde{v},i}\geq \lambda_{\widetilde{v},i+1}$ for all $i\in\{1,\cdots, n-1\}$, $v\in S_p$.  We have as in \cite[Prop. 7.2]{BD1}:
\begin{proposition}\label{densalg}
  $\Ord_P(\widehat{S}(U^p,E)_{\overline{\rho}})_+^{L_P(\Z_p)-\alg}$ is dense in $\Ord_P(\widehat{S}(U^p,E)_{\overline{\rho}})$.
\end{proposition}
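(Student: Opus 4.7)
The plan is to follow the strategy of \cite[Prop.~7.2]{BD1} closely. The key structural input is Lemma \ref{LPzp}, which realizes $\Ord_P(\widehat{S}(U^p,\co_E)_{\overline{\rho}})|_{L_P(\Z_p)}$ as a direct summand (in the category of unitary $\co_E$-Banach $L_P(\Z_p)$-representations) of $\cC(L_P(\Z_p),\co_E)^{\oplus r}$. Tensoring with $E$, it therefore suffices to control the density of suitable algebraic vectors inside $\cC(L_P(\Z_p),E)^{\oplus r}$ and to pull the result back along the projection.

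The first step is a density statement of Mahler/Amice type: matrix coefficients of irreducible algebraic representations of $L_P$ are dense in $\cC(L_P(\Z_p),E)$, and hence in any $L_P(\Z_p)$-stable closed direct summand. This already yields density of the full $L_P(\Z_p)$-algebraic subspace of $\Ord_P(\widehat S(U^p,E)_{\overline{\rho}})$. What remains is to see that only those $\sigma$ with $\GL_n$-dominant highest weight (i.e.\ $\lambda_{\widetilde v,i}\geq\lambda_{\widetilde v,i+1}$ across blocks, not merely within each $\GL_{n_{\widetilde v,i}}$-block) can in fact occur, so that the $L_P(\Z_p)$-algebraic subspace coincides with $\Ord_P(\widehat{S}(U^p,E)_{\overline{\rho}})_+^{L_P(\Z_p)-\alg}$.

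The second step uses Emerton's adjunction between $\Ord_P$ and parabolic induction from $\overline{P}$, combined with the local-global decomposition (\ref{equ: lalgaut}). Concretely, for an irreducible algebraic representation $\sigma$ of $L_P(\Q_p)$ of highest weight $\lambda$, a nonzero $\Hom_{L_P(\Z_p)}(\sigma,\Ord_P(\widehat{S}(U^p,E)_{\overline{\rho}}))$ produces, after passage to the locally algebraic part of $\widehat{S}(U^p,E)_{\overline{\rho}}$, a nonzero contribution in one of the summands $(\pi^{\infty,p})^{U^p}\otimes(\pi_p\otimes W_p)$ of (\ref{equ: lalgaut}). The algebraic piece $W_p$ is an irreducible algebraic representation of $\prod_{v\in S_p}\GL_n(F_{\widetilde v})$, hence its highest weight is automatically $\GL_n$-dominant; the $L_P$-weights that survive in $\Ord_P$ (those which make $Z_{L_P}^+$ act invertibly on the corresponding isotypic component of $J_{\overline{P}}(W_p)\otimes J_{\overline P}(\pi_p)$) inherit this $\GL_n$-dominance, giving precisely the ``+'' condition.

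The step I expect to be the main obstacle is this second one: although the first step is routine $p$-adic analysis, carrying out the ``$\sigma$ is automatically +'' argument requires matching Emerton's $\Ord_P$-functor with a Jacquet-type functor on the locally algebraic level, and then transferring across (\ref{equ: lalgaut}) while keeping track of both the ordinary condition on $Z_{L_P}^+$ and the $\overline{\rho}$-localisation. Once these identifications are in place, the $\GL_n$-dominance of the highest weights of the $W_p$'s—which is built into (\ref{equ: lalgaut})—delivers the ``+'' constraint, and the density of $\Ord_P(\widehat{S}(U^p,E)_{\overline{\rho}})_+^{L_P(\Z_p)-\alg}$ then follows from the density of the full $L_P(\Z_p)$-algebraic subspace established in Step~1.
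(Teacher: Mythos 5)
Your Step 1 --- the reduction via Lemma \ref{LPzp} to a direct summand of $\cC(L_P(\Z_p),E)^{\oplus r}$ together with Mahler-type density of algebraic matrix coefficients --- is correct, and it is how the proof of \cite[Prop. 7.2]{BD1} (which this proposition follows) begins. The gap is in your Step 2: it is \emph{not} true that $\Hom_{L_P(\Z_p)}(\sigma,\Ord_P(\widehat{S}(U^p,E)_{\overline{\rho}}))$ vanishes when the highest weight of $\sigma$ fails the inequalities $\lambda_{\widetilde{v},j}\geq\lambda_{\widetilde{v},j+1}$ across consecutive blocks, so the full $L_P(\Z_p)$-algebraic subspace does not coincide with the ``$+$'' subspace. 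This $\Hom$ is taken over the compact group $L_P(\Z_p)$ alone, and $\Ord_P(\widehat{S}(U^p,\co_E)_{\overline{\rho}})|_{L_P(\Z_p)}$ is a direct summand of $\cC(L_P(\Z_p),\co_E)^{\oplus r}$, in which \emph{every} irreducible algebraic $\sigma$ occurs; e.g.\ whenever the summand is free over $\co_E[[L_P(\Z_p)]]$ (as in Hida-theoretic situations, where the ordinary family covers all of weight space) every type, dominant for $\GL_n$ or not, appears. A nonzero $L_P(\Z_p)$-equivariant map $\sigma\to\Ord_P(\widehat{S}(U^p,E)_{\overline{\rho}})$ gives no control on the $L_P(\Q_p)$-action on the closed subrepresentation its image generates, so it cannot be fed into the adjunction for $\Ord_P$ nor matched with a summand of (\ref{equ: lalgaut}); that mechanism is available for $L_P(\Q_p)$-locally algebraic vectors of ``$+$''-dominant type (this is what Proposition \ref{beni4} exploits), not for arbitrary $L_P(\Z_p)$-algebraic vectors. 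Step 2 therefore cannot be repaired along these lines.

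The statement is nonetheless true, and the missing step is elementary and entirely local on $L_P(\Z_p)$: one shows directly that the ``$+$''-algebraic functions are already dense in $\cC(L_P(\Z_p),E)$. Write $\dett_{\widetilde{v},i}$ for the determinant of the $i$-th block at $v$. If $Q$ is a polynomial of degree $\leq d$ in the matrix entries of $\prod_{v,i}\GL_{n_{\widetilde{v},i}}(\Z_p)$, then $Q\cdot\prod_{v,i}\dett_{\widetilde{v},i}^{N_{\widetilde{v},i}}$ lies in the span of matrix coefficients of representations satisfying the ``$+$'' condition as soon as $N_{\widetilde{v},i}-N_{\widetilde{v},i+1}\geq d$ for all $i$ (each block's weight entries lie in $[N_{\widetilde{v},i},N_{\widetilde{v},i}+d]$, so the cross-block inequalities hold). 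Taking $N_{\widetilde{v},i}=(k_{\widetilde{v}}-i)\,p^M(p-1)$ with $p^M(p-1)\geq d$ and letting $M\to\infty$, the factor $\prod_{v,i}\dett_{\widetilde{v},i}^{N_{\widetilde{v},i}}$ tends to $1$ uniformly on $L_P(\Z_p)$, so these ``$+$''-algebraic functions converge uniformly to $Q$; by Mahler such $Q$ are dense. Pushing forward along the continuous $L_P(\Z_p)$-equivariant projection of $\cC(L_P(\Z_p),E)^{\oplus r}$ onto the summand $\Ord_P(\widehat{S}(U^p,E)_{\overline{\rho}})$, which sends ``$+$''-algebraic vectors into $\Ord_P(\widehat{S}(U^p,E)_{\overline{\rho}})_+^{L_P(\Z_p)-\alg}$, yields the proposition; this is the route taken in \cite[Prop. 7.2]{BD1}.
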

\begin{definition}
(1) A closed point $x\in \Spec \widetilde{\bT}(U^p)_{\overline{\rho}}^{P-\ord}[1/p]$ is benign if:
\begin{equation*}
\Ord_P\big(\widehat{S}(U^p, E\big)_{\overline{\rho}}[\fm_x]\big)^{L_P(\Z_p)-\alg}_+\neq 0.
\end{equation*}

\noindent (2) A closed point $x \in \Spec \widetilde{\bT}(U^p)_{\overline{\rho}}[1/p]$ is classical if $\widehat{S}(U^p,E)_{\overline{\rho}}[\fm_x]^{\lalg}\neq 0$.
\end{definition}
\noindent By the same argument as in the proof of \cite[Prop. 7.5]{BD1}, we have
\begin{proposition}\label{beni1}
(1) A benign point is classical.

\noindent (2) The benign points are Zariski-sense in $\Spec \widetilde{\bT}(U^p)_{\overline{\rho}}^{P-\ord}[1/p]$.
\end{proposition}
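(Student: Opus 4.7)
The plan is to follow the strategy of \cite[Prop. 7.5]{BD1} with only cosmetic modifications for the current, slightly more flexible, $P$-ordinary setup.

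For (1), given a benign point $x$, pick a nonzero vector $v \in \Ord_P(\widehat{S}(U^p,E)_{\overline{\rho}}[\fm_x])$ lying in the $\sigma$-isotypic component for some irreducible algebraic representation $\sigma$ of $L_P(\Q_p)$ whose highest weight is dominant (in the sense of the decomposition (\ref{LpZp+})). Applying Emerton's adjunction between the parabolic induction from $\overline{P}(\Q_p)$ and the ordinary part functor $\Ord_P(-)$, the sub-$L_P(\Q_p)$-representation of $\Ord_P(\widehat{S}(U^p,E)_{\overline{\rho}}[\fm_x])$ generated by $v$, which is of the form $\sigma \otimes \pi_{\sm}$ for some smooth $L_P(\Q_p)$-representation $\pi_{\sm}$, lifts to a nonzero $G(\Q_p)$-equivariant map
\[
\Ind_{\overline{P}(\Q_p)}^{G(\Q_p)}\bigl(\sigma \otimes \pi_{\sm}\bigr) \lra \widehat{S}(U^p,E)_{\overline{\rho}}[\fm_x].
\]
Since $\sigma$ is algebraic with dominant highest weight (this is where the $+$ subscript is used crucially), the image consists of nonzero locally algebraic vectors, so $\widehat{S}(U^p,E)_{\overline{\rho}}[\fm_x]^{\lalg}\neq 0$ and $x$ is classical.

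For (2), let $J \subset \widetilde{\bT}(U^p)_{\overline{\rho}}^{P-\ord}[1/p]$ be the intersection of the maximal ideals $\fm_x$ corresponding to benign points $x$; it suffices to show $J = 0$. By Proposition \ref{densalg}, the subspace $\Ord_P(\widehat{S}(U^p,E)_{\overline{\rho}})_+^{L_P(\Z_p)-\alg}$ is dense in $\Ord_P(\widehat{S}(U^p,E)_{\overline{\rho}})$. For any $v$ in this dense subspace, the Hecke orbit $\widetilde{\bT}(U^p)_{\overline{\rho}}^{P-\ord} \cdot v$ remains inside a fixed $\sigma$-isotypic component; combining the classicality result of part (1) with the admissibility supplied by Lemma \ref{LPzp}, one sees that this Hecke submodule is finite-dimensional over $E$. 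Decomposing it (after extension of scalars to $\overline{E}$) into generalized Hecke eigenspaces, each eigensystem which appears yields a nonzero $L_P(\Z_p)$-algebraic dominant vector in $\Ord_P(\widehat{S}(U^p,E)_{\overline{\rho}}[\fm_y])$ for the corresponding maximal ideal $\fm_y$, which is therefore benign. Thus $J\cdot v = 0$; by continuity of the Hecke action and density, $J$ annihilates all of $\Ord_P(\widehat{S}(U^p,E)_{\overline{\rho}})$, and the faithfulness in Lemma \ref{lem: Pord-reduce} forces $J = 0$.

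The most delicate step is establishing the finite-dimensionality of the Hecke-generated submodule in part (2), which ultimately rests on a Hida-style control of classical $P$-ordinary automorphic forms of fixed weight and tame level, together with the admissibility furnished by Lemma \ref{LPzp}. A secondary technical point in part (1) is the careful application of Emerton's adjunction in the presence of multiple $\GL_2(\Q_p)$ and $\GL_1(\Q_p)$ factors in $L_P$, so that the produced $G(\Q_p)$-map genuinely lands in the locally algebraic vectors of the full Banach space; this is standard but, as in \cite{BD1}, requires some care in identifying the correct smooth twist $\pi_{\sm}$.
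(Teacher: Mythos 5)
Your argument is essentially the paper's: Proposition \ref{beni1} is proved by the same reasoning as \cite[Prop. 7.5]{BD1} — part (1) by using Emerton's adjunction to lift a nonzero dominant $L_P(\Z_p)$-algebraic vector to a nonzero $G(F^+\otimes_{\Q}\Q_p)$-equivariant map out of a parabolic induction whose locally algebraic part hits $\widehat{S}(U^p,E)_{\overline{\rho}}[\fm_x]^{\lalg}$, and part (2) by combining the density of Proposition \ref{densalg} with the finite-dimensionality of each $\sigma$-isotypic component (Lemma \ref{LPzp}) and the faithfulness of the Hecke action (Lemma \ref{lem: Pord-reduce}). The only step to tighten is the inference ``each eigensystem appearing is benign, thus $J\cdot v=0$'': a generalized eigenspace is a priori only annihilated by a power of $\fm_y$, so either invoke semisimplicity of the Hecke action on these vectors (as the paper does via (\ref{equ: lalgaut}), cf. Proposition \ref{beni4}(2)) or conclude only that some $J^N$ acts by zero and then use that $\widetilde{\bT}(U^p)_{\overline{\rho}}^{P-\ord}$ is reduced.
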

\noindent Let $x$ be a benign point, we can attach to $x$ a dominant weight (i.e. $\lambda_{\widetilde{v},1}\geq \cdots \geq \lambda_{\widetilde{v},n}$)
\begin{equation*}\ul{\lambda}=\prod_{v\in S_p} \ul{\lambda}_{\widetilde{v}}=\prod_{v\in S_p}(\lambda_{\widetilde{v},1}, \cdots, \lambda_{\widetilde{v}, n})\end{equation*}
 such that $\rho_{x,\widetilde{v}}$ is de Rham of Hodge-Tate weights $(\lambda_{\widetilde{v},1}, \lambda_{\widetilde{v},2}-1, \cdots, \lambda_{\widetilde{v}, n}-n+1)$ for $v\in S_p$ (e.g. by Proposition \ref{beni1} and \cite[Thm. 6.5(v)]{Thor}). We have
\begin{proposition}\label{beni2}Let $x$ be a benign point.

\noindent (1) $\rho_{x,\widetilde{v}}$ is semi-stable for all $v\in S_p$.

\noindent (2) $\rho_{x,\widetilde{v}}$ is $P_{\widetilde{v}}$-ordinary with a $P_{\widetilde{v}}$-filtration $\cF_{x,\widetilde{v}}$ satisfying that $\rho_{x, \widetilde{v},i}:=\gr^i \cF_{x,\widetilde{v}}$ (of dimension $n_{\widetilde{v},i}$) is crystalline of Hodge-Tate weights $(\lambda_{\widetilde{v},s_{\widetilde{v},i}+1}-s_{\widetilde{v},i}, \lambda_{\widetilde{v},s_{\widetilde{v},i}+n_{\widetilde{v},i}}-(s_{\widetilde{v},i}+n_{\widetilde{v},i}-1))$. Moreover, let $\alpha_{\widetilde{v},s_{\widetilde{v},i}+1}$, $\alpha_{\widetilde{v},s_{\widetilde{v},i}+n_{\widetilde{v},i}}$ be the eigenvalues of $\varphi$ on $D_{\cris}(\rho_{x,\widetilde{v},i})$, then $\alpha_{\widetilde{v},s_{\widetilde{v},i}+1}\alpha_{\widetilde{v},s_{\widetilde{v},i}+n_{\widetilde{v},i}}^{-1}\neq p^{\pm 1}$.

\noindent (3) There exists an $L_P(\Q_p)$-equivariant injection
  \begin{equation}\label{Pord: emb0}
 \otimes_{v\in S_p}\big(\otimes_{i=1,\cdots, k_{\widetilde{v}}} \widehat{\pi}(\rho_{x, \widetilde{v}, i})^{\lalg}\otimes_{k(x)} \varepsilon^{s_{\widetilde{v},i+1}-1}\circ \dett\big) \hooklongrightarrow \Ord_P\big(\widehat{S}(U^p, E\big)_{\overline{\rho}}[\fm_x]\big),
  \end{equation}
where $\widehat{\pi}(\rho_{x,\widetilde{v},i})$ denotes the continuous finite length representation of $\GL_{n_{\widetilde{v},i}}(\Q_p)$ over $k(x)$ (the residue field at $x$) associated to $\rho_{x,\widetilde{v},i}$ via the $p$-adic local Langlands correspondence for $\GL_2(\Q_p)$ (\cite{Colm10a}) normalized as in \emph{loc. cit.} and \cite{Pas13} \footnote{note that the normalization is slightly different from that in \cite{BD1}.} (so that the central character of $\widehat{\pi}(\rho_{x,\widetilde{v},i})$ is equal to $(\wedge^2 \rho_{x,\widetilde{v},i})\varepsilon^{-1}$) when $n_{\widetilde{v},i}=2$, via local class filed theory normalized by sending $p$ to a (lift of) geometric Frobenius when $n_{\widetilde{v},i}=1$.
\end{proposition}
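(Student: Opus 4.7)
The plan is to exploit the classicality of benign points (Proposition \ref{beni1}(1)) together with the classical local-global compatibility at $p$, and then promote the resulting smooth/locally algebraic data to continuous $\GL_2(\Q_p)$-representations via the $p$-adic local Langlands correspondence.

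First I would unpack what ``benign'' gives us. By (\ref{LpZp+}), pick a non-zero $v$ in $\Hom_{L_P(\Z_p)}(\sigma,\Ord_P(\widehat{S}(U^p,E)_{\overline{\rho}}[\fm_x]))$ for some algebraic representation $\sigma$ of $L_P(\Q_p)$ of dominant highest weight $\ul{\lambda}$. By Proposition \ref{beni1}(1) the point $x$ is classical, so via (\ref{equ: lalgaut}) there exists an automorphic representation $\pi$ of $G(\bA_{F^+})$ with $\pi^{\infty,p}$ contributing to $\fm_x$, and $\pi_\infty$ of highest weight matching $\ul{\lambda}$. By adjunction for Emerton's ordinary functor (\cite{EOrd1}) together with the fact that the locally $L_P(\Z_p)$-algebraic part of $\Ord_P$ is produced from the $P$-ordinary subquotients of $\pi_p$, the local factor $\pi_{\widetilde{v}}$ must be a subquotient of a normalized parabolic induction $\iota_{P_{\widetilde{v}}}^{\GL_n}(\tau_{\widetilde{v},1}\otimes\cdots\otimes\tau_{\widetilde{v},k_{\widetilde{v}}})$, with each $\tau_{\widetilde{v},i}$ a smooth irreducible representation of $\GL_{n_{\widetilde{v},i}}(\Q_p)$ and such that its associated Jacquet character matches the $Z_{L_P}^+$-eigenvalue on $v$.

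Next I would apply classical local-global compatibility (\cite[Thm.~6.5(v)]{Thor}, \cite{Car12}): for every $v\in S_p$, $\rho_{x,\widetilde{v}}$ is de Rham of Hodge-Tate weights determined by $\ul{\lambda}_{\widetilde{v}}$, and $\WD(\rho_{x,\widetilde{v}})$ corresponds to $\pi_{\widetilde{v}}$ via classical local Langlands. Compatibility of local Langlands with parabolic induction then transfers the $P_{\widetilde{v}}$-filtration structure of $\pi_{\widetilde{v}}$ to a filtration on $\WD(\rho_{x,\widetilde{v}})$ with graded pieces matching $\tau_{\widetilde{v},i}$; since each $\tau_{\widetilde{v},i}$ is a smooth representation of either $\Q_p^\times$ or $\GL_2(\Q_p)$, it is (after an unramified twist) generic principal series of the Borel (for $\GL_2$) or a character, so the corresponding $\rho_{x,\widetilde{v},i}$ is crystalline, giving part~(1) and (2). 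The Hodge-Tate weights of each $\rho_{x,\widetilde{v},i}$ are then read off from the restriction of $\ul{\lambda}_{\widetilde{v}}$ to the block corresponding to the $i$-th factor of $L_{\widetilde{v}}$, with the shift by $s_{\widetilde{v},i}$ coming from the standard Shapiro/modulus character of parabolic induction. The non-criticality $\alpha_{\widetilde{v},s_{\widetilde{v},i}+1}\alpha_{\widetilde{v},s_{\widetilde{v},i}+n_{\widetilde{v},i}}^{-1}\neq p^{\pm1}$ for $n_{\widetilde{v},i}=2$ translates the fact that benign points live in the ``$+$'' region (\ref{LpZp+}): the $Z_{L_P}^+$-eigencharacter on $v$ must be regular enough that $\tau_{\widetilde{v},i}$ is an irreducible unramified principal series of $\GL_2(\Q_p)$, not a twist of Steinberg.

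For the embedding in part~(3), on the $\GL_1(\Q_p)$-factors the claim is exactly the classical local-global compatibility for unramified characters. On the $\GL_2(\Q_p)$-factors I would invoke the $p$-adic local Langlands correspondence of Colmez: the smooth part $\widehat{\pi}(\rho_{x,\widetilde{v},i})^{\sm}\cong \tau_{\widetilde{v},i}$ already embeds in the relevant isotypic component by the parabolic subquotient description above, the algebraic part is supplied by the dominant weight $\ul{\lambda}_{\widetilde{v}}$, and the full locally algebraic representation $\widehat{\pi}(\rho_{x,\widetilde{v},i})^{\lalg}\cong \tau_{\widetilde{v},i}\otimes_E \Sym^{\bullet}$ then appears (see the analogous statement in the proof of \cite[Prop.~7.5]{BD1}); tensoring these over $v\in S_p$ and inserting the central twist $\varepsilon^{s_{\widetilde{v},i+1}-1}\circ\dett$ to reconcile the modulus of parabolic induction with the Pa{\v{s}}k{\=u}nas/Colmez normalization (cf.\ the footnote) yields the stated embedding. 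The main obstacle is bookkeeping of normalizations: balancing the geometric-Frobenius convention, the modulus character appearing in the parabolic induction, and the twist built into $\widehat{\pi}(-)$ so that the central character of each $\widehat{\pi}(\rho_{x,\widetilde{v},i})$ is $(\wedge^2\rho_{x,\widetilde{v},i})\varepsilon^{-1}$ — the Galois and automorphic structures are each standard, but matching them produces exactly the exponent $s_{\widetilde{v},i+1}-1$ appearing in (\ref{Pord: emb0}).
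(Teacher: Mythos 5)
Your outline has the right overall shape (classicality of $x$, classical local--global compatibility at $p$, the adjunction/computation for $\Ord_P$, then the normalization of $\widehat{\pi}(-)$), which is indeed how the paper proceeds: its proof simply defers verbatim to the proofs of \cite[Prop.~7.6, Cor.~7.10]{BD1}. But there is a genuine gap at the central step of part (2). Classical local--global compatibility identifies $\WD(\rho_{x,\widetilde{v}})$ with the Weil--Deligne representation attached to $\pi_{\widetilde{v}}$, and compatibility with parabolic induction decomposes this \emph{Weil--Deligne} representation. That does not by itself produce a $\Gal_{\Q_p}$-stable filtration on the $p$-adic representation $\rho_{x,\widetilde{v}}$, let alone one whose $i$-th graded piece carries precisely the largest remaining Hodge--Tate weights $(\lambda_{\widetilde{v},s_{\widetilde{v},i}+1}-s_{\widetilde{v},i},\lambda_{\widetilde{v},s_{\widetilde{v},i}+n_{\widetilde{v},i}}-(s_{\widetilde{v},i}+n_{\widetilde{v},i}-1))$: a $\varphi$-stable flag in $D_{\cris}(\rho_{x,\widetilde{v}})$ corresponds to a flag of subrepresentations only if each step is weakly admissible for the induced Hodge filtration, and which Hodge--Tate weights sit on which $\varphi$-stable subspaces is exactly what is at stake. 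The missing input is the \emph{ordinarity} of the $Z_{L_P}^+$-eigensystem on the $L_P(\Z_p)$-algebraic eigenvector (its eigenvalues are $p$-adic units after the standard normalization), which combined with weak admissibility forces the Hodge filtration into the ordinary position relative to the $\varphi$-stable flag; this is \cite[Prop.~5.10]{BD1} and is the engine of the proof of \cite[Prop.~7.6]{BD1}. You never invoke the unit property of these eigenvalues, so parts (1) and (2) do not follow from what you wrote.

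Two smaller points. The inequality $\alpha_{\widetilde{v},s_{\widetilde{v},i}+1}\alpha_{\widetilde{v},s_{\widetilde{v},i}+n_{\widetilde{v},i}}^{-1}\neq p^{\pm 1}$ is not a consequence of the $Z_{L_P}^+$-eigencharacter being ``regular enough'': the subscript $+$ in (\ref{LpZp+}) refers to dominance of the highest weight of $\sigma$, not to any regularity of the eigencharacter, and sphericity alone would still allow $\tau_{\widetilde{v},i}$ to be the one-dimensional (non-tempered) subquotient of a reducible unramified principal series, which is exactly the case $\alpha/\beta=p^{\pm 1}$ with $N=0$. What rules this out is purity/temperedness of $\pi_{\widetilde{v}}$, i.e.\ the full strength of \cite{Car12}. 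For part (3), your sketch is plausible but the actual content is the computation of $\Ord_{P_{\widetilde{v}}}(\pi_{\widetilde{v}}\otimes_E W_{\widetilde{v}})$ as a locally algebraic $L_{P_{\widetilde{v}}}(\Q_p)$-representation (Emerton's formula for the ordinary part of a locally algebraic parabolic induction); that computation is where the twists $\varepsilon^{s_{\widetilde{v},i+1}-1}\circ \dett$ and the identification of the factors with $\widehat{\pi}(\rho_{x,\widetilde{v},i})^{\lalg}$ actually come from, whereas you assert rather than derive this matching.
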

\begin{proof}
The proposition follows by verbatim of the proof of \cite[Prop. 7.6, Cor. 7.10]{BD1}. Note that the strict $P$-ordinary assumption of \emph{loc. cit.} is only used to compare $\rho_{x,\widetilde{v},i}$ with a representation obtained by another way (which we don't use here).
\end{proof}
\begin{proposition}\label{beni4}
(1) If $x$ is benign, then there exists $r(x)\geq 1$ such that
 \begin{equation*}
  \Big( \otimes_{v\in S_p}\big(\otimes_{i=1,\cdots, k_{\widetilde{v}}} \widehat{\pi}(\rho_{x, \widetilde{v}, i})^{\lalg}\otimes_{k(x)} \varepsilon^{s_{\widetilde{v},i+1}-1}\circ \dett\big)\Big)^{\oplus r(x)} \xlongrightarrow{\sim} \Ord_P\big(\widehat{S}(U^p, E\big)_{\overline{\rho}}[\fm_x]^{\lalg}\big),
 \end{equation*}
where we refer to \cite[\S~4.3]{BD1} for the definition of $\Ord_P\big(\widehat{S}(U^p, E\big)_{\overline{\rho}}[\fm_x]^{\lalg}\big)$.

\noindent (2) The action of $\widetilde{\bT}(U^p)_{\overline{\rho}}[1/p]$ on $\Ord_P(\widehat{S}(U^p, E)_{\overline{\rho}})^{L_P(\Z_p)-\alg}_+$ is semi-simple.
\end{proposition}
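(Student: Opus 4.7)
The plan is to combine the injection from Proposition \ref{beni2}(3) with the classical automorphic decomposition (\ref{equ: lalgaut}) at the benign point $x$, closely following the strategy of \cite[Prop.~7.6, Cor.~7.10]{BD1}.

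For (1), I would start from the description of $\widehat{S}(U^p,E)_{\overline{\rho}}^{\lalg}$ provided by (\ref{equ: lalgaut}) and unravel the definition of $\Ord_P$ on locally algebraic vectors recalled in \cite[\S~4.3]{BD1}: locally, $\Ord_P(-)^{\lalg}$ is computed by the $P$-ordinary part of the smooth Jacquet module tensored with the appropriate Levi-weight piece of $W_p$. Cutting out the part killed by $\fm_x$ and tensoring with $\overline{\Q_p}$, only the finitely many automorphic representations $\pi$ contributing to the Hecke eigensystem $\fm_x$ survive. For each such $\pi$ and each $v\in S_p$, classical local-global compatibility (\cite[Thm.~6.5~(v)]{Thor}) identifies $\pi_{\widetilde{v}}$ with a full parabolic induction whose Satake/Langlands parameters recover $\rho_{x,\widetilde{v}}$. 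The genericity assumption $\alpha_{\widetilde{v},s_{\widetilde{v},i}+1}\alpha_{\widetilde{v},s_{\widetilde{v},i}+n_{\widetilde{v},i}}^{-1}\neq p^{\pm 1}$ of Proposition \ref{beni2}(2) guarantees that this induction is irreducible and that its $P_{\widetilde{v}}$-ordinary smooth Jacquet component is the unique constituent whose inertial data is dictated by the ordered filtration $\cF_{x,\widetilde{v}}$. Tensoring with the matching algebraic piece of $W_p$ and applying the $p$-adic normalization of \cite{Colm10a} and \cite{Pas13} reconstructs $\bigotimes_i \widehat{\pi}(\rho_{x,\widetilde{v},i})^{\lalg}\otimes\varepsilon^{s_{\widetilde{v},i+1}-1}\circ\dett$, which is precisely the target of (\ref{Pord: emb0}). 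Summing over the contributing $\pi$ yields the claimed isomorphism with $r(x)=\sum_{\pi} m(\pi)\cdot\dim_{\overline{\Q_p}}(\pi^{\infty,p})^{U^p}\geq 1$, the positivity following from the nonzero injection already produced in Proposition \ref{beni2}(3).

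For (2), once (1) is available at every benign point, semi-simplicity is formal. By (\ref{LpZp+}) and (\ref{equ: lalgaut}), the space $\Ord_P(\widehat{S}(U^p,E)_{\overline{\rho}})^{L_P(\Z_p)-\alg}_+$ decomposes, $\widetilde{\bT}(U^p)_{\overline{\rho}}$-equivariantly, into a direct sum of generalized Hecke-eigenspaces. Each such eigenspace corresponds to a closed point of $\Spec \widetilde{\bT}(U^p)_{\overline{\rho}}^{P-\ord}[1/p]$ at which the algebraic part is nonzero; such a point is classical by definition, and then benign by the description in (1). Part (1) then implies that $\widetilde{\bT}(U^p)_{\overline{\rho}}$ acts on that eigenspace through its residue field, so the action factors through a product of fields, hence is semi-simple.

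The principal obstacle is part (1): one must pin down exactly which smooth constituent of the classical Jacquet module contributes to the $P_{\widetilde{v}}$-ordinary part, and must verify the comparison up to the twist $\varepsilon^{s_{\widetilde{v},i+1}-1}\circ\dett$. Under the genericity hypothesis there are no spurious Jordan--H\"older constituents and the comparison is clean, but matching the precise normalizations (in particular the central character) between the classical and $p$-adic Langlands conventions requires the careful bookkeeping already carried out in \cite[Prop.~7.6]{BD1}.
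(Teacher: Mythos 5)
Your argument for part (1) is essentially the paper's: the proof is delegated to the arguments of \cite[Lem.~7.8, Prop.~7.9, Cor.~7.10]{BD1}, which carry out exactly the bookkeeping you describe --- the decomposition (\ref{equ: lalgaut}), classical local-global compatibility at the benign point, and the genericity $\alpha_{\widetilde{v},s_{\widetilde{v},i}+1}\alpha_{\widetilde{v},s_{\widetilde{v},i}+n_{\widetilde{v},i}}^{-1}\neq p^{\pm 1}$ from Proposition \ref{beni2}(2) to isolate the unique ordinary constituent of the Jacquet module and match normalizations. No objection there.

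Part (2), however, has a genuine gap. You decompose $\Ord_P(\widehat{S}(U^p,E)_{\overline{\rho}})^{L_P(\Z_p)-\alg}_+$ into \emph{generalized} Hecke eigenspaces and then claim that part (1) forces the action on each of them to factor through the residue field. But part (1) only describes the honest eigenspace $\Ord_P(\widehat{S}(U^p,E)_{\overline{\rho}}[\fm_x])^{\lalg}$, i.e.\ the $\fm_x$-torsion (the socle of the generalized eigenspace as a module over the Artinian localization); it says nothing about whether the generalized eigenspace admits nontrivial self-extensions, so semi-simplicity does not follow. Knowing the socle of a finite-dimensional module over a local Artinian ring does not tell you the module is semisimple. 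The paper's argument goes the other way around: by the adjunction property $\Hom_{L_P(\Q_p)}(U,\Ord_P(V))\cong \Hom_{G_p}((\Ind_{\overline{P}}^{G_p}U)^{\cC},V)$ of \cite[Prop.~4.21]{BD1} (this is the content of the proof of \cite[Prop.~7.5]{BD1}), every vector of $\Ord_P(\widehat{S}(U^p,E)_{\overline{\rho}})^{L_P(\Z_p)-\alg}_+$ is produced from a locally algebraic vector of $\widehat{S}(U^p,E)_{\overline{\rho}}$ itself, and on $\widehat{S}(U^p,E)_{\overline{\rho}}^{\lalg}$ the Hecke action is \emph{manifestly} semisimple because (\ref{equ: lalgaut}) exhibits it as a direct sum over automorphic $\pi$ on each of which $\bT(U^p)$ acts through a character (the Satake parameters at the unramified places). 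Semi-simplicity is then inherited by the ordinary part; it is an input to, not a consequence of, the structure of the eigenspaces. You should replace your deduction in (2) by this transfer argument (or otherwise supply a multiplicity count showing the generalized eigenspace coincides with the eigenspace, which is not available at this stage).
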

\begin{proof}
  (1) follows from the same argument as in \cite[Lem. 7.8, Prop. 7.9, Cor. 7.10]{BD1}. (2) follows from the proof of \cite[Prop. 7.5]{BD1} (which proves that all the vectors in $\Ord_P(\widehat{S}(U^p, E)_{\overline{\rho}})^{L_P(\Z_p)-\alg}_+$ come from locally algebraic vectors in $\widehat{S}(U^p,E)_{\overline{\rho}}$ via the adjunction property \cite[Prop. 4.21]{BD1}, on which the action of $\widetilde{\bT}(U^p)_{\overline{\rho}}[1/p]$ is semi-simple by (\ref{equ: lalgaut})).
\end{proof}

\subsection{Pa{\v{s}}k{\=u}nas' theory}\label{sec: Pas}
\noindent We recall Pa{\v{s}}k{\=u}nas' theory of blocks (\cite{Pas13}), which we will use to construct our $\GL_2(\Q_p)$-ordinary families.
\\

\noindent Let $H$ be a $p$-adic analytic group. Denote by $\Mod_{H}^{\sm}(\co_E)$ the category of smooth representations of $H$ over $\co_E$ in the sense of \cite[Def. 2.2.1]{EOrd1}, and $\Mod_{H}^{\lfin}(\co_E)$ the full subcategoy of $\Mod_{H}^{\sm}(\co_E)$ consisting of those objects which are locally of finite length. For an irreducible representation $\pi\in \Mod_{H}^{\lfin}(\co_E)$, denote by $\cJ_{\pi}$ the injective enveloppe of $\pi$ in $\Mod_{H}^{\lfin}(\co_E)$. A block $\fB$ of $\Mod_{H}^{\lfin}(\co_E)$ is a set of irreducible representations, such that if $\tau\in \fB$, then $\tau'\in \fB$ if and only if there exists a sequence of irreducible representaitons $\tau=\tau_0, \tau_1, \cdots, \tau_m=\tau'$ such that $\tau_i=\tau_{i+1}$, $\Ext^1_{H}(\tau_i, \tau_{i+1})\neq 0$ or $\Ext^1_{H}(\tau_{i+1}, \tau_i)=0$.
We have a decomposition
\begin{equation*}\Mod_{H}^{\lfin}(\co_E)\cong \prod_{\fB} \Mod_{H}^{\lfin}(\co_E)^{\fB},
 \end{equation*}
where $\fB$ runs through the blocks of $\Mod_{H}^{\lfin}(\co_E)$, and $\Mod_{H}^{\lfin}(\co_E)^{\fB}$ denotes the full subcategory of $\Mod_{H}^{\lfin}(\co_E)$ consisting of those objects such that all the irreducible subquotients lie in $\fB$. In particular, for any $\tau\in \Mod_{H}^{\lfin}(\co_E)$, $\tau\cong \oplus_{\fB} \tau_{\fB}$ with $\tau_{\fB}\in \Mod_{H}^{\lfin}(\co_E)$. If $\tau$ is moreover admissible, then there exists a finite set $\cI$ of blocks such that
\begin{equation*}\tau\cong \oplus_{\fB\in \cI} \tau_{\fB}.\end{equation*}
For a block $\fB$, denote by $\pi_{\fB}:=\oplus_{\pi \in \fB} \pi$. Denote by $\cJ_{\fB}\cong \oplus_{\pi\in \fB} \cJ_{\pi}$ the injective envelope of $\pi_{\fB}$ in $\Mod_{H}^{\lfin}(\co_E)$, and $\widetilde{E}_{\fB}:=\End_{H}(\cJ_{\fB})$.
\\

\noindent By \cite[(2.2.8)]{EOrd1}, taking Pontryagain dual induces an anti-equivalence of categories between the category $\Mod_H^{\sm}(\co_E)$ and the category $\Mod_H^{\pro \ \aug}(\co_E)$ of profinite augmented $H$-representations over $\co_E$ (cf. \cite[Def. 2.1.6]{EOrd1}). Denote by $\fC_H(\co_E)$ the full subcategory of $\Mod_H^{\pro \ \aug}(\co_E)$ consisting of those objects that are the Pontryagain duals of the representations in $\Mod_H^{\lfin}(\co_E)$. For a block $\fB$ of $\Mod_H^{\lfin}(\co_E)$, we see
\begin{equation*}\widetilde{P}_{\fB}:=\cJ_{\fB}^{\vee} \cong \oplus_{\pi\in \fB} \cJ_{\pi}^{\vee}\cong \oplus_{\pi\in \fB} \widetilde{P}_{\pi^{\vee}}
 \end{equation*}is a projective envelope of $\pi_{\fB}^{\vee}$ in $\fC_H(\co_E)$, where $\widetilde{P}_{\pi^{\vee}}$ denotes the projective enveloppe of $\pi^{\vee}$ in $\fC_H(\co_E)$. And we have $\End_{\fC_H(\co_E)}(\widetilde{P}_{\fB}) \cong \widetilde{E}_{\fB}$. Denote by $\fC_H(\co_E)^{\fB}$ the full subcategory of $\fC_H(\co_E)$ consisting of those objects whose Pontryagain dual lies in $\Mod_H^{\lfin}(\co_E)^{\fB}$. The functor sending $M\in \fC_H(\co_E)^{\fB}$ to $\Hom_{\fC_H(\co_E)}(\widetilde{P}_{\fB}, M)$ induces an anti-equivalence of categories between $\fC_H(\co_E)^{\fB}$ and the category of pseudo-compact $\widetilde{E}_{\fB}$-modules, with the inverse given by $M\mapsto M\widehat{\otimes}_{\widetilde{E}_{\fB}} \widetilde{P}_{\fB}$ \big(cf. \cite[Lem. 2.9, 2.10]{Pas13}, note that a similar argument as in the proof of \cite[Lem. 2.10]{Pas13} also shows that $\Hom_{\fC_H(\co_E)}(\widetilde{P}_{\fB}, M) \widehat{\otimes}_{\widetilde{E}_{\fB}} \widetilde{P}_{\fB} \xrightarrow{\sim} M$ for $M\in \fC_H(\co_E)^{\fB}$\big).
\\

\noindent By \cite[\S~3.2]{Pas13}, the blocks of $\Mod_{\Q_p^{\times}}^{\lfin}(\co_E)$ that contain an absolutely irreducible representation  are given by $\fB=\{\chi:\Q_p^{\times} \ra k_E^{\times}\}$. For such $\fB$, we have $\widetilde{E}_{\fB}\cong \co_E[[x,y]]$, and that $\widetilde{P}_{\fB}$ is a free $\widetilde{E}_{\fB}$-module of rank $1$. Actually, let $\Def_{\chi}: \Art(\co_E) \ra \{\text{Sets}\}$ denote the standard deformation functor of $\chi$, then $\Def_{\chi}$ is pro-represented by $\widetilde{E}_{\fB}$ and $\widetilde{P}_{\fB}$ is isomorphic to the universal deformation of $\chi$ over $\widetilde{E}_{\fB}$. We denote by $1_{\univ}$ the universal deformation of the trivial character over $\Lambda:=\co_E[[x,y]]$. By the local class field theory, we have an isomorphism between $\Def_1$ and the deformation functor of the trivial character of $\Gal_{\Q_p}$, which we also denote by $\Def_1$. The $\Q_p^{\times}$ action on $1_{\univ}$ naturally extends to a $\Gal_{\Q_p}$-action, and the resulting $\Gal_{\Q_p}$-representation over $\Lambda$ is actually the universal deformation of $1$.
\\

\noindent By \cite[Cor. 1.2]{Pas14}, the blocks of $\Mod_{\GL_2(\Q_p)}^{\lfin}(\co_E)$ that contain an absolutely irreducible representation are given by (when $p>2$)
\begin{itemize}
  \item[(1)] $\fB=\{\pi\}$, supersingular,
  \item[(2)] $\fB=\{\Ind_{\overline{B}(\Q_p)}^{\GL_2(\Q_p)} (\chi_1\omega^{-1}\otimes \chi_2 ), \Ind_{\overline{B}(\Q_p)}^{\GL_2(\Q_p)} (\chi_2\omega^{-1}\otimes \chi_1)\}$, $\chi_1\chi_2^{-1}\neq 1$, $\omega^{\pm 1}$,
  \item[(3)] $\fB=\{\Ind_{\overline{B}(\Q_p)}^{\GL_2(\Q_p)} (\chi\omega^{-1} \otimes \chi )\}$,
  \item[(4)] $\fB=\{\eta \circ \dett, \Sp \otimes_{k_E} \eta \circ \dett, (\Ind_{\overline{B}(\Q_p)}^{\GL_2(\Q_p)} \omega^{-1}\otimes \omega)\otimes_{k_E} \eta \circ \dett\}$ if $p\geq 5$,
  \item[(4')]     $\fB=\{\eta \circ \dett, \Sp \otimes_{k_E} \eta \circ \dett, (\eta\omega) \circ \dett, \Sp \otimes_{k_E} (\eta\omega)\circ \dett\}$ if $p=3$.
\end{itemize}
For each $\fB$ as above, we can attach a $2$-dimensional semi-simple representation $\overline{\rho}_{\fB}$ of $\Gal_{\Q_p}$ over $k_E$ such that
\begin{itemize}
  \item if $\pi\in \fB$ supersingular, then $\overline{\rho}_{\fB}=\hV(\pi)$, where $\hV$ is the Colmez's functor normalized as in \cite[\S~5.7]{Pas13};
  \item if $\Ind_{\overline{B}(\Q_p)}^{\GL_2(\Q_p)} (\chi_1 \omega^{-1}\otimes \chi_2)\in \fB$ with $\chi_1\chi_2^{-1}\neq \omega^{\pm 1}$, then $\overline{\rho}_{\fB}=\chi_1\oplus \chi_2 $,
  \item if $\eta \circ \dett\in \fB$, then $\overline{\rho}_{\fB}=\eta \oplus \eta\omega$.
\end{itemize}
Note that under this normalization, if $\pi\in \fB$ has central character $\zeta: \Q_p^{\times} \ra k_E^{\times}$, then $\wedge^2 \overline{\rho}_{\fB}=\zeta \omega$.
Suppose $p\geq 3$.
Denote by $R_{\fB}^{\pss}$ the universal deformation ring which parametrizes all $2$-dimensional pseudo-representations of $\Gal_{\Q_p}$ lifting $\tr \overline{\rho}_{\fB}$ (cf. \cite[Lem. 1.4.2]{Kis08a}).
\begin{theorem}[Pa{\v{s}}k{\=u}nas]\label{blo0}Suppose the block $\fB$ of $\Mod_{\GL_2(\Q_p)}^{\lfin}(\co_E)$ lies in case (1) (2) (3) (4).

\noindent (1) There exists a natural isomorphism between the centre of $\widetilde{E}_{\fB}$ and $R_{\fB}^{\pss}$.

\noindent (2) $\widetilde{E}_{\fB}$ is a finitely generated module over $R_{\fB}^{\pss}$.
\end{theorem}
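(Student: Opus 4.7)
The plan is to construct a natural ring homomorphism $\alpha_{\fB}\colon R_{\fB}^{\pss}\to Z(\widetilde{E}_{\fB})$ using Colmez's functor $\hV$, show it is an isomorphism case by case, and then deduce finite generation of $\widetilde{E}_{\fB}$ by topological Nakayama. First I would apply $\hV$ to the projective envelope $\widetilde{P}_{\fB}\in \fC_{\GL_2(\Q_p)}(\co_E)^{\fB}$. Since $\hV$ is exact and commutes with inverse limits, $\hV(\widetilde{P}_{\fB})$ is a profinite continuous $\Gal_{\Q_p}$-representation equipped, by functoriality, with a commuting action of $\widetilde{E}_{\fB}$; Chenevier's pseudo-character machinery applied to this bimodule produces a $2$-dimensional pseudo-character of $\Gal_{\Q_p}$ valued in $Z(\widetilde{E}_{\fB})$ lifting $\tr \overline{\rho}_{\fB}$, hence by the universal property of $R_{\fB}^{\pss}$ the map $\alpha_{\fB}$.

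For part (1) I would argue case by case. In case (1), $\overline{\rho}_{\fB}$ is absolutely irreducible, so $R_{\fB}^{\pss}$ coincides with the usual framed deformation ring $R_{\overline{\rho}_{\fB}}$, and Colmez's equivalence between the supersingular block and $2$-dimensional absolutely irreducible Galois deformations identifies $\widetilde{P}_{\fB}$ with the $\hV$-dual of the universal deformation; this forces $\widetilde{E}_{\fB}\xrightarrow{\sim}R_{\fB}^{\pss}$ (in particular $\widetilde{E}_{\fB}$ is already commutative). In the generic principal series case (2), I would compute $\Ext^1_{\GL_2(\Q_p)}$ between the two principal series and match it against the Galois-side $\Ext^1$ of the diagonal characters via local Tate duality, then build $\widetilde{P}_{\fB}$ as an iterated extension parameterised by $R_{\fB}^{\pss}$ and transport through $\hV$ to conclude. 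Cases (3) and (4) are analogous but more delicate because the genericity $\chi_1\chi_2^{-1}\neq \omega^{\pm 1}$ fails and extra irreducibles (a character or a Steinberg twist) enter the block; here one would carefully locate these subquotients inside $\widetilde{P}_{\fB}$ using Emerton's ordinary part functor and match them against the reducible deformations that are universal for $R_{\fB}^{\pss}$.

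For part (2), topological Nakayama reduces finite generation of $\widetilde{E}_{\fB}$ over the noetherian ring $R_{\fB}^{\pss}$ to the finite-dimensionality of $\widetilde{E}_{\fB}/\fm_{R_{\fB}^{\pss}}\widetilde{E}_{\fB}$ over $k_E$, which in turn is controlled by the finite length of the residual quotient $\widetilde{P}_{\fB}/\fm_{R_{\fB}^{\pss}}\widetilde{P}_{\fB}$ in $\fC_{\GL_2(\Q_p)}(\co_E)^{\fB}$. The main obstacle, in my view, is case (4): there the three distinct irreducibles in $\fB$ (a character, a Steinberg twist, and a reducible principal series) admit non-trivial $\Ext^1$'s in multiple directions that must all be matched against residual Galois extensions controlled by $R_{\fB}^{\pss}$ via $\hV$, which requires a careful inductive analysis of the socle filtration of $\widetilde{P}_{\fB}$. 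Once this matching is carried out, both assertions of the theorem follow uniformly.
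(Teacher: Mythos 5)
Your high-level skeleton (build $R_{\fB}^{\pss}\to Z(\widetilde{E}_{\fB})$ via Colmez's functor, argue case by case, finish with Nakayama) is the right general shape, but the proposal misses the point on which the whole proof actually turns: the passage from \emph{fixed} to \emph{arbitrary} central character. All of the results you want to invoke — Colmez's equivalence for supersingular blocks, the $\Ext$-computations and the identification of $\End(\widetilde{P})$ with a Galois deformation ring in the principal series and non-generic cases — are known (from Pa{\v{s}}k{\=u}nas and \cite{CEGGPS2}) only for the subcategory $\fC_{\GL_2(\Q_p),\zeta}(\co_E)$ of objects with fixed central character $\zeta$, where the Galois side is the universal deformation ring with \emph{fixed determinant} $\zeta\varepsilon$. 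The theorem here concerns $\widetilde{E}_{\fB}=\End_{\fC_{\GL_2(\Q_p)}(\co_E)}(\widetilde{P}_{\fB})$ with no central character restriction and the pseudo-deformation ring with unrestricted determinant. The paper's proof is almost entirely devoted to this untwisting: one shows $\widetilde{P}_{\pi^{\vee}}\cong\widetilde{P}_{\pi^{\vee},\zeta}\widehat{\otimes}_{\co_E}(1_{\univ}\circ\dett)$ and $R^{\pss}_{\fB}\cong R^{\pss,\zeta\varepsilon}_{\fB}\widehat{\otimes}_{\co_E}\co_E[[x,y]]$, checks that the resulting embedding $R^{\pss}_{\fB}\hookrightarrow\widetilde{E}_{\fB}$ is independent of the auxiliary choice of $\zeta$ (this requires tracking how $\check{\hV}_{\zeta}$ itself depends on $\zeta$), and in case (3) redoes Pa{\v{s}}k{\=u}nas' presentation $\widetilde{E}_{\fB}\cong(R^{\pss}_{2\chi}[[\cG]]/\widetilde{J})^{\op}$ to compute the center and get freeness of rank $4$. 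Your proposal never mentions the central character, so as written it cannot cite the known fixed-$\zeta$ results and would instead have to reprove them from scratch; the steps you defer (``compute $\Ext^1$ and match via Tate duality'', ``careful inductive analysis of the socle filtration'') are essentially the content of Pa{\v{s}}k{\=u}nas' entire memoir, not a proof of this statement.

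Two smaller points. First, you single out case (4) as the main obstacle, but in this paper the genuinely delicate case is (3) (the non-generic principal series $\Ind_{\overline{B}}^{\GL_2(\Q_p)}(\chi\omega^{-1}\otimes\chi)$): cases (1), (2), (4) reduce quickly to \cite[Cor.~6.23]{CEGGPS2} plus the arguments of \cite[Cor.~8.11, Thm.~10.87]{Pas13}, whereas case (3) needs the full untwisting argument; Emerton's ordinary part functor plays no role there. Second, your Nakayama reduction for part (2) hinges on the finite length of $\widetilde{P}_{\fB}/\fm_{R^{\pss}_{\fB}}\widetilde{P}_{\fB}$, which is itself a deep finiteness statement; in the paper finite generation falls out of the explicit descriptions (e.g.\ freeness of rank $4$ over $R^{\pss}_{2\chi}$ in case (3)) rather than from an abstract Nakayama argument.
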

\begin{proof}Let $\zeta: \Q_p^{\times} \ra\co_E^{\times}$ be a continuous character, let $\Mod_{\GL_2(\Q_p),\zeta}^{\lfin}(\co_E)$ be the full subcategory of $\Mod_{\GL_2(\Q_p)}^{\lfin}(\co_E)$ consisting of those representations that have central character $\zeta$, and $\fC_{\GL_2(\Q_p),\zeta}(\co_E)$ be the full subcategory of $\fC_{\GL_2(\Q_p)}(\co_E)$ consisting of the objects whose dual lies in $\Mod_{\GL_2(\Q_p),\zeta}^{\lfin}(\co_E)$.  We denote by $\widetilde{P}_{\fB,\zeta}$ the projective enveloppe of $\pi_{\fB}^{\vee}$ in $\fC_{\GL_2(\Q_p),\zeta}(\co_E)$ (recall $\pi_{\fB}=\oplus_{\pi\in \fB} \pi$), and put $\widetilde{E}_{\fB,\zeta}:=\End_{\fC_{\GL_2(\Q_p),\zeta}(\co_E)}(\widetilde{P}_{\fB,\zeta})$. For $\pi\in \fB$, denote by $\widetilde{P}_{\pi^{\vee}, \zeta}$ the projective enveloppe of $\pi^{\vee}$ in $\fC_{\GL_2(\Q_p),\zeta}(\co_E)$, thus $\widetilde{P}_{\fB, \zeta}\cong \oplus_{\pi\in \fB} \widetilde{P}_{\pi^{\vee},\zeta}$.
\\

\noindent  Suppose $\fB$ is in case (1) (2) (4):

\noindent Let $\pi$ be an arbitrary representation in  $\fB$ if $\fB$ is in the case (1) or (2), and let $\pi:=(\Ind_{\overline{B}(\Q_p)}^{\GL_2(\Q_p)} \omega^{-1}\otimes \omega)\otimes_{k_E} \eta \circ \dett\in \fB$ if $\fB$ is in the case (4). By \cite[Prop. 6.18]{CEGGPS2}, we have $\widetilde{P}_{\pi^{\vee}}\cong \widetilde{P}_{\pi^{\vee}, \zeta}\widehat{\otimes}_{\co_E} 1_{\univ}\circ \dett$. Denote by $\overline{\rho}_{\pi}$  the (unique) two dimensional representation of $\Gal_{\Q_p}$ over $k_E$ satisfying  that $\overline{\rho}_{\pi}^{\sss} \cong \overline{\rho}_{\fB}$    and  that if $\fB$ is moreover in the case (2) or (4), then $\hV(\pi)^{-1} \omega \overline{\zeta} \cong \cosoc_{\Gal_{\Q_p}}\overline{\rho}_{\pi}$.
By \cite[Cor. 6.23]{CEGGPS2}, we have  a natural isomorphism (see also Remark \ref{normB})
\begin{equation}\label{equ: Colm}\End_{\fC_{\GL_2(\Q_p)}(\co_E)}(\widetilde{P}_{\pi^{\vee}} )\cong R_{\overline{\rho}_{\pi}},
 \end{equation}where $R_{\overline{\rho}_{\pi}}$
 denotes the universal deformation ring of $\overline{\rho}_{\pi}$ (note that by the assumption on $\pi$, $\End_{\Gal_{\Q_p}}(\overline{\rho}_{\pi})\cong k_E$ hence $R_{\overline{\rho}_{\pi}}$ exists).
\begin{itemize}
\item If $\fB$ is in case (1), then $\widetilde{P}_{\fB}\cong \widetilde{P}_{\pi^{\vee}}$, and $\widetilde{E}_{\fB}\cong R_{\overline{\rho}_{\pi}}\cong R_{\overline{\rho}_{\fB}}^{\pss}$.
\item  If $\fB$ is in case (2), we write $\fB=\{\pi_1,\pi_2\}$. The statement follows by the same argument as in \cite[Cor. 8.11]{Pas13} replacing the isomorphism in \cite[Cor. 8.7]{Pas13} by (\ref{equ: Colm}) applied to $\pi_1$ and $\pi_2$.
\item If $\fB$ is in case (4). The statement follows by the same arguments as for \cite[Thm. 10.87]{Pas13} replacing the isomorphism in \cite[Thm. 10.71]{Pas13} by (\ref{equ: Colm}).
\end{itemize}

\noindent Suppose $\fB$ is in case (3). The statement in this case follows by a similar argument. We include a proof (with several steps) for the convenience of the reader.
\\

\noindent (a) Let $\pi\in \fB$. We first show $\widetilde{P}_{\pi^{\vee}}\cong \widetilde{P}_{\pi^{\vee}, \zeta}\widehat{\otimes}_{\co_E} 1_{\univ}\circ \dett$.  Put $\widetilde{P}':=\widetilde{P}_{\pi^{\vee},\zeta} \otimes_{\co_E} 1_{\univ}$, equipped with the diagonal action of $\GL_2(\Q_p)$, where $\GL_2(\Q_p)$ acts on the second factor via $\dett: \GL_2(\Q_p) \ra \Q_p^{\times}$. It is not difficult to see $\widetilde{P}'\in \fC_{\GL_2(\Q_p)}(\co_E)$. Indeed, we can write $\widetilde{P}_{\pi^{\vee},\zeta}\cong \varprojlim_n \widetilde{P}_{\pi^{\vee},\zeta,n}$(resp. $\Lambda\cong \varinjlim_n \Lambda_n$) such that the Pontryagain dual of each  $\widetilde{P}_{\pi^{\vee},\zeta,n}$ (resp. $\Lambda_n$) is a finite length representation of $\GL_2(\Q_p)$ (resp. of $\Q_p^{\times}$), and hence $\widetilde{P}'\cong \varprojlim_n (\widetilde{P}_{\pi^{\vee},\zeta,n} \otimes_{\co_E} \Lambda_n)\in \fC_{\GL_2(\Q_p)}(\co_E)$. By \cite[Thm. 3.26]{Pas13}, $\widetilde{P}_{\pi^{\vee}, \zeta}$ is a deformation of $\pi^{\vee}$ over $\widetilde{E}_{\fB,\zeta}$. We see by definition that $\widetilde{P}'$ is a deformation of $\pi^{\vee}$ over $\widetilde{E}':=\widetilde{E}_{\fB,\zeta}\widehat{\otimes}_{\co_E} \Lambda$.

\noindent We show $\cosoc_{\GL_2(\Q_p)} \widetilde{P}'\cong \pi^{\vee}$. By the proof of \cite[Lem. B.8]{GN16}, we know
\begin{equation*}\Hom_{\fC_{\GL_2(\Q_p), \zeta}(\co_E)\times \fC_{\Q_p^{\times}}(\co_E)}(\widetilde{P}_{\pi^{\vee}, \zeta} \widehat{\otimes}_{\co_E} 1_{\univ}, \pi^{\vee}\otimes_{k_E} k_E)\cong k_E.
\end{equation*}
We deduce then $\Hom_{\fC_{\GL_2(\Q_p)}(\co_E)}(\widetilde{P}', \pi^{\vee})\hookrightarrow k_E$. Since any irreducible constituent of $\widetilde{P}'$ is isomorphic to $\pi^{\vee}$, we deduce then  $\cosoc_{\GL_2(\Q_p)} \widetilde{P}'\cong \pi^{\vee}$.

\noindent We have thus a projection $\widetilde{P}_{\pi^{\vee}} \twoheadrightarrow  \widetilde{P}'$. Applying $\Hom_{\fC_{\GL_2(\Q_p)}(\co_E)}(\widetilde{P}_{\pi^{\vee}}, -)$, we obtain a surjection
\begin{equation}\label{def01}
  \widetilde{E}_{\fB} \twoheadlongrightarrow \Hom_{\fC_{\GL_2(\Q_p)}(\co_E)}(\widetilde{P}_{\pi^{\vee}}, \widetilde{P}').
\end{equation}
By the same argument as in \cite[Lem. 3.25]{Pas13}, we have an isomorphism of $\widetilde{E}'$-module:
\begin{equation}\label{deftw}
  \Hom_{\fC_{\GL_2(\Q_p)}(\co_E)}(\widetilde{P}_{\pi^{\vee}}, \widetilde{P}') \cong \widetilde{E}_{\fB,\zeta}\widehat{\otimes}_{\co_E} \Lambda.
\end{equation}
such that the composition of (\ref{def01}) with (\ref{deftw}) gives a surjective homomorphism of $\co_E$-algebra $\delta: \widetilde{E}_{\fB}\twoheadrightarrow \widetilde{E}'$, and $\widetilde{P}'\cong \widetilde{P}_{\pi^{\vee}}\widehat{\otimes}_{\widetilde{E}_{\fB}} \widetilde{E'}$.
 By the same argument as in \cite[\S~9.1]{Pas13} and using
 $$\dim_{k_E} \Ext^1_{\GL_2(\Q_p)}(\pi,\pi)=4$$
  (which for example follows from the fact $\dim_{k_E} \Ext^1_{\GL_2(\Q_p),Z}(\pi,\pi)=2$ (\cite[Prop. 9.1]{Pas13}), and the same argument as in the  proof of \cite[Lem. A.3]{BD1}), we have $\widetilde{E}_{\fB}\twoheadrightarrow \widetilde{E}_{\fB}^{\ab}\cong \co_E[[x_1, x_2, x_3, x_4]]$ (where $\widetilde{E}_{\fB}^{\ab}$ is the  maximal commutative quotient of $\widetilde{E}$). Moreover, one can check that \cite[Lem. 9.2, Lem. 9.3]{Pas13} hold with $\widetilde{E}$ of \emph{loc. cit.} replaced by $\widetilde{E}_{\fB}$, and $\co[[x,y]]$ replaced by $\co_E[[x_1,x_2,x_3,x_4]]$. By \cite[Lem. 9.2]{Pas13}, there exists $t\in \widetilde{E}_{\fB,\zeta}$ such that
\begin{equation*}
  0 \ra \widetilde{E}_{\fB,\zeta} \xlongrightarrow{t} \widetilde{E}_{\fB,\zeta} \ra (\widetilde{E}_{\fB,\zeta})^{\ab} \ra 0,
\end{equation*}
which then induces (noting $\Lambda$ is flat over $\co_E$, and $\widetilde{E}_{\fB,\zeta}$ is $\co_E$-torsion free)
\begin{equation}\label{compts}
  0 \ra \widetilde{E}' \xlongrightarrow{t} \widetilde{E}' \ra (\widetilde{E}_{\fB,\zeta})^{\ab}\widehat{\otimes}_{\co_E} \Lambda \ra 0.
\end{equation}
Using the same argument as in the proof of \cite[Lem. 9.3]{Pas13}, we deduce then $\delta$ is an isomorphism, and $\widetilde{P}_{\pi^{\vee}}\cong \widetilde{P}' \cong \widetilde{P}_{\pi^{\vee},\zeta}\widehat{\otimes}_{\co_E} \Lambda$.
\\

\noindent (b) Let $\chi: \Q_p^{\times} \ra k_E^{\times}$ be such that $\pi\cong \Ind_{\overline{B}(\Q_p)}^{\GL_2(\Q_p)} (\chi \omega^{-1} \otimes \chi)$ (hence $\overline{\zeta}=\chi^2\omega^{-1}$). Thus $R^{\pss}_{\fB} = R^{\pss}_{2\chi}$, the universal deformation ring of the pseudo-character $2\chi$. Denote by $R_{2 \chi}^{\pss, \zeta\varepsilon}$ the universal deformation ring parameterizing $2$-dimensional pseudo-characters of $\Gal_{\Q_p}$ with determinant $\zeta\varepsilon$ lifting $2\chi$. We show there is a natural isomorphism $R_{2\chi}^{\pss} \xrightarrow{\sim} R_{2\chi}^{\pss, \zeta \varepsilon} \widehat{\otimes}_{\co_E} \Lambda$. Let $T^{\univ,\zeta \varepsilon}: \Gal_{\Q_p} \ra R_{2\chi}^{\pss, \zeta \varepsilon}$ be the universal deformation with determinant $\zeta\varepsilon$ of $2\chi$, and put $T': \Gal_{\Q_p} \ra  R_{2\chi}^{\pss, \zeta \varepsilon} \widehat{\otimes}_{\co_E} \Lambda$ be the pseudo-character sending $g$ to $T^{\univ,\zeta\varepsilon}(g) \otimes 1_{\univ}(g)$. By the universal property of $R_{2\chi}^{\pss}$, we obtain a morphism of complete $\co_E$-algebras:
\begin{equation}\label{cent3}
R_{2\chi}^{\pss} \lra  R_{2\chi}^{\pss, \zeta \varepsilon}\widehat{\otimes}_{\co_E} \Lambda.
\end{equation}
By \cite[Cor. 9.13]{Pas13}, $ R_{2\chi}^{\pss, \zeta \varepsilon}\cong \co_E[[x_1,x_2,x_3]]$. Using the fact that taking determinant induces a surjective map $R_{2\chi}^{\pss}(k[\epsilon]/\epsilon^2) \twoheadrightarrow \Lambda(k[\epsilon]/\epsilon^2)$ (since it is easy to construct a section of this map), it is not difficult to see the tangent map of (\ref{cent3}) is bijective, from which we deduce  (\ref{cent3}) is an isomorphism.
\\

\noindent (c) By \cite[Cor. 9.27]{Pas13}, we have a natural isomorphism (which is unique up to conjugation by $\widetilde{E}_{\fB,\zeta}^{\times}$)
\begin{equation}\label{cenPas2}
 \big(R_{2\chi}^{\pss,\zeta \varepsilon}[[\cG]]/J_{\zeta\varepsilon}\big)^{\op} \xlongrightarrow{\sim} \widetilde{E}_{\fB,\zeta},
\end{equation}
where $\cG$ denotes the maximal pro-$p$ quotient of $\Gal_{\Q_p}$, which is a free pro-$p$ group generated by $2$ elements $\gamma$, $\delta$, and where $J_{\zeta\varepsilon}$ denotes the closed two-sided ideal generated by $g^2-T^{\univ,\zeta \varepsilon}(g) g+ \zeta \varepsilon (g)$, for all $g\in \cG$. By (a) and (b), we have
\begin{equation}\label{cenPas}
  \widetilde{E}_{\fB}\cong \widetilde{E}_{\fB,\zeta}\widehat{\otimes}_{\co_E} \Lambda \cong \big(R_{2\chi}^{\pss,\zeta \varepsilon}[[\cG]]/J_{\zeta\varepsilon}\big)^{\op} \widehat{\otimes}_{\co_E} \Lambda \cong  \big(R_{2\chi}^{\pss}[[\cG]]/J_{\zeta\varepsilon}\big)^{\op}.
\end{equation}
In particular, we see by \cite[Cor. 9.25]{Pas13} that $\widetilde{E}_{\fB}$ is a free $R_{2\chi}^{\pss}$-module of rank $4$. Composing with an automorphism of $\Lambda$ if needed, we assume $1_{\univ}(\gamma)=1+x$, and $1_{\univ}(\delta)=1+y$ (recall $1_{\univ}: \cG^{\ab} \ra \Lambda$, and where  we  use $\gamma$, $\delta$ to denote their images in $\cG^{\ab}$). The induced isomorphism $1_{\univ}: \co_E[[\cG^{\ab}]] \xrightarrow{\sim} \co_E[[x,y]]$ lifts to an isomorphism $\co_E[[\cG]] \xrightarrow{\sim} \co_E[[x,y]]^{\nc}$ sending $\gamma$ to $1+x$ and  $\delta$ to $1+y$ (``$\nc$" means non-commutative). Let $\widetilde{J}$ be the closed two-sided ideal of $R_{2\chi}^{\pss}[[\cG]]$ generated by $g^2-T^{\univ}(g)g+\dett(T^{\univ})(g)$. Consider the following  isomorphism of $R_{2\chi}^{\pss}$-algebras
\begin{equation*}
  R_{2\chi}^{\pss}[[\cG]] \xlongrightarrow{\sim} \big(R_{2\chi}^{\pss, \zeta\varepsilon} \widehat{\otimes}_{\co_E}\co_E[[x,y]]\big)[[\cG]]
\end{equation*}
which sends $\gamma$ to $\gamma(1+x)$ and $\delta$ to $\delta(1+y)$. One can check (using $T^{\univ}=T^{\univ,\zeta\varepsilon} \otimes 1_{\univ}$) that this isomorphism induces an isomorphism
\begin{equation*}
   R_{2\chi}^{\pss}[[\cG]]/\widetilde{J} \xlongrightarrow{\sim} R_{2\chi}^{\pss, \zeta\varepsilon}[[\cG]]/J_{\zeta\varepsilon} \widehat{\otimes}_{\co_E} \Lambda.
\end{equation*}
Using the same argument as in \cite[Cor. 9.24]{Pas13}, we have that the center of $R_{2\chi}^{\pss}[[\cG]]/\widetilde{J}$ (hence of $\widetilde{E}_{\fB}$) is equal to $R_{2\chi}^{\pss}$.
\\

\noindent (d) We show the injection $R_{2\chi}^{\pss} \hookrightarrow \widetilde{E}_{\fB}$ is independent of the choice of $\zeta$, by unwinding a little the isomorphism in (\ref{cenPas2}). Let $\eta:\Q_p^{\times} \ra \co_E^{\times}$ be such that $\eta\equiv 1 \pmod{\varpi_E}$. We have a natural $\GL_2(\Q_p)$-equivariant isomorphism $\widetilde{P}_{\pi^{\vee}, \zeta\eta^2} \cong \widetilde{P}_{\pi^{\vee}, \zeta} \otimes_{\co_E} (\eta^{-1}\circ \dett)$, which induces an isomorphism $\tw_{\eta}: \widetilde{E}_{\fB,\zeta\eta^2} \xrightarrow{\sim}\widetilde{E}_{\fB,\zeta}$. Twisting $\eta$ also induces an isomorphism $\tw_{\eta}: R^{\pss,\zeta\eta^2}_{2\chi} \xrightarrow{\sim}R^{\pss,\zeta}_{2\chi}$.  Denote by $\check{\hV}_{\zeta}$ (resp. $\check{\hV}_{\zeta\eta^2}$) the functor $\hV$ of \cite[\S~5.7]{Pas13} on $\fC_{\GL_2(\Q_p),\zeta}(\co_E)$ (resp. on $\fC_{\GL_2(\Q_p), \zeta \eta^2}(\co_E)$) associated to $\zeta$ (resp. to $\zeta\eta^2$). By definition (cf. \emph{loc. cit.}), we have a $\Gal_{\Q_p}$-equivariant isomorphism
\begin{equation}\label{twist0}
  \check{\hV}_{\zeta \eta^2}(\widetilde{P}_{\pi^{\vee}, \zeta \eta^2}) \cong \check{\hV}_{\zeta}(\widetilde{P}_{\pi^{\vee}, \zeta}) \otimes_{\co_E} \eta.
\end{equation}
As in the discussion below \cite[Lem. 9.3]{Pas13}, we can deduce from (\ref{twist0}) a commutative diagram
\begin{equation}\label{pascom}\begin{CD}
  \co_E[[\cG]]^{\op} @> \id >> \co_E[[\cG]]^{\op}\\
  @V \varphi_{\check{\hV}_{\zeta \eta^2}} VV @V \eta \otimes \varphi_{\check{\hV}_{\zeta}} VV \\
  \widetilde{E}_{\fB,\zeta\eta^2} @>\tw_{\eta} >> \widetilde{E}_{\fB, \zeta},
  \end{CD}
\end{equation}
where ``$\eta$" in the right vertical map denotes the composition $\co_E[[\cG]]^{\op} \xrightarrow{\eta} \co_E \hookrightarrow \widetilde{E}_{\fB, \zeta}$, and where $\varphi_{\check{\hV}_{\zeta \eta^2}}$ (resp. $\varphi_{\check{\hV}_{\zeta}}$) is the map $\varphi_{\check{\hV}}$ of \cite[\S~9.1]{Pas13} (which is unique up to conjugation by $\widetilde{E}_{\fB,\zeta\eta^2}^{\times}$ (resp. $\widetilde{E}_{\fB,\zeta}^{\times}$), but we can choose the maps so that (\ref{pascom}) commutes). Hence $\varphi_{\check{\hV}_{\zeta\eta^2}}$ is equal to the composition
\begin{equation*}
  \co_E[[\cG]]^{\op} \xlongrightarrow{ \eta\otimes\id} \co_E[[\cG]]^{\op} \xlongrightarrow{\varphi_{\check{\hV}_{\zeta}}} \widetilde{E}_{\fB,\zeta}\xlongrightarrow[\sim]{\tw_{\eta^{-1}}} \widetilde{E}_{\fB,\zeta\eta^2}.
\end{equation*}
We deduce that the following diagram commutes
\begin{equation*}
\begin{CD}
\co_E[[\cG]] @> \eta \otimes \id>> \co_E[[\cG]]\\
 @VVV @VVV \\
 R^{\pss, \zeta\eta^2}_{2\chi}[[\cG]]/J_{\zeta\eta^2\varepsilon} @>>> R_{2\chi}^{\pss, \zeta}[[\cG]]/J_{\zeta\varepsilon} \\
  @V \sim VV @V \sim VV \\
  \big(\widetilde{E}_{\fB,\zeta\eta^2}\big)^{\op} @> \tw_{\eta}>> \big(\widetilde{E}_{\fB,\zeta}\big)^{\op}
  \end{CD}
\end{equation*}
where the middle horizontal map sends $g$ to $\eta(g) g$ for $g\in \cG$, and sends $a$ to $\tw_{\eta}(a)$ for $a\in  R^{\pss, \zeta\eta^2}_{2\chi}$, where the vertical maps in the top square are the surjections given as in \cite[(150)]{Pas13}, and where the vertical maps in the bottom square are given as in (\ref{cenPas2}), induced by  $\varphi_{\check{\hV}_{\zeta \eta^2}}$ , $\varphi_{\check{\hV}_{\zeta}}$ respectively (see \cite[\S~9.2]{Pas13} for details). In particular, the following diagram commutes
\begin{equation*}
   \begin{CD}
 R^{\pss, \zeta\eta^2}_{2\chi} @> \tw_{\eta} >> R_{2\chi}^{\pss, \zeta} \\
  @VVV @VVV \\
  \big(\widetilde{E}_{\fB,\zeta\eta^2}\big)^{\op} @> \tw_{\eta}>> \big(\widetilde{E}_{\fB,\zeta}\big)^{\op}.
   \end{CD}
 \end{equation*}Together with similar commutative diagrams as in \cite[(6.4)]{CEGGPS2} replacing ``$R_p$" by $\widetilde{E}_{\fB}$ and $R^{\pss}_{2\chi}$, we deduce that the composition
\begin{equation*}
  R^{\pss}_{2\chi} \cong R^{\pss, \zeta\eta^2}_{2\chi} \widehat{\otimes}_{\co_E} \Lambda \hooklongrightarrow \big(\widetilde{E}_{\fB,\zeta\eta^2}\big)^{\op} \widehat{\otimes}_{\co_E} \Lambda \cong \widetilde{E}_{\fB}^{\op}
\end{equation*}
coincides with the one induced by (\ref{cenPas}). This concludes the proof.
\end{proof}
\begin{remark}\label{normB}
 Let $\pi$ be the $\GL_2(\Q_p)$-representation in the proof of Theorem \ref{blo0} for the case (1)(2)(4). Let $\zeta: \Q_p^{\times} \ra \co_E^{\times}$ be such that $\overline{\zeta}$ is equal to the central character of $\pi$. Let $\check{\hV}_{\zeta}$ be  the functor $\check{\hV}$ of \cite[\S~5.7]{Pas13} on $\fC_{\GL_2(\Q_p),\zeta}(\co_E)$ (which depends on the choice of $\zeta$). As in \cite[Prop. 6.3, Cor. 8.7, Thm. 10.71]{Pas13}, the functor $\check{\hV}_{\zeta}$ induces an isomorphism
  \begin{equation*}
  R_{\overline{\rho}_{\pi}}^{\zeta \varepsilon} \xlongrightarrow{\sim} \End_{\fC_{\GL_2(\Q_p)}(\co_E)}(\widetilde{P}_{\pi^{\vee}})
  \end{equation*}
  where $R_{\overline{\rho}_{\pi}}^{\zeta \varepsilon}$ denotes the universal deformation over $\overline{\rho}$ of deformations with determinant equal to $\zeta \varepsilon$. The isomorphism in (\ref{equ: Colm}) is given  by the composition
  \begin{equation*}
    R_{\overline{\rho}_{\pi}}\cong R_{\overline{\rho}_{\pi}}^{\zeta \varepsilon} \widehat{\otimes}_{\co_E} \Lambda \xlongrightarrow{\sim}  \End_{\fC_{\GL_2(\Q_p),\zeta}(\co_E)}(\widetilde{P}_{\pi^{\vee}}) \widehat{\otimes}_{\co_E} \Lambda  \cong \End_{\fC_{\GL_2(\Q_p),\zeta}(\co_E)}(\widetilde{P}_{\pi^{\vee}}).
  \end{equation*}
We deduce by \cite[(6.4)]{CEGGPS2} and the diagram in the proof of \cite[Lem. 6.9]{CEGGPS2} that the isomorphism in (\ref{equ: Colm}) is (also) independent of the choice of $\zeta$.
%
\end{remark}
\noindent For $\fm$ a maximal ideal of $R_{\fB}^{\pss}[1/p]$ with $\fp:=\fm\cap R_{\fB}^{\pss}$, we denote by $\widehat{\pi}_{\fB,\fm}$ the multiplicity free direct sum of the irreducible constituents of the finite length Banach representation
\begin{equation*}\Hom_{\co_E}^{\cts}\big(\widetilde{P}_{\fB} \widehat{\otimes}_{R_{\fB}^{\pss}} (R_{\fB}^{\pss}/\fp), E\big).
 \end{equation*}

\noindent Let $H= \prod_i H_i$ be a finite product with $H_i\cong \Q_p^{\times}$ or $\GL_2(\Q_p)$. By \cite[Lem. 3.4.10, Cor. 3.4.11]{Pan2} (and the proof), we have
\begin{proposition}\label{bloten}Any block $\fB$ of $\Mod_H^{\lfin}(\co_E)$ is of the form
  \begin{equation*}
    \fB=\otimes_i \fB_i:=\{\otimes_{\pi_i\in \fB_i}  \pi_i\}
  \end{equation*}
  where $\fB_i$ is a block of $\Mod_{H_i}^{\lfin}(\co_E)$. And we have $\widetilde{P}_{\fB} \cong \widehat{\otimes}_i \widetilde{P}_{\fB_i}$,
$ \widetilde{E}_{\fB} \cong \widehat{\otimes}_i \widetilde{E}_{\fB_i}$.
\end{proposition}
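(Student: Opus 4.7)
The plan is to proceed in three stages: first classify the absolutely irreducible objects of $\Mod_H^{\lfin}(\co_E)$ as external tensor products, then use a Künneth-type formula for $\Ext^1$ to deduce the block decomposition, and finally identify the projective envelope and its endomorphism ring via a completed tensor product.

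For the classification, let $\pi$ be an absolutely irreducible object of $\Mod_H^{\lfin}(\co_E)$. Each factor $H_i$ is either $\Q_p^\times$ or $\GL_2(\Q_p)$, and all absolutely irreducible smooth representations of such groups over $k_E$ are admissible with $k_E$-valued endomorphisms. Fixing an absolutely irreducible $H_1$-subrepresentation $\pi_1 \subseteq \pi|_{H_1}$, the space $\Hom_{H_1}(\pi_1, \pi)$ carries a natural action of $H_2 \times \cdots \times H_m$ which commutes with the inclusion $\Hom_{H_1}(\pi_1, \pi) \otimes_{k_E} \pi_1 \hookrightarrow \pi$; absolute irreducibility of $\pi$ forces this inclusion to be an isomorphism. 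Iterating on the other factors yields $\pi \cong \otimes_i \pi_i$ with each $\pi_i$ absolutely irreducible.

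For the block decomposition, one needs the Künneth formula
\begin{equation*}
\Ext^1_H\bigl(\otimes_i \pi_i, \otimes_i \tau_i\bigr) \cong \bigoplus_i \Ext^1_{H_i}(\pi_i, \tau_i) \otimes_{k_E} \bigotimes_{j \neq i} \Hom_{H_j}(\pi_j, \tau_j).
\end{equation*}
This is obtained by taking projective resolutions $P_{i,\bullet} \twoheadrightarrow \pi_i^\vee$ in each $\fC_{H_i}(\co_E)$ and observing that the total complex $\widehat{\otimes}_i P_{i,\bullet}$ is a projective resolution of $\otimes_i \pi_i^\vee$ in $\fC_H(\co_E)$. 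From this formula, $\Ext^1$ is nonzero only when $\pi_j \cong \tau_j$ for all but one index $j$ and $\Ext^1_{H_i}(\pi_i, \tau_i) \neq 0$ at the remaining index, so the equivalence relation defining blocks factors through each coordinate, giving $\fB = \otimes_i \fB_i$ as in the statement.

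For the projective envelope, put $Q := \widehat{\otimes}_i \widetilde{P}_{\fB_i}$, which lies in $\fC_H(\co_E)^{\fB}$. The anti-equivalence between $\fC_{H_i}(\co_E)^{\fB_i}$ and pseudo-compact $\widetilde{E}_{\fB_i}$-modules sends $\widetilde{P}_{\fB_i}$ to the rank-one free module and commutes with completed tensor product; hence $Q$ corresponds to the pseudo-compact $\widehat{\otimes}_i \widetilde{E}_{\fB_i}$-module of rank one, which is in particular projective in $\fC_H(\co_E)^{\fB}$. Its cosocle is computed by dualising: $Q^\vee$ has socle $\oplus_{\pi \in \fB} \pi$, so $\cosoc Q \cong \pi_{\fB}^\vee$, and uniqueness of projective envelopes gives $Q \cong \widetilde{P}_{\fB}$. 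Taking endomorphisms yields $\widetilde{E}_{\fB} \cong \widehat{\otimes}_i \widetilde{E}_{\fB_i}$. The main technical obstacle is verifying that the completed tensor product of projectives in the pseudo-compact categories $\fC_{H_i}(\co_E)^{\fB_i}$ remains projective in $\fC_H(\co_E)^{\fB}$; this ultimately reduces to the elementary statement that the tensor product of two free pseudo-compact modules (over the completed tensor product of the respective endomorphism rings) is again free, exactly as carried out in the references cited.
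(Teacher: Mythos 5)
Your overall strategy (decompose irreducibles as external tensor products, control $\Ext^1$ by a K\"unneth-type argument, then identify $\widehat{\otimes}_i \widetilde{P}_{\fB_i}$ as the projective envelope via the Hom--tensor identity) is the same as in the reference the paper cites for this statement, so the architecture is fine. But your K\"unneth formula, as stated and as justified, is false over $\co_E$. The completed tensor product $\widehat{\otimes}_i P_{i,\bullet}$ of projective resolutions of the $\pi_i^{\vee}$ is \emph{not} a resolution of $\otimes_i \pi_i^{\vee}$: the terms $P_{i,j}$ are $\co_E$-flat but the objects $\pi_i^{\vee}$ are killed by $\varpi_E$, so the total complex acquires extra homology $\Tor_1^{\co_E}(\pi_1^{\vee},\pi_2^{\vee})\neq 0$ in degree $1$, and the correct statement is a K\"unneth formula with $\Tor^{\co_E}$ correction terms. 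Concretely, for $H=\Q_p^{\times}\times\Q_p^{\times}$ and $\pi=\tau=\chi_1\otimes\chi_2$ one has $\dim_{k_E}\Ext^1_{H}(\pi,\pi)=5$ (this is $\dim_{k_E}\fm/\fm^2$ for $\widetilde{E}_{\fB}\cong\co_E[[x_1,y_1,x_2,y_2]]$), whereas your formula predicts $3\cdot 1+1\cdot 3=6$; the missing relation is exactly the identification of the two ``$\varpi_E$-directions'' coming from $\Tor_1^{\co_E}(k_E,k_E)$. The block decomposition itself survives because the linking relation only involves $\Ext^1$ between \emph{non-isomorphic} irreducibles: if $\pi\ncong\tau$ then every extension of $\pi$ by $\tau$ is killed by $\varpi_E$ (multiplication by $\varpi_E$ induces a map $\pi\ra\tau$, which vanishes), so one may compute in the $k_E$-linear category where the tensor product of resolutions genuinely is a resolution and the formula holds. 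You should restrict the K\"unneth claim to that case, or state it with the correction term.

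Two smaller points. First, your projectivity argument for $Q:=\widehat{\otimes}_i\widetilde{P}_{\fB_i}$ is circular as written: invoking the anti-equivalence between $\fC_H(\co_E)^{\fB}$ and pseudo-compact $\widehat{\otimes}_i\widetilde{E}_{\fB_i}$-modules presupposes that $Q$ is a projective generator of $\fC_H(\co_E)^{\fB}$, which is what you are trying to establish. The non-circular route is to prove the Hom--tensor identity $\Hom_{\fC_H}(\widehat{\otimes}_i M_i,\widehat{\otimes}_i N_i)\cong\widehat{\otimes}_i\Hom_{\fC_{H_i}}(M_i,N_i)$ directly for pseudo-compact objects (this is the content of the lemmas of Pan and Gee--Newton that the paper cites elsewhere), deduce exactness of $\Hom_{\fC_H}(Q,-)$ and hence projectivity, and then read off both $\cosoc Q\cong\pi_{\fB}^{\vee}$ and $\End_{\fC_H}(Q)\cong\widehat{\otimes}_i\widetilde{E}_{\fB_i}$ from the same identity. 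Second, in Stage 1 you should justify the existence of an irreducible $H_1$-subrepresentation of $\pi|_{H_1}$ (this uses admissibility of irreducible objects, via finite-dimensionality of invariants under a compact open subgroup) before running the isotypic-component argument; with that in place the decomposition $\pi\cong\otimes_i\pi_i$ is correct.
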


\subsection{$\GL_2(\Q_p)$-ordinary families}\label{GL2ord}
\noindent We apply Pa{\v{s}}k{\=u}nas' theory to construct $\GL_2(\Q_p)$-ordinary families.
\\

\noindent Let $\fC:=\fC_{L_P(\Q_p)}(\co_E)$. We decompose the space of $P$-ordinary automorphic representations using the theory of blocks. We have
\begin{equation*}
  \Ord_P(\widehat{S}(U^p,\co_E)_{\overline{\rho}})^d=\Hom_{\co_E}(\Ord_P(\widehat{S}(U^p,\co_E)_{\overline{\rho}}),\co_E)\in \fC.
\end{equation*}For $k\in \Z_{\geq 1}$, the Pontryagain dual $\Ord_P(\widehat{S}(U^p, \co_E/\varpi_E^k)_{\overline{\rho}})^{\vee}$  is also an onject in $\fC$. We have an $\widetilde{\bT}(U^p)^{P-\ord}_{\overline{\rho}}$-equivariant isomorphism in $\fC$ (cf. (\ref{duala})):
\begin{equation}\label{Pord: ordd}
  \Ord_P(\widehat{S}(U^p, \co_E)_{\overline{\rho}})^d \cong \varprojlim_k \Ord_P(\widehat{S}(U^p, \co_E/\varpi_E^k)_{\overline{\rho}})^{\vee}.
\end{equation}
For $M\in \Mod_{L_P(\Q_p)}^{\lfin}(\co_E)$ (resp. in $\fC$), and $\fB$ a block of $\Mod_{L_P(\Q_p)}^{\lfin}(\co_E)$ (hence can also be viewed as a block of $\fC$), we denote by $M_{\fB}$ the maximal direct summand of $M$ such that all the irreducible subquotients $\pi$ of $M_{\fB}$ satisfy  $\pi\in \fB$ (resp. $\pi^{\vee}\in \fB$).
We have thus decompositions (cf. \cite[Prop. 5.36]{Pas13})
\begin{eqnarray}\label{decomp00}
    \Ord_P(\widehat{S}(U^p,\co_E)_{\overline{\rho}})^d &\cong& \oplus_{\fB} \Ord_P(\widehat{S}(U^p,\co_E)_{\overline{\rho}})^d_{\fB}, \\
    \Ord_P(\widehat{S}(U^p, \co_E/\varpi_E^k)_{\overline{\rho}}) & \cong & \oplus_{\fB}  \Ord_P(\widehat{S}(U^p, \co_E/\varpi_E^k)_{\overline{\rho}})_{\fB}, \nonumber \\
        \Ord_P(\widehat{S}(U^p, \co_E/\varpi_E^k)_{\overline{\rho}})^{\vee}& \cong & \oplus_{\fB}  \Ord_P(\widehat{S}(U^p, \co_E/\varpi_E^k)_{\overline{\rho}})^{\vee}_{\fB}. \nonumber
\end{eqnarray}
It is also clear that the isomorphism in (\ref{Pord: ordd}) respects the decompositions.
\\

\noindent Recall that  $\Ord_P(\widehat{S}(U^p,\co_E)_{\overline{\rho}})$ is admissible, hence $\Ord_P(\widehat{S}(U^p,\co_E)_{\overline{\rho}})^d$ is finitely generated over $\co_E[[L_P(\Z_p)]]$. By \cite[(2.2.12)]{EOrd1}, the Pontryagain dual of  $\Ord_P(\widehat{S}(U^p,\co_E)_{\overline{\rho}})^d$ is a smooth admissible representation of $L_P(\Q_p)$ over $\co_E$. Hence there are finitely many blocks $\fB$ of $\Mod_{L_P(\Q_p)}^{\lfin}(\co_E)$ such that $\Ord_P(\widehat{S}(U^p,\co_E)_{\overline{\rho}})^d_{\fB}\neq 0$ \big(which is equivalent to
$\Ord_P(\widehat{S}(U^p, \co_E/\varpi_E^k)_{\overline{\rho}})^{\vee}_{\fB} \neq 0$, by (\ref{Pord: ordd})\big). Put
\begin{eqnarray*}
  \Ord_P(\widehat{S}(U^p,\co_E)_{\overline{\rho}})_{\fB}&:=&\Hom^{\cts}_{\co_E}(    \Ord_P(\widehat{S}(U^p,\co_E)_{\overline{\rho}})^d_{\fB}, \co_E), \\
    \Ord_P(\widehat{S}(U^p,E)_{\overline{\rho}})_{\fB}&:=&   \Ord_P(\widehat{S}(U^p,\co_E)_{\overline{\rho}})_{\fB} \otimes_{\co_E} E,
\end{eqnarray*}
which are equipped with the supreme norm.
For $*\in \{\co_E, E, \co_E/\varpi_E^k\}$, $\Ord_P(\widehat{S}(U^p,*)_{\overline{\rho}})_{\fB}$ is a direct summand of $  \Ord_P(\widehat{S}(U^p,*)_{\overline{\rho}})$ (e.g. using \ref{decomp00} and (\ref{duali})),  and we have
\begin{equation}\label{pord: proj00}
  \Ord_P(\widehat{S}(U^p,\co_E)_{\overline{\rho}})_{\fB}\cong \varprojlim_n \Ord_P(S(U^p,\co_E/\varpi_E^n)_{\overline{\rho}})_{\fB}.
\end{equation}
The following lemma follows easily from Lemma \ref{LPzp} and \cite[Cor. 7.7]{BD1}.
 \begin{lemma}\label{densbl}
   Let $\fB$ be a block of $\Mod_{L_P(\Q_p)}^{\lfin}(\co_E)$ such that $ \Ord_P(\widehat{S}(U^p,\co_E)_{\overline{\rho}})_{\fB}\neq 0$.

\noindent (1) $ \Ord_P(\widehat{S}(U^p,\co_E)_{\overline{\rho}})_{\fB}|_{L_P(\Z_p)}$ is isomorphic to a direct summand of $\cC(L_P(\Z_p),\co_E)^{\oplus r}$ for some $r>1$.

\noindent (2) $\big(\Ord_P(\widehat{S}(U^p,\co_E)_{\overline{\rho}})_{\fB}\big)^{L_P(\Z_p)-\alg}_+$ is dense in $ \Ord_P(\widehat{S}(U^p,\co_E)_{\overline{\rho}})_{\fB}$.
 \end{lemma}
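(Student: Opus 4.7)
The plan is to deduce both assertions formally from Lemma \ref{LPzp} and Proposition \ref{densalg} by passing through the block decomposition (\ref{decomp00}). The key preliminary observation is that since $\Ord_P(\widehat{S}(U^p,\co_E)_{\overline{\rho}})$ is admissible as an $L_P(\Q_p)$-representation, only finitely many blocks $\fB$ contribute to the decomposition; hence
\[
\Ord_P(\widehat{S}(U^p, \co_E)_{\overline{\rho}}) \;\cong\; \bigoplus_{\fB} \Ord_P(\widehat{S}(U^p, \co_E)_{\overline{\rho}})_{\fB}
\]
is a \emph{finite} $L_P(\Q_p)$-equivariant direct sum of closed $\co_E$-submodules, and each block projection $e_{\fB}$ is continuous (by the direct-summand statement recorded just after (\ref{pord: proj00})) and $L_P(\Q_p)$-equivariant.

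For (1), I would simply compose the two splittings: $\Ord_P(\widehat{S}(U^p,\co_E)_{\overline{\rho}})_{\fB}$ is an $L_P(\Z_p)$-equivariant direct summand of $\Ord_P(\widehat{S}(U^p,\co_E)_{\overline{\rho}})$, which by Lemma \ref{LPzp} is itself a direct summand of $\cC(L_P(\Z_p),\co_E)^{\oplus r}$. Composing these two splittings yields the result.

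For (2), I would exploit the fact that $e_{\fB}$ commutes with the entire $L_P(\Q_p)$-action. This ensures that $e_{\fB}$ preserves each algebraic $\sigma$-isotypic component appearing in (\ref{LpZp+}), and therefore carries the dense subspace $\Ord_P(\widehat{S}(U^p,E)_{\overline{\rho}})_+^{L_P(\Z_p)-\alg}$ of Proposition \ref{densalg} into its analogue for the block $\fB$. By continuity, given any $w \in \Ord_P(\widehat{S}(U^p, E)_{\overline{\rho}})_{\fB}$ and an approximating sequence $v_n \to w$ with $v_n \in \Ord_P(\widehat{S}(U^p, E)_{\overline{\rho}})_+^{L_P(\Z_p)-\alg}$, the images $e_{\fB}(v_n) \to e_{\fB}(w)=w$ lie in $\bigl(\Ord_P(\widehat{S}(U^p,E)_{\overline{\rho}})_{\fB}\bigr)_+^{L_P(\Z_p)-\alg}$, which is the desired density. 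No serious obstacle is anticipated; the content is essentially the compatibility of the block decomposition with the $L_P(\Z_p)$-restriction (for (1)) and with the locally algebraic dominant-weight structure (for (2)), both of which are immediate from $L_P(\Q_p)$-equivariance of $e_{\fB}$.
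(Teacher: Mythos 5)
Your proof is correct and is essentially the paper's: the paper disposes of the lemma in one line by citing Lemma \ref{LPzp} together with [BD1, Cor.~7.7], and your two steps — composing the block splitting with Lemma \ref{LPzp} for (1), and pushing the dense subspace of Proposition \ref{densalg} through the continuous $L_P(\Q_p)$-equivariant idempotent $e_{\fB}$ (which preserves each $\sigma$-isotypic component, hence the dominance condition) for (2) — are precisely what that citation encapsulates. The only point worth keeping in mind is the harmless passage from $E$- to $\co_E$-coefficients: the lattice is the unit ball of the supremum norm, so density in the Banach space immediately yields density in the lattice.
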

\noindent We have
\begin{multline}\label{fB0}
\Ord_P(\widehat{S}(U^p,\co_E)_{\overline{\rho}})^d_{\fB} \cong \widetilde{P}_{\fB}\widehat{\otimes}_{\widetilde{E}_{\fB}} \Hom_{\fC}\big(\widetilde{P}_{\fB}, \Ord_P(\widehat{S}(U^p,\co_E)_{\overline{\rho}})^d_{\fB}\big) \\
  \cong \widetilde{P}_{\fB}\widehat{\otimes}_{\widetilde{E}_{\fB}} \Hom_{\fC}\big(\widetilde{P}_{\fB}, \Ord_P(\widehat{S}(U^p,\co_E)_{\overline{\rho}})^d\big) \hooklongrightarrow \Ord_P(\widehat{S}(U^p,\co_E)_{\overline{\rho}})^d
\end{multline}
where the first isomorphism follows from \cite[Lem. 2.10]{Pas13} (applying $\Hom_{\fC}(\widetilde{P}_{\fB},-)$ to \cite[(6)]{Pas13} with $M=\Ord_P(\widehat{S}(U^p,\co_E)_{\overline{\rho}})^d_{\fB}$, we easily deduce that the kernel is zero), the second isomorphism follows from (\ref{decomp00}) and $\Hom_{\fC}\big(\widetilde{P}_{\fB},  \Ord_P(\widehat{S}(U^p,\co_E)_{\overline{\rho}})^d_{\fB'})=0$ for $\fB'\neq \fB$, and where the last injection is the evaluation map (indeed, applying $\Hom_{\fC}(\widetilde{P}_{\fB'}, -)$ to the kernel of this map, we get zero for all $\fB'$, from which we deduce that the kernel has to be zero). We see that $ \Ord_P(\widehat{S}(U^p,\co_E)_{\overline{\rho}})^d_{\fB}$ inherits a natural $\widetilde{\bT}(U^p)^{P-\ord}_{\overline{\rho}}$-action from $\Ord_P(\widehat{S}(U^p,\co_E)_{\overline{\rho}})^d$ (via the first two isomorphisms in (\ref{fB0})), so that the decomposition (\ref{decomp00}) is in fact $\widetilde{\bT}(U^p)^{P-\ord}_{\overline{\rho}}$-equivariant. Similarly, for all $k\geq 1$, $\Ord_P(S(U^p,\co_E/\varpi_E^k)_{\overline{\rho}})_{\fB}$ is also a $\widetilde{\bT}(U^p)^{P-\ord}_{\overline{\rho}}$-equivariant direct summand of $\Ord_P(S(U^p,\co_E/\varpi_E^k)_{\overline{\rho}})$. For $i\geq 0$,  $\big(\Ord_P(S(U^p,\co_E/\varpi_E^k)_{\overline{\rho}})_{\fB}\big)^{L_i}$
is hence  a $\widetilde{\bT}(U^p)^{P-\ord}_{\overline{\rho}}$-equivariant direct summand of (cf. (\ref{isoOrd}))
\begin{equation*}\Ord_P(S(U^p, \co_E/\varpi_E^k)_{\overline{\rho}})^{L_i}\cong S(U^p K_{i,i},\co_E/\varpi_E^k)_{\overline{\rho},\ord}.
\end{equation*}
 Denote by $\bT(U^p K_{i,i}, \co_E/\varpi_E^k)_{\overline{\rho},\fB}^{P-\ord}$ the image of
\begin{equation*}
\bT(U^p)_{\overline{\rho}} \lra \End_{\co_E}\big(\big(\Ord_P(S(U^p,\co_E/\varpi_E^k)_{\overline{\rho}})_{\fB}\big)^{L_i}\big),
\end{equation*}
and put
\begin{equation*}
 \widetilde{\bT}(U^p)_{\overline{\rho},\fB}^{P-\ord}:=\varprojlim_k \varprojlim_i \bT(U^pK_{i,i}, \co_E/\varpi_E^k)_{\overline{\rho},\fB}^{P-\ord}.
\end{equation*}
It is clear that    $\widetilde{\bT}(U^p)_{\overline{\rho},\fB}^{P-\ord}$ is  a quotient of $\widetilde{\bT}(U^p)_{\overline{\rho}}^{P-\ord}$ hence is also a complete local noetherian $\co_E$-algebra of residue field $k_E$. Similarly as in Lemma \ref{lem: Pord-reduce}, we have
\begin{lemma}
  The $\co_E$-algebra $\widetilde{\bT}(U^p)_{\overline{\rho},\fB}^{P-\ord}$ is reduced and the natural action of $\widetilde{\bT}(U^p)_{\overline{\rho},\fB}^{P-\ord}$ on $\Ord_P(\widehat{S}(U^{p}, \co_E)_{\overline{\rho},\fB})$ and $\Ord_P(\widehat{S}(U^p, E)_{\overline{\rho}, \fB})$ is faithful.
\end{lemma}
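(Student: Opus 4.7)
\noindent The plan is to follow the strategy of the proof of Lemma \ref{lem: Pord-reduce} (which itself follows \cite[Lem. 6.7 and 6.8(1)]{BD1}), restricting all arguments to the $\fB$-direct summand. Faithfulness will fall out essentially from the construction, while reducedness will combine the density of locally algebraic vectors with the semisimplicity of the Hecke action on them.

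\noindent First I would dispose of faithfulness. By construction, each $\bT(U^pK_{i,i}, \co_E/\varpi_E^k)_{\overline{\rho},\fB}^{P-\ord}$ is the image of $\bT(U^p)_{\overline{\rho}}$ in $\End_{\co_E}\bigl(\big(\Ord_P(S(U^p,\co_E/\varpi_E^k)_{\overline{\rho}})_{\fB}\big)^{L_i}\bigr)$, hence acts faithfully on that finite-level module. Passing to the inverse limit via (\ref{pord: proj00}) together with the smooth exhaustion $\Ord_P(S(U^p,\co_E/\varpi_E^k)_{\overline{\rho}})_{\fB} \cong \varinjlim_i \big(\Ord_P(S(U^p,\co_E/\varpi_E^k)_{\overline{\rho}})_{\fB}\big)^{L_i}$, any $T\in\widetilde{\bT}(U^p)_{\overline{\rho},\fB}^{P-\ord}$ annihilating $\Ord_P(\widehat{S}(U^p,\co_E)_{\overline{\rho}})_{\fB}$ has all its finite-level projections vanishing, and therefore must itself be zero. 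Faithfulness on $\Ord_P(\widehat{S}(U^p,E)_{\overline{\rho}})_{\fB}$ then follows because the integral lattice $\Ord_P(\widehat{S}(U^p,\co_E)_{\overline{\rho}})_{\fB}$ is $\co_E$-torsion free (as a closed submodule of the $\co_E$-flat Banach lattice $\widehat{S}(U^p,\co_E)$).

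\noindent For reducedness, let $\bar T\in\widetilde{\bT}(U^p)_{\overline{\rho},\fB}^{P-\ord}$ be nilpotent. The block decomposition (\ref{decomp00}) is $\widetilde{\bT}(U^p)^{P-\ord}_{\overline{\rho}}$-equivariant, as observed in the discussion around (\ref{fB0}); hence the restriction to the stable block summand $\big(\Ord_P(\widehat{S}(U^p,E)_{\overline{\rho}})_{\fB}\big)^{L_P(\Z_p)-\alg}_+$ of the semisimple action of $\widetilde{\bT}(U^p)_{\overline{\rho}}[1/p]$ on $\Ord_P(\widehat{S}(U^p,E)_{\overline{\rho}})^{L_P(\Z_p)-\alg}_+$ from Proposition \ref{beni4}(2) remains semisimple. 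The image of $\bar T$ in the endomorphism ring of this subspace therefore lies in a product of fields and is nilpotent, hence is zero. By Lemma \ref{densbl}(2) this subspace is dense in $\Ord_P(\widehat{S}(U^p,E)_{\overline{\rho}})_{\fB}$, so $\bar T$ acts as zero on the whole Banach representation by continuity, and then on the integral lattice by torsion-freeness; the faithfulness established above forces $\bar T=0$. The only mild subtlety I anticipate is checking that the restriction of the semisimple Hecke action to the $\fB$-block is itself semisimple, but this is immediate from the $\widetilde{\bT}$-equivariance of the block decomposition; beyond that, the argument is essentially a verbatim transcription of Lemma \ref{lem: Pord-reduce}.
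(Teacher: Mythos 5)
Your proof is correct and follows exactly the route the paper intends: the paper gives no separate argument for this lemma but simply invokes the analogue of Lemma \ref{lem: Pord-reduce} (itself modelled on \cite[Lem. 6.7, 6.8(1)]{BD1}), namely faithfulness from the definition of the finite-level Hecke algebras as images in endomorphism rings together with the inverse-limit/reduction-mod-$\varpi_E^k$ compatibilities, and reducedness from the density of $L_P(\Z_p)$-algebraic vectors in the $\fB$-summand combined with the semisimplicity of the Hecke action on them. Your handling of the two minor points (torsion-freeness to pass between the $\co_E$- and $E$-versions, and $\widetilde{\bT}$-equivariance of the block decomposition to restrict semisimplicity to the $\fB$-summand) is exactly what is needed.
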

\noindent Since $ \Ord_P(\widehat{S}(U^p,E)_{\overline{\rho}})_{\fB}$ is an $L_P(\Q_p)\times \widetilde{\bT}(U^p)_{\overline{\rho},\fB}^{P-\ord}$-equivariant direct summand of $\Ord_P(\widehat{S}(U^p,E)_{\overline{\rho}})$, by the same argument, we have as in  Proposition \ref{beni1}, \ref{beni2}:
\begin{proposition}\label{beni3}
  (1) The benign points are Zariski-dense in $\Spec \widetilde{\bT}(U^p)_{\overline{\rho},\fB}^{P-\ord}[1/p]$.

\noindent (2) Let $x$ be a benign point of $\Spec \widetilde{\bT}(U^p)_{\overline{\rho},\fB}^{P-\ord}[1/p]$, the statements in Proposition \ref{beni1} (2), Proposition \ref{beni2} hold with $\Ord_P\big(\widehat{S}(U^p, E\big)_{\overline{\rho}}[\fm_x]\big)$ replaced by $\Ord_P\big(\widehat{S}(U^p, E\big)_{\overline{\rho}}[\fm_x]\big)_{\fB}$.
\end{proposition}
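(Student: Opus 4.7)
The plan is to reduce both statements to Propositions \ref{beni1} and \ref{beni2} by exploiting that the block decomposition (\ref{decomp00}) is $\widetilde{\bT}(U^p)_{\overline{\rho}}^{P-\ord}$-equivariant. Thus $\Ord_P(\widehat{S}(U^p,E)_{\overline{\rho}})_{\fB}$ is a direct summand on which $\widetilde{\bT}(U^p)_{\overline{\rho}}^{P-\ord}$ acts through the quotient $\widetilde{\bT}(U^p)_{\overline{\rho},\fB}^{P-\ord}$, and dually we obtain a closed immersion $\Spec \widetilde{\bT}(U^p)_{\overline{\rho},\fB}^{P-\ord}[1/p] \hookrightarrow \Spec \widetilde{\bT}(U^p)_{\overline{\rho}}^{P-\ord}[1/p]$ through which a point $x$ of the source is identified with a point $\widetilde{x}$ of the target.

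For part (1), I would copy the proof of Proposition \ref{beni1}(2) (itself modelled on \cite[Prop. 7.5]{BD1}), substituting Lemma \ref{densbl}(2) for Proposition \ref{densalg}: the density of $\big(\Ord_P(\widehat{S}(U^p,E)_{\overline{\rho}})_{\fB}\big)^{L_P(\Z_p)-\alg}_+$ inside $\Ord_P(\widehat{S}(U^p,E)_{\overline{\rho}})_{\fB}$, combined with the semi-simplicity of the $\widetilde{\bT}(U^p)_{\overline{\rho},\fB}^{P-\ord}[1/p]$-action on this dense subspace (inherited as a direct summand from Proposition \ref{beni4}(2)), forces the intersection of the maximal ideals cutting out benign points to be zero. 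This gives Zariski-density of benign points in $\Spec \widetilde{\bT}(U^p)_{\overline{\rho},\fB}^{P-\ord}[1/p]$.

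For part (2), the corresponding point $\widetilde{x}$ is itself benign in the original sense because $\big(\Ord_P(\widehat{S}(U^p,E)_{\overline{\rho}})[\fm_{\widetilde{x}}]\big)^{L_P(\Z_p)-\alg}_+$ contains the non-zero direct summand $\big(\Ord_P(\widehat{S}(U^p,E)_{\overline{\rho}})[\fm_x]_{\fB}\big)^{L_P(\Z_p)-\alg}_+$. Propositions \ref{beni1}(1) and \ref{beni2}(1)-(2) then apply to $\widetilde{x}$, and these assertions concern $\rho_{\widetilde{x}} = \rho_x$ and its local behavior at places above $p$, so they transfer to $x$ unchanged. For Proposition \ref{beni2}(3), I would project the injection (\ref{Pord: emb0}) at $\widetilde{x}$ onto the $\fB$-component of its target; the result is an $L_P(\Q_p)$-equivariant map into $\Ord_P(\widehat{S}(U^p,E)_{\overline{\rho}})[\fm_x]_{\fB}$.

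The main point requiring verification is that the left-hand side of (\ref{Pord: emb0}), namely $\otimes_{v\in S_p}\big(\otimes_{i} \widehat{\pi}(\rho_{x,\widetilde{v},i})^{\lalg}\otimes_{k(x)} \varepsilon^{s_{\widetilde{v},i+1}-1}\circ \dett\big)$, is isotypic for a single block of $\Mod_{L_P(\Q_p)}^{\lfin}(\co_E)$, so that the projection above is either identically zero or injective on the whole; non-vanishing at our $x$ is then automatic from the hypothesis $\Ord_P(\widehat{S}(U^p,E)_{\overline{\rho}})[\fm_x]_{\fB}\neq 0$. By Proposition \ref{bloten} this reduces to the corresponding assertion for each tensor factor, which follows from the classification of blocks of $\Mod_{\GL_2(\Q_p)}^{\lfin}(\co_E)$ and $\Mod_{\Q_p^{\times}}^{\lfin}(\co_E)$ recalled in \S~\ref{sec: Pas}, together with the fact that the mod-$\varpi_E$ reduction of $\rho_{x,\widetilde{v},i}$ is fixed, being a graded piece of the fixed $P_{\widetilde{v}}$-filtration on $\overline{\rho}_{\widetilde{v}}$.
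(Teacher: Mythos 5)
Your proposal is correct and follows the paper's (very terse) argument: the paper simply observes that $\Ord_P(\widehat{S}(U^p,E)_{\overline{\rho}})_{\fB}$ is an $L_P(\Q_p)\times \widetilde{\bT}(U^p)_{\overline{\rho},\fB}^{P-\ord}$-equivariant direct summand and reruns the proofs of Propositions \ref{beni1} and \ref{beni2}, which is exactly what you do, with the block-isotypicity of the source of (\ref{Pord: emb0}) being the same point exploited later in Proposition \ref{beni5} and Lemma \ref{Bord}. One small imprecision: for the non-vanishing of the projection of (\ref{Pord: emb0}) onto the $\fB$-component you should invoke the full benign hypothesis $\big(\Ord_P(\widehat{S}(U^p,E)_{\overline{\rho}}[\fm_x])_{\fB}\big)^{L_P(\Z_p)-\alg}_+\neq 0$ (together with Proposition \ref{beni4}(1), which identifies the locally algebraic vectors with copies of that source), not merely $\Ord_P(\widehat{S}(U^p,E)_{\overline{\rho}})[\fm_x]_{\fB}\neq 0$, since the latter alone would not rule out a $\fB$-component with no locally algebraic vectors.
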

\noindent By Proposition \ref{bloten}, there exist blocks $\fB_{\widetilde{v},i}$ for $v\in S_p$, $i=1,\cdots, k_{\widetilde{v}}$ such that 
\begin{equation}\label{block0}\fB=\otimes_{v\in S_p} (\otimes_{i=1,\cdots, k_{\widetilde{v}}} \fB_{\widetilde{v},i}(s_{\widetilde{v},i+1}-1))=:\otimes_{v\in S_p} \fB_{\widetilde{v}},\end{equation}
where  $\fB(r)$ denotes the block $\{\pi\otimes_{k_E} \omega^r \circ \dett\ |\ \pi\in \fB\}$ for a block $\fB$. If $p=3$, we assume that $\fB_{\widetilde{v},i}$ is not in case (4') for all $v$, $i$ with $n_{\widetilde{v},i}=2$. For $v\in S_p$, $i=1,\cdots, k_{\widetilde{v}}$, twisting $\varepsilon^{1-s_{\widetilde{v},i+1}}: \Gal_{\Q_p} \ra \co_E^{\times}$ induces an isomorphism
$\tw_{\widetilde{v},i}: R_{\fB_{\widetilde{v},i}}^{\pss} \xrightarrow{\sim} R_{\fB_{\widetilde{v},i}(s_{\widetilde{v},i+1}-1)}^{\pss}$.
Put \begin{equation}\label{RpB}R_{p,\fB}:=\widehat{\otimes}_{v\in S_p}  \big(\widehat{\otimes}_{i=1,\cdots, k_{\widetilde{v}}} R_{\fB_{\widetilde{v},i}}^{\pss}\big).
\end{equation}
Denote by $$\tm(U^p, \fB):=\Hom_{\fC}\big(\widetilde{P}_{\fB}, \Ord_P(\widehat{S}(U^p,\co_E)_{\overline{\rho}})^d_{\fB}\big),$$ which is a compact $\widetilde{E}_{\fB}$-module. By Theorem \ref{blo0}, Proposition \ref{bloten} and  the fact $\Ord_P(\widehat{S}(U^p,\co_E)_{\overline{\rho}})_{\fB}$ is admissible, $\tm(U^p,\fB)$ is finitely generated over $R_{p,\fB}$ where the action of $R_{p,\fB}$ is induced from the natural action of $\widetilde{E}_{\fB}$ via
\begin{equation*}
  R_{p,\fB}\xlongrightarrow[\sim]{(\tw_{\widetilde{v},i})}  \widehat{\otimes}_{v\in S_p}  \big(\widehat{\otimes}_{i=1,\cdots, k_{\widetilde{v}}} R_{\fB_{\widetilde{v},i}(s_{\widetilde{v},i+1}-1)}^{\pss}\big) \hooklongrightarrow \widetilde{E}_{\fB}.
\end{equation*}
The  (faithful) $\widetilde{\bT}(U^p)_{\overline{\rho},\fB}^{P-\ord}$-action on $\Ord_P(\widehat{S}(U^p,\co_E)_{\overline{\rho}})^d_{\fB}$ (commuting with $L_P(\Q_p)$) induces a faithful $\widetilde{E}_{\fB}$-linear action of $\widetilde{\bT}(U^p)_{\overline{\rho},\fB}^{P-\ord}$ on $\tm(U^p,\fB)$. We denote by $\cA$ the $R_{p,\fB}$-subalgebra of $\End_{R_{p,\fB}}\big(\tm(U^p,\fB)\big)$ generated by the image of the composition
\begin{equation}\label{PordcA}
  \widetilde{\bT}(U^p)_{\overline{\rho},\fB}^{P-\ord} \hooklongrightarrow \End_{\widetilde{E}_{\fB}}\big(\tm(U^p,\fB)\big) \hooklongrightarrow \End_{R_{p,\fB}}\big(\tm(U^p,\fB)\big).
\end{equation}
By definition, $\cA$ is commutative and finite over $R_{p,\fB}$, and it is not difficult to see that $\cA$ is $\co_E$-torsion free and $\cap_{j\geq 0} \varpi_E^j \cA=0$ (using similar properties for $\tm(U^p,\fB)$, and the fact that the $\cA$-action on $\tm(U^p,\fB)$ is faithful).  We also have a surjective map
\begin{equation*}
  \widetilde{\bT}(U^p)_{\overline{\rho}, \fB}^{P-\ord} \widehat{\otimes}_{\co_E} R_{p,\fB} \twoheadlongrightarrow \cA,
\end{equation*}
and hence a natural embedding:
\begin{equation*}
  (\Spf \cA)^{\rig} \hooklongrightarrow (\Spf \widetilde{\bT}(U^p)_{\overline{\rho}, \fB}^{P-\ord})^{\rig}\times (\Spf R_{p,\fB})^{\rig}
\end{equation*}
 such that the composition (which one can view as an analogue of the  weight map of Hida families)
\begin{equation*}
  \kappa: (\Spf \cA)^{\rig} \hooklongrightarrow (\Spf \widetilde{\bT}(U^p)_{\overline{\rho}, \fB}^{P-\ord})^{\rig}\times (\Spf R_{p,\fB})^{\rig} \lra (\Spf R_{p,\fB})^{\rig}
\end{equation*}
is finite. For each point $z_{\widetilde{v},i}$ of $(\Spf R_{\fB_{\widetilde{v},i}}^{\pss})^{\rig}$, we can attach a representation $\widehat{\pi}_{z_{\widetilde{v},i}}$ of $\GL_{n_{\widetilde{v},i}}(\Q_p)$ such that if $n_{\widetilde{v},i}=1$, $\widehat{\pi}_{z_{\widetilde{v},i}}$ is the corresponding continuous (unitary) character of $\Q_p^{\times}$, and if $n_{\widetilde{v},i}=2$, $\widehat{\pi}_{z_{\widetilde{v},i}}:=\widehat{\pi}_{\fB_{\widetilde{v},i}, \fm_{z_{\widetilde{v},i}}}$ (see the discussion below Remark \ref{normB}).
\begin{proposition}\label{pts0}
  Let $y=(x,z)=(x,(z_{\widetilde{v},i}))\in  (\Spf \widetilde{\bT}(U^p)_{\overline{\rho}, \fB}^{P-\ord})^{\rig}\times (\Spf R_{p,\fB})^{\rig}$. Then $y\in (\Spf \cA)^{\rig}$ if and only if
  \begin{equation}\label{hom0}
    \Hom_{L_P(\Q_p)}\Big(\widehat{\otimes}_{\substack{v\in S_p \\ i=1,\cdots, k_{\widetilde{v}}}} (\widehat{\pi}_{z_{\widetilde{v},i}} \otimes_{k(y)} \varepsilon^{s_{\widetilde{v},i+1}-1}\circ \dett), \Ord_P\big(\widehat{S}(U^p,E)_{\overline{\rho}}\big)_{\fB}[\fm_x]\Big)\neq 0.
  \end{equation}
\end{proposition}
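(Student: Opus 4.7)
The approach is to convert the Hom-space condition into a support statement for $\tm(U^p,\fB)$, and then exploit the faithfulness of the $\cA$-action on $\tm(U^p,\fB)$. Setting $\fp_x := \fm_x \cap \widetilde{\bT}(U^p)_{\overline{\rho},\fB}^{P-\ord}$, I would first combine the isomorphism (\ref{fB0}) with the observation that $\fp_x$ acts only on the $\tm(U^p,\fB)$-factor to identify the Schikhof dual of $\Pi := \Ord_P(\widehat{S}(U^p,E)_{\overline{\rho}})_{\fB}[\fm_x]$ with $\tm(U^p,\fB)_x \widehat{\otimes}_{\widetilde{E}_{\fB}[1/p]} \widetilde{P}_{\fB}[1/p]$, where $\tm(U^p,\fB)_x := (\tm(U^p,\fB)/\fp_x\tm(U^p,\fB))[1/p]$, naturally a finitely generated $\widetilde{E}_{\fB}[1/p]$-module annihilated by $\fp_x$.

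Next, the Pa{\v{s}}k{\=u}nas anti-equivalence $\Hom_{\fC}(\widetilde{P}_{\fB},-)$, extended to the rigid side by inverting $p$, converts the Hom-space in the proposition into $\Hom_{\widetilde{E}_{\fB}[1/p]}(\tm(U^p,\fB)_x, N_{\widehat{\pi}})$, where $N_{\widehat{\pi}} := \Hom_{\fC}(\widetilde{P}_{\fB},\widehat{\pi}^d) \otimes_{\co_E} E$. Using Proposition \ref{bloten}, the definition of $\widehat{\pi}_{\fB_{\widetilde{v},i},\fm_{z_{\widetilde{v},i}}}$, and Theorem \ref{blo0}, I would show that $N_{\widehat{\pi}}$ is a nonzero finite-length $\widetilde{E}_{\fB}[1/p]$-module whose $R_{p,\fB}$-support is precisely $\{\fm_z\}$, with the twists by $\varepsilon^{s_{\widetilde{v},i+1}-1}$ on $\widehat{\pi}$ matching the twist-isomorphisms $\tw_{\widetilde{v},i}$ of (\ref{RpB}) used to embed $R_{p,\fB}$ into $\widetilde{E}_{\fB}$.

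Granting this identification, the Hom-space is nonzero iff $\tm(U^p,\fB)_x \otimes_{R_{p,\fB}} k(z) \neq 0$ --- using that $\tm(U^p,\fB)$ is finitely generated over $R_{p,\fB}$, so fibers detect support --- iff $\tm(U^p,\fB) \otimes_{\widetilde{\bT}(U^p)_{\overline{\rho},\fB}^{P-\ord} \widehat{\otimes}_{\co_E} R_{p,\fB}} k(y) \neq 0$. Faithfulness of the $\cA$-action on $\tm(U^p,\fB)$ forces the annihilator of $\tm(U^p,\fB)$ in $\widetilde{\bT}(U^p)_{\overline{\rho},\fB}^{P-\ord} \widehat{\otimes}_{\co_E} R_{p,\fB}$ to equal the kernel of the surjection onto $\cA$, so the support of $\tm(U^p,\fB)$ is exactly $\Spec\cA$ and the last condition becomes $y \in (\Spf\cA)^{\rig}$. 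The main obstacle is in the second paragraph: precisely identifying $N_{\widehat{\pi}}$, which requires tracking the twists through (\ref{RpB}) and handling the fact that $\widehat{\pi}_{\fB_{\widetilde{v},i},\fm}$ is defined as the semisimple multiplicity-free sum of irreducible constituents rather than the full Banach fiber $\Hom^{\cts}_{\co_E}(\widetilde{P}_{\fB_{\widetilde{v},i}}\widehat{\otimes}_{R_{\fB_{\widetilde{v},i}}^{\pss}} R_{\fB_{\widetilde{v},i}}^{\pss}/\fp,E)$; one reduces to the latter by using that the irreducible constituents of this Banach correspond bijectively to the maximal ideals of $\widetilde{E}_{\fB_{\widetilde{v},i}}[1/p]$ lying over $\fp$, so nonzero-ness of Hom from $\widehat{\pi}$ into $\Pi$ still detects precisely $\tm(U^p,\fB)_x$ having a nonzero fiber at $z$.
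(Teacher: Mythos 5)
Your proposal is correct and follows essentially the same route as the paper: both reduce the statement, via the equivalence $\Hom_{\fC}(\widetilde{P}_{\fB},-)$, to the assertions that the support of $\tm(U^p,\fB)$ over $\widetilde{\bT}(U^p)_{\overline{\rho},\fB}^{P-\ord}\widehat{\otimes}_{\co_E}R_{p,\fB}$ is exactly $\Spec \cA$ (by faithfulness and finite generation) and that the Hom-space in (\ref{hom0}) detects the non-vanishing of the fibre of $\tm(U^p,\fB)$ at $y$. The paper packages this by constructing the relevant (co)quotient maps explicitly, and it invokes the semisimplicity and multiplicity-freeness of $\widehat{\pi}_{z_{\widetilde{v},i}}$ at precisely the point you flag as delicate, namely relating the simple quotients of the fibre of $\tm(U^p,\fB)_x$ at $z$ to the irreducible constituents of the Banach fibre of $\widetilde{P}_{\fB}$.
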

\begin{proof}
Without loss of generality, we assume $k(y)=E$.
Denote by $\fp_x\subset \widetilde{\bT}(U^p)^{P-\ord}_{\overline{\rho}, \fB}$ (resp. $\fp_z \subset R_{p,\fB}$, resp. $\fp_{z_{\widetilde{v},i}}\subset R^{\pss}_{\fB_{\widetilde{v},i}}$) the prime ideal associated to $x$ (resp. $z$, resp. $z_{\widetilde{v},i}$). By definition of $\widehat{\pi}_{z_{\widetilde{v},i}}$, we see (\ref{hom0}) is equivalent to
\begin{equation}\label{exipt}
  \Hom_{\fC}\Big(\Ord_P(\widehat{S}(U^p,\co_E)_{\overline{\rho}}[\fp_x])^d_{\fB}, \widehat{\otimes}_{\substack{v\in S_p \\ i=1,\cdots, k_{\widetilde{v}}}}  \big(\widetilde{P}_{\fB_{\widetilde{v},i}} \widehat{\otimes}_{\widetilde{E}_{\fB_{\widetilde{v},i}}} (\widetilde{E}_{\fB_{\widetilde{v},i}}/\fp_{z_{\widetilde{v},i}})_{\tf}\big)\Big) \neq 0,
\end{equation}
where ``$\tf$" denotes the $\co_E$-torsion free quotient.
Suppose  (\ref{exipt}) holds. Let $f$ be a non-zero morphism in (\ref{exipt}), and consider the composition
\begin{equation*}
\Ord_P(\widehat{S}(U^p,\co_E)_{\overline{\rho}})^d_{\fB} \twoheadlongrightarrow \Ord_P(\widehat{S}(U^p,\co_E)_{\overline{\rho}}[\fp_x])^d_{\fB} \xlongrightarrow{f} \widehat{\otimes}_{\substack{v\in S_p \\ i=1,\cdots, k_{\widetilde{v}}}}  \big(\widetilde{P}_{\fB_{\widetilde{v},i}} \widehat{\otimes}_{\widetilde{E}_{\fB_{\widetilde{v},i}}} (\widetilde{E}_{\fB_{\widetilde{v},i}}/\fp_{z_{\widetilde{v},i}})_{\tf}\big).
\end{equation*}
Applying $\Hom_{\fC}(\widetilde{P}_{\fB},-)$, we obtain a non-zero $ \widetilde{\bT}(U^p)_{\overline{\rho}, \fB}^{P-\ord}\times R_{p,\fB}$-equivariant map
\begin{equation}\label{ptexist}
\tm(U^p,\fB)/\fp_x \lra \Hom_{\fC}\big(\widetilde{P}_{\fB}, \widehat{\otimes}_{\substack{v\in S_p \\ i=1,\cdots, k_{\widetilde{v}}}} \big(\widetilde{P}_{\fB_{\widetilde{v},i}} \widehat{\otimes}_{\widetilde{E}_{\fB_{\widetilde{v},i}}} (\widetilde{E}_{\fB_{\widetilde{v},i}}/\fp_{z_{\widetilde{v},i}})_{\tf}\big)\big) \cong \otimes_{\substack{v\in S_p\\ i=1,\cdots, k_{\widetilde{v}}}} (\widetilde{E}_{\fB_{\widetilde{v},i}}/\fp_{z_{\widetilde{v},i}})_{\tf},
\end{equation}
where the tensor product on the right hand side is over $\co_E$ and the second isomorphism follows from \cite[Lem. 2.9]{Pas13} and the proof of \cite[Lem. B.8]{GN16}. Note also the $R_{p,\fB}$-action on the right hand side of (\ref{ptexist}) factors through $R_{p,\fB}/\fp_z$. In particular, $\tm(U^p,\fB)$ admits an $\co_E$-torsion free quotient on which  $\widetilde{\bT}(U^p)_{\overline{\rho}, \fB}^{P-\ord}$ (resp. $R_{p,\fB}$) acts via $\widetilde{\bT}(U^p)_{\overline{\rho}, \fB}^{P-\ord}/\fp_x$ (resp. $R_{p,\fB}/\fp_z$). We deduce then $y=(x,z)\in (\Spf \cA)^{\rig}$.
\\

\noindent Conversely, suppose $y =(x,z) \in (\Spf \cA)^{\rig}$, and let $\fp_y\subset \cA$ be the prime ideal associated to $y$. We have
\begin{equation*}
 \tm(U^p,\fB) \twoheadlongrightarrow (\tm(U^p,\fB)\otimes_{\cA} \cA/\fp_y)_{\tf}\neq 0.
\end{equation*}
From which we deduce
\begin{equation*}
 \Ord_P(\widehat{S}(U^p,\co_E)_{\overline{\rho}})^d_{\fB} \cong \widetilde{P}_{\fB}\widehat{\otimes}_{\widetilde{E}_{\fB}} \tm(U^p,\fB) \twoheadlongrightarrow  \widetilde{P}_{\fB}\widehat{\otimes}_{\widetilde{E}_{\fB}} (\tm(U^p,\fB)\otimes_{\cA} \cA/\fp_y)_{\tf}.
\end{equation*}
Considering the $\widetilde{\bT}(U^p)_{\overline{\rho},\fB}^{P-\ord}$-action, we see the above map factors through
\begin{equation}\label{ptmap0}
  \Ord_P(\widehat{S}(U^p,\co_E)_{\overline{\rho}}[\fp_x])^d_{\fB} \twoheadlongrightarrow  \widetilde{P}_{\fB}\widehat{\otimes}_{\widetilde{E}_{\fB}} (\tm(U^p,\fB)\otimes_{\cA} \cA/\fp_y)_{\tf}.
\end{equation} Applying $\Hom_{\co_E}^{\cts}(-, E)$ to  (\ref{ptmap0}), we obtain an injection
\begin{equation}\label{ptmap1}
  \Hom_{\co_E}^{\cts}(\widetilde{P}_{\fB}\widehat{\otimes}_{\widetilde{E}_{\fB}} (\tm(U^p,\fB)\otimes_{\cA} \cA/\fp_y)_{\tf}, E) \hooklongrightarrow \Ord_P(\widehat{S}(U^p,E)_{\overline{\rho}}[\fm_x])_{\fB}.
\end{equation}
Since the $R_{p,\fB}$-action on $\tm(U^p,\fB)\otimes_{\cA} \cA/\fp_y$ factors through $R_{p,\fB}/\fp_z$, we see the right hand side of (\ref{ptmap0}) is a quotient of $ \widetilde{P}_{\fB}\widehat{\otimes}_{\widetilde{E}_{\fB}} (\widetilde{E}_{\fB}/\fp_z)_{\tf}^{\oplus r}$ for certain $r$.
Together with  the fact that $\widehat{\pi}_{z_{\widetilde{v},i}}$ is semi-simple for all $v$, $i$, we see the right hand side of (\ref{ptmap1}) contains a direct summand of a copy of $\widehat{\otimes}_{\substack{v\in S_p \\ i=1,\cdots, k_{\widetilde{v}}}} (\widehat{\pi}_{z_{\widetilde{v},i}}\otimes_E \varepsilon^{s_{\widetilde{v},i+1}-1}\circ \dett)$. This concludes the proof.
\end{proof}

\noindent Let $x$ be a benign point of $\Spec \widetilde{\bT}(U^p)_{\overline{\rho},\fB}^{P-\ord}[1/p]$. By Proposition \ref{beni2}, for $v\in S_p$, and $i=1,\cdots, k_{\widetilde{v}}$, $\rho_{x,\widetilde{v},i}$ is crystalline.  We denote by $ \widehat{\pi}(\rho_{x, \widetilde{v}, i})_1$ the universal completion of $\widehat{\pi}(\rho_{x,\widetilde{v},i})^{\lalg}$ (see \cite{BeBr} \cite{Pas09}, noting that by Proposition \ref{beni2}(2), $\widehat{\pi}(\rho_{x,\widetilde{v},i})^{\lalg}$ is isomorphic to the tensor product of an irreducible algebraic representation of $\GL_2(\Q_p)$ with a smooth \emph{irreducible} principal series). By \cite[Lem. 3.4(i)]{BH2}, $ \widehat{\otimes}_{v\in S_p}\big(\widehat{\otimes}_{i=1,\cdots, k_{\widetilde{v}}} \widehat{\pi}(\rho_{x, \widetilde{v}, i})_1\otimes_E \varepsilon^{s_{\widetilde{v},i+1}-1}\circ \dett\big)$ is the universal completion of $ \otimes_{\substack{v\in S_p \\ i=1,\cdots, k_{\widetilde{v}}}} (\widehat{\pi}(\rho_{z_{\widetilde{v},i}})^{\lalg} \otimes_{k(y)} \varepsilon^{s_{\widetilde{v},i+1}-1}\circ \dett)$.
The injection (\ref{Pord: emb0}) (see Corollary \ref{beni3} (2)) induces hence a non-zero morphism of $L_P(\Q_p)$-representations
\begin{equation}\label{injBeni0}
  \widehat{\otimes}_{v\in S_p}\big(\widehat{\otimes}_{i=1,\cdots, k_{\widetilde{v}}} \widehat{\pi}(\rho_{x, \widetilde{v}, i})_1\otimes_E \varepsilon^{s_{\widetilde{v},i+1}-1}\circ \dett\big) \longrightarrow \Ord_P\big(\widehat{S}(U^p, E\big)_{\overline{\rho}}[\fm_x]\big)_{\fB}.
\end{equation}
Let $\Lambda$ be a $\Gal_{F}$-equivariant lattice of $\rho_{x,F}=\rho_{\fm_x,F}$ (where $\fm_x$ denotes the associated maximal ideal of $\widetilde{\bT}(U^p)_{\overline{\rho},\fB}^{P-\ord}[1/p]$). Since $\overline{\rho}_F$ is absolutely irreducible, $\Lambda$ is unique up to scalar, and we have $\Lambda/\varpi_{k(x)}\cong \overline{\rho}_F$ and hence $\Lambda/\varpi_{k(x)} |_{\Gal_{F_{\widetilde{v}}}} \cong \overline{\rho}_{\widetilde{v}}$. The $P_{\widetilde{v}}$-filtration on $\rho_{x,\widetilde{v}}$ induces a $P_{\widetilde{v}}$-filtration on $\Lambda$, and hence induces a $P_{\widetilde{v}}$-filtration on $\overline{\rho}_{\widetilde{v}}$:
 \begin{equation}\label{barFilx}
   \bar{\cF}_{x,\widetilde{v}}:\ 0=\Fil^0_x \overline{\rho}_{\widetilde{v}} \subsetneq \Fil^1_x \overline{\rho}_{\widetilde{v}} \subsetneq \cdots \subsetneq \Fil^{k_{\widetilde{v}}}_x \overline{\rho}_{\widetilde{v}}=\overline{\rho}_{\widetilde{v}}
 \end{equation}
such that the graded piece $\gr^i \bar{\cF}_x$ is a reduction of $\rho_{x,\widetilde{v},i}$.
 \begin{proposition}\label{beni5}
(1) For $v\in S_p$, $i=1, \cdots, k_{\widetilde{v}}$, we have $\overline{\rho}_{x,\widetilde{v},i}^{\sss} \cong \overline{\rho}_{\fB_{\widetilde{v},i}}$,
where $\overline{\rho}_{x,\widetilde{v},i}^{\sss} $ denotes the semi-simplification of one (or any) modulo $\varpi_{k(x)}$ reduction of $\rho_{x,\widetilde{v},i}$.

\noindent (2) We have $(x, \{z_{\widetilde{v},i}\})\in (\Spf \cA)^{\rig}$ where $z_{\widetilde{v},i}$ is the point associated to $\tr \rho_{x,\widetilde{v},i}$.
 \end{proposition}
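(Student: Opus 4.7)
The plan is to deduce (1) from the compatibility of the morphism (\ref{injBeni0}) with the block decomposition via Pa{\v{s}}k{\=u}nas' theory (Proposition \ref{bloten}), and then to derive (2) by producing an explicit nonzero element of the Hom space appearing in Proposition \ref{pts0}.

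For (1), I would exploit the fact that (\ref{injBeni0}) is nonzero with target the $\fB$-component $\Ord_P(\widehat{S}(U^p,E)_{\overline{\rho}}[\fm_x])_{\fB}$, together with the fact that morphisms between representations living in distinct blocks vanish. Concretely, after restricting to an $\co_E$-lattice, reducing mod $\varpi_E$ and Pontryagin-dualizing, the source must admit a smooth irreducible subquotient whose dual lies in $\fB$. By Proposition \ref{bloten}, a block of $L_P(\Q_p)$ decomposes as an outer tensor product of blocks of the factors $\prod_{i}\GL_{n_{\widetilde{v},i}}(\Q_p)$, so for each $(v,i)$ the block of the factor $\widehat{\pi}(\rho_{x,\widetilde{v},i})_1\otimes\varepsilon^{s_{\widetilde{v},i+1}-1}\circ\dett$ must coincide with $\fB_{\widetilde{v},i}(s_{\widetilde{v},i+1}-1)$. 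The $p$-adic local Langlands correspondence for $\GL_2(\Q_p)$ (when $n_{\widetilde{v},i}=2$) or local class field theory (when $n_{\widetilde{v},i}=1$), together with the normalizations of Theorem \ref{blo0} and Remark \ref{normB}, identifies this block with the one associated to $\overline{\rho}_{x,\widetilde{v},i}^{\sss}\otimes\omega^{s_{\widetilde{v},i+1}-1}$. Comparing with $\overline{\rho}_{\fB_{\widetilde{v},i}}\otimes\omega^{s_{\widetilde{v},i+1}-1}$ yields the desired identification $\overline{\rho}_{x,\widetilde{v},i}^{\sss}\cong\overline{\rho}_{\fB_{\widetilde{v},i}}$.

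For (2), part (1) ensures that $\tr\rho_{x,\widetilde{v},i}$ lifts $\tr\overline{\rho}_{\fB_{\widetilde{v},i}}$ and hence defines a rigid point $z_{\widetilde{v},i}\in(\Spf R^{\pss}_{\fB_{\widetilde{v},i}})^{\rig}$. I then aim to verify the criterion of Proposition \ref{pts0} by producing a nonzero morphism as in (\ref{hom0}). By Proposition \ref{beni2}(2), $\rho_{x,\widetilde{v},i}$ is crystalline with Frobenius eigenvalue ratio avoiding $p^{\pm 1}$, so by Berger--Breuil the universal completion $\widehat{\pi}(\rho_{x,\widetilde{v},i})_1$ coincides with $\widehat{\pi}(\rho_{x,\widetilde{v},i})$ and is topologically irreducible. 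Pa{\v{s}}k{\=u}nas' theory (Theorem \ref{blo0}) then realizes $\widehat{\pi}(\rho_{x,\widetilde{v},i})$ as one direct summand of the multiplicity-free direct sum $\widehat{\pi}_{z_{\widetilde{v},i}}$, furnishing a projection $\widehat{\pi}_{z_{\widetilde{v},i}}\twoheadrightarrow\widehat{\pi}(\rho_{x,\widetilde{v},i})$. Tensoring these projections over $(v,i)$, twisting by $\varepsilon^{s_{\widetilde{v},i+1}-1}\circ\dett$, and composing with the nonzero map (\ref{injBeni0}), I obtain a nonzero $L_P(\Q_p)$-equivariant morphism out of $\widehat{\otimes}_{v,i}\bigl(\widehat{\pi}_{z_{\widetilde{v},i}}\otimes_{k(y)}\varepsilon^{s_{\widetilde{v},i+1}-1}\circ\dett\bigr)$ into $\Ord_P(\widehat{S}(U^p,E)_{\overline{\rho}}[\fm_x])_{\fB}$, witnessing (\ref{hom0})$\neq 0$. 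Proposition \ref{pts0} then delivers $(x,\{z_{\widetilde{v},i}\})\in(\Spf\cA)^{\rig}$.

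The hardest step is the block-matching in (1): one has to transfer the Banach-level assertion that (\ref{injBeni0}) lands in the $\fB$-summand into a smooth-level assertion about irreducible constituents of a lattice's mod-$\varpi_E$ reduction, and then carefully propagate the cyclotomic twists $\omega^{s_{\widetilde{v},i+1}-1}$ through the Colmez--Pa{\v{s}}k{\=u}nas normalization $\fB_{\widetilde{v},i}\leftrightarrow\overline{\rho}_{\fB_{\widetilde{v},i}}$ of Remark \ref{normB} (including the extra twist by $\omega$ built into the central-character convention). Once this bookkeeping is pinned down, (2) follows essentially formally from Proposition \ref{pts0} and the definition of $\widehat{\pi}_{z_{\widetilde{v},i}}$.
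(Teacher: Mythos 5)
Your part (1) follows essentially the same route as the paper: pass from the nonzero map (\ref{injBeni0}) into the $\fB$-summand to a statement about lattices, dualize into $\fC$, and use the tensor decomposition of blocks (Proposition \ref{bloten}) together with the Colmez--Pa{\v{s}}k{\=u}nas normalization to match each $\fB_{\widetilde{v},i}$ with the block of $\overline{\rho}_{x,\widetilde{v},i}^{\sss}$. The one point you gloss over, and which the paper makes explicit, is that the lattice $\Theta_x$ cut out inside the source of (\ref{injBeni0}) by the integral structure of the target need not be the obvious tensor-product lattice $\Theta_x'$; one needs their commensurability to conclude that $\Hom_{\fC}(\widetilde{P}_{\fB},(\Theta_x')^d)\neq 0$ and then apply \cite[Lem. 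B.8]{GN16}. This is a routine point, but it is where the ``transfer to the smooth level'' actually happens.

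In part (2) there is a genuine error in an intermediate claim, though the conclusion survives. You assert that, because $\alpha_{\widetilde{v},s_{\widetilde{v},i}+1}\alpha_{\widetilde{v},s_{\widetilde{v},i}+n_{\widetilde{v},i}}^{-1}\neq p^{\pm 1}$, Berger--Breuil gives $\widehat{\pi}(\rho_{x,\widetilde{v},i})_1=\widehat{\pi}(\rho_{x,\widetilde{v},i})$. Proposition \ref{beni2}(2) does \emph{not} guarantee that $\rho_{x,\widetilde{v},i}$ is (absolutely) irreducible --- note that Proposition \ref{class} has to impose irreducibility as a separate hypothesis precisely for this reason. When $\rho_{x,\widetilde{v},i}$ is reducible (split or not), $\widehat{\pi}(\rho_{x,\widetilde{v},i})$ has at least two Jordan--H\"older factors (two continuous principal series), while the universal completion of the single irreducible locally algebraic principal series $\widehat{\pi}(\rho_{x,\widetilde{v},i})^{\lalg}$ is only one of them; so $\widehat{\pi}(\rho_{x,\widetilde{v},i})_1\subsetneq\widehat{\pi}(\rho_{x,\widetilde{v},i})$ and your proposed projection $\widehat{\pi}_{z_{\widetilde{v},i}}\twoheadrightarrow\widehat{\pi}(\rho_{x,\widetilde{v},i})$ does not compose with (\ref{injBeni0}), whose source is $\widehat{\pi}(\rho_{x,\widetilde{v},i})_1$. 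The repair is short: under the eigenvalue condition of Proposition \ref{beni2}(2), $\widehat{\pi}(\rho_{x,\widetilde{v},i})_1$ is still topologically irreducible, hence is an irreducible constituent of $\widehat{\pi}(\rho_{x,\widetilde{v},i})$ and therefore a direct summand of the multiplicity-free semisimple $\widehat{\pi}_{z_{\widetilde{v},i}}$; projecting onto \emph{it} (rather than onto all of $\widehat{\pi}(\rho_{x,\widetilde{v},i})$) and composing with (\ref{injBeni0}) produces the nonzero element of (\ref{hom0}) required by Proposition \ref{pts0}. With that correction your argument for (2) is a legitimate (and more detailed) unwinding of what the paper dismisses with ``follows from part (1), (\ref{injBeni0}) and Proposition \ref{pts0}.''
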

\begin{proof}
We have $\widehat{\pi}(\rho_{x,\widetilde{v},i})_1\hookrightarrow \widehat{\pi}(\rho_{x,\widetilde{v},i})$. Let $\Theta_{x,\widetilde{v},i}$ be a $\GL_{n_{\widetilde{v},i}}(\Q_p)$-invariant lattice of $\widehat{\pi}(\rho_{x,\widetilde{v},i})$, thus $\Theta_{x,\widetilde{v},i}^d\in \fC_{\GL_{n_{\widetilde{v},i}}(\Q_p)}(\co_E)^{\fB_{x,\widetilde{v},i}}$ where $\fB_{x,\widetilde{v},i}$ is the block corresponding to $\overline{\rho}_{x,\widetilde{v},i}^{\sss}$.
Let $\Theta_{x,\widetilde{v},i,1}:=\Theta_{x,\widetilde{v},i}\cap \widehat{\pi}(\rho_{x,\widetilde{v},i})_1$. Let $\Theta_x$ be the preimage of $ \Ord_P\big(\widehat{S}(U^p, \co_E\big)_{\overline{\rho}}[\fp_x]\big)_{\fB}$ via (\ref{injBeni0}) ($\fp_x=\fm_x\cap \widetilde{\bT}(U^p)_{\overline{\rho}, \fB}^{P-\ord}$). By (\ref{injBeni0}), we deduce $\Hom_{\fC}(\widetilde{P}_{\fB}, \Theta_x^d)\neq 0$. Since $\Theta_x$ and $\Theta_x':=\widehat{\otimes}_{\substack{v\in S_p\\ i=1,\cdots, k_{\widetilde{v}}}} (\Theta_{x,\widetilde{v},i,1} \otimes_{\co_E} \varepsilon^{s_{\widetilde{v},i+1}-1}\circ \dett)$ are commensurable, we see $\Hom_{\fC}(\widetilde{P}_{\fB},(\Theta_x')^d)\neq 0$.
We have a natural projection
\begin{equation}\label{theta1}
  \Hom_{\fC}\Big(\widetilde{P}_{\fB}, \Big(\widehat{\otimes}_{\substack{v\in S_p\\ i=1,\cdots, k_{\widetilde{v}}}} (\Theta_{x,\widetilde{v},i} \otimes_{\co_E} \varepsilon^{s_{\widetilde{v},i+1}-1}\circ \dett)\Big)^d\Big) \twoheadlongrightarrow   \Hom_{\fC}\big(\widetilde{P}_{\fB}, (\Theta_x')^d\big)\ (\neq 0).
\end{equation}
By \cite[Lem. 3.4.9]{Pan2} (see also \cite[Lem. B.8]{GN16}) and the fact (e.g. using (\ref{duala}), and noting that ``$\widehat{\otimes}$" on the left hand side is the $p$-adic completion of the tensor product, while ``$\widehat{\otimes}$" on the right hand side is defined in the following way: $M_1\widehat{\otimes} M_2=\varprojlim (M_1/U_1 \otimes_{\co_E} M_2/U_2)$ for compact $\co_E$-modules $M_i$, where $U_i$ runs through open $\co_E$-submodules of $M_i$):
\begin{equation*}
  \Big(\widehat{\otimes}_{\substack{v\in S_p\\ i=1,\cdots, k_{\widetilde{v}}}} (\Theta_{x,\widetilde{v},i} \otimes_{\co_E} \varepsilon^{s_{\widetilde{v},i+1}-1}\circ \dett)\Big)^d\cong  \widehat{\otimes}_{\substack{v\in S_p\\ i=1,\cdots, k_{\widetilde{v}}}} (\Theta_{x,\widetilde{v},i} \otimes_{\co_E} \varepsilon^{s_{\widetilde{v},i+1}-1}\circ \dett)^d
\end{equation*}
we deduce
$\Hom_{\fC_{\GL_{n_{\widetilde{v},i}}(\Q_p)}}\big(\widetilde{P}_{\fB_{\widetilde{v},i}}, \Theta_{x,\widetilde{v},i}^d\big)\neq 0$,
and hence $\fB_{\widetilde{v},i}=\fB_{x,\widetilde{v},i}$ for all $v\in S_p$, $i=1,\cdots, k_{\widetilde{v}}$. (1) follows. Part (2) of the proposition follows from part (1), (\ref{injBeni0}) and Proposition \ref{pts0}.
\end{proof}
\begin{corollary}
  The set $\cY$ of points $y=(x, z)=(x,\{z_{\widetilde{v},i}\})$ as in Proposition \ref{beni5} (2) is Zariski-dense in $\Spec \cA[1/p]$.
\end{corollary}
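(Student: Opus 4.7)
The plan is to combine the Zariski-density of benign points in $\Spec \widetilde{\bT}(U^p)^{P-\ord}_{\overline{\rho},\fB}[1/p]$ (Proposition \ref{beni3}(1)) with a uniqueness statement for the lift $x \mapsto y_x = (x,z_x)$ from Proposition \ref{beni5}(2). The faithful action of $\widetilde{\bT}(U^p)^{P-\ord}_{\overline{\rho},\fB}$ on $\tm(U^p,\fB)$ provided by (\ref{PordcA}) yields an inclusion $\widetilde{\bT}(U^p)^{P-\ord}_{\overline{\rho},\fB}\hookrightarrow \cA$, and hence a projection $\pi\colon \Spec \cA[1/p] \to \Spec \widetilde{\bT}(U^p)^{P-\ord}_{\overline{\rho},\fB}[1/p]$ with $\pi(y_x)=x$. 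Moreover $\cA$ is finite over $R_{p,\fB}$, because $\tm(U^p,\fB)$ is finitely generated over $R_{p,\fB}$ and $\cA\hookrightarrow \End_{R_{p,\fB}}(\tm(U^p,\fB))$; in particular $\Spec \cA[1/p]$ is Jacobson.

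The first key step is to show that $\pi^{-1}(x)=\{y_x\}$ for every benign $x$. Given $(x,z)\in \pi^{-1}(x)$, Proposition \ref{pts0} provides a non-zero $L_P(\Q_p)$-equivariant morphism
\begin{equation*}
\widehat{\otimes}_{\substack{v\in S_p\\ i=1,\ldots,k_{\widetilde{v}}}}\bigl(\widehat{\pi}_{z_{\widetilde{v},i}}\otimes_E\varepsilon^{s_{\widetilde{v},i+1}-1}\circ\dett\bigr)\lra \Ord_P\bigl(\widehat{S}(U^p,E)_{\overline{\rho}}\bigr)_{\fB}[\fm_x].
\end{equation*}
Extracting locally algebraic vectors on both sides and comparing with the explicit decomposition of $\Ord_P(\widehat{S}(U^p,E)_{\overline{\rho}})_{\fB}[\fm_x]^{\lalg}$ furnished by Proposition \ref{beni4}(1), together with the irreducibility of $\widehat{\pi}(\rho_{x,\widetilde{v},i})^{\lalg}$ guaranteed by the genericity clause in Proposition \ref{beni2}(2), I match the semisimplifications $\widehat{\pi}_{z_{\widetilde{v},i}}^{\lalg}\cong \widehat{\pi}(\rho_{x,\widetilde{v},i})^{\lalg}$ for each $v,i$. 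This determines $\widehat{\pi}_{z_{\widetilde{v},i}}$ up to semisimplification, and hence pins down $z_{\widetilde{v},i}=\tr\rho_{x,\widetilde{v},i}$, so $z=z_x$.

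To conclude density, let $I\subset \cA[1/p]$ be the ideal of functions vanishing on $\cY$; it suffices to show $I$ is contained in the nilradical. Pulling back along $\widetilde{\bT}(U^p)^{P-\ord}_{\overline{\rho},\fB}[1/p]\hookrightarrow \cA[1/p]$, any $f\in I$ restricts to an element vanishing at all benign $x$, hence zero by reducedness of $\widetilde{\bT}(U^p)^{P-\ord}_{\overline{\rho},\fB}$ and Proposition \ref{beni3}(1). To propagate this to all of $\cA[1/p]$, I would use that $\cA$ is generated as an $R_{p,\fB}$-algebra by the image of $\widetilde{\bT}(U^p)^{P-\ord}_{\overline{\rho},\fB}$ and that over each benign $x$ the $R_{p,\fB}$-coordinate is canonical (by the first step). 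Concretely, the universal pseudo-character of the global Galois representation attached to $\widetilde{\bT}(U^p)^{P-\ord}_{\overline{\rho},\fB}$ (through the surjection from $R^{P-\ord}_{\overline{\rho},\cS}$) yields an algebraic section $\Spec (\widetilde{\bT}(U^p)^{P-\ord}_{\overline{\rho},\fB})^{\red}[1/p]\to \Spec R_{p,\fB}[1/p]$ whose graph contains $\Spec \cA[1/p]^{\red}$ set-theoretically; density of $\cY\subset \Spec \cA[1/p]$ then follows from density of benign $x$ in the source.

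The main obstacle is making rigorous the last paragraph, i.e.\ constructing this algebraic section $x\mapsto z_x$ globally on the reduced Hecke side and verifying that its scheme-theoretic graph recovers $\Spec \cA[1/p]$ up to nilpotents. Once this compatibility is established (which should reduce to functoriality of the pseudo-character construction along $R^{P-\ord}_{\overline{\rho},\cS}\twoheadrightarrow \widetilde{\bT}(U^p)^{P-\ord}_{\overline{\rho},\fB}\hookrightarrow\cA$ combined with the single-fiber property over benign $x$), the finiteness of $\kappa$ ensures that the Zariski-density of $\{z_x\}$ in the image lifts to the Zariski-density of $\cY$ in $\Spec \cA[1/p]$, completing the proof.
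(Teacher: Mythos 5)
Your strategy has two genuine gaps, and the second one you acknowledge yourself. First, the claim that $\pi^{-1}(x)=\{y_x\}$ for benign $x$ is not established by ``extracting locally algebraic vectors on both sides'': a non-zero continuous map of Banach representations need not remain non-zero on locally algebraic vectors, and for an arbitrary $z$ in the fiber the representation $\widehat{\pi}_{z_{\widetilde{v},i}}$ need not contain \emph{any} locally algebraic vectors (e.g.\ if the associated pseudo-character is not de Rham), so you cannot match $\widehat{\pi}_{z_{\widetilde{v},i}}^{\lalg}$ against the output of Proposition \ref{beni4}(1). Ruling out such extra points in the fiber over a benign $x$ is not accessible at this stage of the paper. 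Second, the ``algebraic section'' whose graph contains $\Spec \cA[1/p]^{\red}$ is essentially the full local-global compatibility statement (Theorem \ref{lg}, equivalently Corollary \ref{dim1}, which gives $\widetilde{\bT}(U^p)_{\overline{\rho},\fB}^{P-\ord}\xrightarrow{\sim}\cA$); this is a later and substantive theorem, not something that can be quoted or quickly ``made rigorous'' here, so the reduction of density in $\Spec\cA[1/p]$ to density of benign points in $\Spec\widetilde{\bT}(U^p)_{\overline{\rho},\fB}^{P-\ord}[1/p]$ is not available. Note also that without such a statement, knowing $\pi(\cY)$ is dense downstairs and that fibers over it are singletons would still not give density upstairs, since an irreducible component of $\Spec\cA[1/p]$ could a priori fail to meet $\cY$ even while dominating a component of $\Spec\widetilde{\bT}(U^p)_{\overline{\rho},\fB}^{P-\ord}[1/p]$.

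The paper's proof avoids both issues by working with $\cA$ as an algebra of endomorphisms rather than with points. An element $f\in\bigcap_{y\in\cY}\fp_y$ is an $\widetilde{E}_{\fB}$-equivariant endomorphism of $\tm(U^p,\fB)$, hence induces an $L_P(\Q_p)$-equivariant endomorphism of $\Ord_P(\widehat{S}(U^p,\co_E)_{\overline{\rho}})_{\fB}$; one checks (using that the quotient $\tm:=\Hom_{\fC}(\widetilde{P}_{\fB},\Theta_x^d)$ attached to a benign embedding is a quotient of $\tm(U^p,\fB)/\fp_y$) that this endomorphism annihilates the image of every injection (\ref{Pord: emb0}), and then the density of these images (the argument of \cite[Prop.~7.5(2)]{BD1}, i.e.\ Lemma \ref{densbl}(2)) forces the endomorphism, and hence $f$, to vanish, since $\cA$ acts faithfully by construction. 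This gives $\bigcap_{y\in\cY}\fp_y=0$ directly, with no need to control the fibers of $\pi$ or to invoke local-global compatibility. I would encourage you to redo the argument along these lines.
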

\begin{proof}
  For $y\in \cY$, we denote by $\fm_y\subset \cA[1/p]$ (resp. $\fp_y\subset \cA$) the maximal ideal (resp. the prime ideal) associated to $y$. To show $\cap_{y\in \cY} \fm_y=0$, it is sufficient to show $\cap_{y\in \cY} \fp_y=0$.  Let $f\in \cap_{y\in \cY}\fp_y$, which by definition (see (\ref{PordcA})) corresponds to an $\widetilde{E}_{\fB}$-equivariant morphism $f: \tm(U^p,\fB) \ra \tm(U^p,\fB)$ such that the composition $\tm(U^p,\fB) \xrightarrow{f} \tm(U^p,\fB) \twoheadrightarrow \tm(U^p,\fB)/\fp_y$ is equal to zero for all $y\in \cY$. Using the first isomorphism in (\ref{fB0}), the morphism $f$ induces a continuous  $L_P(\Q_p)$-equivariant morphism
  \begin{equation}\label{dualf}
      f: \Ord_P(\widehat{S}(U^p,\co_E)_{\overline{\rho}})_{\fB}  \lra \Ord_P(\widehat{S}(U^p,\co_E)_{\overline{\rho}})_{\fB}.
  \end{equation}
For $y=(x,z)\in \cY$, let $i_x$ be an injection as in (\ref{Pord: emb0}), which induces a non-zero $L_P(\Q_p)$-equivariant morphism as in (\ref{injBeni0}), still denoted by $i_x$. 
Let $\Theta_x$ be the preimage of $ \Ord_P\big(\widehat{S}(U^p, \co_E\big)_{\overline{\rho}}[\fp_x]\big)_{\fB}$ via $i_x$. We use the notation of the proof of Proposition \ref{beni5}. Since the $R_{p,\fB}$-action on the left hand side of (\ref{theta1}) factors through $R_{p,\fB}/\fp_z$ (where $\fp_z$ is the prime ideal of $R_{p,\fB}$ associated to $z$), the same holds for the $R_{p,\fB}$-action on $\Hom_{\fC}(\widetilde{P}_{\fB}, (\Theta_x')^d)$. Since $\Theta_x$ and $\Theta_x'$  are commensurable, we deduce that the $R_{p,\fB}$-action on $\tm:=\Hom_{\fC}(\widetilde{P}_{\fB}, \Theta_x^d)$ factors through $R_{p,\fB}/\fp_z$ as well. It is also clear that the $\widetilde{\bT}(U^p)^{P-\ord}_{\overline{\rho}, \fB}$-action on $\tm$ (as a quotient of $\tm(U^p,\fp)$ via the projection induced by $i_x$) factors through  $\widetilde{\bT}(U^p)^{P-\ord}_{\overline{\rho}, \fB}/\fp_x$. Hence the $\cA$-action on $\tm$ factors through $\cA/\fp_y$, and $\tm$ is thus a quotient of $\tm(U^p,\fp)/\fp_y$. So the following composition is zero:
\begin{equation*}
  \widetilde{P}_{\fB} \widehat{\otimes}_{\widetilde{E}_{\fB}} \tm(U^p,\fB) \xlongrightarrow{f} \widetilde{P}_{\fB} \widehat{\otimes}_{\widetilde{E}_{\fB}} \tm(U^p,\fB) \twoheadlongrightarrow \widetilde{P}_{\fB} \widehat{\otimes}_{\widetilde{E}_{\fB}} \tm.
\end{equation*}
Applying $\Hom_{\co_E}^{\cts}(-,E)$, we deduce that the composition of $f$ with $i_x$ is zero. Consequently, we deduce that the image of any injection as in (\ref{Pord: emb0}) is annihilated by (\ref{dualf}) (for any benign point $x$). By the same argument as in the proof of \cite[Prop. 7.5 (2)]{BD1}, we deduce then the map (\ref{dualf}) is zero, and hence $f=0$. This concludes the proof.
\end{proof}
\noindent 
We have the following classicality criterion.
\begin{proposition}[Classicality]\label{class}
Let $y=(x,\{z_{\widetilde{v},i}\})\in  (\Spf \cA)^{\rig}$. Suppose
\begin{itemize}
  \item for all $v\in S_p$, $i=1, \cdots, k_{\widetilde{v}}$, the pseudo-character associated to $z_{\widetilde{v},i}$ is absolutely irreducible, and let $\rho_{z_{\widetilde{v},i}}: \Gal_{\Q_p} \ra \GL_{n_{\widetilde{v},i}}(k(y))$ be the associated  absolutely irreducible representation (enlarging $k(y)$ if necessary);
  \item $\rho_{z_{\widetilde{v},i}}$ is de Rham of distinct Hodge-Tate weights, and any Hodge-Tate weight of $\rho_{z_{\widetilde{v},i}}$ is strictly bigger than that of $\rho_{z_{\widetilde{v},j}}$ for $j>i$.
\end{itemize}
Then $x$ is classical.
\end{proposition}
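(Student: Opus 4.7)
The plan is to use Proposition \ref{pts0} to produce a non-zero locally algebraic vector in $\Ord_P(\widehat{S}(U^p,E)_{\overline{\rho}})_{\fB}[\fm_x]$ whose $L_P$-weight is dominant, and then invoke Emerton's adjunction between $\Ord_P$ and (a form of) parabolic induction to transfer it to a non-zero locally algebraic vector of $\widehat{S}(U^p,E)_{\overline{\rho}}[\fm_x]$.

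First, by Proposition \ref{pts0}, the hypothesis $y\in(\Spf \cA)^{\rig}$ yields a non-zero $L_P(\Q_p)$-equivariant homomorphism
\begin{equation*}
\iota\colon \widehat{\Pi}_y:=\widehat{\otimes}_{v\in S_p}\widehat{\otimes}_{i=1}^{k_{\widetilde{v}}}\bigl(\widehat{\pi}_{z_{\widetilde{v},i}}\otimes_{k(y)}\varepsilon^{s_{\widetilde{v},i+1}-1}\circ\dett\bigr) \lra \Ord_P(\widehat{S}(U^p,E)_{\overline{\rho}})_{\fB}[\fm_x].
\end{equation*}
For factors with $n_{\widetilde{v},i}=1$ the de Rham assumption forces $\widehat{\pi}_{z_{\widetilde{v},i}}$ to be a locally algebraic character. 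For factors with $n_{\widetilde{v},i}=2$, the absolute irreducibility and de Rham condition on $\rho_{z_{\widetilde{v},i}}$, combined with the $p$-adic local Langlands correspondence for $\GL_2(\Q_p)$ (cf.\ \cite{Colm10a}, \cite{BeBr}), imply that $\widehat{\pi}_{z_{\widetilde{v},i}}^{\lalg}$ is non-zero and of the form $W_{\widetilde{v},i}\otimes_{k(y)}\pi^{\sm}_{\widetilde{v},i}$, with $W_{\widetilde{v},i}$ the algebraic representation whose highest weight is read off from the Hodge--Tate weights of $\rho_{z_{\widetilde{v},i}}$ and $\pi^{\sm}_{\widetilde{v},i}$ a smooth irreducible principal series or a Steinberg twist. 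Since $\widehat{\Pi}_y$ is the universal completion of its locally algebraic subspace $\Pi_y^{\lalg}$ (by \cite[Lem.~3.4(i)]{BH2}), $\iota$ is determined by its restriction to $\Pi_y^{\lalg}$, which is therefore also non-zero and whose image lies in $\Ord_P(\widehat{S}(U^p,E)_{\overline{\rho}})_{\fB}[\fm_x]^{L_P(\Z_p)-\alg}_+$.

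Secondly, I would verify that the weight $\underline{\lambda}=\prod_{v\in S_p}(\lambda_{\widetilde{v},1},\ldots,\lambda_{\widetilde{v},n})$ attached to the locally algebraic vector $\iota(\Pi_y^{\lalg})$ is dominant for $\GL_n$. Within each block indexed by $(v,i)$ with $n_{\widetilde{v},i}=2$, this follows from the Hodge--Tate weights of $\rho_{z_{\widetilde{v},i}}$ being distinct (and hence strictly decreasing). Across consecutive blocks $i$ and $i+1$, the twist $\varepsilon^{s_{\widetilde{v},i+1}-1}\circ\dett$ together with the weight conventions of Proposition \ref{beni2}(2) translates the hypothesis that every Hodge--Tate weight of $\rho_{z_{\widetilde{v},i}}$ is strictly greater than every Hodge--Tate weight of $\rho_{z_{\widetilde{v},i+1}}$ into the inequality $\lambda_{\widetilde{v},s_{\widetilde{v},i+1}}\geq\lambda_{\widetilde{v},s_{\widetilde{v},i+1}+1}$, so $\underline{\lambda}$ is dominant.

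Finally, with a non-zero locally algebraic vector of dominant weight in $\Ord_P(\widehat{S}(U^p,E)_{\overline{\rho}})_{\fB}[\fm_x]$ at hand, I would invoke the adjunction property of \cite[Prop.~4.21]{BD1} --- the same ingredient underlying the classicality assertion in the proof of Proposition \ref{beni1}(1) --- to upgrade $\iota|_{\Pi_y^{\lalg}}$ to a non-zero $G(F^+\otimes_{\Q}\Q_p)$-equivariant morphism from a locally algebraic parabolic induction of $\Pi_y^{\lalg}$ (twisted by the appropriate modulus character) into $\widehat{S}(U^p,E)_{\overline{\rho}}[\fm_x]$. This forces $\widehat{S}(U^p,E)_{\overline{\rho}}[\fm_x]^{\lalg}\neq 0$, i.e.\ $x$ is classical. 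The main technical obstacle is the weight bookkeeping with the various twists $\varepsilon^{s_{\widetilde{v},i+1}-1}\circ\dett$: it is precisely the strict Hodge--Tate weight ordering that guarantees that the resulting $L_P$-weight remains dominant and thus prevents the locally algebraic vector from being a \emph{companion} (critical) vector to which Emerton's adjunction would fail to produce a classical pre-image.
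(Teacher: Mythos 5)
Your proposal follows essentially the same route as the paper: Proposition \ref{pts0} gives the non-zero map into $\Ord_P(\widehat{S}(U^p,E)_{\overline{\rho}})_{\fB}[\fm_x]$, one checks its restriction to locally algebraic vectors is non-zero, and the Hodge--Tate ordering plus the adjunction \cite[Prop.~4.21]{BD1} and (\ref{equ: lalgaut}) finish the argument. The one place where your justification is weaker than what you need is the claim that $\widehat{\Pi}_y$ is the \emph{universal} completion of $\Pi_y^{\lalg}$: this is not what \cite[Lem.~3.4(i)]{BH2} alone gives (each factor must first be known to be such a completion), and it can fail for semistable non-crystalline $\rho_{z_{\widetilde{v},i}}$, which the hypotheses allow. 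What you actually need is only that $\Pi_y^{\lalg}$ is \emph{dense} (equivalently, the paper's argument that any non-zero pure tensor topologically generates), and this follows from the non-vanishing of locally algebraic vectors in the de Rham case together with the topological irreducibility of $\widehat{\pi}(\rho_{z_{\widetilde{v},i}})$ coming from the absolute irreducibility hypothesis; with that substitution your proof is correct.
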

\begin{proof}The proposition follows by the same argument of  the proof of \cite[Cor. 7.34]{BD1}. We give a sketch for the convenience of the reader.
By Proposition \ref{pts0}, we have a non-zero map
\begin{equation}\label{injcla}\widehat{\otimes}_{\substack{v\in S_p \\ i=1,\cdots, k_{\widetilde{v}}}} (\widehat{\pi}_{z_{\widetilde{v},i}} \otimes_{k(y)} \varepsilon^{s_{\widetilde{v},i+1}-1}\circ \dett)\lra  \Ord_P\big(\widehat{S}(U^p,E)_{\overline{\rho}}\big)_{\fB}[\fm_x].\end{equation}
By assumption, $\widehat{\pi}_{z_{\widetilde{v},i}}\cong \widehat{\pi}(\rho_{z_{\widetilde{v},i}})$ is absolutely irreducible. Consider the restriction of (\ref{injcla}):
\begin{equation}\label{injcla2}
 \otimes_{\substack{v\in S_p \\ i=1,\cdots, k_{\widetilde{v}}}} (\widehat{\pi}(\rho_{z_{\widetilde{v},i}})^{\lalg} \otimes_{k(y)} \varepsilon^{s_{\widetilde{v},i+1}-1}\circ \dett)\lra  \Ord_P\big(\widehat{S}(U^p,E)_{\overline{\rho}}\big)_{\fB}[\fm_x].
\end{equation}
If (\ref{injcla2}) is zero, there exists $u=\otimes_{\substack{v\in S_p \\ i=1,\cdots, k_{\widetilde{v}}}} u_i\in \widehat{\otimes}_{\substack{v\in S_p \\ i=1,\cdots, k_{\widetilde{v}}}} (\widehat{\pi}_{z_{\widetilde{v},i}} \otimes_{k(y)} \varepsilon^{s_{\widetilde{v},i+1}-1}\circ \dett)$ which is sent to zero via (\ref{injcla}). Since $\widehat{\pi}(\rho_{z_{\widetilde{v},i}})$ is absolutely irreducible, it is easy to see that the left hand side of (\ref{injcla}) can be topologically generated by $u$ under the action of $L_P(\Q_p)$, which contradicts to that (\ref{injcla}) is non-zero. So (\ref{injcla2}) is non-zero (and is actually injective).
Using the assumption on the Hodge-Tate weights and the adjunction property \cite[Prop. 4.21]{BD1}, the proposition follows from (\ref{equ: lalgaut}).
\end{proof}
\noindent By Lemma \ref{densbl} (2) and the same argument of the proof of \cite[Thm. 3.6.1]{Pan2} (one can also use  an infinite fern argument as in \cite[\S~7.1.3]{BD1} for the part on dimension), we have
\begin{theorem}\label{dim0}
  Each irreducible component of $\cA$ is of characteristic zero and of dimension at least
  \begin{equation*}
  1+\sum_{v\in S_p}\big(|\{i|n_{\widetilde{v},i}=1\}|+3|\{i|n_{\widetilde{v},i}=2\}|\big)=   1+ \sum_{v\in S_p} (2n-k_{\widetilde{v}}).
  \end{equation*}
\end{theorem}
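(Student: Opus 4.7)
The plan is to establish the two claims separately: the characteristic zero assertion via a torsion-free argument, and the dimension lower bound via an infinite fern construction through each benign point, in the spirit of \cite[\S~7.1.3]{BD1} and \cite[Thm.~3.6.1]{Pan2}.

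For the characteristic zero part, recall that $\cA \hookrightarrow \End_{R_{p,\fB}}(\tm(U^p,\fB))$ is $\co_E$-torsion free with $\bigcap_{j\geq 0}\varpi_E^j\cA = 0$, as noted just above (\ref{PordcA}). Hence $\varpi_E$ lies in no minimal prime of $\cA$, so each irreducible component of $\Spec \cA$ has nontrivial intersection with $\Spec \cA[1/p]$, and is therefore of characteristic zero.

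For the dimension bound, fix an irreducible component $C$ of $\Spec\cA$. Since benign points are Zariski dense in $\Spec\cA[1/p]$ (by the corollary preceding this theorem), there is a benign point $y_0 = (x_0, \{z_{\widetilde{v},i,0}\}) \in C[1/p]$. The strategy is to construct inside $(\Spf R_{p,\fB})^{\rig}$ an analytic subvariety $X$ through $\kappa(y_0)$ of the predicted dimension $1+\sum_{v\in S_p}(2n-k_{\widetilde{v}})$, along which benign points of $\cA$ lift densely. For each factor $\fB_{\widetilde{v},i}$ a de Rham deformation of $\rho_{x_0,\widetilde{v},i}$ contributes $2n_{\widetilde{v},i}-1$ parameters: two Hodge--Tate weights together with one crystalline Frobenius refinement when $n_{\widetilde{v},i}=2$, or a single Hodge--Tate weight when $n_{\widetilde{v},i}=1$ (using Proposition \ref{beni2} to see that $\rho_{x_0,\widetilde{v},i}$ is crystalline with non-critical slope). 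Adding the single overall central-character parameter yields the desired total $1+\sum_{v\in S_p}\sum_i(2n_{\widetilde{v},i}-1) = 1+\sum_{v\in S_p}(2n-k_{\widetilde{v}})$. For each closed point of $X$ whose weights are sufficiently regular and dominant, Proposition \ref{pts0} combined with the classicality criterion Proposition \ref{class} produces a benign point of $\cA$ mapping to it, and such classical weights are Zariski dense in $X$.

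The main obstacle is to ensure these benign lifts stay on the fixed component $C$ rather than jumping to other irreducible components of $\Spec\cA$. This is addressed by an infinitesimal version of the fern: one shows that the tangent map of $C$ at $y_0$, composed with $d\kappa$, surjects onto the tangent space of $X$ at $\kappa(y_0)$. This surjectivity is checked parameter by parameter using the faithfulness of the $\cA$-action on $\tm(U^p,\fB)$, the Pa{\v s}k{\=u}nas parameterization (Theorem \ref{blo0}) which transports infinitesimal pseudo-character deformations to deformations on the Hecke-theoretic side, and Lemma \ref{densbl}(2) which supplies a sufficient reservoir of locally algebraic vectors (hence of benign first-order deformations). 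Combining this local lower bound with the Zariski density of benign points in $C$ then yields $\dim C \geq 1+\sum_{v\in S_p}(2n-k_{\widetilde{v}})$, completing the proof.
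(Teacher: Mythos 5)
Your handling of the characteristic-zero assertion is correct: $\cA$ is $\co_E$-torsion free, so $\varpi_E$ is a nonzerodivisor and therefore avoids every minimal prime, whence no component lies in the special fibre. Your choice of route for the dimension bound is also one of the two the paper itself points to (it simply cites the argument of \cite[Thm. 3.6.1]{Pan2}, offering the infinite fern of \cite[\S~7.1.3]{BD1} as an alternative for the dimension part), and the reduction to a fixed component $C$ is fine, since a Zariski-dense subset of a Noetherian space is dense in each of its finitely many irreducible components.

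The dimension argument itself, however, has a genuine gap precisely at the step you flag as ``the main obstacle''. First, Proposition \ref{pts0} does not \emph{produce} points of $(\Spf \cA)^{\rig}$ above a prescribed point of your space $X$: it only translates membership into the non-vanishing of a $\Hom$ into $\Ord_P(\widehat{S}(U^p,E)_{\overline{\rho}})_{\fB}$, and that non-vanishing is an automorphic existence statement which is exactly what has to be proved; this is where the projectivity of $\Ord_P(\widehat{S}(U^p,\co_E)_{\overline{\rho}})_{\fB}^d$ over $\co_E[[L_P(\Z_p)]]$ (Lemma \ref{densbl}(1)) and the density of locally algebraic vectors (Lemma \ref{densbl}(2)) must carry the weight of the proof, not merely supply ``a reservoir of first-order deformations''. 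Second, even granting surjectivity of the tangent map of $C$ at $y_0$ onto $T_{\kappa(y_0)}X$, this bounds only the embedding dimension of $C$ at the single, possibly singular, point $y_0$ and does not yield $\dim C\geq \dim X$; one needs either smoothness of $C$ at $y_0$ (not available), or the tangent statement at a Zariski-dense set of points combined with upper semicontinuity and generic regularity of the reduced component, or, as in the genuine fern arguments, that $\kappa(C)$ (closed, since $\kappa$ is finite) contains a Zariski-dense subset of $X$. The phrase ``checked parameter by parameter using the faithfulness of the $\cA$-action'' is not an argument for producing deformations that remain on $C$. Finally, a smaller but real accounting error: the ``$+1$'' in the statement is the $\co_E$-direction of the complete local ring (so the rigid fibre of $C$ need only have dimension $\sum_{v\in S_p}(2n-k_{\widetilde{v}})$), not an extra ``central-character parameter''; as written your $X$ has rigid dimension one too large, so the conclusion you draw from it is off by one.
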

\noindent We end this section by some discussions on criterions of $\Ord_P(\widehat{S}(U^p,\co_E)_{\overline{\rho}})_{\fB}\neq 0$.  .
\begin{definition}\label{fBord}
  Let $\pi=\pi^{\infty} \otimes \pi_{\infty}=(\otimes'_{v \nmid \infty} \pi_v) \otimes \pi_{\infty}$ be an automorphic representation of $G$ (where $\pi$ is defined over $\overline{\Q_p}$, and $\pi_v$ is defined over $E$ for $v\nmid \infty$)  with $W_p=\otimes_{v\in S_p} W_{v}$ the associated algebraic representation of $G(F^+\otimes_{\Q} \Q_p)\cong \prod_{v\in S_p} \GL_n(\Q_p)$. Let $U^p\subseteq G(\bA_{F^+}^{\infty,p})$ be a sufficiently small compact open subgroup such that $(\pi^{\infty})^{U^p}\neq 0$.  Then $\pi$ is called $\fB$-ordinary if there is an injection $\iota: \otimes_{v\in S_p}(\pi_v\otimes_E W_{v}) \hooklongrightarrow \widehat{S}(U^p,E)_{\overline{\rho}}[\fm_{\pi}]$ such that
  \begin{equation*}
    \overline{\Ord}_P\big(\otimes_{v\in S_p}(\pi_v\otimes_E W_{v})\big)^{\vee}_{\fB}\neq 0
  \end{equation*}
where $\fm_{\pi}$ is the maximal idea of $\widetilde{\bT}(U^p)_{\overline{\rho}}[1/p]$ attached to $\pi$, and  $\overline{\Ord}_P\big(\otimes_{v\in S_p}(\pi_v\otimes_E W_{v})\big) \in  \Mod_{L_P(\Q_p)}^{\lfin}(\co_E)$ denotes the modulo $\varpi_E$ reduction of (where we also use $\iota$ to denote the induced morphism on $\Ord(-)$, cf. \cite[Lem. 4.18]{BD1})
   \begin{equation*}
   \iota\big( \otimes_{v\in S_p} \Ord_{P_{\widetilde{v}}}(\pi_v\otimes_E W_{v})\big) \cap \Ord_P(\widehat{S}(U^p,\co_E))_{\overline{\rho}}.
   \end{equation*}
\end{definition}
\begin{remark}
Note that the definition is independent of the choice of $U^p$ (e.g. by using an isomorphism induced by (\ref{ordUY0}) below via taking inverse limit on $r$). However, it is not clear to the author if the definition depends only on the $p$-factor $\otimes_{v\in S_p} \pi_v$ of $\pi$ (unless under further assumptions, see Remark \ref{locp} below). Hence it is not clear to the author if a $p$-split base-change of a $\fB$-ordinary automorphic representation is still $\fB$-ordinary.
\end{remark}
\begin{lemma}\label{Bord}
  For a block $\fB$ of $\Mod_{L_P(\Q_p)}^{\lfin}(\co_E)$ and a sufficiently small subgroup $U^p\subset G(\bA^{\infty,p}_{F^+})$, the followings are equivalent:
\begin{enumerate}
\item[(1)] there exists a $\fB$-ordinary automorphic representation $\pi=\pi^{\infty}\otimes \pi_{\infty}$ such that $(\pi^{\infty})^{U^p} \neq 0$,
\item[(2)] $\Ord_P(\widehat{S}(U^p,\co_E)_{\overline{\rho}})_{\fB}\neq 0$,
\item[(3)] there exists a benign point of $x$ of  $\Spec \widetilde{\bT}(U^p)_{\overline{\rho}}^{P-\ord}[1/p]$ such that $\overline{\rho}_{x,\widetilde{v},i}^{\sss} \cong \overline{\rho}_{\fB_{\widetilde{v},i}}$ for $v\in S_p$, $i=1,\cdots, k_{\widetilde{v}}$.
\end{enumerate}
\end{lemma}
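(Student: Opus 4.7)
The plan is to prove the lemma by running the cycle $(1)\Rightarrow(2)\Rightarrow(3)\Rightarrow(1)$. Each implication follows relatively directly from results already established in this section, so the main task is to carefully unwind the relevant definitions (especially Definition \ref{fBord}) and match them with the injections coming from benign points.

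For $(1)\Rightarrow(2)$, I would argue essentially tautologically from Definition \ref{fBord}. If $\pi$ is $\fB$-ordinary, the definition provides a $\Gal$-equivariant subspace of $\Ord_P(\widehat{S}(U^p,\co_E))_{\overline{\rho}}$ whose modulo-$\varpi_E$ reduction has non-zero $\fB$-component (in the dual). Since taking $\fB$-parts is a direct-summand functor that commutes with subquotients, this non-vanishing persists on $\Ord_P(S(U^p,\co_E/\varpi_E)_{\overline{\rho}})_{\fB}$. Invoking the identification $\Ord_P(\widehat{S}(U^p,\co_E)_{\overline{\rho}})_{\fB}\cong \varprojlim_n \Ord_P(S(U^p,\co_E/\varpi_E^n)_{\overline{\rho}})_{\fB}$ from (\ref{pord: proj00}) (and Nakayama on the admissible dual) yields $(2)$.

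For $(2)\Rightarrow(3)$, the hypothesis forces $\widetilde{\bT}(U^p)^{P-\ord}_{\overline{\rho},\fB}\neq 0$ by the faithfulness result for this Hecke algebra; hence $\Spec \widetilde{\bT}(U^p)_{\overline{\rho},\fB}^{P-\ord}[1/p]$ is nonempty, and Proposition \ref{beni3}(1) produces a benign point $x$ in this spectrum. Proposition \ref{beni5}(1), applied to any such benign point lying in the $\fB$-part, immediately gives $\overline{\rho}_{x,\widetilde{v},i}^{\sss}\cong \overline{\rho}_{\fB_{\widetilde{v},i}}$ for all $v\in S_p$ and $i=1,\dots,k_{\widetilde{v}}$, which is $(3)$.

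For $(3)\Rightarrow(1)$, let $x$ be a benign point as in (3). Classicality (Proposition \ref{beni1}(1)) combined with (\ref{equ: lalgaut}) produces an automorphic representation $\pi$ with $\fm_\pi=\fm_x$ and $(\pi^\infty)^{U^p}\neq 0$, together with an embedding $\iota:\otimes_{v\in S_p}(\pi_v\otimes_E W_v)\hookrightarrow \widehat{S}(U^p,E)_{\overline{\rho}}[\fm_\pi]$. Applying the functor $\Ord_P$ and using Proposition \ref{beni2}(3) (resp.\ Proposition \ref{beni3}(2)) gives the injection (\ref{Pord: emb0}) landing in $\Ord_P(\widehat{S}(U^p,E)_{\overline{\rho}}[\fm_x])_{\fB}$; concretely, the hypothesis $\overline{\rho}_{x,\widetilde{v},i}^{\sss}\cong \overline{\rho}_{\fB_{\widetilde{v},i}}$ combined with the identification of the block of $\widehat{\pi}(\rho_{x,\widetilde{v},i})^{\vee}$ guarantees that the image of any $L_P(\Z_p)$-invariant $\co_E$-lattice in the source lies in the $\fB$-component modulo $\varpi_E$. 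Taking such a lattice, intersecting with $\Ord_P(\widehat{S}(U^p,\co_E))_{\overline{\rho}}$, and reducing mod $\varpi_E$ yields a non-zero vector whose dual lies in $\fB$, verifying the condition of Definition \ref{fBord} for $\pi$.

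The only non-routine step is the block-matching argument in $(3)\Rightarrow(1)$: one must ensure that the lattice obtained by pulling back the inclusion (\ref{Pord: emb0}) into the integral structure is compatible with the Pa{\v{s}}k{\=u}nas block decomposition. This is handled by the same commensurability argument used in the proof of Proposition \ref{beni5} (comparing an arbitrary integral lattice with a tensor product of minimal lattices, each of whose Pontryagin dual lies in the block associated to $\overline{\rho}_{x,\widetilde{v},i}^{\sss}$), so no new ingredient is needed.
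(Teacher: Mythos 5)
Your proof is correct and follows essentially the same route as the paper: (1)$\Rightarrow$(2) is immediate from Definition \ref{fBord}, (2)$\Rightarrow$(3) comes from the existence of benign points plus Proposition \ref{beni5}(1), and (3)$\Rightarrow$(1) rests on the injection (\ref{Pord: emb0}) together with the commensurability/block-matching argument of Proposition \ref{beni5}. The only cosmetic difference is that the paper identifies the block by projecting (\ref{Pord: emb0}) onto an a priori unknown summand $\fB'$ and then proving $\fB'=\fB$, whereas you argue forward that the image must already land in the $\fB$-component; these are logically equivalent.
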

\begin{proof}
  (1) $\Rightarrow$ (2) is clear. (2) $\Rightarrow$ (3) follows from Proposition \ref{beni5} (1). We show (3) $\Rightarrow$ (1). By the decomposition (\ref{decomp00}), there exists a block $\fB'$ such that the following composition is non-zero (hence injective since the left hand side is irreducible)
\begin{multline}\label{fB'}\otimes_{v\in S_p}\big(\otimes_{i=1,\cdots, k_{\widetilde{v}}} \widehat{\pi}(\rho_{x, \widetilde{v}, i})^{\lalg}\otimes_{k(x)} \varepsilon^{s_{\widetilde{v},i+1}-1}\circ \dett\big) \xlongrightarrow{(\ref{Pord: emb0})} \Ord_P\big(\widehat{S}(U^p, E\big)_{\overline{\rho}}[\fm_x]\big) \\
\twoheadlongrightarrow \Ord_P\big(\widehat{S}(U^p, E\big)_{\overline{\rho}}[\fm_x]\big)_{\fB'}.
\end{multline}
We deduce hence a  similar non-zero map as in (\ref{injBeni0}) with $\fB$ replaced by $\fB'$. Using the same argument as in the proof of Proposition \ref{beni5} (1), we deduce $\overline{\rho}_{x,\widetilde{v},i}^{\sss} \cong \overline{\rho}_{\fB'_{\widetilde{v},i}}$ for all $v\in S_p$ and $i$, and hence $\fB'=\fB$ (by the statement in (3)). By (\ref{fB'}) (with $\fB'=\fB$), it is easy to deduce that any automorphic representation $\pi$ associated to the point $x$ is $\fB$-ordinary and satisfies $\pi^{U^p}\neq 0$. This concludes the proof.
\end{proof}
\begin{remark}\label{locp}
The condition (3) is equivalent to that there exists an automorphic representation $\pi=(\otimes'_{v\nmid \infty} \pi_v)\otimes \pi_{\infty}$ such that \big(where $(-)^{L_P(\Z_p)-\alg}_+$, $(-)^{L_{P_{\widetilde{v}}}(\Z_p)-\alg}_+$ are  defined similarly as in (\ref{LpZp+}), and the first equality easily holds by definition\big)
  \begin{enumerate}
    \item[(a)] $\Ord_P(\otimes_{v\in S_p} (\pi_v \otimes_E W_v))^{L_P(\Z_p)-\alg}_+=\otimes_{v\in S_p} \Ord_{P_{\widetilde{v}}}(\pi_v \otimes_E W_v)^{L_{P_{\widetilde{v}}}(\Z_p)-\alg}_+\neq 0$,
    \item[(b)] the unique Hodge-Tate weights descending $P_{\widetilde{v}}$-filtration on $\rho_{\pi,\widetilde{v}}$ (where the existence follows from Proposition \ref{beni2} (2)) satisfies  $\overline{\gr^i \cF_{\widetilde{v}}}^{\sss}\cong  \overline{\rho}_{\fB_{\widetilde{v},i}}$, where $\rho_{\pi}$ denotes the $\Gal_F$-representation associated to $\pi$,
    \item[(c)] $(\otimes_{v\nmid p, \infty}'\pi_v)^{U_p}\neq 0$.
  \end{enumerate}
Note that the conditions (a) and (b) depend only on the $p$-factor of $\pi$ and $\{\rho_{\pi,\widetilde{v}}\}_{v\in S_p}$.
\end{remark}

\subsection{Local-global compatibility}\label{sec: lg}
\noindent We show a local-global compatibility result for our  $\GL_2(\Q_p)$-ordinary families under certain generic assumptions.
\begin{definition}\label{Bgene}We call  $\overline{\rho}$ is $\fB$-generic if for all $v\in S_p$,
\begin{enumerate} \item[(1)]$\overline{\rho}_{\widetilde{v}}$ admits a unique filtration $\cF_{\fB_{\widetilde{v}}}$ such that $(\gr^i \cF_{\fB_{\widetilde{v}}})^{\sss} \cong \overline{\rho}_{\fB_{\widetilde{v},i}}$ for $i=1,\cdots, k_{\widetilde{v}}$;
\item[(2)] the filtration $\cF_{\fB_{\widetilde{v}}}$ satisfies Hypothesis \ref{hypo: gene1}. \end{enumerate}
\end{definition}
\noindent We assume $\overline{\rho}$ is $\fB$-generic. By Proposition \ref{beni5} (1), we see $\bar{\cF}_{x,\widetilde{v}}=\cF_{\fB_{\widetilde{v}}}$ (cf. (\ref{barFilx})) for all benign points $x$. Consider the deformation problem (where $R_{\overline{\rho}_{\widetilde{v}}, \cF_{\fB_{\widetilde{v}}}}^{P_{\widetilde{v}}-\ord,\bar{\square}}$ denotes the reduced quotient of $R_{\overline{\rho}_{\widetilde{v}}, \cF_{\fB_{\widetilde{v}}}}^{P_{\widetilde{v}}-\ord,\square}$, cf. \S~\ref{Porddef})
\begin{equation}\label{def2g}
  \big(F/F^+, S, \widetilde{S}, \co_E, \overline{\rho}, \varepsilon^{1-n} \delta_{F/F^+}, \{R_{\overline{\rho}_{\widetilde{v}}, \cF_{\fB_{\widetilde{v}}}}^{P_{\widetilde{v}}-\ord,\bar{\square}}\}_{v\in S_p} \cup \{R_{\overline{\rho}_{\widetilde{v}}}^{\bar{\square}}\}_{v\in S\setminus S_p}\big).
\end{equation}
By \cite[Prop. 3.4]{Thor}, the corresponding deformation functor is represented by a complete local noetherian $\co_E$-algebra, denoted by $R_{\overline{\rho},\cS,\fB}^{P-\ord}$, which is a quotient of $R_{\overline{\rho}, \cS}$.  By Proposition \ref{beni3} (2) and Proposition \ref{beni5}(1), for any benign point $x\in \Spec \widetilde{\bT}(U^p)_{\overline{\rho},\fB}^{P-\ord}[1/p]$, the action of $R_{\overline{\rho},\cS}$ on $\Ord_P(\widehat{S}(U^p,E)_{\overline{\rho}})_{\fB}[\fm_x]$ factors through $R_{\overline{\rho},\cS,\fB}^{P-\ord}$. Since $(\Ord_P(\widehat{S}(U^p,E)_{\overline{\rho}})_{\fB})^{L_P(\Z_p)-\alg}_+$ is dense in $\Ord_P(\widehat{S}(U^p,E)_{\overline{\rho}})_{\fB}$ (Proposition \ref{densalg}), we deduce by the same argument as in \cite[Thm. 6.12]{BD1}:
\begin{proposition}\label{Pordlg}
(1) The action of $R_{\overline{\rho},\cS}$ on $\Ord_P(\widehat{S}(U^p,E)_{\overline{\rho}})_{\fB}$ factors through $R_{\overline{\rho},\cS,\fB}^{P-\ord}$.

\noindent (2) Let $x$ be a closed point of $\Spec \widetilde{\bT}(U^p)_{\overline{\rho}}[1/p]$ such that $\Ord_P(\widehat{S}(U^p,E)_{\overline{\rho}}[\fm_x])_{\fB}\neq 0$, then for $v\in S_p$, $\rho_{x,\widetilde{v}}$ admits a $P_{\widetilde{v}}$-filtration with the induced $P_{\widetilde{v}}$-filtration on $\overline{\rho}_{\widetilde{v}}$ equal to $\cF_{\fB_{\widetilde{v}}}$.
\end{proposition}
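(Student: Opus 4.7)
The plan is to mirror the strategy used in \cite[Thm.~6.12]{BD1} and reduce both statements to the case of benign points, where everything is already under control thanks to Propositions \ref{beni3} and \ref{beni5}. Let $I \subset R_{\overline{\rho}, \cS}$ denote the kernel of the surjection $R_{\overline{\rho}, \cS} \twoheadrightarrow R_{\overline{\rho}, \cS, \fB}^{P-\ord}$. Part (1) amounts to proving $I \cdot \Ord_P(\widehat{S}(U^p,E)_{\overline{\rho}})_{\fB} = 0$.

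For (1), I would first observe that $\Ord_P(\widehat{S}(U^p,E)_{\overline{\rho}})_{\fB}$ is an admissible Banach representation of $L_P(\Q_p)$ on which $R_{\overline{\rho},\cS}$ acts by continuous endomorphisms (via the factorization through $\widetilde{\bT}(U^p)_{\overline{\rho},\fB}^{P-\ord}$). Hence the annihilator of any element under $R_{\overline{\rho},\cS}$ is a closed ideal, and it suffices to check that $I$ kills a dense subspace. By Lemma \ref{densbl}(2), the space $\big(\Ord_P(\widehat{S}(U^p,\co_E)_{\overline{\rho}})_{\fB}\big)^{L_P(\Z_p)-\alg}_+ \otimes_{\co_E} E$ is dense, and by Proposition \ref{beni4}(2) the $\widetilde{\bT}(U^p)^{P-\ord}_{\overline{\rho},\fB}[1/p]$-action on this subspace is semisimple, so it decomposes as a sum of $\fm_x$-eigenspaces as $x$ runs over benign points of $\Spec \widetilde{\bT}(U^p)_{\overline{\rho},\fB}^{P-\ord}[1/p]$. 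Thus it is enough to show that for every benign $x$, the composition $R_{\overline{\rho},\cS} \twoheadrightarrow \widetilde{\bT}(U^p)_{\overline{\rho}}^{P-\ord} \twoheadrightarrow k(x)$ kills $I$, equivalently that the associated Galois deformation $\rho_x$ is an $R_{\overline{\rho}, \cS, \fB}^{P-\ord}$-valued deformation. By Proposition \ref{beni3}(2) together with Proposition \ref{beni2}(2), $\rho_{x,\widetilde{v}}$ is $P_{\widetilde{v}}$-ordinary with some $P_{\widetilde{v}}$-filtration $\cF_{x,\widetilde{v}}$; by Proposition \ref{beni5}(1) the graded pieces of its mod $\varpi$ reduction have the prescribed semisimplifications $\overline{\rho}_{\fB_{\widetilde{v},i}}$. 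The $\fB$-genericity hypothesis (Definition \ref{Bgene}) forces $\cF_{\fB_{\widetilde{v}}}$ to be the unique filtration of $\overline{\rho}_{\widetilde{v}}$ with these graded pieces, so the reduction $\bar{\cF}_{x,\widetilde{v}}$ of $\cF_{x,\widetilde{v}}$ must agree with $\cF_{\fB_{\widetilde{v}}}$. Hence $\rho_x$ defines a point of $\Spec R_{\overline{\rho},\cS,\fB}^{P-\ord}[1/p]$, which gives the required vanishing.

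For (2), I would deduce it formally from (1). If $x \in \Spec \widetilde{\bT}(U^p)_{\overline{\rho}}[1/p]$ satisfies $\Ord_P(\widehat{S}(U^p,E)_{\overline{\rho}}[\fm_x])_{\fB} \neq 0$, then (1) says that the action of $R_{\overline{\rho},\cS}$ on this nonzero subspace factors through $R_{\overline{\rho},\cS,\fB}^{P-\ord}$. Therefore the character $R_{\overline{\rho},\cS} \to k(x)$ corresponding to $\rho_x$ factors through $R_{\overline{\rho},\cS,\fB}^{P-\ord}$, i.e.\ $\rho_x$ lies in the image of the (closed) subscheme cut out by the $P_{\widetilde{v}}$-ordinary local conditions at each $v \in S_p$. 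Unwinding the definition of the local deformation problems $R_{\overline{\rho}_{\widetilde{v}}, \cF_{\fB_{\widetilde{v}}}}^{P_{\widetilde{v}}-\ord, \bar{\square}}$ from \S~\ref{Porddef} (and using Lemma \ref{pord0} applied over $k(x)$, after choosing a lattice), this precisely says that $\rho_{x,\widetilde{v}}$ admits a $P_{\widetilde{v}}$-filtration whose mod-$\varpi$ reduction equals $\cF_{\fB_{\widetilde{v}}}$.

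The main obstacle is the density step in the proof of (1): one has to ensure that $I$ is a \emph{closed} ideal of endomorphisms and that vanishing on the locally $L_P(\Z_p)$-algebraic vectors of the block component extends to the full Banach space. Here the key inputs are Lemma \ref{densbl}(2) (density of $L_P(\Z_p)$-algebraic vectors inside the $\fB$-component, which requires the admissibility of $\Ord_P(\widehat{S}(U^p,\co_E)_{\overline{\rho}})_{\fB}$ and the fact that the block decomposition is compatible with the $L_P(\Z_p)$-algebraic part) together with the semisimplicity of Proposition \ref{beni4}(2) to reduce the check to one benign point at a time. The rest reduces to the already established local--global compatibility at benign points, and no essentially new Galois-theoretic input is needed beyond the uniqueness of the $P_{\widetilde{v}}$-filtration provided by $\fB$-genericity.
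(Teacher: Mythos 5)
Your proposal is correct and follows essentially the same route as the paper: the paper reduces to benign points via the density of $\big(\Ord_P(\widehat{S}(U^p,E)_{\overline{\rho}})_{\fB}\big)^{L_P(\Z_p)-\alg}_+$ and the semisimplicity of the Hecke action there, handles benign points via Propositions \ref{beni2}, \ref{beni3}(2), \ref{beni5}(1) together with the uniqueness of the filtration supplied by $\fB$-genericity, and then invokes the argument of \cite[Thm.~6.12]{BD1} (including deducing (2) from (1) by unwinding the local deformation condition at the point $x$). Your write-up just makes explicit the closed-annihilator/density step that the paper leaves to the citation.
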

\noindent Let $A$ be an artinian local $\co_E$-algebra $A$, and $\rho_A\in R_{\overline{\rho},\cS,\fB}^{P-\ord}(A)$. For $v\in S_p$, by definition $\rho_{A,\widetilde{v}}:=\pr_2\circ \rho_A|_{\Gal_{F_{\widetilde{v}}}}$ (with $\pr_2: \GL_1\times \GL_n \twoheadrightarrow \GL_n$) admits a $P_{\widetilde{v}}$-filtration $\cF_{\widetilde{v}}$ such that $\tr(\gr^i \cF_{\widetilde{v}})$ is a deformation of $\tr(\gr^i \cF_{\fB_{\widetilde{v}}})$ over $A$ for $i=1,\cdots,k_{\widetilde{v}}$. We obtain thus a natural morphism
\begin{equation}\label{wtR}
  R_{p,\fB} \lra R_{\overline{\rho},\cS,\fB}^{P-\ord}.
\end{equation}
The composition
\begin{equation}\label{RpB2}
   R_{p,\fB} \lra R_{\overline{\rho},\cS,\fB}^{P-\ord}\lra \widetilde{\bT}(U^p)_{\overline{\rho}, \fB}^{P-\ord}.
\end{equation}
equips with $\tm(U^p,\fB)$ another $R_{p,\fB}$-action. The following theorem follows by the same argument as in the proof of \cite[Thm. 3.5.5]{Pan2} \cite[Prop. 5.5]{Pas18}. Indeed,  one can easily obtain an analogue of \cite[Lem. 3.5.6]{Pan2} with $\fp$ of \emph{loc. cit.} replaced by the benign points, using  Lemma \ref{densbl} (2), Corollary \ref{beni3} (2). Note also that an analogue of \cite[Lem. 3.5.7]{Pan2} is already contained in Proposition \ref{beni4} (1) (whose proof builds upon the local-global compatibility result  in classical local Langlands correspondence, see the proof of \cite[Prop. 7.6 (2)]{BD1}).
\begin{theorem}[local-global compatibility]\label{lg}
The two actions of $R_{p,\fB}$ on $\tm(U^p, \fB)$ coincide, i.e. the composition
\begin{equation*}
  R_{p,\fB}\lra \widetilde{\bT}(U^p)_{\overline{\rho}, \fB}^{P-\ord} \hooklongrightarrow \End_{R_{p,\fB}}(\tm(U^p,\fB))
\end{equation*}
coincides with the structure map.
\end{theorem}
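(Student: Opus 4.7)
The plan is to reduce the equality of the two $R_{p,\fB}$-actions on $\tm(U^p,\fB)$ to a pointwise check at a Zariski-dense set of closed points of $\Spec\cA[1/p]$, and then to verify that pointwise equality at benign points using the inputs already assembled in Propositions \ref{beni2} and \ref{beni5}.

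First, I would reformulate the problem as follows. Since $\cA$ acts faithfully on $\tm(U^p,\fB)$, the theorem amounts to showing that the two ring maps
\begin{equation*}
\alpha_1,\ \alpha_2:\ R_{p,\fB}\rightrightarrows \cA,
\end{equation*}
namely the structure map making $\cA$ an $R_{p,\fB}$-algebra via (\ref{PordcA}) on one side, and the composition $R_{p,\fB}\to R_{\overline{\rho},\cS,\fB}^{P-\ord}\to \widetilde{\bT}(U^p)_{\overline{\rho},\fB}^{P-\ord}\to \cA$ of (\ref{RpB2}) with (\ref{PordcA}) on the other, coincide. Because $\cA$ is $\co_E$-torsion free with $\cap_j\varpi_E^j\cA=0$, it embeds into $\prod_y\cA[1/p]/\fm_y$ as $y$ runs over closed points of $\Spec\cA[1/p]$; by the Zariski density of the set $\cY$ of benign points $y=(x,\{z_{\widetilde v,i}\})$ supplied by Proposition \ref{beni5}(2) and the corollary immediately following it, it suffices to check $\alpha_1\equiv\alpha_2\pmod{\fm_y}$ for every $y\in\cY$.

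Next, I would compute both sides at such a $y$ on a typical element $a=\otimes_{v,i}\, T^{\univ}_{\widetilde v,i}(g_{\widetilde v,i})\in R_{p,\fB}$, where $T^{\univ}_{\widetilde v,i}:\Gal_{\Q_p}\to R^{\pss}_{\fB_{\widetilde v,i}}$ is the universal pseudo-character and $g_{\widetilde v,i}\in\Gal_{\Q_p}$. The global map (\ref{wtR}) is defined by sending $T^{\univ}_{\widetilde v,i}$ to the trace of the $i$-th graded piece of the universal $P$-filtration on the universal $P$-ordinary deformation; combined with Proposition \ref{beni2}(2), which identifies $\gr^i\cF_{x,\widetilde v}$ with $\rho_{x,\widetilde v,i}$, this gives
\begin{equation*}
\alpha_2(a)(y)=\prod_{v,i}\,\tr\rho_{x,\widetilde v,i}(g_{\widetilde v,i}).
\end{equation*}
On the local side, $\alpha_1$ is built from the composition $R^{\pss}_{\fB_{\widetilde v,i}}\xrightarrow{\tw_{\widetilde v,i}}R^{\pss}_{\fB_{\widetilde v,i}(s_{\widetilde v,i+1}-1)}\hookrightarrow \widetilde E_{\fB_{\widetilde v,i}(s_{\widetilde v,i+1}-1)}$ provided by Theorem \ref{blo0}(1). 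Using that $z_{\widetilde v,i}$ corresponds to the pseudo-character $\tr\rho_{x,\widetilde v,i}$ (Proposition \ref{beni5}(2)), and the $p$-adic local Langlands correspondence for $\GL_2(\Q_p)$ applied through the injection (\ref{injBeni0}) with its twist by $\varepsilon^{s_{\widetilde v,i+1}-1}$, one recovers the same product $\prod_{v,i}\tr\rho_{x,\widetilde v,i}(g_{\widetilde v,i})$ for $\alpha_1(a)(y)$.

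Once $\alpha_1(a)(y)=\alpha_2(a)(y)$ is established for all $y\in\cY$ and all $a$, Zariski density of $\cY$ together with torsion-freeness of $\cA$ upgrades this to $\alpha_1=\alpha_2$, which proves the theorem. The main obstacle will be the precise matching of normalizations: the twist by $\varepsilon^{s_{\widetilde v,i+1}-1}$ that is wired into both $\tw_{\widetilde v,i}$ and (\ref{injBeni0}) must cancel against the Hodge-Tate shift induced by the position of $\gr^i\cF$ in the universal filtration, and this bookkeeping is exactly where the local-global compatibility at classical points, in the form of Proposition \ref{beni4}(1) (which ultimately rests on classical local Langlands), is indispensable---it is the step that replaces the role played by \cite[Lem.~3.5.7]{Pan2} in Pan's patching argument.
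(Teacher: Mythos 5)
Your proposal is correct and follows essentially the same route as the paper: the paper's proof (running the argument of Pan and Pa{\v{s}}k{\=u}nas) likewise reduces the equality of the two actions to a check at the dense set of benign points, where the identification of the local parameters with $\{\tr \rho_{x,\widetilde{v},i}\}$ (Propositions \ref{beni2}, \ref{beni4}, \ref{beni5}) supplies the pointwise equality. Your repackaging via $\cap_{y\in\cY}\fp_y=0$ in $\cA$ (the corollary to Proposition \ref{beni5}, whose proof is itself the density-of-locally-algebraic-vectors argument) instead of directly via density of classical vectors in the Banach representation is only a cosmetic difference.
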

\begin{remark}
One might prove (e.g. by putting more hypothesis on $\overline{\rho}_p$) a stronger local-global compatibility result (e.g. by replacing the universal pseudo-deformation rings by the universal deformation rings of certain Galois representations) as in \cite[\S~7.1.4]{BD1} (where the formulation is more close to \cite{Em4}), but we don't need such result in this note.
\end{remark}
\noindent By definition, Theorem \ref{lg} and Theorem \ref{dim0}, we have
\begin{corollary}\label{dim1}
  We have $\widetilde{\bT}(U^p)_{\overline{\rho},\fB}^{P-\ord}\xrightarrow{\sim} \cA$, and each irreducible component of $\widetilde{\bT}(U^p)_{\overline{\rho}, \fB}^{P-\ord}$ is of characteristic zero and of dimension at least $1+ \sum_{v\in S_p} (2n-k_{\widetilde{v}})$.
\end{corollary}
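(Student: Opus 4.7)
The plan is to derive this corollary as a direct formal consequence of the three ingredients mentioned: the definition (\ref{PordcA}) of $\cA$, the local-global compatibility Theorem \ref{lg}, and the dimension estimate Theorem \ref{dim0}. The statement splits into the identification $\widetilde{\bT}(U^p)_{\overline{\rho},\fB}^{P-\ord}\xrightarrow{\sim} \cA$ and then the dimension/characteristic-zero assertion, which becomes automatic once the identification is established.

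First I would establish the surjection $\widetilde{\bT}(U^p)_{\overline{\rho},\fB}^{P-\ord}\twoheadrightarrow \cA$. By definition, $\cA$ is the $R_{p,\fB}$-subalgebra of $\End_{R_{p,\fB}}(\tm(U^p,\fB))$ generated by the image of (\ref{PordcA}). By Theorem \ref{lg}, the composition $R_{p,\fB}\to \widetilde{\bT}(U^p)_{\overline{\rho},\fB}^{P-\ord}\to \End_{\widetilde{E}_{\fB}}(\tm(U^p,\fB))$ coincides with the structure map coming from the Pa\v{s}k\=unas action. Consequently the image of $R_{p,\fB}$ in $\End_{R_{p,\fB}}(\tm(U^p,\fB))$ is already contained in the image of $\widetilde{\bT}(U^p)_{\overline{\rho},\fB}^{P-\ord}$, so $\cA$ is nothing more than that image, and the surjection follows tautologically.

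Next I would show injectivity. The faithfulness lemma (stated right after the definition of $\widetilde{\bT}(U^p)_{\overline{\rho},\fB}^{P-\ord}$) asserts that this Hecke algebra acts faithfully on $\Ord_P(\widehat{S}(U^p,\co_E)_{\overline{\rho}})_{\fB}$, hence also on its Schikhof dual $\Ord_P(\widehat{S}(U^p,\co_E)_{\overline{\rho}})^d_{\fB}$. Using the isomorphism $\Ord_P(\widehat{S}(U^p,\co_E)_{\overline{\rho}})^d_{\fB}\cong \widetilde{P}_{\fB}\widehat{\otimes}_{\widetilde{E}_{\fB}}\tm(U^p,\fB)$ from (\ref{fB0}) and the anti-equivalence between $\fC^{\fB}$ and pseudo-compact $\widetilde{E}_{\fB}$-modules recalled in \S\ref{sec: Pas}, $\widetilde{E}_{\fB}$-linear endomorphisms of $\tm(U^p,\fB)$ correspond to $L_P(\Q_p)$-equivariant endomorphisms of $\Ord_P(\widehat{S}(U^p,\co_E)_{\overline{\rho}})^d_{\fB}$. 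Since the $\widetilde{\bT}(U^p)_{\overline{\rho},\fB}^{P-\ord}$-action commutes with $L_P(\Q_p)$ and is faithful on the latter, it is faithful on $\tm(U^p,\fB)$; this is precisely the injectivity of the first map in (\ref{PordcA}), and combined with the previous step yields $\widetilde{\bT}(U^p)_{\overline{\rho},\fB}^{P-\ord}\xrightarrow{\sim}\cA$.

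Finally, transporting the conclusion of Theorem \ref{dim0} across this isomorphism gives the desired lower bound $1+\sum_{v\in S_p}(2n-k_{\widetilde{v}})$ on the dimension of every irreducible component, together with the fact that each such component is of characteristic zero. There is no real obstacle here beyond tracking the identifications: the content of the corollary is entirely that the abstractly defined patched-style algebra $\cA$, which is amenable to the infinite fern/Zariski density argument underlying Theorem \ref{dim0}, coincides with the geometrically meaningful Hecke algebra $\widetilde{\bT}(U^p)_{\overline{\rho},\fB}^{P-\ord}$; the hardest step, namely the local-global compatibility comparing the two $R_{p,\fB}$-actions on $\tm(U^p,\fB)$, has already been absorbed into Theorem \ref{lg}.
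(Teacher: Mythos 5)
Your proposal is correct and is exactly the argument the paper compresses into ``By definition, Theorem \ref{lg} and Theorem \ref{dim0}'': Theorem \ref{lg} forces the structure-map image of $R_{p,\fB}$ in $\End_{R_{p,\fB}}(\tm(U^p,\fB))$ to lie inside the image of $\widetilde{\bT}(U^p)_{\overline{\rho},\fB}^{P-\ord}$, so $\cA$ is that image, while the injectivity of (\ref{PordcA}) (the faithfulness of the Hecke action on $\tm(U^p,\fB)$, which the paper already records) gives the isomorphism, and the dimension statement is then transported from Theorem \ref{dim0}.
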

\noindent We end this section by a $\fB$-generic criterion, which is easier to check in certain cases.
\begin{lemma}\label{bgene1}For $v\in S_p$, let $\cF_{\widetilde{v}}$ be a $P_{\widetilde{v}}$-filtration on $\overline{\rho}_{\widetilde{v}}$ satisfying Hypothesis \ref{hypo: gene1} and that $(\gr^i \cF_{\widetilde{v}})^{\sss} \cong \overline{\rho}_{\fB_{\widetilde{v},i}}$ for all $i$. Suppose moreover $\fB_{\widetilde{v},i}\neq \fB_{\widetilde{v},j}$ for all $i\neq j$ and $v\in S_p$. Then $\overline{\rho}$ is $\fB$-generic.
\end{lemma}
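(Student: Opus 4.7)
The plan is to prove the uniqueness statement (condition (1) of Definition \ref{Bgene}) by induction on $k_{\widetilde{v}}$; condition (2) is supplied directly by the hypothesis of the lemma applied to $\cF_{\widetilde{v}}$. Let $\cF'$ be any $P_{\widetilde{v}}$-filtration on $\overline{\rho}_{\widetilde{v}}$ with $(\gr^i \cF')^{\sss} \cong \overline{\rho}_{\fB_{\widetilde{v},i}}$, and write $\cF := \cF_{\widetilde{v}}$ for the given filtration. The inductive step reduces to showing $\Fil^1 \cF = \Fil^1 \cF'$; once this is established, applying the induction hypothesis to $\overline{\rho}_{\widetilde{v}}/\Fil^1 \cF$ equipped with the induced filtrations finishes the argument (Hypothesis \ref{hypo: gene1} for $\cF_{\widetilde{v}}$ passes to the quotient by the standard devissage).

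To show $\Fil^1 \cF = \Fil^1 \cF'$, I would consider the composition $\phi\colon \Fil^1 \cF \hookrightarrow \overline{\rho}_{\widetilde{v}} \twoheadrightarrow \overline{\rho}_{\widetilde{v}}/\Fil^1 \cF'$; since both $\Fil^1 \cF$ and $\Fil^1 \cF'$ have dimension $n_{\widetilde{v},1}$, it suffices to prove $\phi = 0$. By devissage on the target filtration $\gr^2 \cF', \ldots, \gr^{k_{\widetilde{v}}} \cF'$ and on the source, $\phi = 0$ would follow from the vanishing of $\Hom_{\Gal_{\Q_p}}(\tau, \tau')$ for every irreducible Jordan-H\"older constituent $\tau$ of $\overline{\rho}_{\fB_{\widetilde{v},1}}$ appearing in the cosocle of $\Fil^1 \cF$ and every $\tau'$ appearing in the socle of some $\gr^j \cF'$ with $j \geq 2$; by irreducibility this just means $\tau \not\cong \tau'$.

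This last point is where the distinctness of the blocks $\fB_{\widetilde{v},i}$ combines with Hypothesis \ref{hypo: gene1}. From the classification of blocks recalled before Theorem \ref{blo0}, distinct blocks give non-isomorphic semisimple Galois representations $\overline{\rho}_{\fB_{\widetilde{v},i}}$; however, a single irreducible character may still appear in two of them (for example in case (4), where $\overline{\rho}_{\fB}=\eta\oplus\eta\omega$, a twist can overlap an adjacent block). In such a situation, the shared constituent $\tau$ would, via $\phi$, sit simultaneously in the cosocle of $\gr^1 \cF_{\widetilde{v}}$ (equivalently, of $\Fil^1 \cF$) and in the socle of some $\gr^j \cF_{\widetilde{v}}$ for $j\geq 2$; since $n_{\widetilde{v},i} \leq 2$ and hence the socle and cosocle together exhaust the Jordan-H\"older factors of each $\gr^i \cF_{\widetilde{v}}$, this produces a nonzero element of $\Hom_{\Gal_{\Q_p}}(\gr^1 \cF_{\widetilde{v}}, \gr^j \cF_{\widetilde{v}})$, contradicting Hypothesis \ref{hypo: gene1}. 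The main technical obstacle is the careful socle/cosocle bookkeeping in the shared-constituent cases: one must distinguish split vs.\ non-split $\gr^i \cF_{\widetilde{v}}$ and, in each configuration, exhibit the Hom that Hypothesis \ref{hypo: gene1} excludes. Once this case analysis is in place, the induction on $k_{\widetilde{v}}$ concludes the proof.
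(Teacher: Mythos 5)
Your overall architecture (induct on $k_{\widetilde{v}}$, reduce to $\Fil^1\cF_{\widetilde{v}}=\Fil^1\cF'$, pass to the quotient) matches the paper's, and condition (2) of Definition \ref{Bgene} is indeed immediate. But the step you dismiss as ``bookkeeping'' is the entire content of the lemma, and the reduction you propose in its place is not valid. First, the devissage criterion is wrong: to conclude $\Hom(\Fil^1\cF_{\widetilde{v}}, \overline{\rho}_{\widetilde{v}}/\Fil^1\cF')=0$ by devissage one must check $\Hom(\tau,\gr^j\cF')=0$ for \emph{every} Jordan--H\"older constituent $\tau$ of $\Fil^1\cF_{\widetilde{v}}$, not only those in its cosocle: a nonzero map can be injective, in which case it is the \emph{socle} of the source that lands in the socle of the target (consider the identity map on a non-split extension, whose cosocle is disjoint from the target's socle). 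Once all constituents are included, the criterion fails precisely in the shared-constituent cases you identify, so the devissage is merely inconclusive there rather than producing a contradiction. Second, even granting a shared constituent $\tau\in\cosoc(\gr^1\cF_{\widetilde{v}})\cap\soc(\gr^j\cF')$, you silently replace $\gr^j\cF'$ by $\gr^j\cF_{\widetilde{v}}$ in order to invoke Hypothesis \ref{hypo: gene1}; these graded pieces agree only after semisimplification, and if $\gr^j\cF_{\widetilde{v}}$ is a non-split extension its socle need not contain $\tau$. Since Hypothesis \ref{hypo: gene1} is assumed for $\cF_{\widetilde{v}}$ and not for $\cF'$, no contradiction results.

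What actually closes these gaps in the paper is a positional argument inside $\overline{\rho}_{\widetilde{v}}$ rather than Hom-vanishing between abstract graded pieces: one first proves the isomorphism $\gr^1\cF'\cong\gr^1\cF_{\widetilde{v}}$ by a case analysis on how the two first steps sit inside $\overline{\rho}_{\widetilde{v}}$ (split versus non-split, equal versus distinct images of the common socle character), tracking the maximal indices $i,j$ at which the characters $\chi_1,\chi_2$ reappear in the filtration $\cF_{\widetilde{v}}$ and pinning down the socle and cosocle of $\gr^i\cF_{\widetilde{v}}$ and $\gr^j\cF_{\widetilde{v}}$; the block-distinctness hypothesis enters only at the very end (the sub-case $j=i$, which would force $\fB_{\widetilde{v},i}=\fB_{\widetilde{v},1}$), and a further argument is then needed to upgrade the abstract isomorphism to the equality $\Fil^1_{\cF_{\widetilde{v}}}=\Fil^1_{\cF'}$ of subrepresentations. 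None of this is recoverable from your outline, so the proposal has a genuine gap.
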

\begin{proof}
Suppose we have another filtration $\cF_{\widetilde{v}}'$ satisfying the same property.
\\

\noindent (1) We show first $\gr^1 \cF_{\widetilde{v}}'\cong \gr^1 \cF_{\widetilde{v}}$. If $\overline{\rho}_{\fB_{\widetilde{v},1}}$ is irreducible, then it is clear. Suppose $\overline{\rho}_{\fB_{\widetilde{v},1}}\cong \chi_1\oplus \chi_2$, and we prove the statement case by case.

\noindent (a) If $\gr^1 \cF_{\widetilde{v}}'\cong \chi_1\oplus \chi_2$, and if $\gr^1 \cF_{\widetilde{v}}$ is not isomorphic to $\gr^1 \cF_{\widetilde{v}}'$. Without loss of generality, we assume $\gr^1 \cF_{\widetilde{v}}$ is a non-split extension of $\chi_2$ by $\chi_1$. We see
\begin{equation}\label{homBge}\Hom_{\Gal_{F_{\widetilde{v}}}}(\chi_2, \overline{\rho}_{\widetilde{v}}) \longrightarrow \Hom_{\Gal_{F_{\widetilde{v}}}}(\chi_2, \overline{\rho}_{\widetilde{v}}/\gr^1 \cF_{\widetilde{v}}) \hooklongrightarrow \Hom_{\Gal_{F_{\widetilde{v}}}}(\gr^1 \cF_{\widetilde{v}}, ,\overline{\rho}_{\widetilde{v}}/\gr^1 \cF_{\widetilde{v}}),
\end{equation}
where the first map is non-zero by assumption (using $\gr^1 \cF_{\widetilde{v}}'\hookrightarrow \overline{\rho}_{\widetilde{v}}$).We deduce thus the right hand side of (\ref{homBge}) is non-zero,  which leads to a contradiction with Hypothesis \ref{hypo: gene1}.

\noindent (b) Suppose $\gr^1 \cF_{\widetilde{v}}'$ is a non-split extension of $\chi_2$ by $\chi_1$, and suppose $\gr^1 \cF_{\widetilde{v}}$ is an extension of $\chi_1$ by $\chi_2$ (e.g.  if $\gr^1 \cF_{\widetilde{v}}$ is a direct sum of $\chi_1$ and $\chi_2$). If $\chi_1=\chi_2=\chi$, the sum $\gr^1 \cF_{\widetilde{v}}'+\gr^1 \cF_{\widetilde{v}}$ will be a subrepresentation of $\overline{\rho}_{\widetilde{v}}$ of dimension bigger than $3$. We deduce that $\Hom_{\Gal_{\Q_p}}(\chi, \gr^1 \cF_{\widetilde{v}}'/(\gr^1 \cF_{\widetilde{v}}' \cap \gr^1 \cF_{\widetilde{v}}))\neq 0$, and hence $\Hom_{\Gal_{\Q_p}}(\gr^1 \cF_{\widetilde{v}}, \overline{\rho}_{\widetilde{v}}/\gr^1 \cF_{\widetilde{v}})\neq 0$, a contradiction with Hypothesis \ref{hypo: gene1}. If $\chi_1\neq \chi_2$, then we have
\begin{equation*}
  \Hom_{\Gal_{F_{\widetilde{v}}}}(\chi_2, \gr^1 \cF_{\widetilde{v}}) \hooklongrightarrow \Hom_{\Gal_{F_{\widetilde{v}}}}(\chi_2, \rho_{\widetilde{v}}) \lra \Hom_{\Gal_{F_{\widetilde{v}}}}(\chi_2, \rho_{\widetilde{v}}/\gr^1 \cF'_{\widetilde{v}})
\end{equation*}
 with the second map non-zero. There exists thus $i>1$ such that $\chi_2\hookrightarrow \gr^i \cF'_{\widetilde{v}}$, thus $$\Hom_{\Gal_{F_{\widetilde{v}}}}(\gr^1 \cF'_{\widetilde{v}}, \gr^i \cF'_{\widetilde{v}})\neq 0,$$ which (again) contradicts with Hypothesis \ref{hypo: gene1}.

\noindent (c) Consider the last case where both of $\gr^1 \cF_{\widetilde{v}}'$ and $\gr^1 \cF_{\widetilde{v}}$ are isomorphic to a non-split extension of $\chi_2$ by $\chi_1$, and  $\gr^1 \cF_{\widetilde{v}}'\ncong  \gr^1 \cF_{\widetilde{v}}$. In this case, $\chi_1\chi_2^{-1}=1$ or $\omega$.

\noindent If $\chi_1=\chi_2=\chi$. It is easy to see $\gr^1 \cF_{\widetilde{v}}+\gr^1 \cF_{\widetilde{v}}'$ is isomorphic to a successive extension of $\chi$ of dimension bigger three. Using the same argument in (b) (in the case $\chi_1=\chi_2=\chi$), we easily deduce a contradiction.

\noindent Suppose $\chi_1=\chi_2 \omega$. Denote by $\iota: \chi_1\hookrightarrow \gr^1 \cF_{\widetilde{v}}\hookrightarrow \overline{\rho}_{\widetilde{v}}$, and $\iota': \chi_1 \hookrightarrow \gr^1 \cF_{\widetilde{v}}' \hookrightarrow \overline{\rho}_{\widetilde{v}}$.

\noindent If $\Ima(\iota)=\Ima(\iota')$, since $\gr^1 \cF_{\widetilde{v}}'\ncong  \gr^1 \cF_{\widetilde{v}}$, we deduce the composition
\begin{equation*}
  \chi_2 \cong \gr^1 \cF_{\widetilde{v}}/\Ima(\iota) \hookrightarrow \overline{\rho}_{\widetilde{v}}/\Ima(\iota') \lra \overline{\rho}_{\widetilde{v}}/\gr^1 \cF'_{\widetilde{v}}
\end{equation*}
is non-zero. Hence there exists $i>1$ such that $\chi_2\hookrightarrow \gr^i \cF'_{\widetilde{v}}$, thus $\Hom_{\Gal_{F_{\widetilde{v}}}}(\gr^1 \cF'_{\widetilde{v}}, \gr^i \cF'_{\widetilde{v}})\neq 0$,  a contradiction with Hypothesis \ref{hypo: gene1}.

\noindent If $\Ima(\iota')\neq \Ima(\iota)$. Let $i>1$ be maximal  such that $\chi_1\xrightarrow{\iota'} \overline{\rho}_{\widetilde{v}}\ra \overline{\rho}_{\widetilde{v}}/\Fil^{i-1}_{\cF_{\widetilde{v}}} \overline{\rho}_{\widetilde{v}}$ is non-zero. Thus $\chi_1$ is an irreducible sub of $\gr^i \cF_{\widetilde{v}}$. Since $\cF_{\widetilde{v}}$ satisfies Hypothesis \ref{hypo: gene1}, we deduce $\dim_{k_E} \gr^i \cF_{\widetilde{v}}=2$ and $\soc_{\Gal_{F_{\widetilde{v}}}} \gr^i \cF_{\widetilde{v}}\cong \chi_1$ (otherwise, $\Hom_{\Gal_{F_{\widetilde{v}}}}(\gr^i \cF_{\widetilde{v}},\gr^1 \cF_{\widetilde{v}})\neq 0$).   Consider
\begin{equation}\label{compBge}\chi_2\cong \gr^1\cF_{\widetilde{v}}'/\chi_1 \hooklongrightarrow \overline{\rho}_{\widetilde{v}}/\Ima(\iota') \twoheadlongrightarrow \overline{\rho}_{\widetilde{v}}/(\Ima(\iota)\oplus \Ima(\iota')).
 \end{equation}If the image of (\ref{compBge}) lies in $\gr^1 \cF_{\widetilde{v}}/\chi_1$, then it is not difficult to see the first injection in (\ref{compBge}) has image in $\gr^1 \cF_{\widetilde{v}}/\Ima(\iota')\cong \gr^1\cF_{\widetilde{v}}$. But if so, $\gr^1\cF_{\widetilde{v}}$ is split, a contradiction. So there exists $j>1$ such that
\begin{equation*}\chi_2 \cong \gr^1\cF_{\widetilde{v}}'/\chi_1  \lra \overline{\rho}_{\widetilde{v}}/\big(\Ima(\iota')+\Fil_{\cF_{\widetilde{v}}}^{j-1} \overline{\rho}_{\widetilde{v}}\big)\end{equation*} is non-zero. We let $j$ be the maximal integer satisfying this property. Then $\chi_2$ is an irreducible constituent of $\gr^j \cF_{\widetilde{v}}$. Using (again) Hypothesis \ref{hypo: gene1}, we easily deduce $\dim_{k_E} \gr^j \cF_{\widetilde{v}}=2$ and $\cosoc_{\Gal_{F_{\widetilde{v}}}} \gr^j \cF_{\widetilde{v}}\cong \chi_2$ (otherwise, $\Hom_{\Gal_{F_{\widetilde{v}}}}(\gr^j \cF_{\widetilde{v}},\gr^1 \cF_{\widetilde{v}})\neq 0$). If $j<i$, let $V_j$ be the kernel of $\Fil^{j-1}_{\cF_{\widetilde{v}}}\twoheadrightarrow \gr^j \cF_{\widetilde{v}} \twoheadrightarrow \chi_2$, then we see
\begin{equation*}
  \gr^1 \cF_{\widetilde{v}}' \lra \overline{\rho}_{\widetilde{v}}/V_j
\end{equation*}
is injective, and the injection $\chi_2\hookrightarrow \overline{\rho}_{\widetilde{v}}/V_j$ induces a section of $\gr^1 \cF_{\widetilde{v}}'\twoheadrightarrow \chi_2$, a contradiction. If $j>i$, since $\chi_2 \hookrightarrow \overline{\rho}_{\widetilde{\chi}}/\Ima(\iota')\twoheadrightarrow \overline{\rho}_{\widetilde{\chi}}/\Fil^{j-1}_{\cF_{\widetilde{v}}}$, we deduce $\gr^j \cF_{\widetilde{v}}$ splits, a contradiction. Finally if $j=i$, then $\fB_{v,i}=\fB_{v,1}$ a contradiction.
\\

\noindent (2) We show $\Fil^1_{\cF_{\widetilde{v}}}=\Fil^1_{\cF_{\widetilde{v}}'}$ (as subrepresentation of $\overline{\rho}_{\widetilde{v}}$), from which the lemma follows by an easy induction argument.  Suppose $\Fil^1_{\cF_{\widetilde{v}}}\neq \Fil^1_{\cF_{\widetilde{v}}'}$, consider $V:=(\Fil^1_{\cF_{\widetilde{v}}} + \Fil^1_{\cF_{\widetilde{v}}'})/\Fil^1_{\cF_{\widetilde{v}}}$, which is thus a non-zero subrepresentation of $\overline{\rho}_{\widetilde{v}}/\Fil^1_{\cF_{\widetilde{v}}}$, and whose irreducible constituents appear in $\gr^1 \cF_{\widetilde{v}}$. If $V$ is irreducible, it has to be a sub of $\gr^j \cF_{\widetilde{v}}$ for certain $j>1$. However, we have
 \begin{equation*}
   \gr^1 \cF_{\widetilde{v}} \cong \Fil^1_{\cF_{\widetilde{v}}'} \twoheadrightarrow (\Fil^1_{\cF_{\widetilde{v}}} + \Fil^1_{\cF_{\widetilde{v}}'})/\Fil^1_{\cF_{\widetilde{v}}}=V.
 \end{equation*}
 Thus $\Hom_{\Gal_{F_{\widetilde{v}}}}(\gr^1 \cF_{\widetilde{v}}, \gr^j \cF_{\widetilde{v}})\neq 0$, a contradiction with Hypothesis \ref{hypo: gene1}.
 If $V$ is isomorphic to a direct sum of characters, we similarly obtain a contradiction. Suppose there exist characters $\chi_1$, $\chi_2$ such that $V$ is a non-split extension of $\chi_2$ by $\chi_1$. In this case we see  $V\cong \gr^1 \cF_{\widetilde{v}}$. Using the same argument as in (c) for the case $\Ima(\iota)\neq \Ima(\iota')$, we can also obtain a contradiction. This concludes the proof.
\end{proof}
\section{Patching and automorphy lifting}
\noindent We apply the Taylor-Wiles patching argument to our $\GL_2(\Q_p)$-ordinary families, and prove our main result on automorphy lifting.
\subsection{Varying levels outside $p$}
\noindent Let $\fB$ be a block of $\Mod_{L_P(\Q_p)}^{\lfin}(\co_E)$ such that $\Ord_P(\widehat{S}(U^p,\co_E)_{\overline{\rho}})_{\fB}\neq 0$.
\begin{lemma}\label{flat0}
  Let $H$ be a finite group, $r\geq 1$, $M$ be a finitely generated flat $\co_E/\varpi_E^r[H]$-module. Then $M^{\vee}$ is also a finitely generated flat $\co_E/\varpi_E^r[H]$-modules.
\end{lemma}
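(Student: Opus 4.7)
The plan is to reduce to showing that the group algebra $A := \co_E/\varpi_E^r[H]$ is a symmetric Frobenius algebra over $\co_E/\varpi_E^r$, so that the Pontryagin dual sends finitely generated projective $A$-modules to finitely generated projective $A$-modules.

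First, I would unwind the definition of $M^{\vee}$. Since $M$ is killed by $\varpi_E^r$, we have
\begin{equation*}
M^{\vee}=\Hom_{\co_E}(M,E/\co_E)\cong \Hom_{\co_E/\varpi_E^r}(M,\co_E/\varpi_E^r),
\end{equation*}
and the natural (left) $H$-action is $(h\cdot\phi)(m)=\phi(h^{-1}m)$, which extends to a left $A$-module structure on $M^{\vee}$. Since $\co_E/\varpi_E^r$ is artinian, so is the finite $\co_E/\varpi_E^r$-algebra $A$; over such a ring finitely generated flat modules are projective. Thus the hypothesis gives that $M$ is a direct summand of $A^n$ for some $n\ge 0$, and applying $(-)^{\vee}$ shows $M^{\vee}$ is a direct summand of $(A^{\vee})^n$. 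Finite generation of $M^{\vee}$ is automatic from this.

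It then suffices to prove that $A^{\vee}$ is isomorphic to $A$ as a left $A$-module. For this I would exhibit the symmetrizing form explicitly: let $\epsilon\colon A\to \co_E/\varpi_E^r$ be the augmentation-type map sending $\sum_{h\in H} a_h h\mapsto a_1$ (coefficient of the identity), and consider the pairing
\begin{equation*}
\langle\cdot,\cdot\rangle\colon A\times A\lra \co_E/\varpi_E^r,\quad (x,y)\longmapsto \epsilon(xy).
\end{equation*}
A direct computation shows $\langle x,y\rangle=\langle y,x\rangle$ (so the form is symmetric), and the pairing is non-degenerate because the matrix of $\langle\cdot,\cdot\rangle$ in the basis $\{h\}_{h\in H}$ is a permutation matrix (up to identifying $h$ with $h^{-1}$). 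The induced map $A\to A^{\vee}$, $x\mapsto \langle x,-\rangle$, is then an isomorphism of left $A$-modules (using $\langle ax,y\rangle=\epsilon(axy)=\langle x,ya\rangle$ together with the symmetry, compatible with the convention on $H$-action on $A^{\vee}$ above).

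Combining, $M^{\vee}$ is a direct summand of $A^n$, hence a finitely generated projective (equivalently flat) $A$-module, as desired. The only mildly subtle point is keeping the left/right conventions straight when identifying the $H$-action on $M^{\vee}$ with the one coming from the Frobenius isomorphism $A^{\vee}\cong A$, but this is handled once and for all by the symmetry $\epsilon(xy)=\epsilon(yx)$; nothing substantial should fail.
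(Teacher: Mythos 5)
Your proof is correct and follows essentially the same route as the paper's: the paper reduces to the free case and asserts that $M^{\vee}\cong M$ for $M$ finite free over $\co_E/\varpi_E^r[H]$, which is exactly the self-duality of the group algebra that you establish via the symmetrizing form $\epsilon(xy)$. You merely supply the details (flat f.g.\ over an artinian ring $=$ projective $=$ summand of a free module, plus the Frobenius-algebra isomorphism, where one inserts the antipode $h\mapsto h^{-1}$ to match the contragredient convention) that the paper leaves as ``easy to check''.
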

\begin{proof}
It is easy to check that if  $M$ is a finite free $\co_E/\varpi_E^r [H]$-module, then $M^{\vee}\cong M$, and hence is also a finite free $\co_E/\varpi_E^r[H]$-module. The lemma follows.
\end{proof}
\noindent Let $Y^p=\prod_{v\nmid p} Y_v$ be a compact open normal subgroup of $U^p$ (which is thus also sufficiently small). For any compact open subgroup $U_p$ of $\prod_{v\in S_p} \GL_n(\Z_p)$, by \cite[Lem. 3.3.1]{CHT}, $S(Y^p U_p, \co_E/\varpi_E^r)$ is a finite free $\co_E/\varpi_E^r[U^p/Y^p]$-module.
\begin{lemma}\label{flat1}Let  $i\in \Z_{\geq 0}$, $r\in \Z_{\geq 1}$.

\noindent (1)  $\Ord_P(S(Y^p,\co_E/\varpi_E^r)_{\overline{\rho}})_{\fB}^{L_i}$ and $ \big(\Ord_P(S(Y^p,\co_E/\varpi_E^r)_{\overline{\rho}})_{\fB}^{L_i}\big)^{\vee}$ are finite flat $\co_E/\varpi_E^r[U^p/Y^p]$-modules.

\noindent (2) We have a natural isomorphism
\begin{equation*}
  \big(\Ord_P(S(Y^p,\co_E/\varpi_E^r)_{\overline{\rho}})_{\fB}^{L_i}\big)^{\vee}_{U^p/Y^p}\xlongrightarrow{\sim} \big(\Ord_P(S(U^p,\co_E/\varpi_E^r)_{\overline{\rho}})_{\fB}^{L_i}\big)^{\vee}.
\end{equation*}
\end{lemma}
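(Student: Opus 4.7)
The plan for (1) is to realise $M := \Ord_P(S(Y^p,\co_E/\varpi_E^r)_{\overline{\rho}})_{\fB}^{L_i}$ as a direct summand, in the category of $\co_E/\varpi_E^r[U^p/Y^p]$-modules, of $S(Y^pK_{i,i},\co_E/\varpi_E^r)$. The latter is finite free over $\co_E/\varpi_E^r[U^pK_{i,i}/Y^pK_{i,i}]\cong \co_E/\varpi_E^r[U^p/Y^p]$ by the cited result of \cite{CHT}, so any such direct summand will be finite projective and hence finite flat over $\co_E/\varpi_E^r[U^p/Y^p]$; the finite flatness of $M^\vee$ then follows immediately from Lemma \ref{flat0}. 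To get the direct-summand identification, I would start from the fourth isomorphism of (\ref{isoOrd}) applied with $U^p$ replaced by $Y^p$, which already displays $\Ord_P(S(Y^p,\co_E/\varpi_E^r)_{\overline{\rho}})^{L_i}$ as the direct summand $(S(Y^pK_{i,i},\co_E/\varpi_E^r)_{\overline{\rho}})_{\ord}$ of $S(Y^pK_{i,i},\co_E/\varpi_E^r)$, and then cut out the block component $(-)_{\fB}$ by one more idempotent. The three relevant projections --- localisation at $\fm_{\overline{\rho}}$, the ordinary projection $(-)_{\ord}$ attached to $Z_{L_P}^+$, and the block projection $(-)_{\fB}$ --- all commute with the right-translation action of $U^p/Y^p$: the first two act through operators in the prime-to-$p$ Hecke algebra and through $Z_{L_P}(\Q_p)$ respectively, while the last is a central idempotent in the Bernstein centre of $\Mod_{L_P(\Q_p)}^{\lfin}(\co_E)$ and therefore commutes with every endomorphism that commutes with the $L_P(\Q_p)$-action.

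For part (2), the plan is to combine the flatness of $M$ obtained in (1) with the following elementary duality: for any finite group $H$ and any finitely generated flat (equivalently, finite projective) $\co_E/\varpi_E^r[H]$-module $N$, the restriction-of-linear-forms map induces a canonical isomorphism $(N^\vee)_H \cong (N^H)^\vee$. One verifies this on the regular module $\co_E/\varpi_E^r[H]$ by direct computation (both sides are then naturally identified with $\co_E/\varpi_E^r$) and extends it to projectives by passage to direct summands. Applied to $H=U^p/Y^p$ and $N=M$, part (2) reduces to the identification
$$M^{U^p/Y^p}\cong \Ord_P(S(U^p,\co_E/\varpi_E^r)_{\overline{\rho}})_{\fB}^{L_i},$$
which follows from $S(Y^pK_{i,i},\co_E/\varpi_E^r)^{U^p/Y^p}=S(U^pK_{i,i},\co_E/\varpi_E^r)$ together with the commutation of the three idempotent projections above with the $U^p/Y^p$-invariants functor.

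The main point to be checked carefully is precisely the $\co_E/\varpi_E^r[U^p/Y^p]$-equivariance of each of the Hecke/ordinary/block projections --- a standard bookkeeping step in Hida-type level-variation arguments, but one that genuinely requires attention because the Hecke operators $T_{\widetilde{v}}^{(j)}$ entering the localisation at $\fm_{\overline{\rho}}$ are situated at the same finite places where $U^p/Y^p$ acts. Beyond this, the argument is just a matter of combining Lemma \ref{flat0}, the isomorphism (\ref{isoOrd}), and the above algebraic duality for projective modules over finite group algebras.
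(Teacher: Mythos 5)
Your proposal is correct and follows essentially the same route as the paper: exhibit $\Ord_P(S(Y^p,\co_E/\varpi_E^r)_{\overline{\rho}})_{\fB}^{L_i}$ as a $U^p/Y^p$-equivariant direct summand of the finite free $\co_E/\varpi_E^r[U^p/Y^p]$-module $S(Y^pK_{i,i},\co_E/\varpi_E^r)$ (the localisation and ordinary projectors commute with $U^p/Y^p$ because they act through prime-to-level, resp.\ $p$-adic, operators, and the block projector does so by functoriality of the block decomposition --- the paper phrases this last point via $\widetilde{P}_{\fB}\widehat{\otimes}_{\widetilde{E}_{\fB}}\Hom_{\fC}(\widetilde{P}_{\fB},-)$ rather than your Bernstein-centre language, but it is the same fact), then invoke Lemma \ref{flat0} and, for (2), the identification of level-$U^p$ objects with $U^p/Y^p$-invariants followed by the duality $(N^H)^\vee\cong(N^\vee)_H$. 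The one step you rightly flag as needing care --- that the ordinary projector at level $Y^p$ restricts on invariants to the one at level $U^p$ --- is handled in the paper by comparing ordinary maximal ideals through the surjection $B(Y^p)\twoheadrightarrow B(U^p)$, which is exactly the bookkeeping you describe.
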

\begin{proof}
Recall (cf. (\ref{isoOrd}))
\begin{equation*}\Ord_P(S(Y^p,\co_E/\varpi_E^r)_{\overline{\rho}})^{L_i}\cong S(Y^p K_{i,i},\co_E/\varpi_E^r)_{\overline{\rho},\ord} \cong \oplus_{\fm \text{ ordinary}} S(Y^p K_{i,i},\co_E/\varpi_E^r)_{\overline{\rho}, \fm},
\end{equation*}
where $\fm$ runs through the ordinary maximal ideals of  the $\co_E/\varpi_E^r$-subalgebra $B(Y^p)$ of
$$\End_{\co_E/\varpi_E^r}\big( S(Y^p K_{i,i},\co_E/\varpi_E^r)_{\overline{\rho}}\big)$$ generated by the operators in $Z_{L_P^+}$. The same statement holds with $Y^p$ replaced by $U^p$. Note also that the natural inclusion
\begin{equation*}
  S(U^pK_{i,i},\co_E/\varpi_E^r)_{\overline{\rho}} \cong S(Y^pK_{i,i}, \co_E/\varpi_E^r)_{\overline{\rho}}^{U^p/Y^p}\hooklongrightarrow S(Y^p K_{i,i}, \co_E/\varpi_E^r)_{\overline{\rho}}
\end{equation*}
induces a projection $\pr: B(Y^p) \twoheadrightarrow B(U^p)$. By definition, it is straightforward to see that for a maximal ideal $\fm$ of $B(U^p)$, $\fm$ is ordinary if and only if $\pr^{-1}(\fm)$ is ordinary.
 Since the action of $B(Y^p)$ and $U^p/Y^p$ commute, we deduce that $\Ord_P(S(Y^p,\co_E/\varpi_E^r)_{\overline{\rho}})^{L_i}$ is an $\co_E/\varpi_E^r[U^p/Y^p]$-equivariant direct summand of $ S(Y^p K_{i,i},\co_E/\varpi_E^r)_{\overline{\rho}}$, and hence is a finite flat $\co_E/\varpi_E^r[U^p/Y^p]$-module.
\\

\noindent Using the isomorphism (which follows by the same argument as for (\ref{fB0}))
\begin{equation*}
  \Ord_P(S(Y^p, \co_E/\varpi_E^r)_{\overline{\rho}})^{\vee}_{\fB}\cong \widetilde{P}_{\fB} \widehat{\otimes}_{\widetilde{E}_{\fB}} \Hom_{\fC}\big(\widetilde{P}_{\fB},  \Ord_P(S(Y^p, \co_E/\varpi_E^r)_{\overline{\rho}})^{\vee}\big),
\end{equation*}
we see that $\Ord_P(S(Y^p, \co_E/\varpi_E^r)_{\overline{\rho}})^{\vee}_{\fB}$ inherits a natural $U^p/Y^p$-action from $\Ord_P(S(Y^p, \co_E/\varpi_E^r)_{\overline{\rho}})^{\vee}$ satisfying that  the natural injection (given by the evaluation map)
\begin{multline*}
  \Ord_P(S(Y^p, \co_E/\varpi_E^r)_{\overline{\rho}})^{\vee}_{\fB}\cong \widetilde{P}_{\fB} \widehat{\otimes}_{\widetilde{E}_{\fB}} \Hom_{\fC}\big(\widetilde{P}_{\fB},  \Ord_P(S(Y^p, \co_E/\varpi_E^r)_{\overline{\rho}})^{\vee}\big)\\ \hooklongrightarrow \Ord_P(S(Y^p, \co_E/\varpi_E^r)_{\overline{\rho}})^{\vee}
\end{multline*}
is $U^p/Y^p$-equivariant. Hence, the decomposition
\begin{equation*}
 \Ord_P(S(Y^p, \co_E/\varpi_E^r)_{\overline{\rho}}) \cong \oplus_{\fB}  \Ord_P(S(Y^p, \co_E/\varpi_E^r)_{\overline{\rho}})_{\fB}
\end{equation*}
is $U^p/Y^p$-equivariant. This implies that $\Ord_P(S(Y^p, \co_E/\varpi_E^r)_{\overline{\rho}})_{\fB}^{L_i}$ is a $U^p/Y^p$-equivariant direct summand of $\Ord_P(S(Y^p,\co_E/\varpi_E^r)_{\overline{\rho}})^{L_i}$, hence is also a finite flat $\co_E/\varpi_E^r[U^p/Y^p]$-module. Together with Lemma \ref{flat0}, (1) follows.
\\

\noindent For each $i\geq 1$, we have a natural isomorphism
\begin{equation}\label{Up0}
  S(U^p K_{i,i}, \co_E/\varpi_E^r)_{\overline{\rho}} \xlongrightarrow{\sim} S(Y^p K_{i,i}, \co_E/\varpi_E^r)_{\overline{\rho}}^{U^p/Y^p}.
\end{equation}
By the discussion in the first paragraph, we deduce that (\ref{Up0}) induces an isomorphism
\begin{equation}\label{UpOrd}
  \Ord_P(S(U^p, \co_E/\varpi_E^r)_{\overline{\rho}}^{L_i}) \xlongrightarrow{\sim} \Ord_P(S(Y^p, \co_E/\varpi_E^r)_{\overline{\rho}}^{L_i})^{U^p/V^p}.
\end{equation}
Indeed, for all maximal ideals of $B(U^p)$, we have
\begin{equation}\label{Up01}
   S(U^p K_{i,i}, \co_E/\varpi_E^r)_{\overline{\rho},\fm} \hooklongrightarrow \big(S(Y^p K_{i,i}, \co_E/\varpi_E^r)_{\overline{\rho}, \pr^{-1}(\fm)}\big)^{U^p/Y^p},
\end{equation}
hence
\begin{equation*}
  \oplus_{\fm}  S(U^p K_{i,i}, \co_E/\varpi_E^r)_{\overline{\rho},\fm} \hookrightarrow \oplus_{\fm} \big(S(Y^p K_{i,i}, \co_E/\varpi_E^r)_{\overline{\rho}, \pr^{-1}(\fm)}\big)^{U^p/Y^p}
  \hookrightarrow S(Y^p K_{i,i}, \co_E/\varpi_E^r)_{\overline{\rho}}^{U^p/Y^p}.
\end{equation*}
By (\ref{Up0}), the above composition is surjective, hence each map in (\ref{Up01}) has to be bijective. The isomorphism (\ref{UpOrd}) follows. Taking direct limit on $i$, (\ref{UpOrd}) induces an isomorphism
\begin{equation}\label{ordUY0}
   \Ord_P(S(U^p, \co_E/\varpi_E^r)_{\overline{\rho}}) \xlongrightarrow{\sim} \Ord_P(S(Y^p, \co_E/\varpi_E^r)_{\overline{\rho}})^{U^p/V^p}.
\end{equation}
Which together with the obvious injections (with the composition bijective)
\begin{multline*}
 \oplus_{\fB} \Ord_P(S(U^p, \co_E/\varpi_E^r)_{\overline{\rho}})_{\fB} \hooklongrightarrow \oplus_{\fB}\big(\Ord_P(S(Y^p, \co_E/\varpi_E^r)_{\overline{\rho}})_{\fB}\big)^{U^p/V^p}\\ \hooklongrightarrow \Ord_P(S(Y^p, \co_E/\varpi_E^r)_{\overline{\rho}})^{U^p/V^p}
\end{multline*}
 imply $\Ord_P(S(U^p, \co_E/\varpi_E^r)_{\overline{\rho}})_{\fB} \cong \big(\Ord_P(S(Y^p, \co_E/\varpi_E^r)_{\overline{\rho}})_{\fB}\big)^{U^p/V^p}$ for all $\fB$. Hence
\begin{equation}\label{isomInv}
  \Ord_P(S(U^p, \co_E/\varpi_E^r)_{\overline{\rho}})_{\fB}^{L_i} \xlongrightarrow{\sim} \big(\Ord_P(S(Y^p,\co_E/\varpi_E^r)_{\overline{\rho}})_{\fB}^{L_i}\big)^{U^p/Y^p}.
\end{equation}
By taking Pontryagain dual, (2) follows.
\end{proof}

\subsection{Auxiliary primes}
\noindent Choose $q\geq [F^+:\Q]\frac{n(n-1)}{2}$. By \cite[Prop. 4.4]{Thor} (see also the proof of \cite[Thm. 6.8]{Thor}),  for all $N\geq 1$, there exists a set $Q_N$ (resp.  $\widetilde{Q}_N$) of primes of $F^+$ (resp. of $F$) such that
\begin{itemize}
  \item $|Q_N|=q$, $Q_N$ is disjoint from $S$, and any primes in $Q_N$ is split in $F$;
  \item $\widetilde{Q}_N=\{\widetilde{v}|v\ |\ v \in Q_N\}$;
  \item ${\rm Nm}(v)\equiv 1\pmod{p^N}$ for $v\in Q_N$;
  \item for $\widetilde{v}\in \widetilde{Q}_N$, $\overline{\rho}_{\widetilde{v}} \cong \overline{s}_{\widetilde{v}} \oplus \overline{\psi}_{\widetilde{v}}$, where $\overline{\psi}_{\widetilde{v}}$ is the (generalized) eigenspace of Frobenius of an eigenvalue $\alpha_{\widetilde{v}}$ on which Frobenius acts semisimply.
\end{itemize}
For $\widetilde{v}\in \widetilde{Q}_N$, denote by $D_{\widetilde{v}}$  the local deformation problem such that for $A\in \Art(\co_E)$, $D_{\widetilde{v}}(A)$ consists of all lifts which are $1+M_n(\fm_A)$-conjugate to one of the form $s_{\widetilde{v}} \oplus \psi_{\widetilde{v}}$ where $s_{\widetilde{v}}$ is an unramified lift of $\overline{s}_{\widetilde{v}}$  and where $\psi_{\widetilde{v}}$ is a (possibly ramified) lift of $\overline{\psi}_{\widetilde{v}}$ satisfying that the image of inertial under $\psi_{\widetilde{v}}$ is contained in the set of scalar matrices. The deformation problem $D_{\widetilde{v}}$ is pro-represented by a quotient of $R_{\overline{\rho}_{\widetilde{v}}}^{\bar{\square}}$, denoted dy $R_{\overline{\rho}_{\widetilde{v}}}^{\overline{\psi}_{\widetilde{v}}}$.
Let $\psi_{\widetilde{v}}$ be as above, then the restriction of  $\psi_{\widetilde{v}}$ to the inertial subgroup $I_{\widetilde{v}}$ of $\Gal_{F_{\widetilde{v}}}$ gives a character $\chi_{\widetilde{v}}: I_{\widetilde{v}} \ra 1+\fm_A$ (noting $\chi_{\widetilde{v}}\equiv \overline{\psi}_{\widetilde{v}}|_{I_{\widetilde{v}}} =1 \pmod{\fm_A}$). We can prove (e.g. using $\chi_{\widetilde{v}}^{|\F_{\widetilde{v}}|}=\chi_{\widetilde{v}}$ since $\Frob_{\widetilde{v}}^{-1}\sigma \Frob_{\widetilde{v}}=\sigma^{|\F_{\widetilde{v}}|}$ for all $\sigma\in I_{\widetilde{v}}/P_{\widetilde{v}}$, and using the fact that $1+\fm_A$ is a $p$-group)
that $\chi_{\widetilde{v}}$ factors through
\begin{equation*}
  I_{\widetilde{v}}/P_{\widetilde{v}} \twoheadlongrightarrow \F_{\widetilde{v}}^{\times} \twoheadlongrightarrow  \F_{\widetilde{v}}(p)\cong \Z/p^N \Z
\end{equation*}
where $ \F_{\widetilde{v}}(p)$ denotes the maximal $p$-power order quotient of $\F_{\widetilde{v}}^{\times}$, and $\F_{\widetilde{v}}$ denotes the residue field of $F$ at $\widetilde{v}$. We deduce thus a natural morphism
\begin{equation}\label{qn0}
\chi_{\widetilde{v}}^{\univ}:  \Z/p^N \Z \lra (R_{\overline{\rho}_{\widetilde{v}}}^{\psi_{\widetilde{v}}})^{\times}.
\end{equation}

\noindent Denote by $\cS_{Q_N}$ the following deformation problem
\begin{equation*}
\big(F/F^+, S \cup Q_N, \widetilde{S} \cup \widetilde{Q}_N, \co_E, \overline{\rho}, \varepsilon^{1-n}\delta_{F/F^+}, \{R_{\overline{\rho}_{\widetilde{v}}}^{\bar{\square}}\}_{v\in S} \cup \{D_{\widetilde{v}}\}_{v\in Q_N}\big).
\end{equation*}
Let $R_{\overline{\rho}, \cS_{Q_N}}$ be the corresponding universal deformation ring, and $R_{\overline{\rho},\cS_{Q_N}}^{\square_S}$ the $S$-framed universal deformation ring. By \cite[Prop. 4.4]{Thor}, we can and do choose $Q_N$, $\widetilde{Q}_N$ satisfying moreover (recall $R^{\loc}=\widehat{\otimes}_{v\in S} R_{\widetilde{v}}^{\bar{\square}}$)
\begin{equation}\label{geneG}
  \text{ $R_{\overline{\rho}, \cS_{Q_N}}^{\square_S}$ is topologically generated over $R^{\loc}$ by $g:=q-[F^+:\Q]\frac{n(n-1)}{2}$ elements.}
\end{equation}
Denote by $\Delta_{Q_N}:=\prod_{v\in Q_N} \F_{\widetilde{v}}(p)\cong (\Z/p^N \Z)^{\oplus q}$. We deduce from  (\ref{qn0}) a natural morphism $\Delta_{Q_N}\ra (R_{\overline{\rho}, \cS_{Q_N}}^{\square_S})^{\times}$. Using the fact $\chi_{\widetilde{v}}^{\univ}$ does not depend on the choice of basis, it is not difficult to see this morphism factors through $(R_{\overline{\rho}, \cS_{Q_N}})^{\times}$. We have thus  morphisms of $\co_E$-algebras
\begin{equation*}\co_E[\Delta_{Q_N}]\lra R_{\overline{\rho}, \cS_{Q_N}} \lra R_{\overline{\rho}, \cS_{Q_N}}^{\square_S}.
\end{equation*}
Denote by $\fa_{Q_N}$ the augmentation ideal of $\co_E[\Delta_{Q_N}]$. Then we have
\begin{equation*}
  R_{\overline{\rho}, \cS_{Q_N}}/\fa_{Q_N} \cong R_{\overline{\rho}, \cS}, \ R_{\overline{\rho}, \cS_{Q_N}}^{\square_S}/\fa_{Q_N} \cong R_{\overline{\rho},\cS}^{\square}.
\end{equation*}

\noindent For $v\in Q_N$, denote by $\fp_N^{\widetilde{v}}:=\bigg\{g\in \GL_n(\co_{F_{\widetilde{v}}})\ |\ g \pmod{\varpi_{F_{\widetilde{v}}}} \in \begin{pmatrix}
  \GL_{n-d_N^{\widetilde{v}}} & * \\ 0 & \GL_{d_N^{\widetilde{v}}}
\end{pmatrix}\bigg\}$, where $d_N^{\widetilde{v}}:=\dim_{k_E} \overline{\psi}_{\widetilde{v}}$. Denote by $\fp_{N,1}^{\widetilde{v}}$ the kernel of the following composition
\begin{equation*}
  \fp_N^{\widetilde{v}} \twoheadlongrightarrow \GL_{d_N^{\widetilde{v}}}(\F_{\widetilde{v}}) \xlongrightarrow{\dett} \F_{\widetilde{v}}^{\times} \twoheadlongrightarrow \F_{\widetilde{v}}(p),
\end{equation*}
where the first map is given by the composition of the modulo $\varpi_{F_{\widetilde{v}}}$ map and the natural projection.
Put
\begin{eqnarray*}
  &&U_0(Q_N)_{\widetilde{v}}:=i_{\widetilde{v}}^{-1} (\fp_N^{\widetilde{v}}), \ U_1(Q_N)_{\widetilde{v}}:=i_{\widetilde{v}}^{-1}(\fp_{N,1}^{\widetilde{v}}), \\
  &&U_i(Q_N)^p:=\big(\prod_{\substack{v\nmid p \\ v\notin Q_N}} U_v\big)\big( \prod_{v\in Q_N} U_i(Q_N)_{\widetilde{v}}\big) \subset U^p, \ i=0,1.
\end{eqnarray*}
We have $U_0(Q_N)^p/U_1(Q_N)^p\cong \Delta_{Q_N}$.
\\

\noindent We have by definition (cf. \S~\ref{prelprel}) $\bT(U_i(Q_N)^p)\hookrightarrow \bT(U^p)$, and we use $\fm(\overline{\rho})$ to denote $\fm(\overline{\rho})\cap \bT(U_i(Q_N)^p)$ which is the  maximal ideal of $\bT(U_i(Q_N)^p)$ associated to $\overline{\rho}$ via the relations (\ref{equ: ord-ideal}). As before, we also use the subscript $\overline{\rho}$ to denote the localizations at the maximal ideal $\fm(\overline{\rho})\subset \bT(U_i(Q_N)^p)$.
\\

\noindent Let $i\in \{0,1\}$. For a compact open subgroup $U_p$ of $G(F^+ \otimes_{\Q} \Q_p)$, and for a uniformiser $\varpi_{\widetilde{v}}$ of $\co_{F_{\widetilde{v}}}$ for $v\in Q_N$, we have as in \cite[Prop. 5.9]{Thor} (see also \cite[\S~5.5]{EG14}\cite[\S~2.6]{CEGGPS}) a projection operator $$\pr_{\varpi_{\widetilde{v}}}\in \End_{\co_E}\big(S(U_i(Q_N)^pU_p, \co_E/\varpi_E^r)_{\overline{\rho}}\big)$$
(defined using Hecke operators at $\widetilde{v}$). We denote by $\pr_N:=\prod_{v\in Q_N} \pr_{\varpi_{\widetilde{v}}}$. By \cite[Prop. 5.9]{Thor},  the following composition
\begin{equation}\label{pr0}
  S(U^pU_p, \co_E/\varpi_E^r)_{\overline{\rho}} \hooklongrightarrow S(U_0(Q_N)^p U_p, \co_E/\varpi_E^r)_{\overline{\rho}} \xlongrightarrow{\pr_N} \pr_N(S(U_0(Q_N)^p U_p, \co_E/\varpi_E^r)_{\overline{\rho}})
\end{equation}
is an isomorphism. Since $\pr_N$ is defined using Hecke operators for $\widetilde{v}\in \widetilde{Q}_N$,  (\ref{pr0}) is $\bT(U_0(Q_N)^p)$-equivariant. We also have
\begin{equation*}
  \pr_N\big(S(U_i(Q_N)^p, \co_E/\varpi_E^r)_{\overline{\rho}}\big) \cong \varinjlim_{U_p} \pr_N\big(S(U_i(Q_N)^pU_p, \co_E/\varpi_E^r)_{\overline{\rho}}\big),
\end{equation*}
as a $\widetilde{\bT}(U_i(Q_N)^p)_{\overline{\rho}}\times G(F^+\otimes_{\Q} \Q_p)$-equivariant direct summand of $S(U_i(Q_N)^p, \co_E/\varpi_E^r)_{\overline{\rho}}$ (see \cite[\S~2.6]{CEGGPS}). Similarly, we deduce from (\ref{isoOrd}) isomorphisms
\begin{multline}\label{isoPrN}\pr_N\big(\Ord_P\big(S(U_i(Q_N)^p, \co_E/\varpi_E^r)_{\overline{\rho}}\big)\big) \cong \varinjlim_j \pr_N\big(S(U_i(Q_N)^pK_{j,j},\co_E/\varpi_E^r)_{\overline{\rho}, \ord}\big)
\\
\cong  \Ord_P\big(\pr_N \big(S(U_i(Q_N)^p, \co_E/\varpi_E^r)_{\overline{\rho}}\big) \big),
\end{multline}
and we see that the object in (\ref{isoPrN}) is a $\widetilde{\bT}(U_i(Q_N)^p)_{\overline{\rho}}^{P-\ord}\times L_P(\Q_p)$-equivariant direct summand of $$\Ord_P\big(S(U_i(Q_N)^p, \co_E/\varpi_E^r)_{\overline{\rho}}\big).$$ It is also clear (e.g. by a similar argument as in the proof of Lemma \ref{flat1}) that the decomposition
 \begin{equation*}
\Ord_P\big(S(U_i(Q_N)^p, \co_E/\varpi_E^r)_{\overline{\rho}}\big) \cong \oplus_{\fB} \Ord_P\big(S(U_i(Q_N)^p, \co_E/\varpi_E^r)_{\overline{\rho}}\big)_{\fB}
 \end{equation*}
commutes with $\pr_N$, and hence we have that
\begin{multline*}
V_i(N,\fB,r):=\pr_N\big(\Ord_P\big(S(U_i(Q_N)^p, \co_E/\varpi_E^r)_{\overline{\rho}}\big)_{\fB}\big) \\
\cong \varinjlim_j  \pr_N\big(\Ord_P(S(U_i(Q_N)^p, \co_E/\varpi_E^r)_{\overline{\rho}})_{\fB}^{L_j}\big)
\end{multline*}
is a $\widetilde{\bT}(U_i(Q_N)^p)^{P-\ord}_{\overline{\rho}, \fB}\times L_P(\Q_p)$-equivariant direct summand of $\Ord_P\big(S(U_i(Q_N)^p, \co_E/\varpi_E^r)_{\overline{\rho}}\big)_{\fB}$.  The isomorphism (\ref{pr0}) induces a $\bT(U_0(Q_N)^p)\times L_P(\Q_p)$-equivariant isomorphism
\begin{equation}\label{pr1}
\Ord_P(S(U^p, \co_E/\varpi_E^r)_{\overline{\rho}})_{\fB}\xlongrightarrow{\sim} V_0(N,\fB,r).
\end{equation}
Note that $V_1(N,\fB,r)$ is naturally equipped with a $U_0(Q_N)^p/U_1(Q_N)^p\cong \Delta_{Q_N}$-action, which commutes with $\widetilde{\bT}(U_1(Q_N)^p)^{P-\ord}_{\overline{\rho}, \fB}\times L_P(\Q_p)$.
\begin{lemma}\label{DelNmo1} Let $j\in \Z_{\geq 0}$.

\noindent  (1) $V_1(N,\fB,r)^{L_j}$ and $(V_1(N,\fB,r)^{L_j})^{\vee}$ are finite flat $\co_E/\varpi_E^r[\Delta_{Q_N}]$-modules.

  \noindent (2) There is a natural isomorphism $(V_1(N,\fB,r)^{L_j})^{\vee}_{\Delta_{Q_N}} \xlongrightarrow{\sim} V_0(N,\fB,r)^{L_j}$.
\end{lemma}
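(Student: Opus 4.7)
The plan is to reduce Lemma~\ref{DelNmo1} to Lemma~\ref{flat1} applied to the inclusion $U_1(Q_N)^p \subset U_0(Q_N)^p$, and then transfer the conclusions to the $\pr_N$-cut-out summands.

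First I would observe that, since $U^p$ is sufficiently small by assumption, its open subgroups $U_0(Q_N)^p$ and $U_1(Q_N)^p$ are both sufficiently small, and $U_1(Q_N)^p$ is normal in $U_0(Q_N)^p$ with quotient canonically identified with $\Delta_{Q_N}$. Applying Lemma~\ref{flat1} with $U^p$ replaced by $U_0(Q_N)^p$ and $Y^p$ replaced by $U_1(Q_N)^p$ therefore yields, writing $W_i^{L_j} := \Ord_P(S(U_i(Q_N)^p,\co_E/\varpi_E^r)_{\overline{\rho}})_{\fB}^{L_j}$: (i) $W_1^{L_j}$ and $(W_1^{L_j})^{\vee}$ are finite flat $\co_E/\varpi_E^r[\Delta_{Q_N}]$-modules, and (ii) a natural isomorphism $(W_1^{L_j})^{\vee}_{\Delta_{Q_N}} \xrightarrow{\sim} (W_0^{L_j})^{\vee}$.

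Next, the key compatibility I would check is that the projector $\pr_N = \prod_{v \in Q_N} \pr_{\varpi_{\widetilde{v}}}$ is $\Delta_{Q_N}$-equivariant when acting on $W_1^{L_j}$. By construction each $\pr_{\varpi_{\widetilde{v}}}$ is built from Hecke double coset operators at $\widetilde{v}$ with respect to $U_1(Q_N)_{\widetilde{v}}$; since $U_1(Q_N)_{\widetilde{v}}$ is normal in $U_0(Q_N)_{\widetilde{v}}$, the right-translation action of $U_0(Q_N)_{\widetilde{v}}/U_1(Q_N)_{\widetilde{v}}$ on $S(U_1(Q_N)^p,\cdot)$ commutes with those Hecke operators, so $\pr_N$ commutes with the $\Delta_{Q_N}$-action.

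Hence $V_1(N,\fB,r)^{L_j} = \pr_N W_1^{L_j}$ is a $\Delta_{Q_N}$-equivariant direct summand of $W_1^{L_j}$, and similarly $V_0(N,\fB,r)^{L_j} = \pr_N W_0^{L_j}$. Part (1) is then immediate from (i): direct summands of finite flat $\co_E/\varpi_E^r[\Delta_{Q_N}]$-modules remain finite flat, and Lemma~\ref{flat0} propagates this through Pontryagin duality. For part (2), one restricts the isomorphism (ii) to the $\pr_N$-summands on both sides, using that Pontryagin duality intertwines $\pr_N$ with its transpose compatibly with the coinvariants functor. I do not anticipate a serious obstacle; the only mildly subtle point is the $\Delta_{Q_N}$-equivariance of $\pr_N$, which as noted reduces to the normality of $U_1(Q_N)_{\widetilde{v}}$ inside $U_0(Q_N)_{\widetilde{v}}$.
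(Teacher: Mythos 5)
Your proposal is correct and follows essentially the same route as the paper: apply Lemma \ref{flat1} with $U^p$, $Y^p$ replaced by $U_0(Q_N)^p$, $U_1(Q_N)^p$, observe that $\pr_N$ commutes with the $\Delta_{Q_N}$-action so that $V_1(N,\fB,r)^{L_j}$ is a $\Delta_{Q_N}$-equivariant direct summand, and transfer the flatness and the coinvariants isomorphism to that summand. The one point you leave implicit in part (2) --- that the level-$U_1(Q_N)^p$ projector, restricted to the $\Delta_{Q_N}$-invariants, agrees with the level-$U_0(Q_N)^p$ projector --- is exactly what the paper invokes \cite[\S~2.6]{CEGGPS} for, so it is a citation rather than a gap.
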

\begin{proof}(1) follows from Lemma \ref{flat1} and the fact  that
\begin{equation*}
  V_1(N,\fB,r)^{L_j}\cong  \pr_N\big(\Ord_P(S(U_1(Q_N)^p, \co_E/\varpi_E^r)_{\overline{\rho}})_{\fB}^{L_j}\big)
\end{equation*}
is a $\Delta_{Q_N}$-equivariant direct summand of $\Ord_P(S(U_1(Q_N)^p, \co_E/\varpi_E^r)_{\overline{\rho}})_{\fB}^{L_j}$. We have (e.g. see \cite[\S~2.6]{CEGGPS})
\begin{equation*}
  \pr_N\big(S(U_1(Q_N)^pK_{j,j}, \co_E/\varpi_E^r)_{\overline{\rho}}\big)^{\Delta_{Q_N}} \xlongrightarrow{\sim}   \pr_N\big(S(U_0(Q_N)^pK_{j,j}, \co_E/\varpi_E^r)_{\overline{\rho}}\big).
\end{equation*}
Using the same argument as for (\ref{UpOrd}), we deduce an isomorphism
\begin{equation*}
    \pr_N\big(\Ord_P\big(S(U_1(Q_N)^p, \co_E/\varpi_E^r)_{\overline{\rho}}\big)^{L_j}\big)^{\Delta_{Q_N}} \xlongrightarrow{\sim}   \pr_N\big(\Ord_P\big(S(U_0(Q_N)^p, \co_E/\varpi_E^r)_{\overline{\rho}}\big)^{L_j}\big).
\end{equation*}
(2) follows then by the same argument as for (\ref{isomInv}) (and Pontryagain duality).
\end{proof}
\noindent Put $V_i(N,\fB):=\varprojlim_r V_i(N,\fB,r)$, and we have a natural isomorphism
\begin{equation*}
  V_i(N,\fB)\cong \pr_N\big(\Ord_P(\widehat{S}(U_i(Q_N)^p, \co_E)_{\overline{\rho}, \fB}\big).
\end{equation*}
Put $M_i(N,\fB,r,L_j):=\big(V_i(N,\fB,r)^{L_j}\big)^{\vee}$, 
and $M_i(N,\fB):= \varprojlim_{j,r}M_i(N, \fB,r, L_j) \cong V_i(N,\fB)^d$.
By Lemma \ref{DelNmo1}(1),  $M_1(N,\fB,r,L_j)$ is a finite flat  $\co_E/\varpi_E^{r}[\Delta_{Q_N}]$-module. By \cite[Lem. 4.4.4]{Pan2}, we deduce (where the conditions of \emph{loc. cit.} are easy to verify in our case)
\begin{proposition}\label{flat2}
$M_1(N,\fB)$ is a flat $\co_E[\Delta_{Q_N}]$-module, and
  \begin{equation*}
  M_1(N,\fB)/\fa_{Q_N} M_1(N,\fB) \xlongrightarrow{\sim} M_0(N,\fB).
  \end{equation*}
\end{proposition}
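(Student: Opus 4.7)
The plan is to apply Pan's Lemma 4.4.4 to the doubly-indexed inverse system $\{M_1(N,\fB,r,L_j)\}_{r,j}$, viewed as a system of modules over the finite quotients $\co_E/\varpi_E^r[\Delta_{Q_N}]$ of $\co_E[\Delta_{Q_N}]$. The transition maps arise, on the one hand, from the surjections $V_1(N,\fB,r+1) \twoheadrightarrow V_1(N,\fB,r)$ (reduction mod $\varpi_E^r$) and, on the other hand, from the inclusions $V_1(N,\fB,r)^{L_j} \hookrightarrow V_1(N,\fB,r)^{L_{j'}}$ for $j\leq j'$. Pontryagin dualizing converts both families into surjections, so the inverse system satisfies the Mittag-Leffler condition in both directions, and the natural maps $M_1(N,\fB,r+1,L_j)\otimes_{\co_E/\varpi_E^{r+1}}\co_E/\varpi_E^r \to M_1(N,\fB,r,L_j)$ are isomorphisms because $V_1(N,\fB)$ is $\co_E$-flat (being the unit ball of an admissible unitary Banach $L_P(\Q_p)\times\Delta_{Q_N}$-representation).

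The key local input to Pan's lemma is Lemma \ref{DelNmo1}(1), which supplies finite flatness of each $M_1(N,\fB,r,L_j)$ over $\co_E/\varpi_E^r[\Delta_{Q_N}]$. The $\Delta_{Q_N}$-equivariance of all transitions is automatic, as the action of $\Delta_{Q_N} = U_0(Q_N)^p/U_1(Q_N)^p$ commutes with $L_P(\Q_p)$ and with the Hecke operators defining $\pr_N$ (which live at places in $\widetilde{Q}_N$). Granting these verifications, Pan's lemma yields both that $M_1(N,\fB)=\varprojlim_{r,j} M_1(N,\fB,r,L_j)$ is flat over $\co_E[\Delta_{Q_N}]$, and that reduction modulo any finitely generated ideal of $\co_E[\Delta_{Q_N}]$ commutes with the inverse limit.

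For the second assertion, Lemma \ref{DelNmo1}(2) identifies the coinvariants level-by-level as $M_1(N,\fB,r,L_j)_{\Delta_{Q_N}} \xrightarrow{\sim} M_0(N,\fB,r,L_j)$. Quotienting the flat inverse system by $\fa_{Q_N}$ preserves the Mittag-Leffler condition, so invoking the commutation of $\varprojlim$ with reduction modulo $\fa_{Q_N}$ gives
\begin{equation*}
M_1(N,\fB)/\fa_{Q_N}M_1(N,\fB) \cong \varprojlim_{r,j}\bigl(M_1(N,\fB,r,L_j)/\fa_{Q_N}\bigr) \cong \varprojlim_{r,j} M_0(N,\fB,r,L_j) = M_0(N,\fB).
\end{equation*}

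The main technical obstacle is organizing the doubly-indexed inverse limit in a way compatible with the hypotheses of Pan's lemma as stated; concretely, one should either pass to the cofinal diagonal subsystem $(r,j)=(n,n)$ or apply the lemma twice, first inverting $j$ at each fixed $r$ to obtain a flat $\co_E/\varpi_E^r[\Delta_{Q_N}]$-module $M_1(N,\fB,r)$, and then inverting $r$. Either way, the only ingredients needed are the level-wise flatness of Lemma \ref{DelNmo1}(1), the surjectivity of transition maps, and the base-change compatibility recalled above, all of which are immediate in our setting.
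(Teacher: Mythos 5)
Your proposal is correct and follows essentially the same route as the paper: the paper likewise deduces the statement from the level-wise finite flatness of $M_1(N,\fB,r,L_j)$ over $\co_E/\varpi_E^r[\Delta_{Q_N}]$ (Lemma \ref{DelNmo1}) together with \cite[Lem. 4.4.4]{Pan2}, leaving the verification of that lemma's hypotheses (which you spell out: surjective transition maps, base-change compatibility in $r$, and the level-wise identification of coinvariants) as routine.
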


\subsection{Patching I}\label{patch1}
\noindent By \cite[Prop. 5.3.2]{EG14} (see also the proof \cite[Thm. 6.8]{Thor}), we have a natural surjection
\begin{equation*}
  R_{\overline{\rho},\cS(Q_N)} \twoheadlongrightarrow \widetilde{\bT}(U_1(Q_N)^p)_{\overline{\rho}}.
\end{equation*}
By the local-global compatibility in classical local Langlands correspondance, for any compact open subgroup $U_p$ of $\prod_{v|p} \GL_n(\co_{F_{\widetilde{v}}})$, the induced action of $\co_E[\Delta_{Q_N}]$ on $S(U_1(Q_N)^p U_p, \co_E/\varpi_E^r)_{\overline{\rho}}$ via
\begin{equation}\label{DltN0}
  \co_E[\Delta_{Q_N}] \lra R_{\overline{\rho},\cS(Q_N)}  \lra \widetilde{\bT}(U_1(Q_N)^p)_{\overline{\rho}},
\end{equation}
coincides with the $\co_E[\Delta_{Q_N}]$-action coming from the natural $\Delta_{Q_N}\cong U_0(Q_N)^p/U_1(Q_N)^p$-action.
\\

\noindent We assume henceforth $\overline{\rho}$ is $\fB$-generic (cf. Definition \ref{Bgene}).
By Proposition \ref{Pordlg} (applied with $U^p=U_1(Q_N)$), we can deduce that the morphism
\begin{equation*}
  R_{\overline{\rho},\cS(Q_N)} \twoheadlongrightarrow  \widetilde{\bT}(U_1(Q_N)^p)_{\overline{\rho}, \fB}^{P-\ord}
\end{equation*}
factors through $R_{\overline{\rho}, \cS(Q_N),\fB}^{P-\ord}:=R_{\overline{\rho}, \cS(Q_N)} \otimes_{R_{\overline{\rho}, \cS}} R_{\overline{\rho}, \cS, \fB}^{P-\ord}$, which is also the universal deformation ring of the deformation problem
\begin{equation}\label{defPt}
   \big(F/F^+, S, \widetilde{S}, \co_E, \overline{\rho}, \varepsilon^{1-n} \delta_{F/F^+}, \{R_{\overline{\rho}_{\widetilde{v}}, \cF_{\fB_{\widetilde{v}}}}^{P_{\widetilde{v}}-\ord,\bar{\square}}\}_{v\in S_p} \cup \{R_{\overline{\rho}_{\widetilde{v}}}^{\bar{\square}}\}_{v\in S\setminus S_p} \cup \{D_{\widetilde{v}}\}_{v\in Q_N}\big).
\end{equation}
 Since  $V_1(N,\fB,r)^{L_j}$ is a $\widetilde{\bT}(U_1(Q_N)^p)_{\overline{\rho}}\times U_0(Q_N)^p/U_1(Q_N)^p$-equivariant direct summand of  $$S(U_1(Q_N)^p K_{j,j}, \co_E/\varpi_E^r)_{\overline{\rho}},$$ the two $\co_E[\Delta_{Q_N}]$-actions on $V_1(N,\fB,r)^{L_j}$, obtained by the following two ways (noting the first composition is compatible with (\ref{DltN0}))
\begin{equation*}
  \co_E[\Delta_{Q_N}]\lra R_{\overline{\rho},\cS(Q_N), \fB}^{P-\ord}  \lra \widetilde{\bT}(U_1(Q_N)^p)_{\overline{\rho},\fB}^{P-\ord},
\end{equation*}
\begin{equation*}
  \co_E[\Delta_{Q_N}]\cong \co_E[U_0(Q_N)^p/U_1(Q_N)^p],
\end{equation*}
coincide. By taking limit, we  obtain a similar statement for  $V_1(N,\fB)$ and $M_1(N,\fB)$.
\\

\noindent Denote by $R_{\overline{\rho}, \cS, \fB}^{\square_S, P-\ord}$ \big(resp. by $R_{\overline{\rho}, \cS(Q_N), \fB}^{\square_S, P-\ord}$\big) the $S$-framed deformation ring of the deformation problem (\ref{def2g}) (resp. of (\ref{defPt})). For $*\in \{\cS, \cS(Q_N)\}$, the composition $R^{\loc} \ra R_{\overline{\rho}, *}^{\square_S} \ra R_{\overline{\rho}, *, \fB}^{\square_S, P-\ord}$ thus factors through
\begin{equation*}
  R^{\loc,P-\ord}_{\fB}:= \big(\widehat{\otimes}_{v\in S\setminus S_p} R_{\overline{\rho}_{\widetilde{v}}}^{\bar{\square}}\big) \widehat{\otimes}_{\co_E} \big(\widehat{\otimes}_{v\in S_p} R_{\overline{\rho}_{\widetilde{v}, \fB_{\widetilde{v}}}}^{\bar{\square}, P_{\widetilde{v}}-\ord}\big).
\end{equation*}
For $v\in S_p$, we have a natural morphism $\widehat{\otimes}_{i=1,\cdots, k_{\widetilde{v}}} R_{\fB_{\widetilde{v},i}}^{\pss} \ra R_{\overline{\rho}_{\widetilde{v}, \fB_{\widetilde{v}}}}^{\bar{\square}, P_{\widetilde{v}}-\ord}$,
which induces (see (\ref{RpB}) for $R_{p,\fB}$)
\begin{equation}\label{compPat}
  R_{p,\fB} \hooklongrightarrow R_{\fB} \lra R^{\loc,P-\ord}_{\fB} \lra R_{\overline{\rho}, \cS(Q_N), \fB}^{\square_S, P-\ord},
\end{equation}
where $R_{\fB}:=R_{p,\fB}\widehat{\otimes}_{\co_E} \big(\widehat{\otimes}_{v\in S\setminus S_p} R_{\overline{\rho}_{\widetilde{v}}}^{\bar{\square}}\big)$. It is clear that (\ref{compPat})
factors through $R_{\overline{\rho},\cS(Q_N), \fB}^{P-\ord}$ (e.g. see the argument above (\ref{wtR})). Note that we have
\begin{equation*}
 R_{\overline{\rho}, \cS(Q_N), \fB}^{P-\ord}/\fa_{Q_N} \cong R_{\overline{\rho}, \cS, \fB}^{P-\ord}, \   R_{\overline{\rho}, \cS(Q_N), \fB}^{\square_S, P-\ord}/\fa_{Q_N} \cong R_{\overline{\rho}, \cS, \fB}^{\square_S, P-\ord}.
\end{equation*}

\noindent Let $\co_{\infty}:=\co_E[[z_1,\cdots, z_{n^2|S|}]]$ with the maximal ideal $\ub$, $S_{\infty}:=\co_E[[y_1,\cdots, y_q, z_1,\cdots, z_{n^2|S|}]] $ with the maximal ideal $\fa$, and $R_{\infty}:=R^{\loc,P-\ord}_{\fB}[[x_1,\cdots, x_g]]$.
Denote by $\fa_0=(y_1, \cdots, y_q)$ the kernel of $S_{\infty} \twoheadrightarrow \co_{\infty}$, and $\fa_1:=(z_1, \cdots, z_{n^2|S|}, y_1,\cdots, y_q)$. For an open ideal $\fc$ of $S_{\infty}$, we denote by $s(\fc)$ be the integer such that $S_{\infty}/\fc\cong \co_E/\varpi_E^{s(\fc)}$.
\\

\noindent For each $N\geq 1$, we fix a surjection $\co_E[[y_1,\cdots, y_q]] \twoheadrightarrow \co_E[\Delta_{Q_N}]$ with kernel $\fc_N=((1+y_1)^{p^N}-1, \cdots, (1+y_q)^{p^N}-1)$, which, together with the morphism $\co_E[\Delta_{Q_N}]\ra R_{\overline{\rho},\cS(Q_N),\fB}^{P-\ord}$, induce
$$\co_E[[y_1, \cdots, y_q]]\lra R_{\overline{\rho},\cS(Q_N),\fB}^{P-\ord}.$$
Together with the isomorphism $R_{\overline{\rho},\cS(Q_N),\fB}^{\square_S, P-\ord}\cong R_{\overline{\rho},\cS(Q_N),\fB}^{P-\ord}\widehat{\otimes}_{\co_E} \co_{\infty}$,
 we obtain a morphism of complete noetherian $\co_E$-algebras
\begin{equation}\label{Sinf}
  S_{\infty} \lra   R_{\overline{\rho},\cS(Q_N),\fB}^{\square_S, P-\ord}.
\end{equation}
By (\ref{geneG}), $R_{\overline{\rho},\cS(Q_N),\fB}^{\square_S, P-\ord}$ can be topologically generated by $g$ elements over $R^{\loc, P-\ord}_{\fB}$, hence there exists a surjective map
\begin{equation}\label{RinfR}
  R_{\infty}=R^{\loc,P-\ord}_{\fB}[[x_1,\cdots, x_g]]\twoheadlongrightarrow R_{\overline{\rho},\cS(Q_N),\fB}^{\square_S, P-\ord}\cong R_{\overline{\rho},\cS(Q_N),\fB}^{P-\ord}[[z_1,\cdots, z_{n^2|S|}]].
\end{equation}
We lift the morphism (\ref{Sinf}) to a morphism $S_{\infty} \ra R_{\infty}$. For $i\in \{0,1\}$, $j\geq 0$ and $k>0$, we put
\begin{equation*}
  M_i^{\square}(N,\fB,k, L_j):= M_i(N,\fB,k,L_j)\otimes_{\co_E} \co_{\infty}.
\end{equation*}
Since $M_i(N,\fB,k,L_j)$ is equipped with a natural $R_{\overline{\rho}, \cS(Q_N),\fB}^{P-\ord}$-action via $$R_{\overline{\rho}, \cS(Q_N),\fB}^{P-\ord} \lra  \widetilde{\bT}(U_1(Q_N)^p)_{\overline{\rho}, \fB}^{P-\ord},$$ we see $M_i^{\square}(N,\fB,k, L_j)$ is equipped with a natural $R_{\overline{\rho},\cS(Q_N),\fB}^{\square_S, P-\ord}$($\cong R_{\overline{\rho},\cS(Q_N),\fB}^{P-\ord}\widehat{\otimes}_{\co_E} \co_{\infty}$)-action, and hence with an $S_{\infty}$-action via (\ref{Sinf}).
Moreover, for any open ideal of $S_{\infty}$ containing $\fc_N$ and $\varpi_E^k$,  by Lemma \ref{DelNmo1}, we deduce that  $M_1^{\square}(N,\fB,k,L_j)/I$ is a finite flat $S_{\infty}/I$-module of rank equal to $\rk_{\co_E/\varpi_E^k} M_0(N,\fB,k,L_j)$ (and hence has finite cardinality).
\\

\noindent We use the language of ultrafilters for the patching argument (cf. \cite[\S~8]{SchoLT}). Let $\fF$ be a non-principal  ultrafilter of $\cI:=\Z_{\geq 0}$, $\hR:=\prod_I \co_E$. Let $S_{\fF}\subset \hR$ be the multiplicative set consisting of all idempotents $e_I$ with $I\in \fF$ where $e_I(i)=1$ if $i\in I$, and $e_I(i)=0$ otherwise. Denote by $\hR_{\fF}:=S_{\fF}^{-1} \hR$.  For $k\in \Z_{\geq 1}$, put (noting that the cardinality of $ M_1^{\square}(N,\fB,k, L_j)/\fa^k$ and $M_0^{\square}(N,\fB,k,L_j)/\ub^k$ is finite and bounded by a certain integer independent of $N$)
\begin{eqnarray*}
 M_1^{\infty}(\fB,k)&:=& \varprojlim_j M_1^{\infty}(\fB,k,L_j):=\varprojlim_j \big((\prod_{N\in \cI} M_1^{\square}(N,\fB,k, L_j)/\fa^k) \otimes_{\hR} \hR_{\fF}\big),\\
 M_0^{\infty}(\fB,k)&:=& \varprojlim_j\big((\prod_{N\in \cI} M_0^{\square}(N,\fB,k,L_j)/\ub^k) \otimes_{\hR} \hR_{\fF}\big).
\end{eqnarray*}
The diagonal $S_{\infty}$-action (resp. $\co_{\infty}$-action) on
\begin{equation*}\prod_{N\in \cI} M_1^{\square}(N,\fB,k,L_j)/\fa^k \text{ \big(resp. }\prod_{N\in \cI} M_0^{\square}(N,\fB,k,L_j)/\ub^k\text{\big)}
\end{equation*}induces an $S_{\infty}$-module (resp. an $\co_{\infty}$-module) structure on $M_1^{\infty}(\fB,k)$ (resp. on  $M_0^{\infty}(\fB,k)$). Moreover, $M_1^{\infty}(\fB,k)$ (resp. $M_0^{\infty}(\fB,k)$) is equipped with a natural $S_{\infty}$-linear (resp. $\co_{\infty}$-linear) $L_P(\Q_p)$-action.
By similar (and easier) arguments as in \cite[\S~4.5.5]{Pan2}, we have:
\begin{proposition}\label{pat0} (1) $M_0^{\infty}(\fB,k)\cong M_0(N,\fB) \otimes_{\co_E} \co_{\infty}/\ub^k \cong \Ord_P(\widehat{S}(U^p,\co_E)_{\overline{\rho}})_{\fB}^d\otimes_{\co_E} \co_{\infty}/\ub^k$.

\noindent (2) $M_1^{\infty}(\fB,k)$ is a flat $S_{\infty}/\fa^k$-module, $M_1^{\infty}(\fB,k)/\fa_0\cong M_0^{\infty}(\fB,k)$ and \begin{equation}\label{isomoda}M_1^{\infty}(\fB,k)/\fa_1 \cong \Ord_P(\widehat{S}(U^p,\co_E/\varpi_E^{s(\fa^k+\fa_1)})_{\overline{\rho}})_{\fB}^{\vee}.
\end{equation}

\noindent (3) $M_i^{\infty}(\fB,k)$ is a finitely generated $\co_E[[L_P(\Z_p)]]$-module, and in particular,  $M_i^{\infty}(\fB,k)\in \fC$.
\end{proposition}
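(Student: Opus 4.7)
The plan is to follow the template of \cite[\S~4.5.5]{Pan2}, adapted to the present $P$-ordinary setting, treating the three parts in turn.

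For (1), the key point is that by the isomorphism (\ref{pr1}), $V_0(N,\fB,k)$ is canonically identified with $\Ord_P(S(U^p, \co_E/\varpi_E^k)_{\overline{\rho}})_{\fB}$, which in particular does not depend on the auxiliary set $Q_N$. Consequently each factor $M_0^{\square}(N,\fB,k,L_j)/\ub^k$ takes a common value, which is a finite $\co_E$-module. For a constant sequence of finite modules with common value $M$, the ultraproduct $(\prod_{N\in\cI} M)\otimes_{\hR} \hR_{\fF}$ is canonically isomorphic to $M$ (a standard property of ultraproducts of bounded finite objects). Taking $\varprojlim_j$ yields the first isomorphism in (1); the second follows from $V_0(N,\fB)\cong \Ord_P(\widehat{S}(U^p,\co_E)_{\overline{\rho}})_{\fB}$ via (\ref{pord: proj00}), hence $M_0(N,\fB)\cong \Ord_P(\widehat{S}(U^p,\co_E)_{\overline{\rho}})_{\fB}^d$.

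For (2), by Lemma \ref{DelNmo1}(1), $M_1(N,\fB,k,L_j)$ is finite flat over $\co_E/\varpi_E^k[\Delta_{Q_N}]$, hence $M_1^{\square}(N,\fB,k,L_j)$ is finite flat over $\co_E/\varpi_E^k[\Delta_{Q_N}]\otimes_{\co_E}\co_\infty\cong S_\infty/(\varpi_E^k,\fc_N)$. For $N$ sufficiently large that $\fc_N\subseteq \fa^k$ (which, via $(1+y_i)^{p^N}-1\equiv p^N y_i\pmod{y_i^2}$, holds once $p^N$ dominates $k$), the $S_\infty$-action on $M_1^{\square}(N,\fB,k,L_j)/\fa^k$ factors through $S_\infty/\fa^k$, over which this module is finite flat by base change. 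Since the condition $\fc_N\subseteq \fa^k$ holds on a cofinite (hence $\fF$-large) subset of $\cI$ and the cardinalities are uniformly bounded, flatness passes to the ultraproduct and then to $\varprojlim_j$. The identification $M_1^{\infty}(\fB,k)/\fa_0\cong M_0^{\infty}(\fB,k)$ follows from Lemma \ref{DelNmo1}(2): the image of $\fa_0$ in $R_{\overline{\rho},\cS(Q_N),\fB}^{\square_S, P-\ord}$ lies in $\fa_{Q_N}$, so quotienting by $\fa_0$ amounts to taking $\Delta_{Q_N}$-coinvariants, which identifies $M_1^{\square}(N,\fB,k,L_j)/\fa_0$ with $M_0^{\square}(N,\fB,k,L_j)$. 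The identification modulo $\fa_1$ is analogous: since $S_\infty/\fa_1\cong \co_E$ and $S_\infty/(\fa^k+\fa_1)\cong \co_E/\varpi_E^k$ (so $s(\fa^k+\fa_1)=k$), combining Lemma \ref{DelNmo1}(2) with (\ref{pr1}) and the obvious identity $\Ord_P(\widehat{S}(U^p,\co_E/\varpi_E^k))=\Ord_P(S(U^p,\co_E/\varpi_E^k))$ (discreteness of torsion coefficients) produces the stated formula.

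For (3), the admissibility of $\Ord_P(\widehat{S}(U^p,\co_E)_{\overline{\rho}})_{\fB}$ combined with Lemma \ref{DelNmo1}(2) gives a bound on the number of $\co_E[[L_P(\Z_p)]]$-generators of $M_1(N,\fB,k,L_j)$ that is uniform in $N$ and $j$: by Nakayama applied to the $\fa_0$-quotient $M_0(N,\fB,k,L_j)$ (which is independent of $N$ and finitely generated over $\co_E[[L_P(\Z_p)]]$ by part (1)), a fixed set of lifts generates $M_1(N,\fB,k,L_j)$ over $\co_E[[L_P(\Z_p)]][\Delta_{Q_N}]$, hence over $\co_E[[L_P(\Z_p)]]$ since $\Delta_{Q_N}$ is finite. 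This uniform bound is preserved by the ultraproduct and by $\varprojlim_j$, so each $M_i^\infty(\fB,k)$ is finitely generated over $\co_E[[L_P(\Z_p)]]$, and in particular lies in $\fC$. The main obstacle in executing this plan is the careful bookkeeping around the ultraproduct: one must verify that flatness over $S_\infty/\fa^k$ (which holds only eventually in $N$), the uniform finite generation needed for part (3), and the identifications modulo $\fa_0$ and $\fa_1$ all survive both the ultrafilter localization and the projective limit over $L_j$. This technical content is essentially that of \cite[\S~4.5.5]{Pan2}, which is why the author invokes it by reference rather than reproducing the argument.
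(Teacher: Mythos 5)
Your proposal is correct and follows essentially the same route as the paper, which simply defers to the ultrapatching formalism of \cite[\S~4.5.5]{Pan2}: part (1) via the $N$-independence of $V_0(N,\fB,k)$ from (\ref{pr1}) and the triviality of ultraproducts of constant finite families; part (2) via Lemma \ref{DelNmo1} and the eventual inclusion $\fc_N\subseteq\fa^k$; part (3) via a uniform bound on generators propagated through the ultraproduct and $\varprojlim_j$.

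One imprecision in part (3) is worth repairing: the uniform bound you assert does not hold for $M_1(N,\fB,k,L_j)$ itself. That module is finite free over $\co_E/\varpi_E^k[\Delta_{Q_N}]$ of rank $r_j$ independent of $N$, so Nakayama over that local ring gives $r_j$ generators over $\co_E[[L_P(\Z_p)]][\Delta_{Q_N}]$ --- but converting this to generators over $\co_E[[L_P(\Z_p)]]$ costs a factor of $|\Delta_{Q_N}|=p^{Nq}$, which is unbounded in $N$. The statement that is actually uniform (and is all the construction needs, since only these objects enter the ultraproduct) is for $M_1^{\square}(N,\fB,k,L_j)/\fa^k$: once $\fc_N\subseteq\fa^k$, the group ring collapses to $S_\infty/\fa^k$, a finite local ring of cardinality independent of $N$, so $r_j$ generators over $(S_\infty/\fa^k)[[L_P(\Z_p)]]$ yield a number of $\co_E[[L_P(\Z_p)]]$-generators bounded independently of $N$; uniformity in $j$ then comes from Lemma \ref{densbl} (1) exactly as you say. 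Alternatively, and more in the spirit of how such statements are usually closed off, one can bypass finite levels entirely: by part (2) the quotient $M_1^{\infty}(\fB,k)/\fa$ is a quotient of $\Ord_P(\widehat{S}(U^p,\co_E/\varpi_E^{k})_{\overline{\rho}})_{\fB}^{\vee}/\varpi_E$, which is finitely generated over $k_E[[L_P(\Z_p)]]$ by admissibility, and topological Nakayama for the compact module $M_1^{\infty}(\fB,k)$ over the compact local ring $(S_\infty/\fa^k)[[L_P(\Z_p)]]$ then gives (3) directly.
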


\noindent For $j\geq 0$, $k\geq 1$, there exists $d(j,k)> 0$ (independent of $N$) such that the $R_{\overline{\rho}, \cS(Q_N), \fB}^{\square_S, P-\ord}$-action on $M_1^{\square}(N,\fB,k,L_j)/\fa^k$ factors through $R_{\overline{\rho}, \cS(Q_N),\fB}^{\square_S, P-\ord}/\fm_N^{d(j,k)}$ where $\fm_N$ denotes the maximal ideal of $R_{\overline{\rho},\cS(Q_N),\fB}^{\square_S, P-\ord}$. Actually, when $k=1$, it follows easily from the fact (by (\ref{pr1}) and Proposition \ref{flat2}):
\begin{equation*}
  M_1^{\square}(N,\fB,1,L_j)/\fa\cong \big(\Ord_P(S(U^p, \co_E/\varpi_E)_{\overline{\rho}})_{\fB}^{L_j}\big)^{\vee};
\end{equation*} the general case then follows by considering the $\fa$-adic filtration on $M_1^{\square}(N,\fB,k,L_j)/\fa^k$. Note that ${d(j,k)}\ra +\infty$ when $k\ra +\infty$, and we choose $d(j,k)$ such that $d(j',k')\geq d(j,k)$ if $j'\geq j$ and $k'\geq k$. Denote by $R(N, k, {d(j,k)}):=R_{\overline{\rho}, \cS(Q_N),\fB}^{\square_S, P-\ord}/\fm_N^{d(j,k)} \otimes_{S_{\infty}} S_{\infty}/\fa^k$. When $N$ is sufficiently large (satisfying $\fc_N \subset \fa^k$), then we have
\begin{multline}\label{moda1}
  R(N,k,{d(j,k)})/\fa_1\cong \big(R_{\overline{\rho}, \cS(Q_N),\fB}^{\square_S, P-\ord}/\fm_N^{d(j,k)} \otimes_{\co_{\infty}[\Delta_{Q_N}]} O_{\infty}[\Delta_{Q_N}]/\fa^k\big)/\fa_1\\ \cong R_{\overline{\rho}, \cS,\fB}^{ P-\ord}/(\fm^{d(j,k)}, \varpi_E^{s(\fa^k+\fa_1)}),
\end{multline}
where $\fm$ denotes the maximal ideal of $R_{\overline{\rho}, \cS,\fB}^{ P-\ord}$.
In particular, we see $R(N,k,{d(j,k)})$ is an $S_{\infty}/\fa^k$-module of bounded rank (with $N$ varying). Put \begin{equation*}R(\infty,k,{d(j,k)}):=(\prod_{N\in \cI} R(N,k,{d(j,k)})) \otimes_{\hR} \hR_{\fF},\end{equation*}
which naturally acts on $ M_1^{\infty}(\fB,k,j)$.  We have a natural injection $S_{\infty}/\fa^k\hookrightarrow R(\infty,k,{d(j,k)})$ (since $S_{\infty}/\fa^k \hookrightarrow R(N,k,d(j,k))$ for all $N$). By \cite[Lem. 2.2.4]{GN16}, we deduce from (\ref{moda1}) an isomorphism
\begin{equation*}
  R(\infty,k,{d(j,k)})/\fa_1\cong R_{\overline{\rho}, \cS,\fB}^{P-\ord}/(\fm^{d(j,k)}, \varpi_E^{s(\fa^k+\fa_1)}).
\end{equation*}
For $N$ sufficiently large, we have
\begin{equation}\label{moda10}
  M_1^{\square}(N,\fB,k,L_j)/\fa_1\cong \big(\Ord_P(S(U^p, \co_E/\varpi_E^{s(\fa_1+\fa^k)})_{\overline{\rho}})_{\fB}^{L_j}\big)^{\vee},
\end{equation}
and the isomorphism is $R(N,k,{d(j,k)})$-equivariant, where $R(N,k,{d(j,k)})$ acts on the right hand side via the isomorphism (\ref{moda1}).
We deduce then the isomorphism  (which is obtained via the same way as in Proposition \ref{pat0} (2))
\begin{equation*}M_1^{\infty}(\fB,k,L_j)/\fa_1\cong \big(\Ord_P(\widehat{S}(U^p,\co_E/\varpi_E^{s(\fa^k+\fa_1)})_{\overline{\rho}})_{\fB}^{L_j}\big)^{\vee}
\end{equation*} is $R(\infty, k,{d(j,k)})$-equivariant.
The map $R_{\infty} \twoheadrightarrow R(N, k, {d(j,k)})$ induces a natural projection $R_{\infty}\twoheadrightarrow R(\infty,k,{d(j,k)})$, and equips $M_1^{\infty}(\fB,k,L_j)$ with a natural $R_{\infty}$-action. Taking inverse limit on $j$, we see  $M_1^{\infty}(\fB,k)$ is equipped with a natural $R_{\infty}$-action via
\begin{equation}\label{Rinfmp}
  R_{\infty} \twoheadlongrightarrow \varprojlim_{j} R(\infty,k,{d(j,k)}),
\end{equation}
satisfying that the isomorphism in (\ref{isomoda}) is $R_{\infty}$-equivariant, where the $R_{\infty}$-action on the right hand side is induced from the natural action of $R_{\overline{\rho}, \cS,\fB}^{P-\ord}$ via
\begin{equation}\label{Rinfma1}
 R_{\infty}/\fa_1 \twoheadlongrightarrow  \varprojlim_{j} R(\infty,k,{d(j,k)})/\fa_1 \cong \varprojlim_j R_{\overline{\rho}, \cS,\fB}^{P-\ord}/(\fm^{d(j,k)}, \varpi_E^{s(\fa^k+\fa_1)}).
\end{equation}
Note also we have $S_{\infty}/\fa^k \hookrightarrow  \varprojlim_{j} R(\infty,k,{d(j,k)})$.
\\

\noindent
Let $M_i^{\infty}(\fB):=\varprojlim_k M_i^{\infty}(\fB,k)$. By Proposition \ref{pat0} (3), $M_i^{\infty}(\fB)\in \fC$. Using $$M_1^{\square}(N,\fB,k,L_j)/\fa^{k} \cong \big(M_1^{\square}(N,\fB,k+1,L_j)/\fa^{k+1}\big)/\fa^k$$
 for all $N$, we see $M_1^{\infty}(\fB,k,L_j)\cong M_1^{\infty}(\fB,k+1,L_j)/\fa^{k}$. By  \cite[Lem. 4.4.4 (1)]{Pan2}, we have thus $M_1^{\infty}(\fB, k)\cong M_1^{\infty}(\fB,k+1)/\fa^k$ for $k> 0$. By \cite[Lem. 4.4.4 (2)]{Pan2}, we see $M_1^{\infty}(\fB)$ is a flat $S_{\infty}$-module. By Proposition \ref{pat0} (1) (2) and \cite[Lem. 4.4.4 (1)]{Pan2}, we also have $M_1^{\infty}(\fB)/\fa_0 \cong M_0^{\infty}(\fB)$, and
\begin{equation}\label{isomoda2}M_1^{\infty}(\fB)/\fa_1\cong \Ord_P(\widehat{S}(U^p,\co_E)_{\overline{\rho}})_{\fB}^d.
\end{equation}
We have a natural injection
\begin{equation}\label{Sinfinj}
  S_{\infty}\cong \varprojlim_k S_{\infty}/\fa^k \hooklongrightarrow \varprojlim_k \varprojlim_{j} R(\infty,k,{d(j,k)}),
\end{equation}
and $M_1^{\infty}(\fB)$ is equipped with a natural $S_{\infty}$-linear action of $\varprojlim_{k} \varprojlim_j R(\infty,k,d(j,k))$. The morphism (\ref{Rinfmp}) induces
\begin{equation}\label{Rinfsurj}
  R_{\infty} \twoheadlongrightarrow \varprojlim_k \varprojlim_{j} R(\infty,k,{d(j,k)}).
\end{equation}
We can hence lift (\ref{Sinfinj}) to an injection $S_{\infty} \hookrightarrow R_{\infty}$. The $R_{\infty}$-action on  $M_1^{\infty}(\fB)$ (induced by  (\ref{Rinfsurj})) is then $S_{\infty}$-linear. By (\ref{Rinfma1}) (and taking inverse limit over $k$), we have a projection
\begin{equation}\label{Rinfma10}
  R_{\infty}/\fa_1 \twoheadlongrightarrow R_{\overline{\rho}, \cS,\fB}^{P-\ord},
\end{equation}
and the isomorphism (\ref{isomoda2}) is equivariant under the $R_{\infty}$-action, where the $R_{\infty}$-action on the right hand side of (\ref{isomoda2}) is induced from the natural $ R_{\overline{\rho}, \cS,\fB}^{P-\ord}$-action via (\ref{Rinfma10}).
\\

\noindent  We apply Pa{\v{s}}k{\=u}nas' theory. Put
\begin{eqnarray*}
  \tm_i^{\infty}(\fB,k)&:=&\Hom_{\fC}\big(\widetilde{P}_{\fB}, M_i^{\infty}(\fB,k)\big), \\
  \tm_i^{\infty}(\fB)&:=&\Hom_{\fC}\big(\widetilde{P}_{\fB}, M_i^{\infty}(\fB)\big).
\end{eqnarray*}
We have $\tm_i^{\infty}(\fB)\cong \varprojlim_k \tm_i^{\infty}(\fB,k)$. By the same argument as in \cite[Lem. 4.7.4]{Pan2}, $\tm_1^{\infty}(\fB)$ is a flat $S_{\infty}$-module. By the same argument as in the proof of \cite[Prop. 4.7.7 (1)]{Pan2}, $\tm_1^{\infty}(\fB)$ is a finitely generated $R_{\infty}$-module. Denote by $\ub_1:=(z_1,\cdots, z_{n^2|S|})\subset \co_{\infty}$. We have by (\ref{isomoda2})
\begin{equation}\label{isomoda3}
  \tm_1^{\infty}(\fB)/\fa_1 \cong \tm_0^{\infty}(\fB)/\ub_1 \cong \tm(U^p,\fB).
\end{equation}
It is clear that $y_1,\cdots, y_q, z_1,\cdots, z_{n^2|S|}$ form a regular sequence of $\tm_1^{\infty}(\fB)$. Hence by \cite[Prop. 1.2.12]{BruHer}, they can extend to a system of parameters of $\tm_1^{\infty}(\fB)$. By \cite[Prop. A.4]{BruHer}, we have $\dim_{R_{\infty}} \tm_1^{\infty}(\fB)=\dim_{R_{\infty}} \tm(U^p,\fB)+q+n^2|S|$.  Note that the $R_{\infty}$-action on $\tm(U^p,\fB)$ factors through $R_{\overline{\rho}, \cS,\fB}^{P-\ord}$. By Corollary \ref{dim1} (and the fact $\cA$ is finite over $R_{\overline{\rho}, \cS,\fB}^{P-\ord}$), we deduce:
\begin{proposition}\label{suppdim}We have
\begin{multline*}\dim_{R_{\infty}} \tm_1^{\infty}(\fB)\geq 1+q+n^2|S|+\sum_{v\in S_p} (2n-k_{\widetilde{v}})\\ =1+g+n^2|S|+\sum_{v\in S_p}\big(|\{i|n_{\widetilde{v},i}=1\}|+3|\{i|n_{\widetilde{v},i}=2\}|+\frac{n(n-1)}{2}\big) \\ =
 1+g+n^2|S|+\sum_{v\in S_p} \big(\sum_{i=1}^{k_{\widetilde{v}}} n_{\widetilde{v},i}(n-s_{\widetilde{v},i})\big).\end{multline*}
\end{proposition}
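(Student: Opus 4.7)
The plan is to combine the identity $\dim_{R_\infty} \tm_1^\infty(\fB) = \dim_{R_\infty} \tm(U^p,\fB) + q + n^2|S|$ established just before the proposition with a lower bound for $\dim_{R_\infty} \tm(U^p,\fB)$, and then conclude via some purely combinatorial arithmetic using $q = g + [F^+\!:\!\Q]\frac{n(n-1)}{2}$ and the hypothesis that $F_v^+\cong\Q_p$ and $n_{\widetilde{v},i}\in\{1,2\}$ for all $v\in S_p$.

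For the lower bound I would argue as follows. Since the $R_\infty$-action on $\tm(U^p,\fB)$ factors through $R_{\overline{\rho},\cS,\fB}^{P-\ord}$ and hence through its quotient $\widetilde{\bT}(U^p)_{\overline{\rho},\fB}^{P-\ord}$, and since by (\ref{PordcA}) the map $\widetilde{\bT}(U^p)_{\overline{\rho},\fB}^{P-\ord}\hookrightarrow \End_{R_{p,\fB}}(\tm(U^p,\fB))$ is injective, the $\widetilde{\bT}(U^p)_{\overline{\rho},\fB}^{P-\ord}$-action on $\tm(U^p,\fB)$ is faithful. Consequently $\Supp_{\widetilde{\bT}(U^p)_{\overline{\rho},\fB}^{P-\ord}}(\tm(U^p,\fB))=\Spec\widetilde{\bT}(U^p)_{\overline{\rho},\fB}^{P-\ord}$, and Corollary \ref{dim1} (together with the isomorphism $\widetilde{\bT}(U^p)_{\overline{\rho},\fB}^{P-\ord}\xrightarrow{\sim}\cA$) yields $\dim_{R_\infty}\tm(U^p,\fB)\geq 1+\sum_{v\in S_p}(2n-k_{\widetilde{v}})$. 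Adding $q+n^2|S|$ gives the desired inequality. This is the substantive step; happily, all the nontrivial work has been packed into Corollary \ref{dim1}, whose proof rests in turn on the infinite-fern/global triangulation argument embodied in Theorem \ref{dim0}.

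The two subsequent equalities are purely arithmetic. By the definition of $g$ after (\ref{geneG}), $q = g+[F^+\!:\!\Q]\frac{n(n-1)}{2}$, and the assumption $F_v^+\cong\Q_p$ for $v\in S_p$ gives $[F^+\!:\!\Q]=|S_p|$, so we may distribute $\frac{n(n-1)}{2}$ into the sum over $v\in S_p$. Writing $a_{\widetilde{v}}:=|\{i:n_{\widetilde{v},i}=1\}|$ and $b_{\widetilde{v}}:=|\{i:n_{\widetilde{v},i}=2\}|$, the relations $a_{\widetilde{v}}+b_{\widetilde{v}}=k_{\widetilde{v}}$ and $a_{\widetilde{v}}+2b_{\widetilde{v}}=n$ yield $a_{\widetilde{v}}+3b_{\widetilde{v}}=2n-k_{\widetilde{v}}$, giving the first equality.

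For the last equality I will use the standard identity $\sum_{i=1}^{k_{\widetilde{v}}} n_{\widetilde{v},i}(n-s_{\widetilde{v},i})=\dim P_{\widetilde{v}}=\frac{n^2+\sum_i n_{\widetilde{v},i}^2}{2}$, which follows by writing $\sum_i n_i(n-s_i)=\sum_{i\leq j}n_in_j$. Since $\sum_i n_{\widetilde{v},i}^2 = a_{\widetilde{v}}+4b_{\widetilde{v}}$, a short computation shows $\frac{n^2+a_{\widetilde{v}}+4b_{\widetilde{v}}}{2}=a_{\widetilde{v}}+3b_{\widetilde{v}}+\frac{n(n-1)}{2}$, closing the chain of equalities. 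The only genuine mathematical content is the faithfulness/support argument in the second paragraph; everything else is bookkeeping.
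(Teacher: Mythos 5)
Your proof is correct and follows essentially the same route as the paper: the paper likewise deduces the inequality from the identity $\dim_{R_{\infty}}\tm_1^{\infty}(\fB)=\dim_{R_{\infty}}\tm(U^p,\fB)+q+n^2|S|$ together with Corollary \ref{dim1} (faithfulness of the $\widetilde{\bT}(U^p)_{\overline{\rho},\fB}^{P-\ord}\cong\cA$-action giving full support), and the remaining equalities are the same bookkeeping with $a_{\widetilde{v}}+b_{\widetilde{v}}=k_{\widetilde{v}}$, $a_{\widetilde{v}}+2b_{\widetilde{v}}=n$ and $\sum_i n_{\widetilde{v},i}(n-s_{\widetilde{v},i})=\sum_{i\le j}n_{\widetilde{v},i}n_{\widetilde{v},j}$. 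Your write-up just makes explicit the support argument and the arithmetic that the paper leaves implicit.
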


\subsection{Patching II}\label{patch2}
\noindent We construct certain patched modules to apply Taylor's Ihara avoidance.
\\

\noindent
Let $\Omega$ be a finite set of finite places $v$ of $F^+$ satisfying that
\begin{itemize}
  \item  $v=\widetilde{v}\widetilde{v}^c$ in $F$,
  \item $p\mid (\Nm(\widetilde{v})-1)$, and if $p^m \parallel  (\Nm(\widetilde{v})-1)$ then $n\leq p^m$.
\end{itemize}
Let $U_\Omega:=\prod_{v\in \Omega} \Iw(\widetilde{v})$, $Y_\Omega:=\prod_{v\in \Omega} \Iw_1(\widetilde{v})$, where $\Iw(\widetilde{v})$ (resp. $\Iw_1(\widetilde{v})$) is the standard Iwahori subgroup (resp. pro-$\ell$ Iwahori subgroup with $\widetilde{v}|\ell$) of $G(F_{\widetilde{v}})$, i.e. the preimage via $i_{G,\widetilde{v}}^{-1}$ of the matrices in $\GL_n(\co_{F_{\widetilde{v}}})$ (recalling that we assume $G$ quasi-split at all finite places of $F^+$) that are upper triangular (resp. upper triangular unipotent)  modulo $\varpi_{\widetilde{v}}$.  We have thus
\begin{equation*}
 \Delta_\Omega:=\prod_{v\in \Omega} (\F_{\widetilde{v}}^{\times})^n \xlongrightarrow{\sim} U_\Omega/Y_\Omega.
\end{equation*}
Enlarging $E$ if necessary, we assume $E$ contains $p^m$-th roots of unity if $p^m\parallel (\Nm(\widetilde{v})-1)$ for $v\in \Omega$.
\\

\noindent Let $U^p=U_\Omega \times \prod_{v\notin S_p\cup \Omega} U_v$, and $Y^p=Y_\Omega \times \prod_{v\notin S_p\cup \Omega} U_v$, and suppose $U^{\Omega,p}:=\prod_{v\notin S_p \cup \Omega}U_v$ is sufficiently small. For any compact open subgroup $U_p$ of $G(F^+\otimes_{\Q} \Q_p)$, $S(Y^pU_p, \co_E/\varpi_E^k)_{\overline{\rho}}$ is equipped with a natural action of $\Delta_\Omega$. For a continuous character $\chi: \Delta_\Omega \ra \co_E^{\times}$, we denote by $S_{\chi}(U^pU_p,\co_E/\varpi_E^k)$ the sub $\co_E/\varpi_E^k$-module of $S(Y^pU_p, \co_E/\varpi_E^k)$ on which $\Delta_\Omega$ acts via $\chi$. Using the fact $U^{\Omega,p}$ is sufficiently small, we have \footnote{Using that $U^{\Omega,p}$ is sufficiently small, we can reduce to the following fact: let $H$ be a finite cyclic group, then $\co_E/\varpi_E^k[H]\cong \co_E/\varpi_E^k[x]/(x^m-1)$ (using a generator $\sigma$ of $H$); let $\chi$ be a character of $H$, then $\co_E/\varpi_E^k[H]_{\chi}\cong \frac{x^m-1}{x-\chi(\sigma)} \co_E/\varpi_E^k[H]$, and hence $\co_E/\varpi_E^k[H]_{\chi} \cong \co_E/\varpi_E^{k+1}[H]_{\chi} \otimes_{\co_E/\varpi_E^{k+1}} \co_E/\varpi_E^k$. } \begin{equation}\label{keychi}S_{\chi}(U^pU_p,\co_E/\varpi_E^{k}) \cong S_{\chi}(U^pU_p,\co_E/\varpi_E^{k+1})\otimes_{\co_E/\varpi_E^{k+1}} \co_E/\varpi_E^k.\end{equation}Consequently, we see
\begin{equation*}
  \widehat{S}_{\chi}(U^p, \co_E):= \varprojlim_k \varinjlim_{U_p} S_{\chi}(U^pU_p, \co_E/\varpi_E^k)
\end{equation*}
is also the subspace of $\widehat{S}(U^p,\co_E)$ on which $\Delta_S$ acts via $\chi$. By the same argument as in \cite[Lem. 6.1]{BD1}, one can show that $\widehat{S}_{\chi}(U^p, \co_E)$ is a finite projective $\co_E[[G(\Z_p)]]$-module.\footnote{Actually, by \emph{loc. cit.}, we can show $\widehat{S}(U^p, \co_E)$ is a finite projective $\co_E[\Delta_{\Omega}][[G(\Z_p)]]$-module, from which we deduce the statement. }
Let $S$ be a finite set containing $\Omega$, $S_p$ and the places $v$ such that $U_v$ is not hyperspecial.
We define $\bT_{\chi}(U^pU_p, *)_{\overline{\rho}}$ in the same way as $\bT(U^pU_p, *)_{\overline{\rho}}$ with $S(U^pU_p,*)_{\overline{\rho}}$ replaced by $S_{\chi}(U^pU_p,*)_{\overline{\rho}}$ for $*\in \{\co_E, \co_E/\varpi_E^k\}$, and $\widetilde{\bT}_{\chi}(U^p)_{\overline{\rho}}:=\varprojlim_{U_p} \bT_{\chi}(U^p,\co_E)_{\overline{\rho}}$ (where $\overline{\rho}$ is as in \S~3.1). Similarly, we can define $\widetilde{\bT}_{\chi}(U^p)_{\overline{\rho},\fB}^{P-\ord}$ which acts faithfully on $\Ord_P(\widehat{S}_{\chi}(U^p, \co_E)_{\overline{\rho}})_{\fB}$. We have natural projections
\begin{equation}\label{RTchi}
R_{\overline{\rho}, \cS} \twoheadlongrightarrow \widetilde{\bT}_{\chi}(U^p)_{\overline{\rho}} \twoheadlongrightarrow \widetilde{\bT}_{\chi}(U^p)_{\overline{\rho},\fB}^{P-\ord}
\end{equation}
satisfying that the composition factors through $R_{\overline{\rho}, \cS, \fB}^{P-\ord}$ (assuming $\overline{\rho}$ is $\fB$-generic). We have thus a natural morphism
$R_{p,\fB} \ra \widetilde{\bT}_{\chi}(U^p)_{\overline{\rho},\fB}^{P-\ord}$.
By the same arguments, we have a similar  local-global compatibility result as in Theorem \ref{lg} for the $\widetilde{\bT}_{\chi}(U^p)_{\overline{\rho},\fB}^{P-\ord}$-action on
\begin{equation}\label{mUpBchi}\fm(U^p,\fB)_{\chi}:= \Hom_{\fC}(\widetilde{P}_{\fB}, \Ord_P(\widehat{S}_{\chi}(U^p, \co_E)_{\overline{\rho}})_{\fB}^d),
 \end{equation}and we have $\dim \widetilde{\bT}_{\chi}(U^p)_{\overline{\rho}, \fB}^{P-\ord}[1/p]\geq   1+ \sum_{v\in S_p} (2n-k_{\widetilde{v}})$ (noting that $\Ord_P(\widehat{S}_{\chi}(U^p,\co_E)_{\overline{\rho}})_{\fB}$ is a finite projective $\co_E[[L_P(\Z_p)]]$-module by \cite[Cor. 4.6]{BD1}).
\\

\noindent Suppose for  $v\in \Omega$, $\overline{\rho}_{\widetilde{v}}$ is trivial. Let $\chi_{\widetilde{v}}=(\chi_{\widetilde{v},i}): \Delta_v:=(\F_{\widetilde{v}}^{\times})^n\ra \co_E^{\times}$. We view $\chi_{\widetilde{v},i}$ as a character of $I_{\widetilde{v}}$ via
\begin{equation*}
  I_{\widetilde{v}} \twoheadlongrightarrow I_{\widetilde{v}}/P_{\widetilde{v}} \twoheadlongrightarrow \F_{\widetilde{v}}^{\times} \xlongrightarrow{\chi_{\widetilde{v},i}} \co_E^{\times}.
\end{equation*}
Denote by $D_{\chi_{\widetilde{v}}}$ the deformation problem consisting of liftings $\rho$ (over artinian $\co_E$-algebras) of $\overline{\rho}_{\widetilde{v}}$ such that for all $\sigma \in I_{\widetilde{v}}$ the characteristic polynomial of $\rho(\sigma)$ is given by $\prod_{i=1}^n (X-\chi_{\widetilde{v},i}(\sigma))$.
Denote by $R_{\overline{\rho}_{\widetilde{v}},\chi_{\widetilde{v}}}^{\bar{\square}}$ the reduced universal deformation ring of $D_{\chi_{\widetilde{v}}}$, which is a quotient of $R_{\overline{\rho}_{\widetilde{v}}}^{\bar{\square}}$. Let $\chi:=\prod_{v\in \Omega} \chi_{\widetilde{v}}$. For the \emph{global} Galois deformation rings considered in the previous sections, we add $\chi$ in the subscript to denote the corresponding universal deformation ring with the local deformation problem $R_{\overline{\rho}_{\widetilde{v}}}^{\bar{\square}}$ replaced by $D_{\chi_{\widetilde{v}}}$ for $v\in \Omega (\subset S)$. For example, we have deformation rings $R_{\overline{\rho}, \cS,\chi}$, $R_{\overline{\rho}, \cS, \chi}^{\square}$, $R_{\overline{\rho},\cS,\fB,\chi}^{P-\ord}$, $R_{\overline{\rho}, \cS,\fB, \chi}^{P-\ord, \square_S}$ etc. By \cite[Prop. 8.5]{Thor}, the morphism $R_{\overline{\rho}, \cS} \twoheadrightarrow \widetilde{\bT}_{\chi}(U^p)_{\overline{\rho}}$ (resp. the composition in (\ref{RTchi})) factors through $R_{\overline{\rho}, \cS,\chi}$ (resp. through $R_{\overline{\rho}, \cS, \fB,\chi}^{P-\ord}$).
\\

\noindent We are particularly interested in the following setting. Let $v_1$ is a finite place of $F^+$ split in $F$ (with $v_1=\widetilde{v}_1\widetilde{v}_1^c$) such that  $p\nmid (\Nm(\widetilde{v}_1)-1)$, and suppose $U^p$ has the following form
\begin{equation}\label{iharaUp}
  U^p= U_\Omega \times \Iw(\widetilde{v}_1) \times  \prod_{v\notin \Omega\cup S_p \cup\{v_1\}} U_v
\end{equation}
where $U_v$ is hyperspecial for all $v\notin \Omega\cup S_p \cup \{v_1\}$ (hence $S=\Omega\cup S_p \cup \{v_1\}$). By the assumption on $v_1$, one can check that $U^p$ is sufficiently small. For $i=0,1$, $k\geq 1$, $j\geq 0$, $N\geq 1$, we define $V_i(N,\fB,k)_{\chi}^{L_j}$ by  the same way as $V_i(N,\fB,k)^{L_j}$ with $S(U_i(Q_n)^p, \co_E/\varpi_E^k)$ replaced by $S_{\chi}(U_i(Q_N)^p, \co_E/\varpi_E^k)$. Put $M_i(N,\fB,k,L_j)_{\chi}:=(V_i(N,\fB,k)_{\chi}^{L_j})^{\vee}$. Let
\begin{equation*}
  R_{\fB,\chi}^{\loc, P-\ord}:=\big(\widehat{\otimes}_{v\in \Omega} R_{\overline{\rho}_{\widetilde{v}}, \chi_{\widetilde{v}}}^{\bar{\square}}\big) \widehat{\otimes}_{\co_E} R_{\overline{\rho}_{\widetilde{v}_1}}^{\bar{\square}} \widehat{\otimes}_{\co_E} \big(\widehat{\otimes}_{v\in S_p} R_{\overline{\rho}_{\widetilde{v}}, \fB_{\widetilde{v}}}^{P_{\widetilde{v}}-\ord, \bar{\square}}\big).
\end{equation*}
The morphisms (\ref{Sinf}), (\ref{RinfR}), and $S_{\infty} \ra R_{\infty}$ (lifting (\ref{Sinf})) induce morphisms
\begin{equation*}S_{\infty} \lra R_{\overline{\rho}, \cS(Q_N), \fB,\chi}^{\square_S, P-\ord},
 \end{equation*}
\begin{equation*}R_{\infty,\chi}:= R_{\fB,\chi}^{\loc, P-\ord}[[x_1,\cdots, x_g]] \twoheadlongrightarrow R_{\overline{\rho}, \cS(Q_N), \fB,\chi}^{\square_S, P-\ord},
\end{equation*}
and $S_{\infty} \ra R_{\infty,\chi}$ respectively. The module
\begin{equation*}M_i^{\square}(N,\fB,r,L_j)_{\chi}:=M_i(N,\fB,r,L_j)_{\chi}\otimes_{\co_E} \co_{\infty}
 \end{equation*}
 is equipped with a natural $S_{\infty}$-linear action of $R_{\infty,\chi}$. We can run the patching argument as in \S~\ref{patch1} with $\{S_{\infty},R_{\infty}, \{M_i^{\square}(N,\fB,k, L_j)\}\}$ replaced by $\{S_{\infty}, R_{\infty,\chi},\{M_i^{\square}(N,\fB,k,L_j)_{\chi}\}\}$, to obtain $R_{\infty,\chi}$-modules $\fm_i^{\infty}(\fB)_{\chi}$ replacing the $R_{\infty}$-modules $\fm_i^{\infty}(\fB)$. By the same arguments, we have that $\fm_1^{\infty}(\fB)_{\chi}$ is flat over $S_{\infty}$ and (cf. (\ref{mUpBchi}))
\begin{equation}\label{isomoda4}\fm_1^{\infty}(\fB)_{\chi}/\fa_1 \cong \fm(U^p, \fB)_{\chi}.\end{equation}
As in Proposition \ref{suppdim}, we have (if $\fm_1^{\infty}(\fB)_{\chi}\neq 0$)
\begin{equation}\label{dim2}
  \dim_{R_{\infty,\chi}} \fm_1^{\infty}(\fB)_{\chi}\geq 1+g+n^2|S|+\sum_{v\in S_p}\big(\sum_i^{k_{\widetilde{v}}} n_{\widetilde{v},i}(n-s_{\widetilde{v},i})\big).
\end{equation}
\noindent
Let $\chi': \Delta_R \ra \co_E^{\times}$ be another character such that $\chi'\equiv \chi \pmod{\varpi_E}$. We have  natural isomorphisms
\begin{eqnarray}\label{RSchibar}R_{\overline{\rho}, \cS,\chi}/\varpi_E &\cong& R_{\overline{\rho}, \cS,\chi'}/\varpi_E,\\ \label{Rinfchibar}R_{\infty,\chi}/\varpi_E&\cong& R_{\infty,\chi'}/\varpi_E.\end{eqnarray}
We have natural isomorphisms (compatible with (\ref{RSchibar}))
\begin{equation*}
M_i(N,\fB,1,L_j)_{\chi}\cong M_i(N,\fB,1,L_j)_{\chi'}
\end{equation*}
for all $N\in \Z_{\geq 1}$, $i\in \{0,1\}$, $j\in \Z_{\geq 0}$, from which we deduce (using (\ref{keychi})) natural isomorphisms $M_i^{\square}(N,\fB,k, L_j)_{\chi}/\varpi_E \cong M_i^{\square}(N,\fB, k,L_j)_{\chi'}/\varpi_E$ which are compatible with (\ref{Rinfchibar}). These isomorphisms finally induce an isomorphism
\begin{equation}\label{iharaiso}
  \fm_1^{\infty}(\fB)_{\chi}/\varpi_E \xlongrightarrow{\sim} \fm_1^{\infty}(\fB)_{\chi'}/\varpi_E
\end{equation}
which is compatible with (\ref{Rinfchibar}). 

\subsection{Automorphy lifting}
\noindent In this section, we prove our main results on automorphy lifting. Recall that $S\supset S_p$ is a finite set of finite places of $F^+$ which split in $F$, and for all $v\in S$, we fix a place $\widetilde{v}$ of $F$ above $v$, and that we assume Hypothesis \ref{hypoglobal}.
\begin{theorem}\label{main}
Let $\rho: \Gal_F \ra \GL_n(E)$ be a continuous representation satisfying the following conditions:
  \begin{enumerate}
    \item $\rho^c\cong \rho^{\vee} \varepsilon^{1-n}$.
    \item $\rho$ is unramified outside $S$.
    \item $\overline{\rho}$ absolutely irreducible, $\overline{\rho}(\Gal_{F(\zeta_p)})\subseteq \GL_n(k_E)$ is adequate  and $\overline{F}^{\Ker \ad \overline{\rho}}$ does not contain $F(\zeta_p)$.
    \item For all $v\in S_p$, $\rho_{\widetilde{v}}$ is $P_{\widetilde{v}}$-ordinary, i.e.
    \begin{equation}\label{Pvfil}
      \rho_{\widetilde{v}} \cong
      \begin{pmatrix}
        \rho_{\widetilde{v},1} &* & \cdots & * \\
        0 & \rho_{\widetilde{v},2} & \cdots & * \\
        \vdots & \vdots & \ddots & \vdots \\
        0 & 0 &\cdots & \rho_{\widetilde{v},k_{\widetilde{v}}}
      \end{pmatrix}
    \end{equation} with $\dim_E \rho_{\widetilde{v},i}=n_{\widetilde{v},i} (\leq 2)$.
    \item For all $v\in S_p$, $\rho_{\widetilde{v}}$ is de Rham of distinct Hodge-Tate weights. Suppose moreover one of the following two conditions holds
     \begin{enumerate} \item for all $v\in S_p$, and $i=1,\cdots, k_{\widetilde{v}}$, $\rho_{\widetilde{v},i}$ is absolutely irreducible and the Hodge-Tate weights of $\rho_{\widetilde{v},i}$ are strictly bigger than those of $\rho_{\widetilde{v},i-1}$;
     \item for all $v|p$, $\rho_{\widetilde{v}}$ is  crystalline with the eigenvalues $(\phi_1,\cdots, \phi_n)$ of the crystalline Frobenius satisfying $\phi_i\phi_j^{-1}\neq 1, p^{\pm 1}$ for $i\neq j$.
     \end{enumerate}
    \item Let $\bar{\cF}_{\widetilde{v}}$ be $P_{\widetilde{v}}$-filtration on $\overline{\rho}_{\widetilde{v}}$ induced by (\ref{Pvfil}), $\fB_{\widetilde{v},i}$ be the block associated to $\tr (\gr^i \bar{\cF}_{\widetilde{v}})$ (cf. \S~\ref{sec: Pas})  and $\fB:=\otimes_{\substack{v\in S_p\\ i=1,\cdots, k_{\widetilde{v}}}} \fB_{\widetilde{v},i}(s_{\widetilde{v},i+1}-1)$ (which is a block of $\Mod_{L_P(\Q_p)}^{\lfin}(\co_E)$). Suppose:
    \begin{enumerate} \item $\overline{\rho}$ is $\fB$-generic (Definition \ref{Bgene}, see also Lemma \ref{bgene1});
    \item $\Hom_{\Gal_{\Q_p}}(\Fil^i_{\bar{\cF}_{\widetilde{v}}} \overline{\rho}_{\widetilde{v}}, \overline{\rho}_{\widetilde{v},i}\otimes_{k_E} \omega)=0$ for all $v|p$, $i=1,\cdots, k_{\widetilde{v}}$.
    \end{enumerate}
    \item There is an automorphic representation $\pi$ of $G$ with the associated representation $\rho_{\pi}: \Gal_F \ra \GL_n(E)$ satisfying
    \begin{enumerate}
    \item $\overline{\rho}_{\pi}\cong \overline{\rho}$;
      \item $\pi_v$ is unramified for all $v\notin S$;
      \item $\pi$ is $\fB$-ordinary (cf. Definition \ref{fBord}, see also Lemma \ref{Bord}).
    \end{enumerate}
  \end{enumerate}
  Then $\rho$ is automorphic, i.e. there exists an automorphic representation $\pi'$ of $G$ such that $\rho\cong \rho_{\pi'}$.
\end{theorem}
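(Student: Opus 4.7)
The plan is to show that $\rho$ defines a point of the $P$-ordinary Hecke algebra $\widetilde{\bT}(U^p)^{P-\ord}_{\overline{\rho},\fB}$ via the Taylor-Wiles patched modules of \S\ref{patch1}--\S\ref{patch2}, and then deduce classicality from hypothesis 5.

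First I would set up the patching. By enlarging $S$ (adding a finite set $\Omega$ of split auxiliary places at which $\overline{\rho}_{\widetilde{v}}$ is trivial and $p\mid (\Nm(\widetilde{v})-1)$, and a further place $v_1$ with $p\nmid (\Nm(\widetilde{v}_1)-1)$) and correspondingly adjusting the level $U^p$ to the shape (\ref{iharaUp}), one reaches a setting where $U^p$ is sufficiently small and the machinery of \S\ref{patch2} applies, while $\pi$ still has $U^p$-fixed vectors and $\rho$ acquires the appropriate local type at each new place. Hypothesis 7(c) combined with Lemma \ref{Bord} ensures $\Ord_P(\widehat{S}(U^p,\co_E)_{\overline{\rho}})_\fB\neq 0$, so the construction of \S\ref{patch1} produces the finitely generated $R_\infty$-module $\tm_1^\infty(\fB)$, flat over $S_\infty$, with $\tm_1^\infty(\fB)/\fa_1\cong \tm(U^p,\fB)$. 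Hypothesis 6(a) makes $\overline{\rho}$ $\fB$-generic so that Theorem \ref{lg} applies, while hypothesis 6(b) combined with Lemma \ref{lem: H2van} and Corollary \ref{Porddim0} renders each local $P_{\widetilde{v}}$-ordinary deformation ring formally smooth and hence irreducible.

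The core of the argument is the support computation. By Proposition \ref{suppdim}, $\dim_{R_\infty}\tm_1^\infty(\fB)\geq 1+q+n^2|S|+\sum_{v\in S_p}(2n-k_{\widetilde{v}})$, and the local dimension count via Corollary \ref{Porddim0} exhibits $\dim R_\infty$ as the same quantity (using that $p$ splits completely in $F^+$, so $[F^+:\Q]=|S_p|$). Flatness of $\tm_1^\infty(\fB)$ over the regular ring $S_\infty$ then implies that $\Supp_{R_\infty}\tm_1^\infty(\fB)$ is a union of irreducible components of $\Spec R_\infty$ of maximal dimension. The automorphic representation $\pi$ supplies at least one such component; to reach the component containing (the framed deformation of) $\rho$, I would apply Taylor's Ihara avoidance via the patched modules $\fm_1^\infty(\fB)_\chi$ of \S\ref{patch2}. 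Choosing characters $\chi,\chi':\Delta_\Omega\to\co_E^\times$ with $\chi\equiv \chi'\pmod{\varpi_E}$ such that $\chi$ (resp.\ $\chi'$) encodes the local type of $\rho$ (resp.\ of $\rho_\pi$) at $\Omega$, the mod $\varpi_E$ isomorphism (\ref{iharaiso}), combined with flatness of both $\fm_1^\infty(\fB)_\chi$ and $\fm_1^\infty(\fB)_{\chi'}$ over $S_\infty$ and the known component structure of the local rings $R^{\bar{\square}}_{\overline{\rho}_{\widetilde{v}},\chi_{\widetilde{v}}}$ (cf.\ \cite{Tay08}), transfers the full-support statement from the $\chi'$-side to the $\chi$-side. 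Hence $\rho$ defines a point of $\Spec \widetilde{\bT}_\chi(U^p)^{P-\ord}_{\overline{\rho},\fB}[1/p]$.

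To conclude, we must pass from this point in the ordinary Hecke algebra to a classical automorphic form. Proposition \ref{pts0} yields a nonzero $L_P(\Q_p)$-equivariant map
\begin{equation*}
\widehat{\otimes}_{v\in S_p}\big(\widehat{\otimes}_{i=1}^{k_{\widetilde{v}}}\widehat{\pi}(\rho_{\widetilde{v},i})\otimes_E \varepsilon^{s_{\widetilde{v},i+1}-1}\circ\dett\big)\lra \Ord_P\big(\widehat{S}_\chi(U^p,E)_{\overline{\rho}}\big)_\fB[\fm_x]
\end{equation*}
for $x$ the point corresponding to $\rho$. Under hypothesis 5(a), each $\rho_{\widetilde{v},i}$ is absolutely irreducible and de Rham, so $\widehat{\pi}(\rho_{\widetilde{v},i})^{\lalg}\neq 0$ (\cite{BeBr},\cite{Pas09}), and the strictly descending Hodge-Tate weights render the resulting vector non-critical, so the adjunction property \cite[Prop. 4.21]{BD1} of $\Ord_P$ lifts locally algebraic vectors to $\widehat{S}_\chi(U^p,E)_{\overline{\rho}}[\fm_x]^{\lalg}$, which by (\ref{equ: lalgaut}) yields a classical automorphic representation $\pi'$ with $\rho\cong \rho_{\pi'}$. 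Under 5(b), $P$-ordinarity forces $\rho_{\widetilde{v}}$ to be trianguline, so ``trianguline implies finite slope'' for $\GL_2(\Q_p)$ (via Colmez's $p$-adic Langlands) embeds $x$ into the patched eigenvariety, and the classicality theorem of Breuil-Hellmann-Schraen (\cite{BHS2},\cite{BHS3})---whose crystalline genericity hypothesis is precisely condition 5(b)---concludes. The principal obstacle will be the Ihara-avoidance support transfer: unlike classical $\GL_n$ Taylor-Wiles patching where generic ranks can be directly controlled, here the passage through Pa{\v{s}}k{\=u}nas's $\widetilde{P}_\fB$ and the $L_P(\Q_p)$-equivariance mean rank information is less transparent, so one must carefully exploit flatness over $S_\infty$ together with the formal smoothness of the $P$-ordinary local rings to propagate full support across components.
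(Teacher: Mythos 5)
Your proposal follows the same overall strategy as the paper's proof: reduce to an Ihara-avoidance setting, use the patched module $\tm_1^{\infty}(\fB)$ together with the dimension bound of Proposition \ref{suppdim} and the equidimensionality of $R_{\infty}$ to show the support is a union of (in fact all) irreducible components, specialize along $\fa_1$ and use Theorem \ref{lg} to produce a point of $\Spec \cA[1/p]$ with local data $\{\tr \rho_{\widetilde{v},i}\}$, and conclude via Proposition \ref{class} under 5(a) or via the eigenvariety and \cite{BHS2}, \cite{BHS3} under 5(b). Two steps, however, are genuinely under-specified.

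First, ``enlarging $S$'' does not by itself place you in the setting of \S~\ref{patch2}: the places where $\rho$ and $\rho_{\pi}$ actually ramify are already in $S$, and at those places $\overline{\rho}_{\widetilde{v}}$ need not be trivial nor the inertial action unipotent, so the machinery of \S~\ref{patch2} does not apply there. One must perform a solvable base change (the paper's Step (1), via \cite[Lem.~4.1.2]{Tay08} and \cite[Prop.~2.7]{Ger19}) to absorb all ramification outside $p$ into places of the required shape. Moreover, before base-changing one must replace hypothesis 7(c) by the equivalent condition 7$'$ of Lemma \ref{Bord} and Remark \ref{locp}: $\fB$-ordinariness as defined is a global condition that is not known to be preserved under base change (see the remark following Definition \ref{fBord}), and your argument silently assumes it is. Second, your Ihara-avoidance bookkeeping is reversed. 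After base change both $\rho$ and $\rho_{\pi}$ are unipotently ramified at $\Omega$, i.e.\ both live on the trivial-character side $\chi=1$, possibly on \emph{different} irreducible components of $\Spec R_{\infty,1}$; taking ``$\chi$ encoding the type of $\rho$ and $\chi'$ that of $\rho_{\pi}$'' literally would give $\chi=\chi'=1$ and no argument. The character $\chi$ with pairwise distinct components is purely auxiliary: its local rings $R_{\overline{\rho}_{\widetilde{v}},\chi_{\widetilde{v}}}^{\bar{\square}}$ are irreducible, so nonvanishing of $\fm_1^{\infty}(\fB)_{\chi}/\varpi_E$ (transported from the $1$-side via (\ref{iharaiso})) forces full support on the $\chi$-side, and \cite[Prop.~3.1 (2),(3)]{Tay08} transports full support back to \emph{every} component of $\Spec R_{\infty,1}$. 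Accordingly, the conclusion is that $\rho$ gives a point of $\Spec \widetilde{\bT}(U^p)^{P-\ord}_{\overline{\rho},\fB}[1/p]$ (the trivial-character Hecke algebra), not of $\Spec \widetilde{\bT}_{\chi}(U^p)^{P-\ord}_{\overline{\rho},\fB}[1/p]$ as you write.
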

\begin{proof}
Step (1): Let $U^p=\prod_{v\notin S_p} U_v$ be a sufficiently small compact open subgroup of $G(\bA_{F^+}^{\infty,p})$ such that $U_v$ is hyperspecial for  all $v\notin S$ (enlarging $S$ if necessary). By Lemma \ref{Bord}, the condition 7 is equivalent to the following condition
\begin{enumerate}
  \item[7'.] there exists an automorphic representation $\pi''$ such that 7(a), 7(b) hold for $\pi''$ and that the conditions (a) (b) in Remark \ref{locp} hold for $\pi''$.
\end{enumerate} We replace $\pi$ by $\pi''$, and hence assume $\pi$ satisfies the condition 7'. By solvable base change, we can reduce to the case where
\begin{itemize}
  \item $S=\Omega\cup S_p \cup  \{v_1\}$ is as in \S~\ref{patch2} (in particular, $p\nmid (|\F_{\widetilde{v}_1}|-1)$),
  \item for $v\in \Omega$, $\overline{\rho}_{\widetilde{v}}$ is trivial, and $\rho_{\widetilde{v}}|_{I_{\widetilde{v}}}^{\sss}\cong \rho_{\pi,\widetilde{v}}|_{I_{\widetilde{v}}}^{\sss}\cong 1^{\oplus n}$,
  \item $\ad \overline{\rho}(\Frob_{\widetilde{v}_1})=1$ (hence $\rho_{\widetilde{v}_1}$ and $\rho_{\pi,\widetilde{v}_1}$ are unramified, e.g. by \cite[Lem. 2.4.9, Cor. 2.4.21]{CHT}),
  \item the conditions stay unchanged (with the condition 7 replaced by 7').
\end{itemize}
Actually,
for $v$ such that $\rho_{\widetilde{v}}$ or $\rho_{\pi,\widetilde{v}}$ is ramified (we denote by $S_1$ the set of such places, thus $S_1\subseteq S$), there exists a finite extension $M_{v}$ of $F_v^+$ such that $\overline{\rho}|_{\Gal_{M_v}}$ is trivial, and $\rho_{\widetilde{v}}|_{I_{M_v}}^{\sss}\cong \rho_{\pi,\widetilde{v}}|_{I_{M_v}}^{\sss}\cong 1^{\oplus n}$. If $p\nmid (|\F_{\widetilde{v}}|-1)$, then we can enlarge $M_v$ such that $\rho_{\widetilde{v}}|_{\Gal_{M_v}}$ and $\rho_{\pi,\widetilde{v}}|_{\Gal_{M_v}}$ are unramified; otherwise, we enlarge $M_v$ such that if $p^m \parallel  (|\F_{\widetilde{v}}|-1)$ for $m\geq 1$, then $n\leq p^m$. Since $\overline{F}^{\Ker \ad \overline{\rho}}$ does not contain $F(\zeta_p)$, we choose a finite place $v_1'\notin S$ of $F^+$, split in $F$ (with $v_1'=\widetilde{v}_1'(\widetilde{v}_1')^c$) such that  $\widetilde{v}_1'$ does not split completely in $F(\zeta_p)$ (hence $p\nmid (|\F_{\widetilde{v}}|-1)$) and that $\ad \overline{\rho}(\Frob_{\widetilde{v}_1'})=1$. By \cite[Lem. 4.1.2]{Tay08}, we let $L^+/F^+$ be a solvable totally real extension linearly disjoint from $\overline{F}^{\Ker(\overline{\rho})}(\zeta_p)$, and that for a finite place $w$ of $L^+$, and  $v$ the place of $F^+$ with $w|v$, we have
\begin{itemize} \item if $v\in S_1$, then $L^+_w\cong M_v$,
\item if $v\in S_p$, then $L^+_w\cong F^+_v$ ($\cong \Q_p$),
\item if $v=v_1'$, then $L^+_w \cong F^+_{v_1'}$.
\end{itemize}
Let $v_1$ be a place of $L^+$ above $v_1'$. We replace $P$ by $\prod_{w|p} P_{\widetilde{w}}$ where $P_{\widetilde{w}}=P_{\widetilde{v}}$ for a $p$-adic place $w$ of $L^+$ with $v$ the place of $F^+$ such that $w|v$ (and where $\widetilde{w}$ is a place of $L$ above $w$ that we fix as we have done for places in  $F$, cf. \S~\ref{prelprel}), and we replace  $F/F^+$ by $L/L^+$. Using \cite[Prop. 2.7]{Ger19} \cite[Lem. 4.2.2]{CHT}, we reduce to the situation below the condition 7'  (noting that the base change of $\pi$ to $L^+$ still satisfies the condition 7').
\\

\noindent Step (2): Let $U^p$ be as in (\ref{iharaUp}), and let $\chi=\prod_{v\in \Omega} \chi_{\widetilde{v}}:\Delta_\Omega \ra \co_E^{\times}$ with $\chi_{\widetilde{v}}=(\chi_{\widetilde{v},i})_{i=1,\cdots, n}$ satisfying that $\chi_{\widetilde{v},i}\equiv 1\pmod{\varpi_E}$ and that the $\chi_{\widetilde{v},i}$'s are distinct for $i=1,\cdots, n$. We have as in \S~\ref{patch2} an $R_{\infty,1}$-module $\fm_1^{\infty}(\fB)_1$, and an $R_{\infty,\chi}$-module $\fm_1^{\infty}(\fB)_{\chi}$. By the assumptions (i.e. $\pi$ is unramified for places not in $S_p\cup \Omega$, and  for $v\in \Omega$, we have $\pi_{\widetilde{v}}^{\Iw(\widetilde{v})}\neq 0$ (since $\rho_{\pi,\widetilde{v}}|_{I_{\widetilde{v}}}^{\sss}\cong 1^{\oplus n}$)), $\pi^{U^p}\neq 0$. By the condition 7' and Lemma \ref{Bord}, we have thus
\begin{equation*}
  \Ord_P(\widehat{S}_1(U^p,\co_E)_{\overline{\rho}})_{\fB}=\Ord_P(\widehat{S}(U^p,\co_E)_{\overline{\rho}})_{\fB}\neq 0.
\end{equation*}
 Hence $\fm(U^p,\fB)_1=\fm(U^p,\fB)\neq 0$, and $\fm_1^{\infty}(\fB)_1\neq 0$.
Let $\fm_x$  be the maximal ideal of $R_{\overline{\rho},\cS,\fB,1}^{P-\ord}[1/p]$  associated to $(\rho, \{\rho_{\widetilde{v},i}\})$, $\fm_x^{\infty}$ be the preimage of $\fm_x$  via the projection $R_{\infty,1}[1/p] \twoheadrightarrow R_{\overline{\rho},\cS,\fB,1}^{P-\ord}[1/p]$ (cf. (\ref{Rinfma10})).
Let $x \in \Spec R_{\overline{\rho},\cS,\fB,1}^{P-\ord}[1/p]$, $x_{\infty}\in \Spec R_{\infty,1}[1/p]$ be the closed points associated to  $\fm_x$, $\fm_x^{\infty}$ respectively.
\\

\noindent By Corollary \ref{Porddim0} and Condition 6(b) (for places in $S_p$), \cite[Prop. 3.1]{Tay08} (for places in $\Omega$) and \cite[Lem. 2.4.9, Cor. 2.4.21]{CHT} (for $v_1$),  $R_{\infty,1}$, $R_{\infty,\chi}$ are both equidimensional of relative dimension
\begin{equation*}
  g+n^2|S|+\sum_{v\in S_p} \big(\sum_{i=1}^{k_{\widetilde{v}}} n_{\widetilde{v},i}(n-s_{\widetilde{v},i})\big)
\end{equation*}
over $\co_E$.
By (\ref{dim2}), we see $\fm_1^{\infty}(\fB)_1$ is supported on a union of irreducible components of $\Spec R_{\infty,1}$. Giving an irreducible component $\cC$ of $\Spec R_{\infty,1}$ or $\Spec R_{\infty,\chi}$ is the same as giving an irreducible component $\cC_v$ of each $v\in S$.  However,  $\cC_{v}$ is unique if $v\in S_p$ by  Condition 6(b) and Corollary \ref{Porddim0} or $v=v_1$ (noting $R_{\overline{\rho}_{\widetilde{v}_1}}^{\bar{\square}}$ is formally smooth over $\co_E$). So giving $\cC$ (as above) is the same as giving an irreducible component $\cC_v$ of each $v\in \Omega$, and we denote by $\cC=\otimes_{v\in \Omega} \cC_v$. By (\ref{isomoda3}), Proposition \ref{pts0} (and the fact $\fm_1^{\infty}(\fB)_1\neq 0$),  there is an irreducible component  $\cC=\otimes_{v\in \Omega}\cC_v$ of $\Spec R_{\infty,1}$ contained in the support of $\fm_1^{\infty}(\fB)_1$. Denote by $\overline{\cC}$ the modulo $\varpi_E$ reduction of $\cC$. We see $\overline{\cC}$ is contained in the support of $\fm_1^{\infty}(\fB)_1/\varpi_E$. Using the isomorphism in (\ref{iharaiso}), we deduce that $\overline{\cC}$ is contained in the support of $\fm_1^{\infty}(\fB)_{\chi}/\varpi_E$. Thus $\Supp_{R_{\infty,\chi}} \fm_1^{\infty}(\fB)_{\chi}$ contains an irreducible component $\cC'=\otimes_{v\in \Omega} \cC'_v$. Since $R_{\overline{\rho}_{\widetilde{v}}, \chi_{\widetilde{v}}}^{\bar{\square}}$ is irreducible for $v\in \Omega$ (cf. \cite[Prop. 3.1 (1)]{Tay08}), we see $\cC'_v=R_{\overline{\rho}_{\widetilde{v}}, \chi_{\widetilde{v}}}^{\bar{\square}}$ for $v\in \Omega$. Using again the isomorphism in (\ref{iharaiso}),  $\Supp_{R_{\infty,1}/\varpi_E} \fm_1^{\infty}(\fB)_1/\varpi_E$ contains the modulo $\varpi_E$ reduction $\overline{\cC'}$ of $\cC'$. However, by \cite[Prop. 3.1 (2)]{Tay08}, we deduce that the modulo $\varpi_E$ reduction of any irreducible component of $\Spec R_{\infty,1}$ is contained in  $\overline{\cC'}$. Together with \cite[Prop. 3.1 (3)]{Tay08}, we deduce any irreducible component  $\Spec R_{\infty,1}$  is contained in $\Supp_{R_{\infty,1}} \fm_1^{\infty}(\fB)_1$. In particular $x_{\infty} \in \Supp_{R_{\infty,1}[1/p]} \fm_1^{\infty}(\fB)_1[1/p]$. And we have thus $\fm_1^{\infty}(\fB)_1[1/p]/\fm_x^{\infty}\neq 0$.
Using (\ref{isomoda4}), we deduce then  $\tm(U^p,\fB)[1/p]/\fm_x\neq 0$ (noting $\tm(U^p,\fB)_1=\tm(U^p,\fB)$). The $R_{\overline{\rho}, \cS,\fB,1}^{P-\ord}$-action on $\tm(U^p,\fB)$ factors through $\widetilde{\bT}(U^p)_{\overline{\rho}, \fB}^{P-\ord}=\widetilde{\bT}_1(U^p)_{\overline{\rho}, \fB}^{P-\ord}$, we deduce there exists a maximal idea $\fm_x^T$ of $\widetilde{\bT}(U^p)_{\overline{\rho}, \fB}^{P-\ord}[1/p]$ (which is actually equal to the image of $\fm_x$) such that $\tm(U^p,\fB)[1/p]/\fm_x^T\neq 0$.
By Theorem \ref{lg}, we obtain a closed point $z=(x_T, \{z_{\widetilde{v},i}\})\in \Spec \cA[1/p]$ where $x_T$ is the point associated to $\fm_x^T$, and $z_{\widetilde{v},i}=\tr \rho_{\widetilde{v},i}$ (noting that the preimage of $\fm_x$ of the first morphism in (\ref{RpB2}) is the prime ideal corresponding to $\{\tr \rho_{\widetilde{v},i}\}$). 
\\

\noindent Step (3):
Suppose the condition 5 (a), then by Proposition \ref{class}, $x_T$ is classical, and the theorem follows.

\noindent Suppose now the condition 5 (b).
By Proposition \ref{pts0}, we have a non-zero map
\begin{equation*}
  \widehat{\otimes}_{\substack{v\in S_p \\ i=1,\cdots, k_{\widetilde{v}}}} (\widehat{\pi}_{z_{\widetilde{v},i}} \otimes_{E} \varepsilon^{s_{\widetilde{v},i+1}-1}\circ \dett)\lra \Ord_P\big(\widehat{S}(U^p,E)_{\overline{\rho}}\big)_{\fB}[\fm_x^T].
\end{equation*}
Since $\rho_{\widetilde{v},i}$ is crystalline, by the $p$-adic Langlands correspondence for $\GL_2(\Q_p)$, the irreducible constituents of  $\widehat{\pi}_{z_{\widetilde{v},i}}^{\an}$ are all subquotients of locally analytic principal series (induced from locally algebraic characters of $T(\Q_p)$). We deduce then there exist locally algebraic characters $\chi_{\widetilde{v}}$ of $T(\Q_p)$ for $v \in S_p$ such that
\begin{equation}\label{jac}
  \otimes_{v\in S_p} \chi_{\widetilde{v}} \hooklongrightarrow J_{B\cap L_P}\big(\Ord_P\big(\widehat{S}(U^p,E)_{\overline{\rho}}\big)_{\fB}[\fm_x^T]^{\an}\big),
\end{equation}
where $J_{B\cap L_P}(-)$ denotes the Jacquet-Emerton functor (\cite{Em11}).
From the locally analytic representation $J_{B\cap L_P}(\Ord_P\big(\widehat{S}(U^p,E)_{\overline{\rho}}\big)_{\fB}^{\an})$, using Emerton's machinery \cite{Em1}, we can construct an eigenvariety $\cE^{P-\ord}_{\fB}$ as in  \cite[\S 7.1.3]{BD1}, such that
\begin{itemize} \item any point of $\cE^{P-\ord}_{\fB}$ can be parameterized as $(\fm_z, \chi)$ where $\chi$ is a character of $T(F^+\otimes_{\Q} \Q_p)$ ($\cong \prod_{v\in S_p} T(\Q_p)$) and $\fm_z$ is a maximal ideal of $\widetilde{\bT}(U^p)_{\overline{\rho}, \fB}^{P-\ord}[1/p]$;
\item $(\fm_z, \chi)\in \cE^{P-\ord}_{\fB}$ if and only if\begin{equation*}
J_{B\cap L_P}\big(\Ord_P\big(\widehat{S}(U^p,E)_{\overline{\rho}}\big)_{\fB}^{\an}\big)[\fm_z, T(F^+\otimes_{\Q} \Q_p)=\chi]\neq 0.
\end{equation*}
\end{itemize}
In particular, by (\ref{jac}), we see $\fx:=(\fm_x^T, \otimes_{v\in S_p} \chi_{\widetilde{v}})\in \cE^{P-\ord}_{\fB}$. By the same argument as for \cite[(7.28)]{BD1}, one can show there exists a natural injection $(\cE^{P-\ord}_{\fB})^{\red} \hookrightarrow \cE$ where $\cE$ denotes the eigenvariety associated to $G$ with the tame level $U^p$ (constructed from $J_B(\widehat{S}(U^p,E)_{\overline{\rho}}^{\an})$).  Hence we get a point $\fx=(\fm_x^T, \otimes_{v\in S_p} \chi_{\widetilde{v}})\in \cE$. Since $\rho_{x,\widetilde{v}}$ is crystalline and generic for all $v\in S_p$, by \cite[Thm. 5.1.3, Rem. 5.1.4]{BHS3}, $\fm_x^T$ is classical. This concludes the proof.
\end{proof}
\subsection{Locally analytic socle}
\noindent We use the (patched) $\GL_2(\Q_p)$-ordinary families to show some results towards Breuil's locally analytic socle conjecture \cite{Br13I} for certain non-trianguline case. We begin with some preliminaries on representations.
 \begin{lemma}\label{rep0}
   Let $U$ be a unitary admissible Banach representation of $L_P(\Q_p)$ over $E$, $U^{\an}$ be the subrepresentation of locally analytic vectors. Then the following diagram commutes
   \begin{equation}\label{comDia0}
   \begin{CD}
      U^{\an} @>>> (\Ind_{\overline{P}(\Q_p)}^{G_p} U^{\an})^{\an} \\
      @VVV @VVV \\
      U @>>> (\Ind_{\overline{P}(\Q_p)}^{G_p} U)^{\cC},
   \end{CD}
   \end{equation}
   where $(\Ind -)^{\an}$ (resp. $(\Ind -)^{\cC}$) denotes the locally analytic (resp. the continuous) parabolic induction, where the top horizontal map sends $u$ to $f_u\in \cC^{\la}(N_0, U^{\an}) \hookrightarrow (\Ind_{\overline{P}(\Q_p)}^{G_p} U^{\an})^{\an} $ via \cite[(2.3.7)]{Em2} (see \S~\ref{sec: Pord-3} for $N_0$) with $f_u$ the constant function of value $u$, and where the bottom horizontal map is given by the composition (see \cite[Cor. 4.3.5]{Em2} for the first isomorphism, and see \cite[(3.4.7)]{Em2} for the second map, which is the canonical lifting map of \emph{loc. cit.} with respect to $N_0$)
   \begin{equation*}
     U\xlongrightarrow{\sim} \Ord_P\big((\Ind_{\overline{P}(\Q_p)}^{G_p} U)^{\cC}\big) \longrightarrow (\Ind_{\overline{P}(\Q_p)}^{G_p} U)^{\cC}.
   \end{equation*}
 \end{lemma}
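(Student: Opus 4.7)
The plan is to verify commutativity of (\ref{comDia0}) by unwinding Emerton's explicit constructions and comparing the two compositions on the open Bruhat cell $\overline{P}(\Q_p) N_0 \subseteq G_p$. For $u \in U^{\an}$, the top horizontal map of \cite[(2.3.7)]{Em2} realizes $f_u \in \cC^{\la}(N_0, U^{\an})$ as a section of the locally analytic induction whose restriction to the open cell is, by the induction relation, the function $\overline{p} n \mapsto \overline{p} \cdot u$ for $\overline{p} \in \overline{P}(\Q_p)$, $n \in N_0$. The bottom map is the composition of the isomorphism $U \xrightarrow{\sim} \Ord_P((\Ind U)^{\cC})$ of \cite[Cor. 4.3.5]{Em2}, which under the natural inclusion $\Ord_P((\Ind U)^{\cC}) \hookrightarrow (\Ind U)^{\cC, N_0}$ is realized by evaluation at the identity, with the canonical lifting of \cite[(3.4.7)]{Em2}.

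The main step is to observe that the canonical lifting is characterized, under the identification of \cite[Cor. 4.3.5]{Em2}, as the unique $\overline{P}(\Q_p)$-equivariant continuous extension to $G_p$ of the function $\overline{p} n \mapsto \overline{p} \cdot u$ on $\overline{P}(\Q_p) N_0$ that remains ordinary (in the sense of Emerton). This matches, on the open cell, the image of $f_u$ under \cite[(2.3.7)]{Em2} followed by the inclusion $(\Ind U^{\an})^{\an} \hookrightarrow (\Ind U)^{\cC}$. Uniqueness of the canonical lifting as such a section (this is exactly the content of \cite[(3.4.7)]{Em2}) then forces the two compositions to agree as elements of $(\Ind_{\overline{P}(\Q_p)}^{G_p} U)^{\cC}$.

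The main obstacle is tracking the several perspectives involved (ordinary parts of Banach versus locally analytic representations, the open cell identification, and the explicit formulas of Emerton) and verifying that the extension-to-$G_p$ procedure built into \cite[(2.3.7)]{Em2} is compatible with the one built into \cite[(3.4.7)]{Em2}. This is essentially a bookkeeping task: once one writes down both constructions as sections of the natural evaluation map $(\Ind U)^{\cC, N_0} \to U$ at the identity, the required compatibility is visible from the $\overline{P}(\Q_p)$-equivariance of both sides and the characterization of the canonical lifting in \cite[\S~3.4]{Em2}.
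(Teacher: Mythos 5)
Your proposal is correct and follows essentially the same route as the paper: the paper's proof simply invokes the argument of \cite[Lem.~4.20]{BD1}, which is exactly the unwinding you carry out, namely identifying both composites with the extension by zero of the function $\overline{p}n \mapsto \overline{p}\cdot u$ on the cell $\overline{P}(\Q_p)N_0$ and using that the canonical lifting of \cite[(3.4.7)]{Em2} composed with the isomorphism of \cite[Cor.~4.3.5]{Em2} is precisely this extension by zero. The only minor imprecision is attributing the characterization of the canonical lift to (3.4.7) alone, whereas the explicit description of the ordinary part of the continuous induction (and hence of the section) is really the content of \cite[\S~4.3]{Em2}; this does not affect the argument.
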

\begin{proof}
  The lemma follows by the same argument as in \cite[Lem. 4.20]{BD1}.
\end{proof}
\begin{lemma}\label{adj01}
  Let $V$ be a unitary admissible Banach representation of $G_p:=G(F^+\otimes_{\Q} \Q_p)$ over $E$, and $U$ be a unitary admissible Banach representation of $L_P(\Q_p)$ over $E$. Suppose that we have an $L_P(\Q_p)$-equivariant non-zero map
$ U \ra \Ord_P(V)$, such that the following composition is non-zero:
  \begin{equation}\label{OrdPlalg}
    U^{\lalg} \hooklongrightarrow U \lra \Ord_P(V).
  \end{equation}
Then the composition (where the second map is induced by the second map of (\ref{OrdPlalg}) by \cite[Thm. 4.4.6]{EOrd1})
\begin{equation}\label{adj00}
  (\Ind_{\overline{P}(\Q_p)}^{G_p} U^{\lalg})^{\an} \hooklongrightarrow (\Ind_{\overline{P}(\Q_p)}^{G_p} U)^{\cC} \lra V
\end{equation}
is non-zero. Moreover, the following diagram commutes
\begin{equation}\label{comDia1}
  \begin{CD}
    U^{\lalg} @>>>  (\Ind_{\overline{P}(\Q_p)}^{G_p} U^{\lalg})^{\an} \\
    @V (\ref{OrdPlalg}) VV @V (\ref{adj00}) VV \\
    \Ord_P(V) @>>> V
  \end{CD}
\end{equation}
where the top horizontal map is given as in the horizontal map of (\ref{comDia0}) with $U^{\an}$ replaced by $U^{\lalg}$, and the bottom horizontal map is the canonical lifting with respect to $N_0$.
\end{lemma}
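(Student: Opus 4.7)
The plan is to derive Lemma \ref{adj01} as a direct consequence of Lemma \ref{rep0} and the naturality of Emerton's adjunction $\Hom_{G_p}((\Ind_{\overline{P}(\Q_p)}^{G_p} U)^{\cC}, V) \cong \Hom_{L_P(\Q_p)}(U, \Ord_P(V))$ from \cite[Thm. 4.4.6]{EOrd1}. Denote the given map by $\eta: U \to \Ord_P(V)$ and write $\tilde{\eta}: (\Ind_{\overline{P}(\Q_p)}^{G_p} U)^{\cC} \to V$ for its adjoint under the above bijection. By functoriality, $\tilde{\eta}$ factors as
\begin{equation*}
(\Ind_{\overline{P}(\Q_p)}^{G_p} U)^{\cC} \xlongrightarrow{(\Ind \eta)^{\cC}} (\Ind_{\overline{P}(\Q_p)}^{G_p} \Ord_P(V))^{\cC} \xlongrightarrow{\epsilon_V} V,
\end{equation*}
where $\epsilon_V$ is the counit of the adjunction at $V$, i.e. the image of $\id_{\Ord_P(V)}$ under the inverse adjunction.

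To verify the commutativity of (\ref{comDia1}), I would trace an element $u \in U^{\lalg}$ through both paths. Along the top-then-right path, $u \mapsto f_u \mapsto f_{\eta(u)} \mapsto \epsilon_V(f_{\eta(u)})$; along the left-then-bottom path, $u \mapsto \eta(u) \mapsto \lambda_V(\eta(u))$, where $\lambda_V: \Ord_P(V) \to V$ denotes the canonical lifting map with respect to $N_0$. To identify these, I apply Lemma \ref{rep0} with $U$ replaced by $\Ord_P(V)$: that lemma asserts that the canonical lifting $\Ord_P(V) \xrightarrow{\sim} \Ord_P((\Ind \Ord_P(V))^{\cC}) \to (\Ind \Ord_P(V))^{\cC}$ agrees, on locally analytic vectors, with the map $w \mapsto f_w$. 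In particular, since $\eta$ is continuous and $u \in U^{\lalg}\subset U^{\an}$, we have $\eta(u) \in \Ord_P(V)^{\an}$, and therefore the element $f_{\eta(u)}$ coincides with the image of $\eta(u)$ under the unit of adjunction composed with the canonical lifting of $(\Ind\Ord_P(V))^{\cC}$. Combining with the standard identity $\Ord_P(\epsilon_V) \circ \iota_{\Ord_P(V)} = \id_{\Ord_P(V)}$ and naturality of the canonical lifting with respect to the $G_p$-equivariant map $\epsilon_V$, one gets $\epsilon_V(f_{\eta(u)}) = \lambda_V(\eta(u))$, which is exactly the commutativity of (\ref{comDia1}).

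For the non-vanishing of (\ref{adj00}), I rely on the fact that the canonical lifting $\lambda_V: \Ord_P(V) \to V$ is injective: in Emerton's construction, $\Ord_P(V)$ is realized as a direct summand of $V^{N_0}$ under the action of $Z_{L_P}^+$, and the canonical lifting is the composition of this inclusion with $V^{N_0} \hookrightarrow V$. Consequently, the hypothesis that (\ref{OrdPlalg}) is non-zero immediately implies that the left-then-bottom path of (\ref{comDia1}) is non-zero, and commutativity then forces the top-then-right path, namely the composition (\ref{adj00}), to be non-zero as well.

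The main delicate point is unwinding the compatibility between Emerton's canonical lifting, the unit/counit of his adjunction, and the constant-function map $u \mapsto f_u$. However, all of these compatibilities are essentially packaged in Lemma \ref{rep0}---which already expresses the unit $U \xrightarrow{\sim} \Ord_P((\Ind U)^{\cC}) \to (\Ind U)^{\cC}$ in terms of constant functions on $N_0$---so once Lemma \ref{rep0} is applied to both $U$ and $\Ord_P(V)$, the diagram chase above should carry through routinely.
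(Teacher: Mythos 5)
Your argument is correct and is essentially the paper's proof: both reduce the lemma to the commutativity of (\ref{comDia1}) (non-vanishing then following from injectivity of the canonical lifting), obtained by combining the adjunction diagram of \cite[Thm. 4.4.6]{EOrd1} with Lemma \ref{rep0}. The only cosmetic difference is that you apply Lemma \ref{rep0} to $\Ord_P(V)$ and then invoke the counit/triangle identities, whereas the paper applies it to $U$ (restricted to the closed subrepresentation $U^{\lalg}\subseteq U^{\an}$) to identify the top arrow of (\ref{comDia1}) with the restriction of the canonical map $U \to (\Ind_{\overline{P}(\Q_p)}^{G_p} U)^{\cC}$; both routes rest on the same two ingredients.
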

\begin{proof}
  It is sufficient to show (\ref{comDia1}) is commutative. However, by \cite[Thm. 4.4.6]{EOrd1}, the following diagram commutes
  \begin{equation*}
    \begin{CD}
       U @>>>  (\Ind_{\overline{P}(\Q_p)}^{G_p} U)^{\cC} \\
    @V (\ref{OrdPlalg}) VV @V (\ref{adj00}) VV \\
    \Ord_P(V) @>>> V
  \end{CD}.
  \end{equation*}
The lemma then follows from Lemma \ref{rep0} (noting that as an easy consequence of Lemma \ref{rep0}, the statement of Lemma \ref{rep0} holds also with $U^{\an}$ replaced by any closed subrepresentation of $U^{\an}$, and in particular holds for $U^{\lalg}$).
\end{proof}
\noindent For a weight $\lambda=(\lambda_1,\cdots, \lambda_n)$ of $\GL_n$, let  $\overline{L}(\lambda)$ be the unique simple quotient of $\text{U}(\gl_n) \otimes_{\text{U}(\overline{\ub})} \lambda$ with $\overline{\ub}$ the Lie algebra of the Borel subgroup $\overline{B}$ of lower triangular matrices. If $\lambda$ is integral and is  dominant for a parabolic $\overline{P}\supseteq \overline{B}$, then  $\overline{L}(\lambda)$ lies in the BGG category $\co_{\alg}^{\overline{\fp}}$ (cf. \cite[\S~2]{OS}, where $\overline{\fp}$ is the Lie algebra of $\overline{P}$).
\begin{theorem}\label{main2}
  Suppose that all the assumptions, except the assumption 5, of Theorem \ref{main} hold and suppose that  we are in the situation of Step (1) of the proof of Theorem \ref{main}. Suppose the followings (as a replacement of the assumption 5 of Theorem \ref{main}) hold
\begin{itemize}
  \item for all $v|p$, $\rho_{\widetilde{v}}$ is Hodge-Tate of distinct Hodge-Tate weights $h_{\widetilde{v},1}> \cdots> h_{\widetilde{v},n}$;
  \item for all $v|p$, $i=1,\cdots, k_{\widetilde{v}}$, $\rho_{\widetilde{v},i}$ is de Rham and  absolutely irreducible.
\end{itemize}
Then there exists a non-zero morphism of locally analytic $G_p$-representations:
\begin{equation}\label{soc1}
 \widehat{\otimes}_{v|p} \cF_{\overline{P}_{\widetilde{v}}}^{\GL_n}(\overline{L}(-s_{\widetilde{v}}\cdot \lambda_{\widetilde{v}}), \pi_{\widetilde{v}}^{\infty}) \longrightarrow \widehat{S}(U^p,E)[\fm_{\rho}]^{\an},
\end{equation}
where \begin{itemize}
 \item``$-[\fm_{\rho}]$" denotes the subspace annihilated by the maximal ideal $\fm_{\rho}\subseteq R_{\overline{\rho}, \cS}[1/p]$ associated to $\rho$,
  \item$\cF_{\overline{P}_{\widetilde{v}}}^{\GL_n}(-,-)$ is the Orlik-Strauch functor (\cite{OS}),
  \item $\lambda_{\widetilde{v}}:=(\lambda_{\widetilde{v},1},\cdots, \lambda_{\widetilde{v},n})$ with $\lambda_{\widetilde{v},i}:=h_{\widetilde{v},i}+i-1$ (so $\lambda_{\widetilde{v}}$ is dominant for $B$),
  \item $\pi_{\widetilde{v}}^{\infty}=\otimes_{i=1}^{k_{\widetilde{v}}} \pi_{\widetilde{v},i}^{\infty}(s_{\widetilde{v},i+1}-1)$ with ``$- (s_{\widetilde{v},i+1}-1)$" the twist $\unr(p^{-(s_{\widetilde{v},i+1}-1)})\circ \dett$ and  with $\pi_{\widetilde{v},i}^{\infty}$  the smooth $\GL_{n_{\widetilde{v},i}}(\Q_p)$ representation corresponding to $\WD(\rho_{\widetilde{v},i})$ (normalized in the way that $\widehat{\pi}(\rho_{\widetilde{v},i})^{\lalg}$ is isomorphic to the tensor product of $\pi_{\widetilde{v},i}^{\infty}$ with an algebraic representation of $\GL_{n_{\widetilde{v},i}}(\Q_p)$),
  \item $s_{\widetilde{v}}\in S_n$ satisfies that we have an equality of ordered sets
\begin{equation*}\big(h_{\widetilde{v},s_{\widetilde{v}}^{-1}(1)}, \cdots, h_{\widetilde{v},s_{\widetilde{v}}^{-1}(n)}\big)=\big(h_{\rho_{\widetilde{v},1},1}, h_{\rho_{\widetilde{v},1}, n_{\widetilde{v},1}}, \cdots, h_{\rho_{\widetilde{v},k_{\widetilde{v}}}, 1}, h_{\rho_{\widetilde{v},k_{\widetilde{v}}},n_{\widetilde{v}, k_{\widetilde{v}}}}\big),
\end{equation*}
$\{h_{\rho_{\widetilde{v},i},1}, h_{\rho_{\widetilde{v},i}, n_{\widetilde{v},i}}\}$ being the set of the Hodge-Tate weights of $\rho_{\widetilde{v},i}$ with $h_{\rho_{\widetilde{v},i},1}\geq h_{\rho_{\widetilde{v},i}, n_{\widetilde{v},i}}$ (so $-s_{\widetilde{v}}\cdot \lambda_{\widetilde{v}}$ is dominant for $\overline{P}_{\widetilde{v}}$).
\end{itemize}
\end{theorem}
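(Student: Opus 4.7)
\noindent We follow the proof of Theorem \ref{main} verbatim up through the end of its Step 2, which nowhere invokes assumption 5. Step 1 reduces, by solvable base change, to the setting of \S~\ref{patch2}: $S=\Omega\cup S_p\cup\{v_1\}$, with $\overline{\rho}_{\widetilde{v}}$ trivial and $\rho_{\widetilde{v}}|_{I_{\widetilde{v}}}^{\sss}\cong 1^{\oplus n}$ for $v\in\Omega$, and $\rho_{\widetilde{v}_1}$ unramified. Step 2 runs the Taylor--Wiles/Ihara-avoidance patching of \S~\ref{patch1}--\S~\ref{patch2} with $\chi=1$; using Proposition \ref{suppdim}, the identification (\ref{iharaiso}), and \cite[Prop. 3.1]{Tay08}, $\rho$ defines a closed point $x_T\in\Spec\widetilde{\bT}(U^p)_{\overline{\rho},\fB}^{P-\ord}[1/p]$ with $\rho_{x_T}\cong\rho$, and Theorem \ref{lg} promotes this to a closed point $(x_T,\{z_{\widetilde{v},i}\})\in\Spec\cA[1/p]$ with $z_{\widetilde{v},i}=\tr\rho_{\widetilde{v},i}$. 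Note that this point is in general non-classical, reflecting the fact that the Hodge-Tate ordering on $\rho_{\widetilde{v}}$ need not descend to the $P_{\widetilde{v}}$-filtration in the non-critical order.

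\noindent Proposition \ref{pts0} applied at $(x_T,\{z_{\widetilde{v},i}\})$ produces a non-zero $L_P(\Q_p)$-equivariant morphism
\[
\widehat{\otimes}_{v,i}\bigl(\widehat{\pi}(\rho_{\widetilde{v},i})\otimes_E\varepsilon^{s_{\widetilde{v},i+1}-1}\circ\dett\bigr)\lra \Ord_P\bigl(\widehat{S}(U^p,E)_{\overline{\rho}}\bigr)_{\fB}[\fm_{x_T}],
\]
where de Rham-ness plus absolute irreducibility of each $\rho_{\widetilde{v},i}$ identifies $\widehat{\pi}_{z_{\widetilde{v},i}}$ with the topologically irreducible Banach representation $\widehat{\pi}(\rho_{\widetilde{v},i})$. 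Topological irreducibility and density of the locally algebraic subspace $\widehat{\pi}(\rho_{\widetilde{v},i})^{\lalg}\cong\pi_{\widetilde{v},i}^{\infty}\otimes_E W_{\widetilde{v},i}^{\alg}$, where $W_{\widetilde{v},i}^{\alg}$ is the irreducible algebraic $\GL_{n_{\widetilde{v},i}}(\Q_p)$-representation of highest weight determined by the Hodge-Tate weights of $\rho_{\widetilde{v},i}$ in the chosen normalization, imply that the restriction to $\otimes_{v,i}(\widehat{\pi}(\rho_{\widetilde{v},i})^{\lalg}\otimes_E\varepsilon^{s_{\widetilde{v},i+1}-1}\circ\dett)$ remains non-zero. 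Applying Lemma \ref{adj01} then yields a non-zero locally analytic $G_p$-equivariant morphism
\[
\Bigl(\Ind_{\overline{P}(\Q_p)}^{G_p}\otimes_{v,i}\bigl(\widehat{\pi}(\rho_{\widetilde{v},i})^{\lalg}\otimes_E\varepsilon^{s_{\widetilde{v},i+1}-1}\circ\dett\bigr)\Bigr)^{\an}\lra \widehat{S}(U^p,E)[\fm_\rho]^{\an}.
\]

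\noindent It remains to isolate, inside the source, the Orlik--Strauch sub-representation $\widehat{\otimes}_{v|p}\cF_{\overline{P}_{\widetilde{v}}}^{\GL_n}(\overline{L}(-s_{\widetilde{v}}\cdot\lambda_{\widetilde{v}}),\pi_{\widetilde{v}}^{\infty})$. A direct weight computation identifies the algebraic part of $\otimes_{v,i}(W_{\widetilde{v},i}^{\alg}\otimes_E\varepsilon^{s_{\widetilde{v},i+1}-1}\circ\dett)$ with the irreducible algebraic $L_P(\Q_p)$-representation $L^{L_P}(-s_{\widetilde{v}}\cdot\lambda_{\widetilde{v}}|_{L_P})$: the twists $\varepsilon^{s_{\widetilde{v},i+1}-1}\circ\dett$ compensate the block-by-block shift built into the $p$-adic Langlands normalization, while $s_{\widetilde{v}}$ encodes the reshuffling of $\{h_{\widetilde{v},j}\}$ induced by the $P_{\widetilde{v}}$-filtration. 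Since $-s_{\widetilde{v}}\cdot\lambda_{\widetilde{v}}$ is $\overline{P}_{\widetilde{v}}$-dominant, by \cite{OS} the Orlik--Strauch representation $\cF_{\overline{P}_{\widetilde{v}}}^{\GL_n}(\overline{L}(-s_{\widetilde{v}}\cdot\lambda_{\widetilde{v}}),\pi_{\widetilde{v}}^{\infty})$ is a closed $G_p$-sub-representation of $\bigl(\Ind_{\overline{P}(\Q_p)}^{G_p}(L^{L_P}(-s_{\widetilde{v}}\cdot\lambda_{\widetilde{v}}|_{L_P})\otimes_E\pi_{\widetilde{v}}^{\infty})\bigr)^{\an}$, which is in turn naturally a sub-representation of the source of the morphism displayed above. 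The main obstacle, and the heart of the non-trianguline socle statement, is to verify that this morphism is non-zero on the Orlik--Strauch sub-representation when $s_{\widetilde{v}}\neq\id$: in the genuinely critical case, $\cF_{\overline{P}_{\widetilde{v}}}^{\GL_n}(\overline{L}(-s_{\widetilde{v}}\cdot\lambda_{\widetilde{v}}),\pi_{\widetilde{v}}^{\infty})$ contains no locally algebraic vector, so non-vanishing cannot be propagated directly from the locally algebraic level via (\ref{comDia1}). The strategy is to use the companion-point analysis of Step 3 of the proof of Theorem \ref{main} to produce a locally analytic character $\chi=\otimes_v\chi_{\widetilde{v}}$ of $T(F^+\otimes_\Q\Q_p)$ occurring in the Jacquet module $J_{B\cap L_P}(\Ord_P(\widehat{S}(U^p,E))_{\overline{\rho},\fB}^{\an}[\fm_{x_T}])$ and matching the lowest $T(\Q_p)$-weight of the target Orlik--Strauch socle; this exhibits a companion point on the eigenvariety $\cE$ distinct from any classical point and forces the morphism to hit the Orlik--Strauch constituent non-trivially, in the spirit of \cite{Br13I}.
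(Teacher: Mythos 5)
Your setup is correct and coincides with the paper's argument up to and including the application of Lemma \ref{adj01}: you reduce to the situation of Step (1), run Step (2) of the proof of Theorem \ref{main} (which indeed never uses assumption 5) to get the point $(x_T,\{z_{\widetilde{v},i}\})\in\Spec\cA[1/p]$, apply Proposition \ref{pts0}, use absolute irreducibility of $\rho_{\widetilde{v},i}$ to identify $\widehat{\pi}_{z_{\widetilde{v},i}}$ with $\widehat{\pi}(\rho_{\widetilde{v},i})$ and to see that the restriction to locally algebraic vectors is still non-zero, and then invoke the adjunction to get a non-zero map out of $\big(\Ind_{\overline{P}(\Q_p)}^{G_p}\otimes_{v,i}(\widehat{\pi}(\rho_{\widetilde{v},i})^{\lalg}\otimes_E\varepsilon^{s_{\widetilde{v},i+1}-1}\circ\dett)\big)^{\an}$. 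You also correctly identify the crux: showing the map is non-zero on the Orlik--Strauch socle. But your assessment of how to cross that last step contains a genuine error, and your proposed fix does not work.

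First, your claim that ``non-vanishing cannot be propagated directly from the locally algebraic level via (\ref{comDia1})'' is false, and this is exactly the mechanism the paper uses. The point you are missing is \cite[Prop.\ 3.4\,(i)]{Br13II}: the canonical lifting $U^{\lalg}\ra(\Ind_{\overline{P}(\Q_p)}^{G_p}U^{\lalg})^{\an}$ with respect to $N_0$ (the ``constant function'' map in the top row of (\ref{comDia1})) has image \emph{inside} the closed subrepresentation $\cF_{\overline{P}}^{G_p}(\otimes_v\overline{L}(-s_{\widetilde{v}}\cdot\lambda_{\widetilde{v}}),\otimes_v\pi_{\widetilde{v}}^{\infty})$, which by \cite[Cor.\ 2.5]{Br13I} and \cite[Lem.\ 2.10]{BH2} is the $G_p$-socle of the induction. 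The OS constituent need not contain locally algebraic $G_p$-vectors, but it does contain the canonical lift of $U^{\lalg}$ as a subspace, and the commutativity of (\ref{comDia1}) shows that composing this subspace with the adjunction map recovers the non-zero composition $U^{\lalg}\hookrightarrow\Ord_P(V)\ra V$. Hence the restriction of your $G_p$-map to the socle is non-zero, purely formally. Second, the alternative route you propose --- producing a companion character in $J_{B\cap L_P}$ of the ordinary part and locating a point on the eigenvariety --- fails precisely in the case the theorem is designed to cover: if some $\rho_{\widetilde{v},i}$ is $2$-dimensional irreducible and non-trianguline, then $\widehat{\pi}(\rho_{\widetilde{v},i})$ is supersingular, its Jacquet--Emerton module vanishes, and no such character or eigenvariety point exists (this is exactly the limitation of the eigenvariety methods that Remark \ref{soc} contrasts with the present result). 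So your proof as written has a gap at the decisive step; it is closed by citing \cite[Prop.\ 3.4\,(i)]{Br13II} together with the diagram of Lemma \ref{adj01}, not by companion points.
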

\begin{proof}
  By Step (2) of the proof of Theorem \ref{main} (and we use the notation there), we have $z=(x_T, \{z_{\widetilde{v},i}\})\in \Spec \cA[1/p]$. By Proposition \ref{pts0}, there exists a non-zero $L_P(\Q_p)$-equivariant morphism (without loss of generality, we assume the residue field at $x$ is equal to $E$)
  \begin{equation}\label{lgOrd1}
    \widehat{\otimes}_{\substack{v\in S_p \\ i=1,\cdots, k_{\widetilde{v}}}} (\widehat{\pi}_{z_{\widetilde{v},i}} \otimes_{E} \varepsilon^{s_{\widetilde{v},i+1}-1}\circ \dett) \lra  \Ord_P\big(\widehat{S}(U^p,E)_{\overline{\rho}}\big)_{\fB}[\fm_x^T].
  \end{equation}
Note that $\Ord_P\big(\widehat{S}(U^p,E)_{\overline{\rho}}\big)_{\fB}[\fm_x^T]=\Ord_P\big(\widehat{S}(U^p,E)_{\overline{\rho}}\big)_{\fB}[\fm_{\rho}]$.
Since $\rho_{\widetilde{v},i}$ is absolutely irreducible, we have $\widehat{\pi}_{z_{\widetilde{v},i}} \cong \widehat{\pi}(\rho_{\widetilde{v},i})$. Also by the same argument as in the proof of Proposition \ref{class}, (\ref{lgOrd1}) induces a non-zero $L_P(\Q_p)$-equivariant morphism
\begin{equation*}
\otimes_{\substack{v\in S_p \\ i=1,\cdots, k_{\widetilde{v}}}} (\widehat{\pi}(\rho_{\widetilde{v},i})^{\lalg} \otimes_{E} \varepsilon^{s_{\widetilde{v},i+1}-1}\circ \dett) \lra  \Ord_P\big(\widehat{S}(U^p,E)_{\overline{\rho}}\big)_{\fB}[\fm_{\rho}].
\end{equation*}
By Lemma \ref{adj01}, we deduce hence a non-zero $G_p$-equivariant morphism
\begin{multline*}
  \widehat{\otimes}_{v\in S_p} \big(\Ind_{\overline{P}_{\widetilde{v}}}^{\GL_n(\Q_p)} \otimes_{i=1}^{k_{\widetilde{v}}} (\widehat{\pi}(\rho_{\widetilde{v},i})^{\lalg} \otimes_{E} \varepsilon^{s_{\widetilde{v},i+1}-1}\circ \dett) \big) ^{\an}\\ \cong \big(\Ind_{\overline{P}}^{G_p} \otimes_{\substack{v\in S_p \\ i=1,\cdots, k_{\widetilde{v}}}} (\widehat{\pi}(\rho_{\widetilde{v},i})^{\lalg} \otimes_{E} \varepsilon^{s_{\widetilde{v},i+1}-1}\circ \dett) \big)^{\an}
  \lra \widehat{S}(U^p,E)_{\overline{\rho}}[\fm_{\rho}].
\end{multline*}
For $v\in S_p$, we have
\begin{equation*}
  \otimes_{i=1}^{k_{\widetilde{v}}} (\widehat{\pi}(\rho_{\widetilde{v},i})^{\lalg} \otimes_{E} \varepsilon^{s_{\widetilde{v},i+1}-1}\circ \dett) \cong \pi_{\widetilde{v}}^{\infty} \otimes_E L(s_{\widetilde{v}}\cdot \lambda_{\widetilde{v}})_{P_{\widetilde{v}}}
\end{equation*}
where $L(s_{\widetilde{v}}\cdot \lambda_{\widetilde{v}})_{P_{\widetilde{v}}}$ denotes the algebraic representation of the Levi subgroup of $P_{\widetilde{v}}$ (containing the diagonal subgroup) of highest weight $s_{\widetilde{v}}\cdot \lambda_{\widetilde{v}}$ (with respect to the Borel subgroup of upper triangular matrices). By \cite[Cor. 2.5]{Br13I} and \cite[Lem. 2.10]{BH2}, we have
\begin{multline}\label{compSoc}
   \widehat{\otimes}_{v|p} \cF_{\overline{P}_{\widetilde{v}}}^{\GL_n}(\overline{L}(-s_{\widetilde{v}}\cdot \lambda_{\widetilde{v}}), \pi_{\widetilde{v}}^{\infty}) \cong \cF_{\overline{P}}^{G_p}(\otimes_{v\in S_p} \overline{L}(-s_{\widetilde{v}}\cdot \lambda_{\widetilde{v}}), \otimes_{v\in S_p} \pi_{\widetilde{v}}^{\infty})\\  \cong \soc_{G_p}  \big(\Ind_{\overline{P}}^{G_p} \otimes_{v\in S_p} (\pi_{\widetilde{v}}^{\infty} \otimes_E L(s_{\widetilde{v}}\cdot \lambda_{\widetilde{v}})_{P_{\widetilde{v}}}) \big)^{\an}
   \lra \widehat{S}(U^p,E)_{\overline{\rho}}[\fm_{\rho}]^{\an}.
\end{multline}
We show the composition is non-zero. By Lemma \ref{adj01}, the composition
\begin{equation*}
  \otimes_{v\in S_p} (\pi_{\widetilde{v}}^{\infty} \otimes_E L(s_{\widetilde{v}}\cdot \lambda_{\widetilde{v}})_{P_{\widetilde{v}}}) \lra  \big(\Ind_{\overline{P}}^{G_p} \otimes_{v\in S_p} (\pi_{\widetilde{v}}^{\infty} \otimes_E L(s_{\widetilde{v}}\cdot \lambda_{\widetilde{v}})_{P_{\widetilde{v}}}) \big)^{\an}
   \lra \widehat{S}(U^p,E)_{\overline{\rho}}[\fm_{\rho}]^{\an}
\end{equation*}
is non-zero. By \cite[Prop. 3.4 (i)]{Br13II}, the first map actually factors through $\cF_{\overline{P}}^{G_p}(\otimes_{v\in S_p} \overline{L}(-s_{\widetilde{v}}\cdot \lambda_{\widetilde{v}}), \otimes_{v\in S_p} \pi_{\widetilde{v}}^{\infty})$. We deduce thus (\ref{compSoc}) is non-zero, and this concludes the proof.
\end{proof}
\begin{remark}\label{soc}
Keep the assumptions and notation in Theorem \ref{main2}, and assume $\rho$ is automorphic such that $\widehat{S}(U^p,E)[\fm_{\rho}]^{\lalg}\neq 0$. Let $\Pi_{\widetilde{v}}^{\infty}$ be the unique generic subquotient of $(\Ind_{\overline{P}_{\widetilde{v}}(\Q_p)}^{\GL_n(\Q_p)} \pi_{\widetilde{v}}^{\infty})^{\infty}$. By the local-global compatibility in classical local Langlands correspondence, we have
\begin{equation}\label{soc0}
  \otimes_{v\in S_p} \big(\Pi_{\widetilde{v}}^{\infty}\otimes_E L(\lambda_{\widetilde{v}})\big)\hooklongrightarrow \widehat{S}(U^p,E)[\fm_{\rho}]^{\an},
\end{equation}
where $L(\lambda_{\widetilde{v}})$ denotes the algebraic representation of $\GL_n(\Q_p)$ of highest weight $\lambda_{\widetilde{v}}$ (with respect to $B$).
Let $Q_{\widetilde{v}}\supseteq P_{\widetilde{v}}$ be the maximal parabolic subgroup such that $s_{\widetilde{v}}\cdot \lambda_{\widetilde{v}}$ is dominant for $Q_{\widetilde{v}}$, and $L_{Q_{\widetilde{v}}}$ be the Levi subgroup of $Q_{\widetilde{v}}$ containing the torus.  Assume $(\Ind_{\overline{P}_{\widetilde{v}}(\Q_p)\cap L_{Q_{\widetilde{v}}}(\Q_p)}^{L_{Q_{\widetilde{v}}}(\Q_p)} \pi_{\widetilde{v}}^{\infty})^{\infty}$ is irreducible for $v\in S_p$.  By \cite[Lem. 2.10]{BH2} \cite[Thm. (iv)]{OS} (and the fact $\otimes_{v\in S_p} (\Ind_{\overline{P}_{\widetilde{v}}(\Q_p)\cap L_{Q_{\widetilde{v}}}(\Q_p)}^{L_{Q_{\widetilde{v}}}(\Q_p)} \pi_{\widetilde{v}}^{\infty})^{\infty}$ is irreducible as smooth $\prod_{v\in S_p} L_{Q_{\widetilde{v}}}(\Q_p)$-representation), we see $\widehat{\otimes}_{v\in S_p} \cF_{\overline{P}_{\widetilde{v}}}^{\GL_n}(\overline{L}(-s_{\widetilde{v}}\cdot \lambda_{\widetilde{v}}), \pi_{\widetilde{v}}^{\infty}) $ is topologically irreducible.  When there exists $v\in S_p$ such that $s_{\widetilde{v}} \neq 1$, then $\cF_{\overline{P}_{\widetilde{v}}}^{\GL_n}(\overline{L}(-s_{\widetilde{v}}\cdot \lambda_{\widetilde{v}}), \pi_{\widetilde{v}}^{\infty})$ is not locally algebraic, and the morphism (\ref{soc1}) provides an injection (other than (\ref{soc0})):
\begin{equation}\label{soc2}
\widehat{\otimes}_{v\in S_p} \cF_{\overline{P}_{\widetilde{v}}}^{\GL_n}(\overline{L}(-s_{\widetilde{v}}\cdot \lambda_{\widetilde{v}}), \pi_{\widetilde{v}}^{\infty})  \hooklongrightarrow \soc_{G_p} \widehat{S}(U^p,E)[\fm_{\rho}]^{\an}.
 \end{equation}
 This extra constituent appearing  in the socle of $\widehat{S}(U^p,E)[\fm_{\rho}]^{\an}$ is  predicted by Breuil's locally analytic socle conjecture (\cite[Conj. 5.3]{Br13II}), which was proved when $\rho_{\widetilde{v}}$ is crystalline and generic in \cite{BHS3} (see also \cite{Br13II}, \cite{Ding6} etc. for partial results on the conjecture). However, all the previous results used eigenvarieties in an essential way, and hence were limited to the case where  $\rho_{\widetilde{v}}$ is trianguline. By contrast, (\ref{soc2}) also applies to the case where $\pi_{\widetilde{v},i}^{\infty}$ is cuspidal for some $i$ (with $s_{\widetilde{v}}\neq 1$, $n_{\widetilde{v},i}=2$), which then gives a non-trivial example (probably the first, to the author's knowledge) towards the conjecture in this case.
%
%
\end{remark}

\end{document}